\numberwithin{equation}{section}
\numberwithin{section}{part}
\newtheorem{thm}{Theorem}[section]
\newtheorem{lemma}[thm]{Lemma}
\newtheorem{cor}[thm]{Corollary}
\newtheorem{prop}[thm]{Proposition}
\newtheorem{con}[thm]{Conjecture}
\theoremstyle{definition}
\newtheorem{defn}[thm]{Definition}
\theoremstyle{remark}
\newtheorem{remark}[thm]{Remark}
\newcommand{\ol}[1]{\overline{#1}}
\newcommand{\sgn}{{\rm sgn}}
\renewcommand{\S}{\mathfrak S}
\newcommand{\s}{\sigma}
\newcommand\C{{\mathbb{C}}}
\newcommand\Q{{\mathbb Q}}
\newcommand\Z{{\mathbb{Z}}}
\newcommand\F{{\mathbb{F}}}
\newcommand\PP{{\mathbb{P}}}
\newcommand\N{{\mathbb{N}}}
\newcommand\wh{\widehat}
\newcommand\bars{{\rm bar}}
\newcommand\bq{\begin{equation}}
\newcommand\eq{\end{equation}}
\newcommand\beq{\begin{eqnarray*}}
\newcommand\eeq{\end{eqnarray*}}
\newcommand\ben{\begin{enumerate}}
\newcommand\een{\end{enumerate}}
\newcommand\bit{\begin{itemize}}
\newcommand\eit{\end{itemize}}
\newcommand\des{{\rm des}}
\newcommand\exc{{\rm exc}}
\newcommand\inv{{\rm inv}}
\newcommand\maj{{\rm maj}}
\newcommand\comaj{{\rm comaj}}
\newcommand\ai{{\rm ai}}
\newcommand\aid{{\rm aid}}
\newcommand\sg{{\mathfrak S}}
\newcommand\Des{{\rm DES}}
\newcommand\Exd{{\rm DEX}}
\newcommand\Exc{{\rm EXC}}
\newcommand\fix{{\rm fix}}
\newcommand\ch{{\rm ch}}
\newcommand\x{{\mathbf x}}
\newcommand\bbar{{\rm bar}}
\newcommand\Exp{{\rm Exp}}
\newcommand\wt{{\rm wt}}
\newcommand\Cov{{\rm Cov}}
\newcommand\Com{{\rm Com}}
\newcommand\Par{{\rm Par}}
\newcommand\bnd{{\rm bnd}}
\def\wh{\widehat}
\def\hz{\hat 0}
\def\ho{\hat 1}
\def\ov{\overline}
\def\ttn{T_{t,n}}
\def\rh{\tilde{H}}
\def\zz{{\mathbb Z}}
\def\ff{{\mathbb F}}
\def\xx{{\mathbf x}}
\begin{document}

\title[Eulerian quasisymmetric functions]
{Eulerian quasisymmetric functions and poset topology}
\author[Shareshian]{John Shareshian$^1$}
\address{Department of Mathematics, Washington University, St. Louis, MO 63130}
\thanks{$^{1}$Supported in part by NSF Grants
 DMS 0300483 and DMS 0604233, and the Mittag-Leffler Institute}
\email{shareshi@math.wustl.edu}

\author[Wachs]{Michelle L. Wachs$^2$}
\address{Department of Mathematics, University of Miami, Coral Gables, FL 33124}
\email{wachs@math.miami.edu}
\thanks{$^{2}$Supported in part by NSF Grants
DMS 0302310 and DMS 0604562, and the Mittag-Leffler Institute}

\subjclass[2000]{05A30, 05E05, 05E25}

\date{May 15, 2008}

\dedicatory{}

\begin{abstract} We introduce  a   family of  quasisymmetric functions called {\em Eulerian quasisymmetric functions},
which have the property of specializing to   enumerators for the joint distribution of the permutation statistics, major index and excedance number on permutations of fixed cycle type.
 This family is analogous to a family of quasisymmetric functions that Gessel and Reutenauer used to study the joint distribution of major index and descent number on permutations of fixed cycle type.
 Our central result is a formula for
the generating function for the Eulerian quasisymmetric  functions, which specializes to
a new  and surprising
$q$-analog of a classical formula for the exponential generating
function of the Eulerian polynomials.  This $q$-analog computes the joint distribution of  excedance number and major index, the only of the four important Euler-Mahonian distributions that had not yet been computed.   Our  study of the Eulerian quasisymmetric functions   also yields  results  that include the descent statistic and  refine   results of Gessel and Reutenauer.   We also obtain $q$-analogs,  $(q,p)$-analogs and quasisymmetric function analogs of classical results on the symmetry and unimodality of the Eulerian polynomials.  Our Eulerian quasisymmetric functions refine  symmetric functions that have occurred in various representation theoretic and enumerative contexts such as in MacMahon's study of multiset derangements,  in work of  Procesi and Stanley   on  toric varieties of  Coxeter complexes and in Stanley's work on symmetric chromatic polynomials.  Here we present yet another occurence in connection with the homology of a poset introduced by Bj\"orner and Welker.
 \end{abstract}

\maketitle

\vbox{
\tableofcontents
}
\section{Introduction}
Through our study of the homology of a certain partially ordered set introduced by
Bj\"orner and Welker, we  have discovered a
remarkable  $q$-analog of a classical formula for the Eulerian polynomials.    The
 Eulerian polynomials enumerate permutations according to their number of
 descents or their number of excedances.  Our q-Eulerian polynomials are
 the enumerators for the joint distribution of the excedance statistic and
 the major index.  There is a vast literature on q-Eulerian polynomials
 that involves other combinations of Eulerian and Mahonian permutation
 statistics, but ours is the first result to address the combination of
 excedance number and major index.  Although poset topology led us to
 conjecture our formula, it is symmetric function theory that provides  the proof  of our original formula, as well as  of more refined versions involving additional permutation statistics.
 
Part 1 of this paper deals only with our permutation statistic and symmetric function theoretic results. In Part 2, results of Part 1 are used to compute the homology of  a q-analog of the  Bj\"orner and Welker poset and  the character of the symmetric group acting on the homology of the Bj\"orner-Welker poset.

  \subsection{Permutation Statistics}
 The modern study of permutation statistics began with the work of
Major Percy McMahon \cite[Vol. I, pp.~135, 186; Vol. II, p.~viii] {mac1}, \cite{mac2}.
It deals with the enumeration of  permutations
according to   natural statistics.    A permutation statistic is
simply a function from the union of all
the symmetric groups $\S_n$ to the set of
nonnegative integers.     MacMahon studied four fundamental
permutation statistics,  the inversion index ($\inv$), the major index ($\maj$), the
descent number ($\des$), and the excedance number ($\exc$), which are defined in Section~\ref{statsec}.

MacMahon observed in \cite[Vol. I, p. 186]{mac1} the now well
known result that the statistics $\des$ and $\exc$ are
equidistributed, that is,
\begin{equation*} \label{euler}
A_n(t):=\sum_{\sigma \in \sg_n}t^{\des(\sigma)}=\sum_{\sigma \in \sg_n}t^{\exc(\sigma)}.
\end{equation*}
The coefficients of the polynomials $A_n(t)$ were studied by
Euler, and are called {\it Eulerian numbers}.  The polynomials are
known as {\it Eulerian polynomials}.  (Note that it is common in
the literature to define the Eulerian polynomials to be
$tA_n(t)$.) Any permutation statistic that is equidistributed with
$\des$ and $\exc$ is called an {\it Eulerian statistic}.  Eulerian
numbers and polynomials have been extensively studied (see for
example \cite{fs} and \cite{kn}).  Euler proved (see \cite[p.
39]{kn}) the generating function formula
\begin{equation}  \label{expgen}
1+\sum_{n \geq 1}A_n(t)\frac{z^n}{n!}=\frac{1-t}{e^{z(t-1)}-t}.
\end{equation}

For a positive integer $n$, the polynomials $[n]_q$ and $[n]_q!$
are defined as
\[
[n]_q:=1+q+\ldots+q^{n-1}
\]
and
\[
[n]_q!:=\prod_{j=1}^{n}[j]_q.
\]
MacMahon proved in \cite{mac2} the first equality in the equation
\begin{equation*} \label{majeq}
\sum_{\sigma \in \S_n}q^{\maj(\sigma)}=[n]_q!=\sum_{\sigma \in
\S_n}q^{\inv(\sigma)}
\end{equation*}
after the second equality had been obtained in \cite{rod} by
Rodrigues.  A permutation statistic that is equidistributed with
$\maj$ and $\inv$ is called a {\it Mahonian} statistic.

Much effort has been put into the examination of joint
distributions of pairs of permutation statistics, one Eulerian and
one Mahonian (for a sample see,
\cite{bs,br,c,csz,foa3,fs2,fz,gar,gg,grem,gr,hag,rrw,ra,sk,st,st1,wa2}).
One beautiful result on such joint distributions is found in the
paper \cite{st1} of Stanley. For permutation statistics ${\mathsf
{f_1}},\ldots,{\mathsf {f_k}}$ and a positive integer $n$, define
the polynomial
\[
A_n^{{\mathsf {f_1,\ldots,f_k}}}(t_1,\ldots,t_k):=\sum_{\sigma \in
\S_n}t_1^{{\mathsf {f_1}}  (\sigma) } t_2^{{\mathsf {f_2}}
(\sigma)}\cdots t_k^{{\mathsf {f_k}}  (\sigma)}.
\]
Also, set
\[
A_0^{{\mathsf {f_1,\ldots,f_k}}}(t_1,\ldots,t_k):=1.
\]
Stanley showed that if we define
\[
\Exp_q(z):=\sum_{n \geq 0}q^{{n} \choose {2}}\frac{z^n}{[n]_q!}
\]
then we have the $q$-analogue
\begin{equation*} \label{staneq}
\sum_{n \geq 0}A_n^{\inv,\des}(q,t)\frac{z^n}{[n]_q!}=\frac{1-t}{\Exp_q(z(t-1))-t}
\end{equation*}
of (\ref{expgen}).
\\
\\
Although there has been much work on the joint distributions of $(\maj,\des)$, $(\inv,\exc)$ and $(\inv,\des)$ there are to our knowledge no results about
$A^{\maj,\exc}_n(q,t)$ in the existing literature prior to \cite{sw}, where our work was first  announced.   Our main result on
these polynomials is as follows.  Set
\[
\exp_q(z):=\sum_{n \geq 0}\frac{z^n}{[n]_q!}.
\]

\begin{thm} \label{expgenth}
We have
\begin{equation} \label{expgeneq}
\sum_{n \geq 0}A^{\maj,\exc}_n(q,t)\frac{z^n}{[n]_q!}=\frac{(1-tq)\exp_q(z)}{\exp_q(ztq)-tq\exp_q(z)}.
\end{equation}
\end{thm}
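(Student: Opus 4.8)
The plan is to obtain \eqref{expgeneq} as the image, under a principal specialization, of a generating-function identity for a family of \emph{symmetric} functions that refine the polynomials $A^{\maj,\exc}_n(q,t)$. For $n\ge 1$ and $j\ge 0$ I would introduce the \emph{Eulerian quasisymmetric function}
\[
Q_{n,j}:=\sum_{\substack{\sigma\in\S_n\\ \exc(\sigma)=j}}F_{n,\Exd(\sigma)},
\]
where $F_{n,S}$ is Gessel's fundamental quasisymmetric function attached to $S\subseteq[n-1]$ and $\Exd(\sigma)$ is an explicitly defined subset of $[n-1]$, namely the descent set of a word naturally built from $\sigma$, chosen so that the stable principal specialization $\ps\colon x_i\mapsto q^{i-1}$ obeys
\[
(q;q)_n\,\ps(Q_{n,j})=\sum_{\substack{\sigma\in\S_n\\ \exc(\sigma)=j}}q^{\maj(\sigma)-\exc(\sigma)}.
\]
It is convenient to refine further by cycle type, writing $Q_{\lambda,j}$ for the same sum over $\sigma$ of cycle type $\lambda$, so that $Q_{n,j}=\sum_{\lambda\vdash n}Q_{\lambda,j}$. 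The first task is to show that each $Q_{n,j}$, and indeed each $Q_{\lambda,j}$, is genuinely a symmetric function and not merely quasisymmetric. I would prove this in the spirit of Gessel and Reutenauer, by re-expressing the defining sum as a sum of monomials over a set of combinatorial objects built from colored cyclic arrangements of the cycles of $\sigma$ (\emph{ornaments}), whose generating function is visibly invariant under permuting the variables; a convenient intermediate device is a set of words called \emph{banners}, and the substance of this step is a statistic-preserving bijection between permutations and banners/ornaments.

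Granting this, set $Q_n(\x,t):=\sum_{j\ge0}Q_{n,j}\,t^j$ and $H(z):=\sum_{n\ge0}h_nz^n$. Then Theorem~\ref{expgenth} reduces to the symmetric-function identity
\[
\sum_{n\ge0}Q_n(\x,t)\,z^n=\frac{(1-t)\,H(z)}{H(zt)-t\,H(z)}.
\]
Indeed, $h_n(1,q,q^2,\dots)=1/(q;q)_n$, so $\ps(H(w))=\sum_{n\ge0}w^n/(q;q)_n$; since $\exp_q(w)=\sum_{n\ge0}w^n(1-q)^n/(q;q)_n=\ps\bigl(H(w(1-q))\bigr)$ and $(q;q)_n=(1-q)^n[n]_q!$, one applies $\ps$ to the displayed identity after the substitution $t\mapsto tq$ and then replaces $z$ by $z(1-q)$. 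The left side becomes $\sum_{n\ge0}A^{\maj,\exc}_n(q,t)\,z^n/[n]_q!$ (the substitution $t\mapsto tq$ exactly compensates for the term $-\exc(\sigma)$ in the exponent $\maj(\sigma)-\exc(\sigma)$ of the specialization lemma), while the right side becomes the right side of \eqref{expgeneq}. So everything comes down to proving the identity for $\sum_n Q_n(\x,t)z^n$.

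To prove that identity I would clear denominators, reducing it to
\[
\bigl(H(zt)-t\,H(z)\bigr)\sum_{n\ge0}Q_n(\x,t)\,z^n=(1-t)\,H(z),
\]
and argue bijectively in the banner model. The permutations contributing to $Q_n(\x,t)$ are encoded as length-$n$ banners over the alphabet of barred and unbarred positive integers subject to a compatibility rule, so that bars count $\exc$, the descent set of the word records $\Exd$, and the generating function of length-$n$ banners, with $z$ marking length and $t$ marking bars, equals $Q_n(\x,t)z^n$. One then decomposes a banner into its maximal ``Lyndon-type'' blocks, computes the generating function of a single block, and checks that the compatibility rule makes an arbitrary banner an essentially free concatenation of blocks; summing the resulting geometric series produces $(1-t)H(z)/(H(zt)-tH(z))$. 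Equivalently, after clearing denominators, one exhibits a sign-reversing involution on pairs consisting of a banner together with an extra complete-homogeneous factor carrying the sign of the $tH(z)$ term, which cancels all contributions except the $n=0$ term $H(z)$ on the right.

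The step I expect to be the main obstacle is this last one. The excedance number is a \emph{global} feature of the one-line notation of $\sigma$, so the whole scheme depends on choosing the encoding and the statistic $\Exd$ so that $\exc$ becomes the purely local bar count while $\Exd$ is still a descent set, and so that the block decomposition is genuinely multiplicative under concatenation; constructing such an encoding with all the properties it must simultaneously have --- the bijection with banners/ornaments, the matching of $\exc$, $\Exd$ and cycle type, and the clean block multiplicativity driving the involution --- is where essentially all the work lies. One could instead seek an algebraic derivation of the identity for $\sum_n Q_n(\x,t)z^n$ via the power-sum expansion of the $Q_{n,j}$, but that route seems to involve comparable bookkeeping.
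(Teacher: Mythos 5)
Your overall framework matches the paper's exactly: define $Q_{n,j}$ (and the cycle-type refinement $Q_{\lambda,j}$) as sums of fundamental quasisymmetric functions indexed by the bicolored statistic $\Exd$, establish symmetry by passing to ornaments (a bicolored Gessel--Reutenauer bijection) and then to banners, reduce Theorem~\ref{expgenth} to the symmetric-function identity $\sum_n Q_n(\x,t)z^n=(1-t)H(z)/(H(zt)-tH(z))$, and recover \eqref{expgeneq} by stable principal specialization with the substitution $t\mapsto tq$, $z\mapsto z(1-q)$. All of that is correct and is how the paper proceeds.

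The gap --- which you yourself flag as the main obstacle --- is the proof of the generating-function identity, and the two mechanisms you sketch are both off. First, ``decompose a banner into its maximal Lyndon-type blocks and sum a geometric series'' does not work as stated: the Lyndon factorization produces a \emph{weakly increasing} sequence of Lyndon words, i.e.\ a multiset of necklaces, not a free concatenation, and the resulting relationship between $Q_{n,j}$ and the $Q_{(i),j}$ is a \emph{plethystic} sum $\sum_n h_n[\sum_{i,j}Q_{(i),j}t^jz^i]$, not a geometric series. Second, the paper does not use a sign-reversing involution at all; after clearing denominators it proves an honest recurrence bijectively. The two ingredients you are missing are precisely what make the paper's argument go. (i) From the ornament description one immediately reads off $Q_{n,j,k}=h_k\,Q_{n-k,j,0}$ (singleton necklaces, i.e.\ fixed points, split off as a free multiset), which reduces the GF identity to its derangement case $\sum_{n,j}Q_{n,j,0}t^jz^n=(1-t)/(H(zt)-tH(z))$, equivalently to the recurrence $Q_{n,j,0}=\sum_{0\le m\le n-2,\ j+m-n<i<j}Q_{m,i,0}\,h_{n-m}$; you skipped this fixed-point isolation, and without it the cleared-denominator identity for $Q_n(\x,t)$ is substantially messier. (ii) The recurrence is proved by a direct bijection $\gamma\colon{\mathfrak B}^0_n\to\biguplus_m{\mathfrak B}^0_m\times\mathfrak M_{n-m}$, where ${\mathfrak B}^0_n$ are banners whose Lyndon type has no singleton parts and $\mathfrak M_{n-m}$ are ``marked sequences.'' This bijection hinges on the D\'esarm\'enien--Wachs \emph{increasing} factorization of a word (which is genuinely different from the Lyndon factorization: a factor has the form $a^ju$ with $u$ over smaller letters, and the leading letters weakly increase) together with a Stembridge-style peeling of a marked sequence off the last factor. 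Iterating $\gamma$ yields the free ``sequence of marked blocks'' picture that does give $H(z)/(1-\sum_{n\ge2}h_n\,t[n-1]_t\,z^n)$ as a geometric series --- but those blocks are marked sequences extracted from the increasing factorization, not Lyndon factors of the banner, and constructing $\gamma$ is where essentially all the remaining work lives.
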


It is well-known that $\exp_q(z)$ is a specialization (called the stable principal specialization) of the symmetric function
$$ H(z)=H(\xx,z):=\sum_{n \geq 0}h_n(\xx)z^n,
$$
where $h_n$ is the complete homogeneous symmetric function of degree n (details are given in Section~\ref{statsec}).
Hence the right
hand side of (\ref{expgeneq}) is the stable principal specialization of the symmetric function
$$\frac{(1-tq)H(z)}{H(ztq)-tqH(z)}.$$

We introduce a family of  quasisymmetric functions $Q_{n,j,k}(\mathbf x)$, called  Eulerian quasisymmetric functions, whose
 generating function $$\sum_{n,j ,k\ge 0} Q_{n,j,k}(\x) t^j r^k z^n$$
specializes to
$$\sum_{n\ge 0} \sum_{\s \in \S_n} q^{\maj(\s) - \exc(\s)} t^{\exc(\s)}r^{\fix(\s)} {z^n \over [n]_q!}, $$
   where $\fix(\sigma) $ is the number of fixed points of  $\sigma$.
The $Q_{n,j,k}$ are defined as sums of fundamental quasisymmetric functions that we associate (in a nonstandard way) with permutations  $\s \in \mathfrak S_n$ satisfying  $\exc(\s) = j$ and $\fix(\s) = k$.   Our central result is the following theorem.

\begin{thm}\label{introsymgenth}
We have
\begin{eqnarray}
\label{introsymgenth1}\sum_{n,j,k \geq 0}Q_{n,j,k}(\xx)t^jr^kz^n&=&\frac{(1-t)H(rz)}{H(zt)-tH(z)}\\ \label{introsymgenth2}&=& \frac{H(rz)}{1-\sum_{n\ge 2}t[n-1]_t h_nz^n}.
\end{eqnarray}
\end{thm}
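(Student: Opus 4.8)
The plan is to prove the first equality in (\ref{introsymgenth1}), after which the second equality, (\ref{introsymgenth2}), is purely formal. Writing $H(zt)-tH(z)=\sum_{n\ge 0}h_nz^n(t^n-t)$, the $n=0$ summand is $1-t$, the $n=1$ summand vanishes, and for $n\ge 2$ one has $t^n-t=-t(1-t)[n-1]_t$; hence $H(zt)-tH(z)=(1-t)\bigl(1-\sum_{n\ge 2}t[n-1]_t h_nz^n\bigr)$, and dividing this into $(1-t)H(rz)$ identifies the two right-hand sides.

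For the first equality, unwind the definition: the left side is $\sum_{n\ge 0}\sum_{\sigma\in\S_n}F_{n,\Exd(\sigma)}(\x)\,t^{\exc(\sigma)}r^{\fix(\sigma)}z^n$, where $\Exd(\sigma)\subseteq[n-1]$ is the subset attached to $\sigma$ in the definition of the Eulerian quasisymmetric functions and $F_{n,\Exd(\sigma)}$ is the associated fundamental quasisymmetric function. Expanding each $F_{n,\Exd(\sigma)}$ into its defining monomials presents this as a generating function over a family of ``colored words'' — words carrying the record of which letters sit at excedance positions of $\sigma$. The plan is to set up a weight-preserving factorization of these colored words into (i) a sorted run recording the $\fix(\sigma)=k$ fixed points, which contributes $h_k$, and (ii) an ordered sequence of \emph{primitive blocks}, a primitive block of size $\ell\ge 2$ carrying an excedance count $i\in\{1,\dots,\ell-1\}$ and having empty $\Exd$-set, hence generating function $F_{\ell,\emptyset}=h_\ell$, so contributing $t^{\,i}h_\ell z^\ell$. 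Given additivity of $\exc$, $\fix$, degree, and (up to a shift) of $\Exd$ across this factorization, summing the resulting geometric series yields
\[
\sum_{n,j,k\ge 0}Q_{n,j,k}(\x)\,t^jr^kz^n=H(rz)\sum_{m\ge 0}\Bigl(\sum_{\ell\ge 2}\sum_{i=1}^{\ell-1}t^{\,i}h_\ell z^\ell\Bigr)^{\!m}=\frac{H(rz)}{1-\sum_{\ell\ge 2}t[\ell-1]_t h_\ell z^\ell},
\]
which is the asserted rational function. This decomposition is the excedance/$\Exd$ counterpart of the Lyndon--necklace decomposition that Gessel and Reutenauer use for the ordinary descent set, with the factor $H(rz)$ playing the role of their length-one necklaces.

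The crux — and where I expect essentially all the difficulty to sit — is the factorization in (ii): that the colored word of an arbitrary $\sigma$ cuts, at canonically determined positions, into pieces each of which is the colored word of a single cyclic unit, with the statistics additive and each piece's $\Exd$-set empty. This is exactly the point at which the ordinary descent set would fail, so $\Exd(\sigma)$ must be engineered to make it work; the real content is therefore (a) pinning down $\Exd$ correctly in the definitions section, (b) proving the cut-point/factorization lemma, and (c) the single-block computation, namely that for each $\ell\ge 2$ and $i\in\{1,\dots,\ell-1\}$ there is exactly one primitive-block shape and its generating function is $h_\ell$. A subsidiary, easier point is the fixed-point reduction that produces the clean $H(rz)$ numerator: one must check that adjoining a fixed point to a colored word multiplies its quasisymmetric function by the appropriate factor built from complete homogeneous symmetric functions rather than by a power sum. (As a byproduct, these steps also force each $Q_{n,j,k}$ to be symmetric, which is not visible term by term from its expansion in fundamentals.)
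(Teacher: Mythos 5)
Your reduction to the rational form is in the right shape, and you have correctly identified that (\ref{introsymgenth2}) follows formally from (\ref{introsymgenth1}) via $H(zt)-tH(z)=(1-t)\bigl(1-\sum_{n\ge 2}t[n-1]_t h_nz^n\bigr)$. The fixed-point reduction producing the $H(rz)$ numerator is also a genuine, separate observation (it is Corollary~\ref{dercor}, $Q_{n,j,k}=h_kQ_{n-k,j,0}$, proved in the paper via the ornament model). However, the crux of your plan---step (ii), the factorization into ``primitive blocks each of which is the colored word of a single cyclic unit,'' with each block contributing $t^i h_\ell z^\ell$---is wrong as stated, and the error is precisely where you flagged the difficulty.

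The blocks in the correct decomposition are \emph{not} cycles of $\sigma$. If they were, each nontrivial cycle of length $\ell$ with $i$ excedances would have to contribute $h_\ell$, forcing $Q_{(\ell),i}=h_\ell$ for all $\ell\ge 2$ and $1\le i\le\ell-1$. This fails: the paper computes $Q_{(6),3}=2h_{(4,2)}-h_{(4,1,1)}+h_{(3,2,1)}+h_{(5,1)}$, which is not $h_6$ and is not even $h$-positive (see (\ref{counterex})). More generally, the generating function $\sum_j Q_{n,j,0}\,t^j$ does not factor over cycle types at all---the number of ``blocks'' is not the number of nonsingleton cycles, and two pairs $(\sigma,s)$ with the same cycle type can decompose into different numbers of blocks. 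What you are calling the ``Lyndon--necklace'' cut is therefore not the cut that makes the geometric series appear.

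The paper's proof needs two substantive intermediate constructions you have elided. First, a \emph{bicolored} Gessel--Reutenauer bijection (Theorem~\ref{ornbij}) takes each pair $(\sigma,s)$ with $s$ an $\Exd(\sigma)$-compatible weakly decreasing sequence to an \emph{ornament}, a multiset of primitive circular words over barred/unbarred integers. This is what makes $\Exd$ tractable and, as a byproduct, proves the $Q_{\lambda,j}$ are symmetric (Theorem~\ref{symquasith}). Second, ornaments are linearized into \emph{banners} via the Lyndon factorization (Theorem~\ref{banprop}). Only then is the recurrence extracted: the decomposition (Theorem~\ref{bij}) peels off a ``marked sequence'' from a banner using the D\'esarm\'enien--Wachs \emph{increasing factorization}, which is a different factorization from the Lyndon one and does not track cycle type. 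It is those marked sequences, not cycles, that are the objects of size $\ell\ge 2$ with a mark in $\{1,\dots,\ell-1\}$ contributing $t^i h_\ell z^\ell$. Without this change of combinatorial model, your step (b) ``cut-point/factorization lemma'' has no correct formulation directly on permutations, and your step (c) ``single-block computation'' is asking for a false identity.
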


The proof of Theorem \ref{introsymgenth} appears in Section
\ref{identsec}.  It depends on combinatorial bijections and
involves nontrivial extension of techniques introduced by  Gessel
and Reutenauer in \cite{gr}, D\'esarm\'enien and Wachs in
\cite{dw} and Stembridge in \cite{stem1}.   Gessel and Reutenauer
construct a bijection from  multisets of  primitive circular words
of fixed content to permutations in order   to enumerate
permutations with a given  descent set and  cycle type. This
bijection, which is related to Stanley's theory of P-partitions
\cite{st},  has also proved useful in papers of D\'esarm\'enien
and Wachs \cite{dw2,dw},  Diaconis, McGrath and Pitman \cite{dmp},
Hanlon \cite{h}, and Wachs \cite{wa3}.   Reiner \cite{re}
introduced a type B analog. Here we introduce a bicolored version
of the Gessel-Reutenauer bijection.

By specializing  Theorem~\ref{introsymgenth} we get the following strengthening of Theorem~\ref{expgenth}.
\begin{cor} \label{expgenthfix}
We have
\begin{equation} \label{expgeneqfix}
\sum_{n \geq 0}A^{\maj,\exc,\fix}_n(q,t,r)\frac{z^n}{[n]_q!}=\frac{(1-tq)\exp_q(rz)}{\exp_q(ztq)-tq\exp_q(z)}
\end{equation}
\end{cor}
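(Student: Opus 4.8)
The plan is to obtain Corollary~\ref{expgenthfix} by applying the stable principal specialization to the identity of Theorem~\ref{introsymgenth} and then performing the substitution $t \mapsto tq$. Write $\ps$ for the stable principal specialization. Two facts established in Section~\ref{statsec} are needed. First, $\ps$ is a ring homomorphism with $\ps(H(z)) = \exp_q(z)$, and hence $\ps(H(cz)) = \exp_q(cz)$ for any scalar $c$. Second, the (nonstandard) rule associating fundamental quasisymmetric functions to permutations in the definition of the $Q_{n,j,k}$ was chosen precisely so that
\begin{equation*}
\ps\Big(\sum_{n,j,k \ge 0} Q_{n,j,k}(\xx)\,t^j r^k z^n\Big) = \sum_{n \ge 0}\sum_{\s \in \S_n} q^{\maj(\s)-\exc(\s)}\,t^{\exc(\s)} r^{\fix(\s)}\,\frac{z^n}{[n]_q!},
\end{equation*}
which is the specialization asserted in the paragraph preceding Theorem~\ref{introsymgenth}.

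I would then apply $\ps$ to both sides of (\ref{introsymgenth1}). The left-hand side becomes the double sum displayed above. For the right-hand side, note that the denominator $H(zt) - tH(z)$ equals $(1-t) + O(z)$, hence is an invertible element of the ring of formal power series in $z$ with (quasi)symmetric function coefficients; since $\ps$ is a ring homomorphism that is continuous in the $z$-adic topology, it commutes with this inversion, so the right-hand side specializes to $\dfrac{(1-t)\exp_q(rz)}{\exp_q(zt) - t\exp_q(z)}$. Equating the two sides yields
\begin{equation*}
\sum_{n \ge 0}\sum_{\s \in \S_n} q^{\maj(\s)-\exc(\s)}\,t^{\exc(\s)} r^{\fix(\s)}\,\frac{z^n}{[n]_q!} = \frac{(1-t)\exp_q(rz)}{\exp_q(zt) - t\exp_q(z)}.
\end{equation*}

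Finally I would substitute $t \mapsto tq$. On the left, $q^{\maj(\s)-\exc(\s)}(tq)^{\exc(\s)} = q^{\maj(\s)} t^{\exc(\s)}$, so the left-hand side becomes $\sum_{n\ge 0} A^{\maj,\exc,\fix}_n(q,t,r)\,z^n/[n]_q!$; the right-hand side becomes the claimed expression $\dfrac{(1-tq)\exp_q(rz)}{\exp_q(ztq) - tq\exp_q(z)}$, which is (\ref{expgeneqfix}). There is no real obstacle here, since Theorem~\ref{introsymgenth} does all the work; the only points needing (routine) care are the continuity of $\ps$ that legitimizes applying it coefficientwise in $z$ to the rational expression, and the bookkeeping of the $t \mapsto tq$ substitution, which converts the $q^{\maj-\exc}t^{\exc}$ weighting produced by the specialization into the $q^{\maj}t^{\exc}$ weighting defining $A^{\maj,\exc,\fix}_n$.
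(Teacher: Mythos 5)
Your overall strategy---apply the stable principal specialization $\Lambda$ to Theorem~\ref{introsymgenth} and then substitute $t \mapsto tq$---is exactly the paper's approach. However, one of the two facts you cite is misstated, and this leaves a real gap in the derivation as written. The paper's equation~(\ref{spech}) says $\Lambda\bigl(H(z(1-q))\bigr) = \exp_q(z)$, not $\Lambda(H(z)) = \exp_q(z)$. From~(\ref{stabh}) one has $\Lambda(h_n) = 1/(q;q)_n$, so $\Lambda(H(z)) = \sum_{n\ge 0} z^n/(q;q)_n$, and since $(q;q)_n = (1-q)^n[n]_q!$ this equals $\exp_q\bigl(z/(1-q)\bigr)$, which differs from $\exp_q(z)$. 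Correspondingly, the direct specialization of the left-hand side of~(\ref{introsymgenth1}) produces denominators $(q;q)_n$, not $[n]_q!$, so your claimed specialization of the generating function for the $Q_{n,j,k}$ is off by the same $(1-q)^n$ factor. The two errors are mutually consistent, which is why the intermediate identity you display happens to be correct, but the argument you give for it does not establish it.

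To repair the proof, insert the substitution $z \mapsto z(1-q)$ between applying $\Lambda$ and substituting $t \mapsto tq$. Concretely: applying $\Lambda$ to~(\ref{introsymgenth1}) and using~(\ref{stablespec1}) yields
\begin{equation*}
\sum_{n\ge 0}\sum_{\s\in\S_n} q^{\maj(\s)-\exc(\s)}t^{\exc(\s)}r^{\fix(\s)}\frac{z^n}{(q;q)_n}
=\frac{(1-t)\,\Lambda(H(rz))}{\Lambda(H(zt))-t\,\Lambda(H(z))};
\end{equation*}
replacing $z$ by $z(1-q)$ converts each $z^n/(q;q)_n$ on the left into $z^n/[n]_q!$ and each $\Lambda(H(cz))$ on the right into $\exp_q(cz)$ by~(\ref{spech}); then the substitution $t\mapsto tq$ finishes as you describe. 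Your remaining observations (that $\Lambda$ is a ring homomorphism commuting with inversion of power series whose constant term is a unit, and the bookkeeping for $t\mapsto tq$) are correct.
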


By setting $t=1$ in (\ref{expgeneqfix}) one  obtains a  formula of Gessel and Reutenauer \cite{gr}.
 By setting $r=0$, we obtain a new result on the $(\maj,\exc)$-enumerator of derangements.

We will show  that a different specialization (the nonstable principal specialization) of Theorem~ \ref{introsymgenth} readily yields a further extension of Theorem~\ref{expgenth} that Foata and Han obtained after seeing a preprint containing our work \cite{sw}.

  \begin{cor}[Foata and Han \cite{FH}] \label{foha}
We have
\[
\sum_{n \geq
0}A_n^{\maj,\des,\exc,\fix}(q,p,t,r)\frac{z^n}{(p;q)_{n+1}}=\sum_{m
\geq
0}p^m\frac{(1-qt)(z;q)_m(ztq;q)_m}{((z;q)_m-tq(ztq;q)_m)(zr;q)_{m+1}},
\]
where\[
(a;q)_n:= \left\{\begin{array}{ll} 1 & \mbox{if } n=0 \\
(1-a)(1-aq)\ldots(1-aq^{n-1}) & \mbox{if } n \geq 1.
\end{array}
\right.
\]
\end{cor}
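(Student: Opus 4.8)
The plan is to deduce Corollary~\ref{foha} from Theorem~\ref{introsymgenth} by applying the nonstable principal specialization to \eqref{introsymgenth1}, just as Corollary~\ref{expgenthfix} comes from applying the stable one. For each $m$, the nonstable principal specialization $\ps_m$ is the ring homomorphism that evaluates a quasisymmetric function at $x_1=1,x_2=q,\dots,x_m=q^{m-1}$ and $x_i=0$ for $i>m$; it carries $h_n$ to a Gaussian binomial coefficient, so $H(w)=\sum_{n\ge 0}h_n w^n$ goes to $\prod_{i=0}^{m-1}(1-wq^i)^{-1}=1/(w;q)_m$. I would apply $\ps_m$ to both sides of \eqref{introsymgenth1}, make the substitution $t\mapsto tq$ (the same one that turns Theorem~\ref{introsymgenth} into Corollary~\ref{expgenthfix}, reflecting the fact that the $Q_{n,j,k}$ record $q^{\maj-\exc}$ in place of $q^{\maj}$), multiply through by $p^m$, and sum over $m\ge 0$. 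Because $\ps_m$ is multiplicative, the right-hand side of \eqref{introsymgenth1} becomes, for fixed $m$, the rational function got by replacing $H(rz),H(zt),H(z)$ by $1/(rz;q)_m,1/(ztq;q)_m,1/(z;q)_m$ respectively; summing this against $p^m$ produces a series of exactly the shape displayed on the right of Corollary~\ref{foha}.

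For the left-hand side I would invoke the nonstable-principal-specialization counterpart of the stable identity recorded just after Theorem~\ref{introsymgenth}, namely that $\sum_{m\ge 0}p^m\,\ps_m\bigl(\sum_{j,k\ge 0}Q_{n,j,k}t^jr^k\bigr)=\frac{1}{(p;q)_{n+1}}\sum_{\sigma\in\S_n}p^{\des(\sigma)}q^{\maj(\sigma)-\exc(\sigma)}t^{\exc(\sigma)}r^{\fix(\sigma)}$. This rests on the same structural description of the Eulerian quasisymmetric functions that yields the stable statement: each $Q_{n,j,k}$ is a sum of fundamental quasisymmetric functions $F_{n,S}$, one attached to each $\sigma\in\S_n$ with $\exc(\sigma)=j$ and $\fix(\sigma)=k$, with the subset $S(\sigma)\subseteq\{1,\dots,n-1\}$ satisfying $\maj(S(\sigma))=\maj(\sigma)-\exc(\sigma)$, together with the additional fact that $|S(\sigma)|=\des(\sigma)$; one then applies the standard $P$-partition evaluation $\sum_{m\ge 0}p^m\,\ps_m(F_{n,S})=p^{|S|}q^{\maj(S)}/(p;q)_{n+1}$ in place of its stable counterpart. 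Multiplying by $z^n$, summing over $n,j,k$, and substituting $t\mapsto tq$ then turns the left-hand side of \eqref{introsymgenth1} into $\sum_{n\ge 0}A_n^{\maj,\des,\exc,\fix}(q,p,t,r)\,z^n/(p;q)_{n+1}$, and equating the two specialized sides gives the corollary.

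The step I expect to demand real care is the $q$-series bookkeeping that brings the specialized right-hand side into precisely the stated form — in particular producing the factor $(zr;q)_{m+1}$, with its shifted index, rather than $(zr;q)_m$, and confirming that $\des$ is recorded with the right power of $p$. This hinges on fixing the exact conventions for $\ps_m$ (the number of variables, the indexing of the sum over $m$) and, more substantively, on the way fixed points are built into the definition of $Q_{n,j,k}$: the asymmetry in the final formula between the fixed-point factor $(zr;q)_{m+1}$ and the excedance factors $(z;q)_m$ and $(ztq;q)_m$ is a direct reflection of that construction. No new ideas beyond Theorem~\ref{introsymgenth} and the dictionary between the Eulerian quasisymmetric functions and permutation statistics are needed — which is why the formula follows ``readily'' once those are in hand — but this is the computation that must be executed carefully.
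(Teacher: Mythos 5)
There is a genuine gap, and it lies exactly where you flag the computation as needing care: you assert that the subset $S(\sigma)=\Exd(\sigma)$ satisfies $|S(\sigma)|=\des(\sigma)$, but this is false. By Lemma~\ref{exdlem} (equation~\eqref{exdsize}) one has $|\Exd(\sigma)|=\des(\sigma)$ only when $\sigma(1)=1$; when $\sigma(1)\neq 1$ one has $|\Exd(\sigma)|=\des(\sigma)-1$. (You also dropped a $p$ from \eqref{spec2}: the specialization of $F_{S,n}$ gives $p^{|S|+1}$, not $p^{|S|}$, but that is minor.) Consequently the intermediate identity you invoke, $\sum_{m\ge 0}p^m\Lambda_m\bigl(\sum_{j,k}Q_{n,j,k}t^jr^k\bigr) = (p;q)_{n+1}^{-1}\sum_{\sigma}p^{\des(\sigma)}q^{\maj(\sigma)-\exc(\sigma)}t^{\exc(\sigma)}r^{\fix(\sigma)}$, is wrong: the correct right-hand side records $p^{|\Exd(\sigma)|+1}$, which equals $p^{\des(\sigma)+1}$ or $p^{\des(\sigma)}$ depending on whether $1$ is a fixed point of $\sigma$. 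Because of that, directly applying $\Lambda_m$ to \eqref{introsymgenth1} (with $t\mapsto tq$) and summing against $p^m$ produces $\sum_m p^m\frac{(1-tq)(z;q)_m(ztq;q)_m}{((z;q)_m-tq(ztq;q)_m)(zr;q)_m}$, with denominator $(zr;q)_m$ rather than the $(zr;q)_{m+1}$ in the statement, which is the wrong answer.

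The missing ingredient is Lemma~\ref{nonstable}, which is the real content of the paper's proof. It corrects for the $\Exd$--versus--$\des$ discrepancy by a recursion that successively strips fixed points $\sigma(1)=1$ (the bijection $\varphi$ in its proof), yielding
$$a_{n,j,k}(q,p)=(p;q)_{n+1}\sum_{m\ge 0}p^m\sum_{i=0}^{k}q^{im+j}\Lambda_m\bigl(Q_{n-i,j,k-i}\bigr),$$
with a nontrivial sum over $i$. In the generating-function computation that extra sum produces exactly the factor $\sum_{i\ge 0}(zrq^m)^i=(1-zrq^m)^{-1}$, which turns the naive $(zr;q)_m$ into the correct $(zr;q)_{m+1}$. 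So the asymmetry you notice at the end is not just bookkeeping to be executed carefully: it signals that a genuinely new lemma must be proved before the specialization yields the left-hand side in the form $A_n^{\maj,\des,\exc,\fix}$. Your outline for the right-hand side (multiplicativity of $\Lambda_m$ and $\Lambda_m(H(z))=1/(z;q)_m$) is fine; but the left-hand side requires Lemma~\ref{nonstable}, and the statement that ``no new ideas beyond Theorem~\ref{introsymgenth} and the dictionary are needed'' should be revised accordingly.
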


Essential to our proof of Theorem~\ref{introsymgenth} is a refinement of   $Q_{n,j,k}$ which is  quite interesting in its own right.  Given a partition $\lambda $ of $n$, we define  $Q_{\lambda,j}$ to be a sum of fundamental quasisymmetric functions that we associate with permutations of cycle type $\lambda$ that have $j$ excedances.  We prove
that  $Q_{\lambda,j}$ is, in fact, a symmetric function.
It is well-known that the Eulerian polynomials are symmetric and unimodal.  We prove
$q$- and $(q,p)$-analogs of these results, as well as cycle type and  quasisymmetric function analogs.  These results appear in Section~\ref{secQ}.

 It follows from Theorem~\ref{introsymgenth} that the Eulerian quasisymmetric functions $Q_{n,j,k}$ are h-positive. In Section~\ref{secQ} we show that this does not hold for the more refined $Q_{\lambda,j}$, but  conjecture that they are Schur positive.   Of particular interest are the
 $Q_{\lambda,j}$ when the partition $\lambda$ consists of a single part.  We observe in Section~\ref{repthsec} that the $Q_{\lambda,j}$ for general $\lambda$ can be expressed via plethysm in terms of  these.    We also present results and conjectures on the virtual representation of the symmetric group whose Frobenius characteristic is $Q_{\lambda,j}$ when $\lambda$ consists of a single part.
 
 The  symmetric functions $Q_{\lambda,j}$  resemble  the symmetric functions $L_\lambda$ studied by
Gessel and Reutenauer in \cite{gr} in their work on quasisymmetric
functions and permutation statistics.  However our $Q_{\lambda,j}$ are not
refinements of the  $L_\lambda$. Indeed, $L_\lambda$ is the
Frobenius characteristic of a representation induced from a linear
character of the centralizer of a permutation of cycle type
$\lambda$.  On the other hand, we show that if $\lambda= (n)$, where $n \ge 3$,  then $\sum_jQ_{\lambda,j}$ is
the Frobenius characteristic of a  virtual representation (conjecturally, an actual representation) whose character
takes nonzero values on elements that do not commute with any
element of cycle type $\lambda$ (see Corollary~\ref{cvalcor}).

The symmetric function on the right hand side of (\ref{introsymgenth1})  and  (\ref{introsymgenth2}) refines  symmetric functions that have been studied earlier in the literature.  These symmetric functions include  enumerators for multiset derangements studied by MacMahon \cite[Sec. III, Ch. III]{mac1} and Askey and Ismail \cite{ai};  enumerators  for words with no adjacent repeats studied by Carlitz, Scoville and  Vaughan \cite{csv},
 Dollhopf, Goulden and Greene \cite{dgg} and Stanley \cite{st2};  chromatic symmetric functions  of Stanley \cite{st2}; and the  Frobenius characteristic of the representation of the symmetric group on the degree $2j$ cohomology of the toric variety $X_n$ associated to the Coxeter complex of the symmetric group $\mathfrak S_n$ studied by Processi \cite{pr}, Stanley \cite{st2}, Stembridge \cite{stem1,stem2}, and Dolgachev and Lunts \cite{dl}.   It is a consequence of our work that these symmetric functions   have nice interpretations as sums of fundamental quasisymmetric functions.  These connections and others are discussed in Section~\ref{repthsec}.

\subsection{Poset topology} Representations with Frobenius characteristic $Q_{n,j}$ also occur in poset topology.
In fact, it was our study of the homology of a certain poset introduced by Bj\"orner and Welker \cite{bw} that led us to conjecture Theorems~\ref{expgenth} and~\ref{introsymgenth} in the first place. The poset we consider is the Rees product $(B_n\setminus\{\emptyset\}) * C_n$, where $ B_n$ is the Boolean algebra on $\{1,2,\dots,n\}$ and $C_n$ is an $n$-element chain.  Rees products of posets were introduced by
Bj\"orner and Welker in \cite{bw}, where they study connections between poset topology and commutative algebra.  (Rees products of affine semigroup posets arise from the ring-theoretic Rees construction.)
 The Rees product of  two ranked posets  is a subposet of the usual product poset (the precise definition  is given in Section~\ref{rep}).

 Bj\"orner and Welker \cite{bw} conjectured and Jonsson \cite{jo} proved  that the dimension of the top homology of $(B_n\setminus\{\emptyset\}) * C_n$ is equal to the number of derangements in $\mathfrak S_n$.  Here we prove a refinement of this result; namely that the dimension of the top homology of any open principal  lower order ideal of $(B_n\setminus\{\emptyset\}) * C_n$ is an Eulerian number.  Moreover, we use Theorems~\ref{expgenth} and \ref{introsymgenth} to obtain a $q$-analog and an equivariant version of our refinement and of the Bj\"orner-Welker-Jonsson result.

   The poset $(B_n\setminus\{\emptyset\}) * C_n$ has $n$ maximal elements  all of rank $n$.  The usual action of $\mathfrak S_n$ on $B_n$ induces an action of $\mathfrak S_n$ on each lower order ideal generated by a maximal element, which in turn induces a representation of $\mathfrak S_n$ on the homology of the open lower order ideal.  In Section~\ref{treesec} we  prove the following equivariant version of our refinement.
 \begin{thm} \label{introhomol} Let $x_1,\dots, x_n$ be the maximal elements of $(B_n\setminus\{\emptyset\}) * C_n$.  For each $j=1,\dots, n$, let $I_{j}(B_n)$ be the open lower order ideal generated by $x_j$.  Then
 \begin{equation*}
{\rm {ch}}(\rh_{n-2}(I_{j}(B_n)) )=\omega \sum_{k=0}^n Q_{n,j-1,k},
\end{equation*}
where $\ch$ denotes the Frobenius characteristic and $\omega$ is the standard involution on the ring of symmetric functions\end{thm}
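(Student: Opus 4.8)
The plan is to recognize $I_j(B_n)$ as an open interval in a bounded poset, show that this interval is Cohen--Macaulay so that only its top reduced homology is nonzero, and then extract the $\S_n$-character of that homology from the Hopf trace formula; this reduces the problem to computing M\"obius numbers of fixed-point subposets, which are then matched --- cycle type by cycle type --- against the power-sum expansion of $\omega\sum_k Q_{n,j-1,k}$ supplied by Theorem~\ref{introsymgenth}. \emph{Step 1 (structure of the interval).} Adjoin a bottom element $\hz$ to $P:=(B_n\setminus\{\emptyset\})*C_n$, and write $x_j=(\{1,\dots,n\},j-1)$ for the $j$-th maximal element; then $I_j(B_n)$ is exactly the open interval $(\hz,x_j)$. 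A maximal chain of the closed interval $[\hz,x_j]$ is encoded by a pair $(w,S)$, where $w\in\S_n$ records the order $w(1),w(2),\dots$ in which the points of $[n]$ enter the flag of subsets and $S\in\binom{[n-1]}{j-1}$ records at which of the $n-1$ covering steps the $C_n$-coordinate advances; in particular $\Delta(I_j(B_n))$ is pure of dimension $n-2$, consistent with the statement.

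\emph{Step 2 (Cohen--Macaulayness).} I would equip $[\hz,x_j]$ with an EL-labeling --- the label of a covering step being the inserted point $w(k)$ decorated by the bit recording whether the $C_n$-coordinate advanced, ordered lexicographically --- and verify the EL-axioms; this makes $\Delta(I_j(B_n))$ shellable, so $\rh_i(I_j(B_n))=0$ for $i\neq n-2$. (Should no ready-made shellability statement for these Rees products be quotable, this verification is routine but does carry some bookkeeping.) A byproduct of the labeling is that the descending maximal chains number $a_{n,j-1}$, the Eulerian number, which establishes the non-equivariant refinement of the Bj\"orner--Welker--Jonsson theorem announced in the introduction.

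\emph{Step 3 (equivariant reduction and fixed-point M\"obius numbers).} The action of $\S_n$ on $B_n$ acts simplicially on $\Delta(I_j(B_n))$, and since a permutation stabilizing a chain setwise fixes it pointwise, the action is admissible; the Hopf trace formula together with Step 2 then gives, for every $g\in\S_n$,
\[
\mathrm{trace}\bigl(g\mid\rh_{n-2}(I_j(B_n))\bigr)=(-1)^{n-2}\,\tilde\chi\bigl(\Delta((I_j(B_n))^g)\bigr)=(-1)^{n-2}\,\mu(\hz,x_j),
\]
the last M\"obius number being computed in the subposet $P^g$ of $g$-fixed elements with a bottom element adjoined. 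As $(A,i)$ is $g$-fixed precisely when $A$ is a union of cycles of $g$, this depends only on the cycle type $\lambda$ of $g$: $P^g$ is the Rees product of a cycle-weighted Boolean lattice (the rank of a union of cycles being the total number of points it contains) with $C_n$. I would evaluate $m_\lambda(j):=\mu(\hz,x_j)$ in closed form by the Rees-product analogue of the usual M\"obius computation in a Boolean lattice, with the cycle lengths $\lambda_1,\dots,\lambda_{\ell(\lambda)}$ playing the role of block sizes --- the natural device being a generating function that convolves the weighted rank-selected chain counts of the cycle-weighted Boolean lattice with the chain of $\{0,1\}$-increments. This yields $\ch(\rh_{n-2}(I_j(B_n)))=\sum_{\lambda\vdash n}z_\lambda^{-1}(-1)^{n-2}m_\lambda(j)\,p_\lambda$, and it remains to identify the right side with $\omega\sum_k Q_{n,j-1,k}$: set $r=1$ in Theorem~\ref{introsymgenth} and apply $\omega$, so $H(z)\mapsto E(z)=\exp\sum_{i\geq1}(-1)^{i-1}p_iz^i/i$, and check that the $p_\lambda$-coefficient of the coefficient of $t^{j-1}z^n$ in $\dfrac{(1-t)E(z)}{E(zt)-tE(z)}$ equals $z_\lambda^{-1}(-1)^{n-2}m_\lambda(j)$ for all $\lambda$.

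\emph{Main obstacle.} The crux is this last step: producing the closed form for the fixed-point M\"obius numbers $m_\lambda(j)$ and recognizing that, once assembled into $\sum_\lambda z_\lambda^{-1}(-1)^{n-2}m_\lambda(j)p_\lambda$, they reproduce exactly the $\omega$-image of the Eulerian quasisymmetric generating function of Theorem~\ref{introsymgenth}; I expect this to be the one genuinely computational ingredient, the rest being structural. A secondary, bookkeeping-heavy point is the EL-shelling of Step 2 if it must be built rather than cited. An alternative route avoiding cycle types: the link of a minimal vertex $(\{a\},0)$ in $\Delta(I_j(B_n))$ is again the order complex of a Rees-product interval built on $[n]\setminus\{a\}$, which gives an $\S_{n-1}$-equivariant recursion for $\ch(\rh_{n-2}(I_j(B_n)))$ in $n$; one then verifies that $\omega\sum_k Q_{n,j-1,k}$ solves the same recursion using the functional equation for $H(z)$ implicit in~(\ref{introsymgenth2}).
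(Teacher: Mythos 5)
Your plan is a genuinely different route from the paper's, and it is conceptually sound through Step~2, but the argument is not actually completed: the one step that carries all the content --- computing the fixed-point M\"obius numbers $m_\lambda(j)$ and showing they match the $p_\lambda$-coefficients of $\omega\sum_k Q_{n,j-1,k}$ --- is left as a declared ``main obstacle'' rather than carried out. That is not a detail; it is essentially the whole theorem, and until it is done you have only reduced the problem to a different unsolved computation.

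To compare: the paper does not go through the Lefschetz/Hopf-trace reduction at all. Instead it proves an \emph{equivariant Tree Lemma} (Theorem~\ref{gtreelem}), which identifies $\bigoplus_j t^j\tilde H_{n-2}(I_j(P))$ with $\tilde H_{n-1}((P^**T_{t,n})^-)$ for a bounded ranked $P$; then it applies the equivariant uniform-poset recursion of Proposition~\ref{gpropuniform} (an avatar of Sundaram's formula~(\ref{sund})) to the uniform sequence $(B_0,\dots,B_n)$ to obtain the convolution identity $\sum_{k}[k+1]_t\,h_k\,L_{n-k}(t)=-h_n$, and solves this with generating functions against the closed form of Theorem~\ref{introsymgenth}. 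This bypasses any cycle-type-by-cycle-type computation: the whole symmetric function is identified at once. Your alternative route via links of minimal vertices is in fact closer in spirit to the paper's recursion, but the paper's recursion is in the parameter $n$ through the uniform structure of $B_n$, not through vertex links, and crucially it is packaged so that the answer drops out of the generating function~(\ref{introsymgenth1}) directly.

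Two secondary remarks. First, Step~2 is unnecessary bookkeeping: Cohen--Macaulayness of $I_j(B_n)$ follows immediately from Theorem~\ref{bwrees} (Bj\"orner--Welker) applied to $B_n^-*C_n$, with no new shelling needed; the EL-labeling does appear in the paper (Section~\ref{elsec}), but in service of the dimension count and the $\aid$ statistic, not of the equivariant result. Second, if you did want to pursue the Lefschetz route, you would want to compare against the power-sum expansion of $Q_{n,j}$ in Proposition~\ref{stem} (Stembridge's formula), which gives $\sum_j Q_{n,j}t^j=\frac1{n!}\sum_\lambda z_\lambda A_{\ell(\lambda)}(t)\prod_i[\lambda_i]_t\,p_\lambda$; so the target identity becomes $(-1)^{n-2}m_\lambda(j)=(-1)^{n-\ell(\lambda)}\bigl[\text{coef.\ of }t^{j-1}\text{ in }A_{\ell(\lambda)}(t)\prod_i[\lambda_i]_t\bigr]$. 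Establishing this for all $\lambda$ by a direct M\"obius computation in the Rees product of the ``cycle-weighted'' poset of $g$-fixed subsets with $C_n$ is plausible but nontrivial (note that this poset is not a Rees product of $B_{\ell(\lambda)}$ with $C_n$ under the usual rank: the atoms sit at ranks $\lambda_1,\dots,\lambda_{\ell(\lambda)}$), and would amount to an independent proof of Procesi--Stanley--Stembridge. You should either carry out that computation or switch to a recursion of the kind the paper uses.
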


As a consequence we have the following equivariant version of the Bj\"orner-Welker-Jonsson result:
$$\ch(\tilde H_{n-1}((B_n\setminus \{\emptyset\}) \ast C_n)) =\omega \sum_{j= 0}^{n-1}  Q_{n,j,0}.$$

We also prove a $q$-analog of our refinement by considering the Rees product   $(B_n(q)\setminus\{0\}) * C_n$, where $B_n(q)$ is the lattice of subspaces of the $n$-dimensional vector space $\F_q^n$ over the $q$ element field $\F_q$.  Like $(B_n\setminus\{\emptyset\}) * C_n$, the q-analog $(B_n(q)\setminus\{0\}) * C_n$ has $n$ maximal elements all of rank $n$.

\begin{thm} \label{intohomolq} Let $x_1,\dots, x_n$ be the maximal elements of $(B_n(q)\setminus\{0\}) * C_n$.  For each $j=1,\dots, n$, let $I_{n,j}(q)$ be the lower order ideal generated by $x_j$.
  Then  \begin{equation*} \label{intoq}
  \dim \rh_{n-2}(I_{n,j}(q))=\sum_{\scriptsize\begin{array}{c} \s \in\sg_n \\ \exc(\s) = j-1 \end{array}} q^{\comaj(\s)+j-1},\end{equation*}
  where $\comaj(\s) = {n \choose 2}- \maj(\s)$.
\end{thm}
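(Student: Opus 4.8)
The plan is to reduce the left‑hand side to a M\"obius function via Cohen--Macaulayness, to evaluate that M\"obius function (as a polynomial in $q$) through the recursive structure of the Rees product, and finally to recognize the resulting generating function as a substitution instance of the one in Theorem~\ref{expgenth}.

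First I would establish that $P:=(B_n(q)\setminus\{0\})\ast C_n$ is Cohen--Macaulay. Since $B_n(q)$ is a geometric lattice it is Cohen--Macaulay, the chain $C_n$ is trivially Cohen--Macaulay, and by the Bj\"orner--Welker theorem \cite{bw} the Rees product of Cohen--Macaulay posets is Cohen--Macaulay; consequently so is every interval of $\hat P$, the poset obtained from $P$ by adjoining a minimum $\hat 0$ (and, if convenient, a maximum). Writing $x_j=(\F_q^n,j)$, the order complex whose homology we must compute is that of the open interval $(\hat 0,x_j)$ of $\hat P$, which is pure of dimension $n-2$. Hence reduced homology is concentrated in degree $n-2$ and
\[
\dim\tilde H_{n-2}(I_{n,j}(q))=(-1)^n\,\mu_{\hat P}(\hat 0,x_j)=:f_{n,j}(q).
\]

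Next I would compute $f_{n,j}(q)$. The crucial structural fact is that if $V\le\F_q^n$ has dimension $d<n$ and $c\le d$, then the interval $[\hat 0,(V,c)]$ of $\hat P$ is isomorphic to the corresponding interval in the $d$-dimensional Rees product $(B_d(q)\setminus\{0\})\ast C_d$, so that $\mu_{\hat P}(\hat 0,(V,c))=(-1)^{d}f_{d,c}(q)$ --- this is where the defining property of the Rees product enters, namely that along a cover the chain coordinate rises by at most one. Grouping the defining recursion $\mu(\hat 0,x_j)=-\sum_{y<x_j}\mu(\hat 0,y)$ according to the dimension $d$ of the subspace part of $y=(V,c)$, and using that $\F_q^n$ has $\binom{n}{d}_q$ subspaces of dimension $d$ (a Gaussian binomial coefficient), one obtains a recursion for $f_{n,j}(q)$ whose only inputs are Gaussian binomials. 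Repackaging this recursion with the weight $z^n/[n]_q!$ on $n$ and $t^{\,j-1}$ on $j$, and invoking the $q$-binomial theorem, I expect it to collapse to the closed form
\[
\sum_{n\ge 0}\Bigl(\sum_{j=1}^{n}f_{n,j}(q)\,t^{\,j-1}\Bigr)\frac{z^n}{[n]_q!}
=\frac{(1-t)\,\Exp_q(z)}{\Exp_q(zt)-t\,\Exp_q(z)}.
\]
(Alternatively one could build an EL-labeling of $\hat P$ from the standard EL-labeling of $B_n(q)$ together with a labeling of the chain-coordinate steps, and identify $f_{n,j}(q)$ with a $q$-weighted count of falling maximal chains of $[\hat 0,x_j]$; the generating-function route seems shorter.)

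Finally I would match this against Theorem~\ref{expgenth}. Replacing $q$ by $q^{-1}$ and $t$ by $tq$ in \eqref{expgeneq}, and using $\exp_{q^{-1}}(z)=\Exp_q(z)$ together with $[n]_{q^{-1}}!=q^{-\binom n2}[n]_q!$, the right-hand side of \eqref{expgeneq} turns into exactly $\dfrac{(1-t)\Exp_q(z)}{\Exp_q(zt)-t\Exp_q(z)}$, while its left-hand side turns into $\sum_{n\ge 0}\bigl(\sum_{\sigma\in\S_n}q^{\comaj(\sigma)+\exc(\sigma)}t^{\exc(\sigma)}\bigr)z^n/[n]_q!$. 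Comparing the coefficient of $t^{\,j-1}z^n/[n]_q!$ in this with the displayed generating function for $f_{n,j}(q)$ gives $f_{n,j}(q)=\sum_{\sigma\in\S_n,\ \exc(\sigma)=j-1}q^{\comaj(\sigma)+j-1}$, which is the assertion. The main obstacle is the middle step: the Rees product is not a direct product, so M\"obius multiplicativity does not apply, and one must exploit carefully that principal intervals of $\hat P$ are themselves small Rees products; getting the bookkeeping to produce precisely the ratio $\frac{(1-t)\Exp_q(z)}{\Exp_q(zt)-t\Exp_q(z)}$ --- hence the exact shift $+\,j-1$ and the passage from $\maj$ to $\comaj$ --- is the delicate point.
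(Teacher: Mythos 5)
Your plan is essentially correct but follows a genuinely different route from the paper's. The paper proves Theorem~\ref{bncnq} (= Theorem~\ref{intohomolq}) indirectly: it first establishes equation~(\ref{treeeq2}) for Rees products with complete $t$-ary trees, by applying Proposition~\ref{propuniform} (a M\"obius recursion via uniformity of \emph{upper} intervals in $(B_n(q)\ast T_{t,n})^+$, where the tree contributes $[k+1]_t$ through its rank sizes) and then manipulating $q$-exponential generating functions; the passage from this tree result to the individual ideals $I_j$ is handled by the Tree Lemma (Theorem~\ref{tree}) together with self-duality of $B_n(q)$. You instead compute $\mu_{\wh P}(\hat 0, x_j)$ for $P=B_n(q)^-\ast C_n$ directly by recursion on \emph{lower} intervals, using that $[\hat 0,(V,c)]$ depends, up to isomorphism, only on $\dim V$ and $c$; this works because the lower interval below any $V$ in $B_n(q)$ is again a subspace lattice. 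After weighting by $t^{j-1}$ and summing over $j$, the constrained range $\max(1,j-(n-d))\le c\le\min(d,j)$ produces exactly $t^{c-1}[n-d+1]_t$, and the generating function does collapse as you claim (though the relevant identity is the geometric $[m]_t=(1-t^m)/(1-t)$ inside $\exp_q$, rather than a $q$-binomial theorem). One slip: your left-hand side vanishes at $n=0$ while your closed form equals $1$ at $z=0$; the correct identity is $\sum_{n\ge 1}\bigl(\sum_j f_{n,j}(q)t^{j-1}\bigr)z^n/[n]_q!=\frac{\Exp_q(z)-\Exp_q(zt)}{\Exp_q(zt)-t\Exp_q(z)}$, which differs from your display by $1$ and agrees with $\sum_{n\ge 1}A_n^{\comaj,\exc}(q,qt)\,z^n/[n]_q!$ once the $n=0$ term $A_0=1$ is accounted for, after which coefficient comparison yields the theorem. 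Your direct route is more economical if one wants only this statement; the paper's tree machinery is designed so the same computation also delivers the equivariant Theorem~\ref{bncnsn} and the full Theorem~\ref{introtreeth}.
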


As a consequence, we have the following $q$-analog of the
Bj\"orner-Welker-Jonsson result:\begin{equation*} \dim \tilde
H_{n-1}((B_n(q)\setminus\{0\}) \ast C_n)= \sum_{\sigma \in \mathcal D_n} q^{\comaj(\s) + \exc(\s)},\end{equation*} where $\mathcal
D_n$ is the set of derangements in $\S_n$.

It is also interesting to consider the Rees product of $B_n$ (or $B_n(q)$) with a tree.
We prove the following result in Section~\ref{treesec}. For any poset $P$ with a minimum element $\hat 0$, let $P^-:=P\setminus \{\hat 0\}$.

\begin{thm} \label{introtreeth} For all $n,t\ge 1$, let $T_{t,n}$ be a poset whose Hasse diagram is a complete $t$-ary tree of height $n$ with the root at the bottom.  Then
\begin{eqnarray*}\dim \tilde H_{n-2} ((B_n * \ttn)^-) &=& tA_n(t)\\
\dim \tilde H_{n-2} ((B_n(q) * \ttn)^-) &=&    t q^{n\choose 2}A^{\maj,\exc}_n(q^{-1},qt) \\
\ch \tilde H_{n-2} ((B_n * \ttn)^-) &=&  \sum_{\scriptsize \begin{array}{c}0\le k\le n\\0 \le j\le n-1\end{array}}  \omega Q_{n,j,k}t^{j+1}.\end{eqnarray*}
\end{thm}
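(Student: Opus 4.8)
The plan is to reduce all three statements to a single topological computation for the Rees product $B_n * T_{t,n}$ (and its $q$-analog), and then feed the combinatorics of $Q_{n,j,k}$ into that computation. First I would identify the order complex of $(B_n * T_{t,n})^-$ up to homotopy type. A maximal chain in $B_n * T_{t,n}$ projects, in the $B_n$-coordinate, to a maximal chain $\emptyset \lessdot S_1 \lessdot \cdots \lessdot S_n = [n]$ in $B_n$, i.e.\ to a permutation $\sigma \in \S_n$ (reading off the order in which elements are adjoined), and in the $T_{t,n}$-coordinate to a root-to-leaf path in the tree; the Rees condition couples the two so that a rank increase in $B_n$ must be matched by a step up the tree. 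I expect the standard technique here — an EL- or CL-shelling of the Rees product, or Bj\"orner--Welker's own machinery together with the discrete Morse / poset fibration arguments used by Jonsson — to show that $(B_n * T_{t,n})^-$ is homotopy equivalent to a wedge of $(n-2)$-spheres, so that only the top reduced homology $\tilde H_{n-2}$ is nonzero and its dimension equals the number of "decreasing-free" maximal chains. Counting these by the position of the descents of $\sigma$ (each descent of $\sigma$ forces a choice among the $t$ children at the corresponding tree vertex, and the leaf gives one more factor of $t$) yields $\sum_{\sigma\in\S_n} t^{\des(\sigma)+1} = tA_n(t)$, which is the first formula.

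For the $q$-analog I would run the same argument with $B_n(q)$ in place of $B_n$. Now a maximal chain of $B_n(q)$ corresponds to a complete flag in $\F_q^n$, and such flags are counted with the familiar $q$-multinomial weights; the Rees coupling with $T_{t,n}$ again isolates top homology, and the weighted count of surviving chains organizes itself according to a Mahonian statistic on the associated permutation. The bookkeeping should produce $\sum_{\sigma\in\S_n} q^{\maj(\sigma)} t^{\des(\sigma)+1}$ up to the standard $\maj \leftrightarrow \comaj$, $q \leftrightarrow q^{-1}$ reindexing; invoking the equidistribution of $(\maj,\des)$ with (a twist of) $(\maj,\exc)$ — equivalently, specializing Theorem~\ref{introsymgenth} or Theorem~\ref{expgenth} — rewrites this as $t\,q^{\binom n2} A^{\maj,\exc}_n(q^{-1},qt)$, giving the second formula.

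The third, equivariant formula is where the Eulerian quasisymmetric functions really enter. The $\S_n$-action on $B_n$ induces an action on $(B_n * T_{t,n})^-$ and hence on $\tilde H_{n-2}$. I would compute the character by the Hopf-trace / Lefschetz formula: since only $\tilde H_{n-2}$ survives, $\ch \tilde H_{n-2}((B_n * T_{t,n})^-)$ equals $(-1)^{n-2}$ times the equivariant Euler characteristic of the order complex, which in turn is a signed sum of permutation modules on the faces (chains). Grouping chains by their $B_n$-projection $\sigma$ and tree-path data, and summing the tree contributions into a factor $t^{\des(\sigma)+1}$ (or rather, after the same statistic translation, $t^{\,j+1}$ with $j=\exc$), the face sum collapses — by exactly the fundamental-quasisymmetric-function bookkeeping used to define $Q_{n,j,k}$ via descent sets — into $\sum_{0\le k\le n,\ 0\le j\le n-1} \omega Q_{n,j,k}\, t^{j+1}$. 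The cleanest route is probably to deduce this directly from Theorem~\ref{introhomol}: the ideal $I_j(B_n)$ is essentially $(B_n * C_n)$ truncated, and replacing the chain $C_n$ by the tree $T_{t,n}$ multiplies the homology of the $j$-th piece by the $t$-weighted number of tree extensions, namely $t^{j+1}$, then sums over $j$; so the third formula follows from Theorem~\ref{introhomol} by summing $\sum_{j=0}^{n-1} t^{j+1}\,\ch \tilde H_{n-2}(I_{j+1}(B_n))$ over the $n$ maximal elements of the tree-Rees product. The main obstacle I anticipate is the first step — rigorously establishing that $(B_n * T_{t,n})^-$ is Cohen--Macaulay (so homology is concentrated in top degree) and pinning down the $\S_n$-equivariant chain-level description precisely enough that the face sum telescopes to $Q_{n,j,k}$; the bijective/statistical translations and the extraction of the $t^{\des+1}$ factor from the tree are comparatively routine once the poset topology is in place.
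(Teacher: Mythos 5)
Your proposal takes a genuinely different route from the paper, but it has a concrete gap that would prevent it from working, most visibly in the $q$-analog. The paper does not shell $(B_n * T_{t,n})^-$ at all: it proves all three formulas by exhibiting the Rees products $(B_m * T_{t,m})^+$ and $(B_m(q) * T_{t,m})^+$ as a \emph{uniform} (resp.\ $G$-uniform) recursive family, extracting a M\"obius / Lefschetz-character recurrence, converting it to a generating function identity, and matching against the closed forms from Theorem~\ref{introsymgenth} and Corollary~\ref{comajcor}. EL-shellability enters the paper only in Section~\ref{elsec}, where it is applied to $\wh{I_j(B_n)}$ and $\wh{I_j(B_n(q))}$ and then compared against the already-established Theorem~\ref{bncnq} to produce Theorem~\ref{equidistth}.

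The gap: your claim that the shellability count of ascent-free chains in the $B_n(q)$ case should organize itself as $\sum_\sigma q^{\maj(\sigma)} t^{\des(\sigma)+1}$ (up to standard $\maj\leftrightarrow\comaj$ reindexing) is false. Already for $n=2$ the right-hand side of the theorem evaluates to $t q^{\binom 22} A^{\maj,\exc}_2(q^{-1},qt) = tq(1+t) = qt + qt^2$, whereas $\sum_\sigma q^{\maj(\sigma)} t^{\des(\sigma)+1} = t + qt^2$; these disagree. What the Simion EL-labeling actually produces, as the paper shows in Theorems~\ref{dimhomel}--\ref{dimai}, is the statistic $\binom n2 - \ai(\sigma)$ (admissible inversions), \emph{not} $\maj$, and the identity $\sum q^{\aid}t^{\des} = \sum q^{\maj}t^{\exc}$ is precisely the paper's new Theorem~\ref{equidistth}, whose only known proof goes \emph{through} (\ref{treeeq2}). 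There is no ``standard equidistribution'' of $(\maj,\des)$ with a twist of $(\maj,\exc)$ one can invoke; that equivalence is one of the main new results of the paper. Likewise, the ``cleanest route'' you mention at the end, deducing the equivariant formula from Theorem~\ref{introhomol}, is circular: in the paper Theorem~\ref{introhomol} is \emph{derived from} (\ref{treeeq3}) via the Equivariant Tree Lemma (Theorem~\ref{gtreelem}) and self-duality of $B_n$, and no independent proof of it is given. The $t$-count $\sum_\sigma t^{\des(\sigma)+1} = tA_n(t)$ in the ungraded case happens to be correct only because $\des$ and $\exc$ are equidistributed at $q=1$, which is exactly the coincidence that disappears once $q$ or the $\S_n$-action is introduced. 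So the chain-count route, as written, cannot produce (\ref{treeeq2}) or (\ref{treeeq3}) without first establishing the deep statistic-translation that the paper obtains from the uniformity recursion.
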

We  derive a general result relating the homology of  lower order ideals of $P^-*C_n$  to the homology of  the Rees product $P^**T_{t,n}$, where $P$ is any bounded, ranked poset of length $n$ and  $P^*$ is the dual of $P$. This result enables us to show that
   Theorem~\ref{introtreeth} implies Theorems~\ref{introhomol} and~\ref{intohomolq}.  We exploit  the recursive nature of $B_n * \ttn$ and $B_n(q) * \ttn$ in our proof of Theorem~\ref{introtreeth}.

 Various authors have studied Mahonian (resp. Eulerian) partners to
Eulerian (resp. Mahonian) statistics whose joint distribution is
equal to a known Euler-Mahonian distribution.   We mention, for
example, Foata \cite{foa3},  Foata and Zeilberger \cite{fz}, Clarke, Steingr\'{\i}misson and Zeng
\cite{csz}, Haglund \cite{hag}, Babson and Steingr\'{\i}msson \cite{bs},
Skandera \cite{sk} and Br\"and\'en \cite{bra}.  In  Section~\ref{elsec} we define a new Mahonian
statistic to serve as a partner for $\des$ in the $(\maj, \exc)$
distribution.   We do not have a simple proof of the
equidistribution.  We have a highly nontrivial proof  which uses Theorem~\ref{intohomolq} and poset topology techniques, such as an EL-labeling of $I_{n,j}(q)$, which is derived from  an EL-labeling of Simion \cite{si} for $B_n(q)$.

In the last section (Section~\ref{bcsec}) we use results from Section~\ref{rep} to derive  type BC analogs (in the sense of Coxeter groups) of the Bj\"orner-Welker-Jonsson derangement  result and our $q$-analog.

\part{Permutation statistics}

\section{Permutation statistics and quasisymmetric functions} \label{statsec}

For $n \ge 1$, let $\S_n$ be the symmetric group on the set
$[n]:=\{1,\ldots,n\}$.  A permutation $\sigma \in \S_n$ will be
represented here in two ways, either as a function that maps $i
\in [n]$ to $\sigma(i)$, or in one line notation as
$\sigma=\sigma_1\ldots\sigma_n$, where $\sigma_i=\sigma(i)$. If
$n\le 0$ then we set $[n] =\emptyset$ and $\S_n = \{\theta\}$
where $\theta$ denotes the empty word.

 The
{\it descent set} of  $\s$ is
\[
\Des(\s):=\{i \in [n-1]:\sigma_i>\sigma_{i+1}\},
\]
and the {\it excedance set} of $\sigma$ is
\[
\Exc(\sigma):=\{i \in [n-1]:\sigma_i>i\}.
\]

We now define  the two basic Eulerian permutation statistics.
The {\it descent number} and {\it excedance number} of $\sigma$
are, respectively,
\[
\des(\sigma):=|\Des(\sigma)|
\]
and
\[
\exc(\sigma):=|\Exc(\sigma)|.
\]
 For
example, if $\sigma = 32541$, written in one line notation, then
$$\Des(\s) = \{1,3,4\} \qquad \mbox{ and }\qquad \Exc(\s) = \{1,3\};$$
 hence $\des(\s) = 3 $ and $\exc(\s) = 2$.  If $i \in \Des(\s)$ we say
 that $\s$ has a descent at $i$.  If $i \in \Exc(\s)$ we say that
 $\s(i)$ is an excedance of $\s$ and that $i$ is an excedance position.

Next we define the two basic Mahonian permutation statistics.  The
{\it inversion index} of $\sigma \in \S_n$ is
\[
\inv(\sigma):=|\{(i,j):1 \leq i<j \leq n \mbox { and }\sigma_i>\sigma_j\}|,
\]
and the {\it major index} of $\sigma$ is
\[
\maj(\sigma):=\sum_{i \in \Des(\sigma)}i,
\]
 For
example, if $\sigma = 32541$ then $\inv(\s) = 6$ and $\maj(\s) = 8$.

We review  some basic facts of Gessel's theory of quasisymmetric functions; a good reference is  \cite[Chapter 7]{st3}. A {\it quasisymmetric function} is a formal power series $f(\xx)=f(x_1,x_2,\ldots)$ of finite degree with rational coefficients in the infinitely many variables $x_1,x_2,\ldots$ such that  for any $a_1,\dots,a_k \in \PP$,  the coefficient of $x_{i_1}^{a_1}\dots x_{i_k}^{a_k}$ equals the  coefficient of $x_{j_1}^{a_1}\dots x_{j_k}^{a_k}$ whenever $i_1 < \dots < i_k$ and $j_1 < \dots < j_k$.  Thus each symmetric function is quasisymmetric.

For a positive integer $n$ and $S \subseteq [n-1]$, define
\[
F_{S,n}=F_{S,n}(\xx):=\sum_{\scriptsize\begin{array}{c}i_1 \geq \ldots \geq i_n \geq 1\\ j \in S \Rightarrow i_j>i_{j+1}\end{array}}x_{i_1}\dots x_{i_n}
\] and let
$F_{\emptyset,0}=1$.
Each  $F_{S,n}$ is a quasisymmetric function.  The  set $\{F_{S,n} : S \subseteq [n-1], n\in \N\}$ is a basis
for the ring $\mathcal Q$ of quasisymmetric functions  and $F_{S,n}$ is called {\em a fundamental} quasisymmetric function.
If $S = \emptyset $ then $F_{S,n}$ is the complete homogeneous symmetric function $h_n$ and if $S=[n-1]$ then $F_{S,n}$ is the elementary symmetric function $e_n$.

We review two important ways to specialize a quasisymmetric funtion. Let
$\Q[q]$ denote the ring of polynomials in variable $q$ with coefficients in  $\Q$ and let
$\Q[[q]]$ denote the ring of formal power series in variable $q$ with coefficients in $\Q$.  The  {\em stable principal  specialization} is  the ring homomorphism
$\Lambda:\mathcal Q \to \Q[[q]]$ defined by $$\Lambda(x_i) =  q^{i-1} ,$$ and the {\em  principal  specialization} of order $m$ is the ring
homomorphism $\Lambda_m:\mathcal Q \to \Q[q]$ defined by
$$\Lambda_m(x_i) = \begin{cases} q^{i-1} &\mbox{if } 1 \le i \le m \\ 0 &\mbox{if } i > m \end{cases} .$$  Sometimes we will need to apply  $\Lambda$ and $\Lambda_m$ to  quasisymmetric functions whose coefficient ring is $\Q$ with indeterminants  adjoined.

\begin{lemma}[{\cite[Lemma 5.2]{gr}}] \label{desspec}For all $n \ge 0$ and $S\in [n-1]$, we have
\begin{equation}\label{spec1}\Lambda (F_{S,n})= \frac {q^{\sum_{i\in S} i}} {(q;q)_n} \end{equation} and
 \begin{equation} \label{spec2}\sum_{m \ge 0} \Lambda_m( F_{S,n}) p^m = {p^{|S|+1} q^{\sum_{i\in S} i} \over (p;q)_{n+1}}.\end{equation} \end{lemma}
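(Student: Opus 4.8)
\medskip
\noindent\textbf{Proof proposal.}
The plan is to apply each specialization directly to the defining sum for $F_{S,n}$ and then evaluate the resulting generating function over weakly decreasing integer sequences by a single reparametrization that absorbs the forced strict descents. We may assume $n\ge 1$, the case $n=0$ being only a matter of convention. Since $\Lambda$ sends $x_i\mapsto q^{i-1}$, applying it termwise to the definition of $F_{S,n}$ gives
\[
\Lambda(F_{S,n})=\sum_{\substack{i_1\ge\cdots\ge i_n\ge 1\\ j\in S\,\Rightarrow\,i_j>i_{j+1}}}q^{(i_1-1)+\cdots+(i_n-1)},
\]
while $\Lambda_m$ sends the monomial $x_{i_1}\cdots x_{i_n}$ to $q^{\sum_k(i_k-1)}$ when $i_1\le m$ and to $0$ otherwise, so interchanging the two sums yields
\[
\sum_{m\ge 0}\Lambda_m(F_{S,n})\,p^m=\frac{1}{1-p}\sum_{\substack{i_1\ge\cdots\ge i_n\ge 1\\ j\in S\,\Rightarrow\,i_j>i_{j+1}}}q^{\sum_k(i_k-1)}\,p^{\,i_1}.
\]

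Next I would eliminate the strict-descent constraints. Put $c_k:=|\{j\in S:\ j\ge k\}|$ and $b_k:=i_k-1-c_k$ for $1\le k\le n$. Since $c_k-c_{k+1}$ equals the indicator that $k\in S$ and $c_n=0$, the condition on $(i_1,\dots,i_n)$ is equivalent to $b_1\ge b_2\ge\cdots\ge b_n\ge 0$, and $(i_k)\mapsto(b_k)$ is a bijection onto the set of all such sequences. Along the way one records $\sum_{k=1}^n c_k=\sum_{k=1}^n|\{j\in S:\ j\ge k\}|=\sum_{j\in S}j$, that $c_1=|S|$ so $i_1=1+|S|+b_1$, and that $\sum_k(i_k-1)=\sum_k b_k+\sum_{j\in S}j$. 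Thus both computations collapse to the single master sum $G_n(p):=\sum_{b_1\ge\cdots\ge b_n\ge 0}q^{\,b_1+\cdots+b_n}\,p^{\,b_1}$, via $\Lambda(F_{S,n})=q^{\sum_{j\in S}j}\,G_n(1)$ and $\sum_{m\ge 0}\Lambda_m(F_{S,n})\,p^m=\dfrac{p^{\,|S|+1}q^{\sum_{j\in S}j}}{1-p}\,G_n(p)$.

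To evaluate $G_n(p)$ I would use the standard difference substitution $d_k:=b_k-b_{k+1}$ for $1\le k\le n-1$ and $d_n:=b_n$: the $d_k$ then range freely over the nonnegative integers, $b_1=\sum_k d_k$ and $\sum_k b_k=\sum_k k\,d_k$, so
\[
G_n(p)=\sum_{d_1,\dots,d_n\ge 0}\ \prod_{k=1}^n (p\,q^k)^{d_k}=\prod_{k=1}^n\frac{1}{1-p\,q^k}=\frac{1}{(pq;q)_n}.
\]
Setting $p=1$ gives $G_n(1)=1/(q;q)_n$, which produces (\ref{spec1}); and substituting $(p;q)_{n+1}=(1-p)\,(pq;q)_n$ into the expression above for $\sum_m\Lambda_m(F_{S,n})\,p^m$ produces $p^{\,|S|+1}q^{\sum_{j\in S}j}/(p;q)_{n+1}$, which is (\ref{spec2}).

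This is essentially a bookkeeping computation, so I expect no genuine obstacle; the one step that must be carried out with care is the reparametrization $i_k=1+b_k+c_k$, since the two exponents in the final answer, $\sum_{j\in S}j$ and $|S|+1$, are produced entirely there, and one has to verify both that the map really is a bijection onto weakly decreasing nonnegative $n$-tuples and that $\sum_k c_k$ and $i_1$ come out exactly as claimed.
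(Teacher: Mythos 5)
The paper does not prove this lemma; it is cited from Gessel--Reutenauer, so there is no internal proof to compare against. Your argument is correct and is the standard computation: apply the specialization termwise, interchange sums, absorb the forced strict descents by the substitution $b_k=i_k-1-c_k$ with $c_k=|\{j\in S:j\ge k\}|$, and then evaluate the resulting sum over weakly decreasing nonnegative sequences by the difference substitution $d_k=b_k-b_{k+1}$. All the bookkeeping you flagged as delicate---that $(i_k)\mapsto(b_k)$ is a bijection onto weakly decreasing nonnegative $n$-tuples, that $\sum_k c_k=\sum_{j\in S}j$, that $c_1=|S|$ so $i_1=1+|S|+b_1$, and that $G_n(p)=1/(pq;q)_n$ with $(p;q)_{n+1}=(1-p)(pq;q)_n$---checks out.

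One small remark on your aside about $n=0$: it is not purely a matter of convention. For $n=0$, $S=\emptyset$, the left side of (\ref{spec2}) equals $\sum_{m\ge 0}\Lambda_m(1)p^m=1/(1-p)$, while the right side is $p^{1}/(p;q)_1=p/(1-p)$, so the stated formula is genuinely off by a factor of $p$ in that degenerate case. This is harmless for the paper, which only invokes the lemma for $n\ge 1$, but it is worth knowing that your restriction to $n\ge 1$ is not just tidiness --- (\ref{spec2}) actually fails at $n=0$.
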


It follows from Lemma~\ref{desspec} that
\begin{equation} \label{stabh}  \Lambda (h_n) = \frac 1{(1-q)(1-q^2) \dots (1-q^n)}\end{equation} for all $n$ and therefore
\begin{equation}\label{spech}\Lambda (H(z(1-q)) )= \exp_q(z).\end{equation}

The standard way to connect quasisymmetric functions with  permutation statistics is by associating  the fundamental quasisymmetric funciton $F_{\Des(\sigma),n}$ with  $\sigma \in \sg_n$.   By Lemma~\ref{desspec}, we have for any subset $A \in \sg_n$
$$\Lambda\left(\sum_{\sigma \in A} F_{\Des(\sigma),n}\right) = {1 \over (q;q)_n} \sum_{\s \in A} q^{\maj(\s)},$$
and
$$\sum_{m\ge 0} \Lambda_m\left(\sum_{\sigma \in A} F_{\Des(\sigma),n}\right) p^m = {1 \over (p;q)_{n+1}}\sum_{\s \in A} p^{\des(\s)+1}q^{\maj(\s)}$$
Gessel and Reutenauer \cite{gr} used this to study the $(\maj,\des)$-enumerator for permutations of a fixed cycle type.

Here we introduce a new way to associate   fundamental quasisymmetric functions with permutations.  For $n \in \N$, we set \[
[\ov{n}]:=\{\ov{1},\ldots,\ov{n}\}
\]
and totally order the alphabet $[n] \cup [\ov{n}]$ by
\begin{equation} \label{order1}
\ov{1}<\ldots<\ov{n}<1<\ldots<n.
\end{equation}
For a permutation $\sigma=\sigma_1\ldots\sigma_n \in \S_n$, we
define $\ov{\sigma}$ to be the word over  alphabet $[n] \cup
[\ov{n}]$ obtained from $\sigma$ by replacing $\sigma_i$ with
$\ov{\sigma_i}$ whenever $i \in \Exc(\sigma)$.  For example, if
$\sigma={\rm {531462}}$ then $\ov{\sigma}={\rm
{\ov{5}\ov{3}14\ov{6}2}}$.  We define a descent in a word
$w=w_1\ldots w_n$ over any totally ordered alphabet to be any $i
\in [n-1]$ such that $w_i>w_{i+1}$ and let $\Des(w)$ be the set of
all descents of $w$.  Now, for $\sigma \in \S_n$, we define
\[
\Exd(\sigma):=\Des(\ov{\sigma}).
\]
For example, $\Exd({\rm {531462}})=\Des({\rm {\ov{5}\ov{3}14\ov{6}2}})=\{1,4\}$.

\begin{lemma} \label{exdlem}  For all $\s \in \S_n$,
\begin{equation}\label{exd}  \sum_{i \in \Exd(\s)} i = \maj(\s) - \exc(\s),\end{equation}
and
\begin{equation}\label{exdsize} |\Exd(\s)| =  \begin{cases} \des(\s)  &\mbox{if } \s(1) = 1 \\  \des(\s)-1 &\mbox {if } \s(1) \ne 1 \end{cases} \end{equation}
\end{lemma}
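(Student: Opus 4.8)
The plan is to compare the barred word $\ov\sigma$ with $\sigma$ position by position, using that barring a letter only alters its order relative to the \emph{un}barred letters, since barred letters keep their mutual order and so do unbarred ones. Write $E:=\Exc(\sigma)\subseteq[n-1]$ and, for $i\in[n]$, let $\chi_i$ be the indicator of $i\in E$; note $\chi_n=0$ since $\sigma_n\le n$. For each $i\in[n-1]$ I would run through the four cases according to whether $i$ and $i+1$ lie in $E$. If $i$ and $i+1$ are both in $E$ or both outside $E$, then the letters of $\ov\sigma$ in positions $i$ and $i+1$ are either both barred or both unbarred, so their order matches that of $\sigma_i,\sigma_{i+1}$, and hence $i\in\Exd(\sigma)\iff i\in\Des(\sigma)$. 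If $i\in E$ and $i+1\notin E$, then the $i$-th letter of $\ov\sigma$ is barred while the $(i{+}1)$-st is not, so it is smaller and $i\notin\Exd(\sigma)$; on the other hand $\sigma_i\ge i+1$ (as $\sigma_i>i$) while $\sigma_{i+1}\le i+1$, and these two values are distinct, so $\sigma_i>\sigma_{i+1}$ and $i\in\Des(\sigma)$. Finally, if $i\notin E$ and $i+1\in E$, the $i$-th letter of $\ov\sigma$ is unbarred and the $(i{+}1)$-st is barred, so $i\in\Exd(\sigma)$, whereas $\sigma_i\le i<i+1<\sigma_{i+1}$ gives $i\notin\Des(\sigma)$.

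Thus $\Exd(\sigma)$ is obtained from $\Des(\sigma)$ by deleting each $i$ with $\chi_i=1,\chi_{i+1}=0$ and adjoining each $i$ with $\chi_i=0,\chi_{i+1}=1$, the adjoined elements being genuinely new since they are not descents of $\sigma$. Since the signed count (adjoined minus deleted) at position $i$ is exactly $\chi_{i+1}-\chi_i$, for any function $f$ on $[n-1]$ we obtain
\[
\sum_{i\in\Exd(\sigma)}f(i)\;=\;\sum_{i\in\Des(\sigma)}f(i)\;+\;\sum_{i=1}^{n-1}f(i)\,(\chi_{i+1}-\chi_i).
\]
Taking $f(i)=i$ and summing by parts, the correction term telescopes to $(n-1)\chi_n-\sum_{i=1}^{n-1}\chi_i=-|E|=-\exc(\sigma)$, and since $\sum_{i\in\Des(\sigma)}i=\maj(\sigma)$ this is precisely (\ref{exd}). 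Taking $f\equiv 1$, the correction telescopes to $\chi_n-\chi_1=-\chi_1$, and $\chi_1=1$ exactly when $\sigma(1)\ne 1$ (because $\sigma_1\le 1$ forces $\sigma_1=1$), which is precisely (\ref{exdsize}).

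I expect the only genuine work to be the verification of the two ``mixed'' cases, in particular the inequality $\sigma_i>\sigma_{i+1}$ when $i\in E$, $i+1\notin E$ (which uses only that $\sigma$ is a bijection and the integrality of the values); everything else is the bookkeeping identity relating $\Exd(\sigma)$ and $\Des(\sigma)$ together with a routine telescoping/Abel-summation computation, so I do not anticipate a real obstacle. The one thing to keep in mind is the boundary behaviour: $\chi_n=0$ always (position $n$ is never an excedance position, which also makes ``$i+1\notin E$'' automatic when $i+1=n$), and this is what makes one of the two Abel boundary terms vanish, while the surviving boundary term $-\chi_1$ is exactly what produces the $\sigma(1)\ne1$ dichotomy in (\ref{exdsize}).
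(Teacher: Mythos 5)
Your proof is correct. The opening case analysis — comparing $\Exd(\sigma)$ to $\Des(\sigma)$ via the four possibilities for $(\chi_i,\chi_{i+1})$ — is exactly the content of the paper's equations establishing $\Exd(\sigma)=\Des(\sigma)\uplus J(\sigma)\setminus K(\sigma)$, where $J$ and $K$ are your "adjoined" and "deleted" sets. Where you diverge is in the bookkeeping: the paper then splits into the cases $\sigma(1)=1$ and $\sigma(1)\neq 1$, shows that $J$ and $K$ interleave (the excedance positions form a union of intervals whose endpoints are the $j$'s and $k$'s), and computes $\sum_{i\in\Exd}i$ and $|\Exd|$ from that interval structure. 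You instead observe that the signed symmetric difference is governed pointwise by $\chi_{i+1}-\chi_i$, so that for any weight $f$ one has $\sum_{\Exd}f-\sum_{\Des}f=\sum_{i=1}^{n-1}f(i)(\chi_{i+1}-\chi_i)$, and then a single Abel summation/telescope (using $\chi_n=0$) delivers both identities at once, with the $\sigma(1)\ne 1$ dichotomy emerging only as the surviving boundary term $\chi_1$. Your version is a bit slicker: it avoids the explicit interleaving combinatorics and the two-case split for \eqref{exd}, and it makes transparent why both formulas are instances of one identity; the paper's version makes the interval structure of $\Exc(\sigma)$ more visible, which is incidental here but is the sort of structure that reappears elsewhere in the paper's bijections. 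The telescoping computations ($(n-1)\chi_n-\sum_{i=1}^{n-1}\chi_i=-\exc(\sigma)$ and $\chi_n-\chi_1=-\chi_1$) check out.
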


\begin{proof} Let $$J(\s) = \{i \in [n-1]:  i \notin \Exc(\s) \,\,\, \& \,\,\, i+1 \in \Exc(\s)\},$$
and let
$$ K(\s) = \{i \in [n-1]:  i\in \Exc(\s) \,\,\, \& \,\,\, i+1 \notin \Exc(\s)\}.$$
If $i \in J(\s)$ then $\s(i) \le i < i+1 < \s(i+1)$.  Hence $i \notin \Des(\s)$, but $i \in \Exd(\s)$.  If $i \in K(\s)$ then $\s(i)\ge  i+1 \ge \s(i+1)$.  Hence $i \in \Des(\s)$, but $i \notin \Exd(\s)$.  It follows that $K(\s) \subseteq \Des(\s)$ and
\bq \label{dexdes}  \Exd(\s) = \Des(\s) \uplus J(\s) - K(\s).\eq
Hence
\bq \label{dexdes2}  |\Exd(\s)| = \des(\s) +| J(\s) | - |K(\s)|.\eq

Let $J(\s) = \{j_1 < j_2 < \dots < j_t\}$ and $K(\s) = \{k_1 < k_2< \dots < k_s\}$.  It is easy to see that if
$\s(1) = 1$ then $t= s$ and
\bq \label{shuf}  j_1 < k_1 <j_2 < k_2 <\dots <j_t <k_t, \eq  since neither $1$ nor $n$ are excedance positions.  On the other hand if $\s(1) \neq 1$ then $s=t+1$ and
 \bq \label{shuf2}  k_1 < j_1 <k_2 <j_2<\dots< j_t < k_{t+1}.\eq  It now follows from (\ref{dexdes2}) that (\ref{exdsize}) holds.

To prove (\ref{exd}), we again handle the cases  $\s(1)=1$ and $\s(1) \ne 1$ separately.

Case 1: Suppose $\s(1) = 1$.  Then (\ref{shuf}) holds.  By (\ref{dexdes}),
$$\sum_{i \in \Exd(\s)} i = \sum_{i \in \des(\s)}  i - \sum_{i=1}^t (k_i -j_i) . $$
Clearly $$\Exc(\s) = \biguplus_{i = 1}^t \{j_i+1,j_i+2,\dots, k_i\}.$$  Hence
$$\exc(\s) = \sum_{i=1}^t (k_i -j_i) $$ and so (\ref{exd}) holds in this case.

Case 2: Suppose $\s(1) \ne 1$. Now (\ref{shuf2}) holds.
By (\ref{dexdes}),
$$\sum_{i \in \Exd(\s)} i = \sum_{i \in \des(\s)}  i - k_1 - \sum_{i=1}^t (k_{i+1} -j_{i}) . $$
Now $$\Exc(\s) = \{1,2,\dots, k_1\} \uplus  \biguplus_{i = 1}^t \{j_i+1,j_i+2,\dots, k_{i+1}\},$$
which implies that
$$\exc(\s) =  k_1 + \sum_{i=1}^t (k_{i+1} -j_i) .$$  Hence (\ref{exd}) holds in this case too.
  \end{proof}

We now define the  quasisymmetric functions that  play a central role in this paper. Let  $n\ge 1$, $j,k \ge 0$ and  $\lambda\vdash n$.   The {\em Eulerian quasisymmetric functions}  are defined by $$ Q_{n,j} = Q_{n,j}(\x):=
\sum_{\scriptsize \begin{array}{c} \s \in \sg_n \\ \exc(\s) = j
\end{array}} F_{\Exd(\s),n}(\x).$$
 The {\em fixed point Eulerian quasisymmetric functions} are  defined by $$ Q_{n,j,k} = Q_{n,j,k}(\x):=
\sum_{\scriptsize \begin{array}{c} \s \in \sg_n \\ \exc(\s) = j \\ \fix(\s) =k
\end{array}} F_{\Exd(\s),n}(\x).$$
The {\em cycle type Eulerian quasisymmetric functions} are defined by
$$Q_{\lambda,j} = Q_{\lambda,j}(\x):=
\sum_{\scriptsize \begin{array}{c} \s \in \sg_n \\ \exc(\s) = j \\ \lambda(\s) =\lambda
\end{array}} F_{\Exd(\s),n}(\x),$$
where $\lambda(\s)$ denotes the cycle type of $\s$.
For convenience we let $\Exd(\theta) = \Des(\theta) = \Exc(\theta) = \emptyset$ and $ \exc(\theta) = \fix(\theta) = \des(\theta) = \maj(\theta)= 0$.   So $Q_{0,0,0} = 1$.
 It is a consequence of Theorem~\ref{introsymgenth} that the Eulerian quasisymmetric functions $Q_{n,j}$ and $Q_{n,j,k}$ are symmetric functions.  We prove that the refinement $Q_{\lambda,j}$ is symmetric, as well, in Section~\ref{symcyclesec}.

The {\em  $(q,p)$-Eulerian numbers} are defined by $$ a_{n,j}(p,q) =
\sum_{\scriptsize \begin{array}{c} \s \in \sg_n \\ \exc(\s) = j
\end{array}} q^{\maj(\s)} p^{\des(\s)}.$$
The {\em fixed point $(q,p)$-Eulerian numbers} are defined by $$ a_{n,j,k}(p,q) =
\sum_{\scriptsize \begin{array}{c} \s \in \sg_n \\ \exc(\s) = j \\ \fix(\s) =k
\end{array}} q^{\maj(\s)} p^{\des(\s)}.$$
The {\em cycle type $(q,p)$-Eulerian numbers} are defined by
$$a_{\lambda,j}(p,q) =
\sum_{\scriptsize \begin{array}{c} \s \in \sg_n \\ \exc(\s) = j \\ \lambda(\s) =\lambda
\end{array}}  q^{\maj(\s)} p^{\des(\s)}.$$

It  follows from (\ref{spec1}) and(\ref{exd}) that for $\sigma \in \sg_n$ we have
$$
\Lambda(F_{\Exd(\sigma),n})=(q;q)_n^{-1}q^{\maj(\sigma)-\exc(\sigma)}.
$$
Hence \begin{eqnarray}
 \label{stablespec1} a_{n,j,k}(q,1) &=& q^j (q;q)_n\Lambda (Q_{n,j,k}),\\ \label{stablespec2} a_{\lambda,j}(q,1) &=& q^j (q;q)_n\Lambda (Q_{\lambda,j}).\end{eqnarray}
From the first of these equations and (\ref{spech}), we see that Corollary~\ref{expgenthfix} is obtained from Theorem~\ref{introsymgenth} by applying the stable principal specialization.

The (nonstable) principle specialization is a bit more
complicated. Given two partitions $\lambda \vdash n$ and $\mu
\vdash m$, let $(\lambda,\mu)$ denote the partition of $n+m$
obtained by concatenating $\lambda$ and $\mu$ and then
appropriately rearranging the parts.
\begin{lemma} \label{nonstable} For $n,j,k \ge 0$ and $\lambda
\vdash n-k$, where $\lambda$ has no parts of size $1$,
\begin{equation*}a_{(\lambda,1^k),j}(q,p) = (p;q)_{n+1} \sum_{m\ge 0} p^m \sum_{i=0}^k q^{im+j} \Lambda_m (Q_{(\lambda,1^{k-i}), j}).\end{equation*}  Consequently, for $n,j,k \ge 0$,
 \begin{equation*}  a_{n,j,k}(q,p) = (p;q)_{n+1} \sum_{m\ge 0} p^m \sum_{i=0}^k q^{im+j} \Lambda_m (Q_{n-i, j,k-i}).\end{equation*}
\end{lemma}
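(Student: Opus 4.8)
\noindent\emph{Proof plan.} The plan is to prove the first identity and then deduce the second by summing over all $\lambda\vdash n-k$ having no part equal to $1$: both $Q_{n,j,k}$ and $a_{n,j,k}(q,p)$ are the sums, over $\mu\vdash n-k$ with no part $1$, of $Q_{(\mu,1^k),j}$ and $a_{(\mu,1^k),j}(q,p)$ respectively (and likewise $Q_{n-i,j,k-i}=\sum_\mu Q_{(\mu,1^{k-i}),j}$), so summing the first identity over such $\lambda$ yields the second. To prove the first identity, fix $\lambda\vdash n-k$ with no part of size $1$ and fix $j$. For $\tau\in\S_{n-i}$ with $\exc(\tau)=j$, equation~(\ref{exd}) gives $\sum_{s\in\Exd(\tau)}s=\maj(\tau)-j$, so substituting the monomial $pq^i$ for $p$ in~(\ref{spec2}) and multiplying by $q^j$ yields
\[
\sum_{m\ge 0}p^m q^{im+j}\,\Lambda_m\!\big(F_{\Exd(\tau),\,n-i}\big)=\frac{(pq^i)^{|\Exd(\tau)|+1}\,q^{\maj(\tau)}}{(pq^i;q)_{n-i+1}}.
\]
Summing this over the $\tau$ of cycle type $(\lambda,1^{k-i})$ with $\exc(\tau)=j$, then over $i=0,\dots,k$, multiplying by $(p;q)_{n+1}$, and using $(p;q)_{n+1}=(p;q)_i\,(pq^i;q)_{n-i+1}$, the right-hand side of the lemma becomes
\[
\sum_{i=0}^{k}(p;q)_i\sum_{\substack{\tau\in\S_{n-i},\ \exc(\tau)=j\\ \lambda(\tau)=(\lambda,1^{k-i})}}(pq^i)^{|\Exd(\tau)|+1}\,q^{\maj(\tau)}.
\]

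Next I would introduce the fixed-point insertion map $\phi_i$, sending $\tau\in\S_{n-i}$ to the permutation $\sigma\in\S_n$ with one line notation $1\,2\cdots i\,(i+\tau_1)(i+\tau_2)\cdots(i+\tau_{n-i})$, i.e.\ prepending $i$ new fixed points carrying the smallest values. Reading off the one line notation one checks directly that $\phi_i$ is injective, that $\lambda(\sigma)=(\lambda,1^k)$ and $\exc(\sigma)=j$, that $\Des(\sigma)=\{i+t:t\in\Des(\tau)\}$ (hence $\des(\sigma)=\des(\tau)$ and $\maj(\sigma)=\maj(\tau)+i\,\des(\tau)$), and that the initial run of fixed points of $\sigma$ consists of $1,\dots,i$ followed by the initial run of fixed points of $\tau$. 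Writing $i_0(\sigma)\le k$ for the length of that run, it follows that for each $i$ the map $\phi_i$ restricts to a bijection from $\{\tau\in\S_{n-i}:\lambda(\tau)=(\lambda,1^{k-i}),\ \exc(\tau)=j\}$ onto $\{\sigma:\lambda(\sigma)=(\lambda,1^k),\ \exc(\sigma)=j,\ i_0(\sigma)\ge i\}$, under which $\tau(1)=1$ exactly when $i<i_0(\sigma)$, and by~(\ref{exdsize}) one has $|\Exd(\tau)|+1=\des(\sigma)+1$ if $\tau(1)=1$ and $|\Exd(\tau)|+1=\des(\sigma)$ otherwise. Substituting into the last display and grouping the $i_0(\sigma)+1$ terms attached to a fixed $\sigma$, its total contribution is
\[
p^{\des(\sigma)}q^{\maj(\sigma)}\Big(p\sum_{i=0}^{i_0(\sigma)-1}(p;q)_i\,q^i+(p;q)_{i_0(\sigma)}\Big),
\]
and the argument concludes with the elementary identity $p\sum_{i=0}^{N-1}(p;q)_i\,q^i+(p;q)_N=1$ for $N\ge 0$ (induction on $N$ using $(p;q)_{N+1}=(1-pq^N)(p;q)_N$): the parenthetical factor equals $1$, so the sum equals $\sum_\sigma p^{\des(\sigma)}q^{\maj(\sigma)}=a_{(\lambda,1^k),j}(q,p)$.

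I expect the main obstacle to be the combinatorial bookkeeping of the second paragraph: arranging $\phi_i$ so that the ``$|\Exd(\tau)|$ versus $\des(\tau)$'' discrepancy of~(\ref{exdsize}) is governed exactly by the condition $\tau(1)=1$, checking that the resulting correspondence between pairs $(i,\tau)$ and permutations $\sigma$ is precisely $(i_0(\sigma)+1)$-to-one with the claimed transformation of $\maj$, $\des$ and $\Exd$, and then recognizing the ensuing overcount as the telescoping $q$-identity above. The generating-function steps in the first paragraph (the substitution $p\mapsto pq^i$ in Lemma~\ref{desspec} and the Pochhammer cancellation) are routine once $\phi_i$ has been identified. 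The fully degenerate case $\lambda=\emptyset$ (so $n=k$) is checked directly, both sides equalling $\delta_{j,0}$.
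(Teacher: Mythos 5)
Your proof is correct, and it is essentially a "resummed'' version of the paper's argument. Both proofs rest on exactly the same two ingredients: the specialization of $F_{\Exd(\tau),n}$ via Lemma~\ref{desspec} and Lemma~\ref{exdlem}, and the map that removes or inserts initial fixed points together with the $\sigma(1)=1$ dichotomy of~(\ref{exdsize}). The paper's map $\varphi$ (strip one leading fixed point, shift down) is precisely the inverse of your $\phi_1$, and your $\phi_i$ is the $i$-fold iterate. Where the two arguments diverge is purely organizational. The paper isolates the quantity $X^k_{\lambda,j}(q,p)=\sum_m\Lambda_m(Q_{(\lambda,1^k),j})p^m$, splits it by whether $\sigma(1)=1$, and packages the $\sigma(1)=1$ part as $Y^k$; the observation $Y^k_{\lambda,j}(q,p)=a^{k-1}_{\lambda,j}(q,qp)/\big(q^j(p;q)_{n+1}\big)$ then yields a first-order recurrence in $k$ with shifted argument $p\mapsto qp$, which they iterate to obtain $a^k/\big(q^j(p;q)_{n+1}\big)=\sum_{i=0}^k X^{k-i}(q,q^ip)$. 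You instead expand the right-hand side of the lemma in one shot, reindex the double sum over $(i,\tau)$ by the target permutation $\sigma$ via $\phi_i$, and observe that each $\sigma$ is counted $i_0(\sigma)+1$ times with Pochhammer weights whose total collapses to $1$ by the elementary identity $p\sum_{i=0}^{N-1}(p;q)_i q^i+(p;q)_N=1$. What the paper's recurrence achieves through the $(1-p)$ factor and the shift $p\mapsto qp$, you achieve through that telescoping identity; the underlying cancellation is identical. Your version is somewhat more transparent as a direct bijective count and makes the $(i_0(\sigma)+1)$-to-one structure explicit, at the cost of slightly heavier bookkeeping in the reindexing step; the paper's is shorter once the recurrence is set up. Your reduction of the second identity to the first by summing over $\lambda\vdash n-k$ with no part $1$, and your separate check of the degenerate case $\lambda=\emptyset$, are both correct and match how the paper intends the "Consequently'' to be read.
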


\begin{proof}

For all $\sigma \in \sg_n$ we have, by (\ref{spec2}) and Lemma~\ref{exdlem},
$$\sum_{m \ge 0} \Lambda_m(F_{\Exd(\sigma),n}) p^m = {1 \over (p;q)_{n+1} } p^{|\Exd(\s)|+1} q^{\maj(\s) - \exc(\s)}.$$
It follows that \begin{equation} \label{specx} \sum_{m \ge 0} \Lambda_m( Q_{(\lambda,1^k),j}) p^m = X^k_{\lambda,j}(q,p),\end{equation}
where
$$ X^k_{\lambda,j}(q,p) := {1 \over (p;q)_{n+1} }\sum_{\scriptsize \begin{array}{c} \s \in \sg_n \\ \exc(\s) = j \\ \lambda(\s) = (\lambda, 1^k)
\end{array}} q^{\maj(\s)-j } p^{|\Exd(\s)|+1}.$$
By (\ref{exdsize}) we have
\begin{eqnarray*} X^k_{\lambda,j}(q,p) := & &{1 \over (p;q)_{n+1} } \sum_{\scriptsize \begin{array}{c} \s \in \sg_n \\ \exc(\s) = j \\ \lambda(\s) = (\lambda, 1^k) \\ \s(1) = 1
\end{array}} q^{\maj(\s)-j } p^{\des(\s)+1} \\ &+&
 {1 \over (p;q)_{n+1} }\sum_{\scriptsize \begin{array}{c} \s \in \sg_n \\ \exc(\s) = j \\ \lambda(\s) = (\lambda, 1^k) \\ \s(1) \ne 1
\end{array}} q^{\maj(\s)-j } p^{\des(\s)}.
\end{eqnarray*}
Let $$ Y^k_{\lambda,j}(q,p) := {1 \over (p;q)_{n+1} }\sum_{\scriptsize \begin{array}{c} \s \in \sg_n \\ \exc(\s) = j \\ \lambda(\s) = (\lambda, 1^k) \\ \s(1) =1
\end{array}} q^{\maj(\s)-j } p^{\des(\s)}.$$
Write  $a^k_{\lambda,j}(q,p)$ for $a_{(\lambda,1^k),j}(q,p)$.
We have  \begin{eqnarray}\label{ayxeq} \nonumber {a^k_{\lambda,j}(q,p) \over q^j(p;q)_{n+1} } &=& Y^k_{\lambda,j}(q,p) + X^k_{\lambda,j}(q,p) - p Y^k_{\lambda,j}(q,p)\\ &=& (1-p) Y^k_{\lambda,j}(q,p)  + X^k_{\lambda,j}(q,p)\end{eqnarray}

Let
$\varphi: \{\s \in \sg_n : \s(1) = 1\} \to \sg_{n-1}$ be the bijection defined by letting $\varphi(\s)$ be the permutation  obtained by removing the $1$ from the beginning of $\s$ and subtracting $1$ from each letter of  the remaining word.
It is clear that  $\maj(\s) = \maj(\varphi(\s)) + \des(\s)$,  $\des(\s) = \des(\varphi(\s)) $, $\exc(\s)=\exc(\varphi(\s)) $ and $\fix(\s)=\fix(\varphi(\s)) +1$.  Hence
$$Y^k_{\lambda,j}(q,p)= {a^{k-1}_{\lambda,j}(q,qp) \over q^j (p;q)_{n+1} }$$
Plugging this into (\ref{ayxeq}) yields the recurrence
$$ {a^k_{\lambda,j}(q,p)  \over q^j (p;q)_{n+1} }= { a^{k-1}_{\lambda,j}(q,qp) \over q^j(qp;q)_{n} } + X^k_{\lambda,j}(q,p).$$
By iterating this recurrence we get
$$ {a^k_{\lambda,j}(q,p) \over q^j (p;q)_{n+1} }= \sum_{i=0}^k X^{k-i}_{\lambda,j}(q,q^ip).$$
The result now follows from this and (\ref{specx}).
\end{proof}

\begin {proof} [Proof of Corollary \ref{foha}]  By Lemma~\ref{nonstable} and Theorem~\ref{introsymgenth}, we have

 \begin{eqnarray*} \sum_{n\ge 0 }A_{n}^{\maj,\des,\exc,\fix}&&\hspace{-.4in}(q,p,t,r) {z^n \over (p;q)_{n+1}} 
 \\ &=&  \sum_{n,j,k \ge 0} a_{n,j,k}(p,q) t^j r^k {z^n \over (p;q)_{n+1}}\\
 &=&\sum_{n,j,k\ge 0}z^n  t^j r^k \sum_{m\ge 0} p^m \sum_{i=0}^k q^{im+j} \Lambda_m (Q_{n-i, j,k-i})
 \\  &=&  \sum_{m\ge0}p^m  \sum_{i \ge 0} (zrq^m)^i  \sum_{\scriptsize \begin{array} {c}
 n,k\ge i \\ j \ge 0\end{array}} \Lambda_m (Q_{n-i, j,k-i}) (qt)^j r^{k-i} z^{n-i}
  \\ 
 &=& \sum_{m\ge 0}{p^m\over 1-zrq^m}\Lambda_m {(1-tq)H(zr) \over H(ztq) -tqH(z)}
\\ &=& \sum_{m \ge 0} p^m {(1-tq) (z;q)_m (ztq;q)_m \over
((z;q)_m -tq(ztq;q)_m)(zr;q)_{m+1}}, \end{eqnarray*}
with the last step following from
$$ \Lambda_m( H(z)) =\Lambda_m\left(\prod_{i \ge 0} \frac 1 {1-x_i z}\right) = \frac 1 {(z;q)_{m}}.$$

\end{proof}

\section{Bicolored necklaces and words} \label{identsec}

This section is devoted to the proof of  Theorem~\ref{introsymgenth}.  There are three main steps.   In the first step (Section~\ref{secnec}) we modify a bijection that  Gessel and Reutenauer \cite{gr} constructed in order to enumerate permutations with a fixed descent set and
fixed cycle type.  This yields an alternative characterization of the Eulerian quasisymmetric functions involving  bicolored necklaces.   In the second step (Section~\ref{secban}) we construct  a  bijection from multisets of bicolored necklaces to
bicolored words, which involves Lyndon decompositions of words.  This yields  yet another characterization of the Eulerian quasisymmetric functions.    In the third step (Section~\ref{recsec}) we generalize a bijection that Stembridge constructed to study the representation of the symmetric group on the cohomology of the toric variety associated with the type A Coxeter complex.  This enables us to derive a recurrence relation, which yields
Theorem~\ref{introsymgenth}.

 \subsection{Step 1: bicolored version of the Gessel-Reutenauer bijection} \label{secnec}
The Gessel-Reutenauer bijection is a   bijection between   pairs $(\sigma,s)$, where $\s$ is a  permutation and $s$ is a ``compatible'' weakly decreasing sequence, and multisets of primitive circular words over the alphabet of positive integers.  This bijection enabled Gessel and Reutenauer  to use the fundamental quasisymmetric functions $F_{\Des(\s),n}$ to study properies of permutations with a fixed descent set and cycle type.   Here we introduce a bicolored version of the Gessel-Reutenauer bijection.  

We  consider circular words over the alphabet of bicolored positive integers $$\mathcal A:=\{1,\bar 1,2,\bar 2, 3,\bar 3, \dots\}.$$   (We can think of ``barred'' and ``unbarred'' as  colors assigned to each positive integer.)  For each such circular
word and each starting position, one gets a linear word by
reading the circular word in a clockwise direction. If one gets a
distinct linear word for each starting position then the
circular word is said to be {\em primitive}.  For example the circular word $(\bar 1,1,1) $  is primitive while the circular word $(\bar 1, 2,\bar 1, 2)$  is not.  (If $w$ is a linear word then $(w)$ denotes the circular word obtained by placing the letters of $w$ around a circle in a clockwise direction.)  The
{\em absolute value} or just {\em value} of a letter $a$ is the letter obtained by ignoring
the bar if there is one. We denote this by $|a|$.
\begin{defn}  \label{orndef} A  {\em bicolored  necklace} is a primitive circular word $w$ over alphabet $\mathcal A$   such that if the length of $ w$ is greater than $1$ then \begin{enumerate}
\item every barred letter  is followed
(clockwise) by a letter less  than or equal to it  in absolute value
\item every unbarred  letter is followed by a letter greater than or equal to it  in absolute value.
\end{enumerate}      A circular
word of length $1$ is a bicolored necklace  if its sole letter is unbarred.
 A
{\em bicolored ornament}  is a
multiset
of
bicolored necklaces.
\end{defn}

For example the following
circular words are bicolored necklaces:
$$ (\bar 3, 1 ,3, \bar 3, 2, 2), (\bar 3,1, \bar 3, \bar 3, 2, 2) , (\bar 3, 1, 3, \bar 3, \bar 2, 2),
(\bar 3, 1, \bar 3, \bar 3, \bar 2, 2), (2),$$
 while $(\bar 3, \bar 1, 3, 3, 2, \bar 2) $
 and $(\bar 3)$  are not.

From now on we will drop the word ``bicolored'';   so ``necklace'' will stand for  ``bicolored necklace'' and ``ornament"  will stand for  ``bicolored ornament''.  The type $\lambda(R)$ of an ornament $R$ is the
partition whose parts are the sizes of the necklaces  in $R$. The weight of a letter $a$ is  the
indeterminate $x_{|a|}$.  The
weight $\wt(R)$ of an ornament $R$  is the product of the weights of
the letters of  $R$.  For example
$$\lambda((\bar 3,2,2),(\bar 3,\bar 2, 1,1,2)) = (5,3)$$
 and
$$\wt((\bar 3,2,2),(\bar 3,\bar 2,1,1,2)) = x_3^2x_2^4x_1^2.$$
For each partition $\lambda$ and nonnegative integer $j$, let
$\mathfrak R_{\lambda,j}$ be the set of ornaments of type
$\lambda$ with $j$ bars.

Given a permutation  $\sigma \in \S_n$, we say that a weakly decreasing sequence $s_1\ge s_2\ge \dots \ge s_n$ of
positive integers is  $\sigma$-{\em compatible} if
$s_i > s_{i+1}$ whenever $i \in \Exd(\sigma)$.  For example,
$7,7,7,5,5,4,2,2$ is $\s$-compatible, where $\s=45162387$.
Note that for all $\s \in \mathfrak S_n$,
$$F_{\Exd(\s),n} = \sum_{s_1,\dots, s_n} x_{s_1}\dots x_{s_n}$$
where $s_1,\dots,s_n$ ranges over all $\s$-compatible sequences.

 For $\lambda \vdash n$ and $j =0,\dots,n-1$, let $\Com_{\lambda,j}$ be the set of pairs $(\s,s)$, where $\s $ is a permutation of cycle type $\lambda$ with $j$ excedances and $s$ is  a $\s$-compatible sequence.
 Let  $\phi: \Com_{\lambda,j} \to {\mathfrak R}_{\lambda,j} $ be the map defined by letting
$\phi(\sigma, s)$ be the ornament obtained by first writing $\sigma$ in cycle form with bars above the letters that are followed (cyclicly) by larger letters (i.e., the excedances) and then replacing each $i$ with $s_i$, keeping the bars in place.  For example,
let $\s = 45162387$ and $s=7,7,7,5,5,4,2,2$.  First we write $\s$  in cycle form,  $$\s = (1,4,6,3)(2,5) (7,8).$$
Next we put  bars above the
letters that are followed  by larger letters, $$(\bar 1,\bar 4,6,3)(\bar 2,5) (\bar 7,8).$$
After replacing each $i$ by $s_i$, we have the ornament $$(\bar 7,\bar 5,4,7)(\bar 7,5) (\bar 2,2).$$

\begin{thm} \label{ornbij} The map $\phi: \Com_{\lambda,j} \to {\mathfrak R}_{\lambda,j} $ is a  well-defined bijection.
\end{thm}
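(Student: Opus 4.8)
The plan is to verify in turn that $\phi$ lands in $\mathfrak R_{\lambda,j}$, that it is injective, and that it is surjective, constructing an explicit inverse along the way. First I would check well-definedness. Given $(\sigma,s)\in\Com_{\lambda,j}$, writing $\sigma$ in cycle form and barring the excedance letters produces a multiset of circular words; after the substitution $i\mapsto s_i$ (bars kept in place) each circular word has the necklace property because of how $s$ was chosen. Concretely, in a cycle $(\dots,a,b,\dots)$ of $\sigma$, the letter $a$ is barred precisely when $b>a$, i.e.\ when $a$ is an excedance position and $b=\sigma(a)$; the key point is that $a\in\Exd(\sigma)$ exactly when the position-$a$ letter in $\overline\sigma$ exceeds the position-$b$... — wait, I must be careful: the ordering on $[n]\cup[\overline n]$ puts all barred letters below all unbarred ones. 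I would need the combinatorial fact (which should be extracted as a small lemma) that for a barred letter $\overline a$ in a cycle followed by $c$, $\sigma$-compatibility forces $s_a\ge s_c$ with strict inequality exactly when $a\in\Exd(\sigma)$; and similarly for unbarred letters. This is precisely what makes conditions (1) and (2) of Definition~\ref{orndef} hold after substitution, so $\phi(\sigma,s)$ is genuinely an ornament of type $\lambda$, and it has $j$ bars since $\sigma$ has $j$ excedances.

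Next, for bijectivity I would build the inverse map directly. Given an ornament $R\in\mathfrak R_{\lambda,j}$, I want to recover $(\sigma,s)$. The idea, following Gessel--Reutenauer, is: the necklaces of $R$, read as circular words over the bicolored alphabet, must be assembled into a permutation by choosing, for each necklace, a way to "number" its positions $1,\dots,n$ consistently across the whole ornament. The multiset of absolute values of all letters, sorted weakly decreasingly, will be the sequence $s$ (up to the labeling); the subtle part is deciding which position of the ornament gets label $1$, which gets label $2$, etc. Here one totally orders the $n$ "slots" of $R$ by a rule that reads each necklace starting appropriately and breaks ties between equal-weight letters using the barred/unbarred coloring and the cyclic structure — this is the bicolored analogue of the Gessel--Reutenauer linear order on circular subwords. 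Primitivity of the necklaces guarantees that within a single necklace no two rotations coincide, so the tie-breaking is unambiguous; and the necklace conditions (1),(2) guarantee that the resulting linear order, when turned back into $(\sigma,s)$, yields a $\sigma$-compatible $s$ with $\Exd(\sigma)$ matching the positions of the bars. Checking that $\phi$ composed with this candidate inverse is the identity in both directions is then a matter of unwinding the definitions.

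The main obstacle, and where I would spend the most care, is the tie-breaking rule in the inverse map — i.e.\ exactly how to linearly order slots carrying letters of the same absolute value, so that (a) the rule is well-defined (uses only data visible in the ornament, not in $\sigma$), (b) it is forced, in the sense that any $(\sigma,s)$ with $\phi(\sigma,s)=R$ must induce this very order, and (c) the recovered $s$ is weakly decreasing and $\sigma$-compatible with the right excedance set. In the uncolored Gessel--Reutenauer setup one orders equal letters by comparing the infinite words obtained by reading clockwise from each occurrence; the bicolored version needs the alphabet order $\overline 1<\dots<\overline n<1<\dots<n$ baked in, so that barred and unbarred occurrences of the "same" value $i$ are separated correctly, and one must confirm that the strict-descent positions of the resulting word are exactly $\Exd(\sigma)$. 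I would isolate this as the core lemma, prove it by the same argument Gessel and Reutenauer use (comparison of eventually-periodic words, using primitivity to rule out ambiguity), and then the theorem follows by assembling the pieces: well-defined $+$ explicit two-sided inverse.
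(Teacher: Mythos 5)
Your outline follows the same general approach as the paper (a bicolored Gessel--Reutenauer argument: check well-definedness, then build the inverse by linearly ordering the positions of the ornament via a lexicographic tie-breaking rule on clockwise reads). However, there is a genuine gap in the well-definedness step that you have misdiagnosed. You spend your care there on verifying the inequality conditions (1) and (2) of Definition~\ref{orndef}, and you propose a lemma tying $\sigma$-compatibility to those inequalities. In fact (1) and (2) are automatic from $s$ being weakly decreasing alone: if $i\in\Exc(\sigma)$ then $\sigma(i)>i$ so $s_i\ge s_{\sigma(i)}$, and if $i\notin\Exc(\sigma)$ then $\sigma(i)\le i$ so $s_i\le s_{\sigma(i)}$; $\sigma$-compatibility is not what makes these hold. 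What you never address is that the circular words produced by $\phi$ must be \emph{primitive} --- this is part of the definition of a necklace, and it is exactly here that $\sigma$-compatibility is doing real work. The paper's proof devotes a full paragraph to this: assuming a cycle of $\sigma$ gives a non-primitive circular word of period $d<k$, one uses the equality $a_{i_1}=a_{i_{d+1}}$, weak decrease, and $\sigma$-compatibility to force $\sigma(i_1)<\sigma(i_2)<\dots<\sigma(i_{d+1})$, hence $i_2<i_{d+2}$, and iterating produces the contradiction $i_{k+1-d}<i_1$. Without this step $\phi$ does not even land in $\mathfrak R_{\lambda,j}$.

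A second, smaller issue: you dismiss the verification that $\eta(\phi(\sigma,s))=(\sigma,s)$ as ``a matter of unwinding the definitions.'' The direction $\phi(\eta(R))=R$ is indeed routine, but the other direction requires a real argument --- one has to show that the lexicographic order on positions induced by $\phi(\sigma,s)$ recovers the standard labeling, and the paper does this via a bootstrapping implication (the paper's~(\ref{claimimp})): from $i<j$ and $w_{p_i}\le_L w_{p_j}$ one deduces $f(w_{p_i})=f(w_{p_j})$ and $\sigma(i)<\sigma(j)$, then applies the implication repeatedly along the orbits. This is comparable in weight to the primitivity argument and deserves the same ``core lemma'' treatment you correctly reserve for the tie-breaking rule in the inverse.
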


\begin{proof}

Let $(\s,s) \in \Com_{\lambda,j}$.  It is clear that the circular words in the multiset $\phi(\sigma,s)$ satisfy conditions (1) and (2) of Definition~\ref{orndef} and that  the number of bars of $\phi(\sigma,s)$ is $j$.  It is also clear that $\lambda(\phi(\s,s))= \lambda$.

We must now show that the circular words in $\phi(\sigma,s)$ are primitive.
Suppose there is a circular word $(a_{i_1}, \dots, a_{i_k})$ that is not primitive.  Let this word come from the cycle $(i_1,i_2,\dots,i_k)$ of $\sigma$, where $i_1$ is the smallest element of the cycle.   Suppose $a_{i_1}, \dots, a_{i_k} = (a_{i_1},\dots a_{i_d})^{k/d}$, where $d < k$.   Since $a_{i_1} = a_{i_{d+1}}$,  $i_1 < i_{d+1}$,  and the $|a_i|$'s form a weakly decreasing sequence, it follows that $|a_{i}| = |a_{i_1}|$ for all $i$ such that $i_1 \le  i  \le i_{d+1}$.  Hence since $(|a_1|, |a_2|, \dots, |a_n| )$ is $\s$-compatible, there can be no element of $\Exd(\s)$ in the set $\{i_1,i_1+1, \dots,i_{d+1}-1\}$.  Since $a_{i_1} $ and $a_{i_{d+1}}$ are both barred or both unbarred, it follows that both $i_1$ and $i_{d+1}$ are  excedance positions or  both are  not. Hence $$\s(i_1) < \s(i_1+1) < \dots < \s(i_{d+1}).$$

Since $\s({i_1}) = i_2$ and $\s(i_{d+1})= i_{d+2}$ we have $i_2 < i_{d+2}$.  Repeated use of the above argument yields  $i_3 < i_{d+3}$ and eventually  $i_{k+1-d} < i_{1}$, which is impossible since $i_1$ is the smallest element of its cycle.  Hence all the circular words of $\phi(\sigma,s)$ are primitive and $\phi(\sigma,s)$ is an ornament of type $\lambda$ with $j$ bars, which means that $\phi$ is  well-defined.

To show that $\phi$ is a bijection, we construct its inverse $\eta:{\mathfrak R}_{\lambda,j} \to \Com_{\lambda,j}$, which takes the ornament $R$ to the pair $(\s(R),s(R))$.
Here $s(R)$ is simply  the weakly decreasing rearrangment of the letters  of $R$ with bars removed.  To construct the  permutation $\s(R)$, we need to first order the alphabet  $\mathcal A$  by
\begin{eqnarray} \label{alphorder} 1<\bar 1 < 2 <  \bar 2 < \dots .\end{eqnarray}   Note that this ordering is different from the ordering  (\ref{order1}) of the finite
alphabet that was used to define $\Exd$.

For each position $x$ of each necklace  of $R$, consider the infinite word
$w_x$ obtained by reading the necklace in a clockwise direction starting at position $x$.      Let $$w_x \le_L w_y$$ mean that $w_x$ is lexicographically less than or equal to $w_y$.  We use the lexicographic order on words  to order the positions:  if $w_x <_L w_y$ then we say  $x< y$.  We break ties as follows:  if   $w_x= w_y$ and $x \ne y$ then $x$ and $y$ must be positions in distinct (but equal) necklaces since necklaces are primitive.  If  $w_x= w_y$   and  $x$ is a position in an earlier necklace than that of $y$ under some fixed linear ordering of the necklaces   then let $x<y$.   If $x$ is the ith {\em largest } position  then replace the letter in position $x$ by $i$.  We now have a multiset of circular words in which each letter  $1,2,\dots,n$ appears exactly once.  This is the cycle form of the permutation $\s(R)$.

For example, if
$$R= ((\bar 7,  \bar 3,  3, 5), (\bar 7, 3, \bar 5, 3), (\bar 7, 3, \bar 5, 3), (5)),$$
then $$\s(R) = (1,8,13,6), (2,11,4,9) ,(3,12,5,10),(7) $$and $$ s(R) =  7,7,7,5,5,5,5,3,3,3,3,3,3.$$

It is not hard to see  that the letter in position $x$ of $R$ is barred if and only if  the letter that replaces it is  an excedance position of $\s(R)$.  This implies  that the number of bars of $R$ equals the number of excedances of $\s(R)$.

We now  show that $s(R)$ is $\s(R)$-compatible.
Suppose $$s(R)_i = s(R)_{i+1}.$$ We must show $i \notin \Exd(\s(R))$.  Let $x$ be the $i$th largest position of $R$ and let $y$ be the $(i+1)$st largest.  Then $i$ is placed in position $x$ and $i+1$ is placed in position $y$.   Let $f(w)$ denote the first letter of a word $w$.  Then one of the following must hold
\begin{enumerate}
\item
$f(w_x) =  s(R)_i = f(w_y)$
 \item $f(w_x) =  \overline{s(R)_i}= f(w_y)$
\item $f(w_x) =  \overline{s(R)_i}$
and  $f(w_y) = s(R)_i$ .
\end{enumerate}

Cases (1) and (2):    Either $w_x  >_L w_y$ or  $w_x = w_y$ and $x$ is in an earlier necklace than $y$. Let $u$ be the position that follows $x$ clockwise and let $v$ be the position that follows $y$.
  Since  $w_u$ is the word obtained from $w_x$ by removing its first letter,  $w_v$ is obtained from $w_y$ by removing its first letter, and the first letters are equal, we conclude that   $u >v$.  Hence the letter that gets placed in position $u$ is smaller than the letter that gets placed in position $v$.  Since the letter  placed in position $u$ is  $\s(R)(i) $ and the letter placed in position $v$ is $\s(R)(i+1) $, we have $\s(R)(i) <\s(R)(i+1) $. Since  $i,i+1 \in \Exc(\s(R))$ or $i,i+1 \notin \Exc(\s(R))$, we conclude that $i \notin \Exd(\s(R))$.

Case (3):  Since the letter in position $x$ is barred we have $i \in \Exc(\s(R))$.  Since the letter in position $y$ is not barred, we have $i+1 \notin \Exc(\s(R))$.  Hence $i \notin \Exd(\s(R))$.

In all three cases we have that $s(R)_i = s(R)_{i+1}$ implies $i \notin \Exd(\s(R))$.  Hence $s(R)$ is $\s(R)$-compatible.

Now we show that the map $\eta$  is the inverse of $\phi$.  It is easy to see  that $\phi(\eta(R)) =R$.
Establishing $\eta(\phi(\s,s)) = (\s,s)$ means establishing $\s(\phi(\s,s)) = \s$ and  $s(\phi(\s,s)) = s$.  The latter equation is obvious.   To establish the former, let $R= \phi(\sigma,s)$.  Recall that $R$ is obtained by writing $\s$ in cycle form, barring the excedances,  and then replacing each  $i$ by $s_i$, keeping the bars intact.    Let $p_i$ be the position that $i$ occupied before the replacement.  By ordering the cycles of $\s$ so that the minimum elements of the  cycles increase, we get  an ordering of the necklaces in $R$, which we use to break ties between the $w_{p}$. To show that $\s(R) = \s$, we need
to show that  \begin{enumerate}
\item if $i<j$ then $w_{p_i}   \ge_L w_{p_j}$
\item  if $i<j$ and $w_{p_i}  = w_{p_j} $ then $i$ is in a cycle whose minimum element is less than that of $j$.
\end{enumerate}

To prove (1) and (2),  we will use the following implication:
\begin{equation} \label{claimimp} i<j \mbox{ and }w_{p_i} \le_L w_{p_j}  \implies f(w_{p_i}) = f(w_{p_j}) \mbox{ and } \s(i) < \s(j) ,\end{equation} (Recall $f(w) $ is the first letter of a word $w$.)

Proof of implication:  Since $i <j$ and $s$ is weakly decreasing, we have  $s_i \ge s_j$.    Since $w_{p_i} \le_L w_{p_j}$, we have $f(w_{p_i}) \le f(w_{p_{j}})$, which implies that  $s_i \le s_j$.   Hence $s_i=s_j$, which implies $$s_i = s_{i+1} = \dots = s_j.$$  It follows from this and  the fact that $s$ is $\s$-compatible that $k \notin \Exd(\s)$ for all $k= i, i+1, \dots, j-1$.   This implies that either $$i,i+1,\dots, j \in \Exc(\s) \mbox{ and  } \s(i) < \s(i+1) <\dots < \s(j)$$ or  $$i,i+1,\dots, j \notin \Exc(\s) \mbox{ and } \s(i) < \s(i+1) <\dots < \s(j)$$  or $$i \in \Exc(\s) \mbox { and }  j \notin \Exc(\s).$$ In the first case, $f(w_{p_i}) =\bar s_i =\bar s_j = f(w_{p_j})$.  In the second case,  $f(w_{p_i}) = s_i =s_j = f(w_{p_j})$.  In the third case $f(w_{p_i}) = \bar s_i >s_i = s_j = f(w_{p_j})$,  which is impossible.   Hence the conclusion of the implication (\ref{claimimp}) holds.

Now we use (\ref{claimimp}) to prove (1).  Suppose $i<j$ and $w_{p_i} <_L w_{p_j}$.  Then by (\ref{claimimp}), we have $\s(i) < \s(j)$ and  $f(w_{p_i}) = f(w_{p_j})$, which implies $w_{p_{\s(i)}} <_L w_{p_{\s(j)}}$.  Hence we can apply (\ref{claimimp}) again with $\s(i)$ and $\s(j)$ playing the  roles of $i$ and $j$.  This yields  $f(w_{p_{\s^2(i)}}) = f(w_{p_{\s^2(j)}})$,  $\s^2(i) <\s^2(j)$, and $w_{p_{\s^2(i)}} <_L w_{p_{\s^2(j)}}$.    Repeated application of (\ref{claimimp}) yields
$ f(w_{p_{\s^m(i)}}) = f(w_{p_{\s^m(j)}})$ for all $m$, which implies $w_{p_i} =w_{p_j}$, a contradiction.

Next we prove (2).  Repeated application of (\ref{claimimp}) yields $\s^m(i) < \s^m(j)$ for all $m$.  Hence the cycle containing $i$ has a smaller minimum than the cycle containing $j$.
\end{proof}

\begin{cor} \label{ornth}
For all $\lambda\vdash n$ and $j= 0,1,\dots,n-1$,
$$Q_{\lambda,j} = \sum_{R \in \mathfrak R_{\lambda,j}} w(R).$$
\end{cor}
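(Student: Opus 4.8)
The plan is to unwind the definition of $Q_{\lambda,j}$ into a sum over compatible pairs and then transport that sum across the bijection $\phi$ of Theorem~\ref{ornbij}. First I would invoke the observation recorded just before the definition of $\Com_{\lambda,j}$: for each $\s \in \S_n$ one has $F_{\Exd(\s),n} = \sum_{s} x_{s_1}\cdots x_{s_n}$, where $s=(s_1\ge\cdots\ge s_n)$ ranges over all $\s$-compatible sequences. Plugging this into
$$Q_{\lambda,j} = \sum_{\substack{\s \in \S_n \\ \exc(\s)=j,\ \lambda(\s)=\lambda}} F_{\Exd(\s),n}$$
and swapping the order of summation rewrites $Q_{\lambda,j}$ as $\sum_{(\s,s)\in\Com_{\lambda,j}} x_{s_1}\cdots x_{s_n}$.

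Next I would verify that $\phi$ is weight preserving, i.e.\ $\wt(\phi(\s,s)) = x_{s_1}\cdots x_{s_n}$ for every $(\s,s)\in\Com_{\lambda,j}$. This is immediate from the construction of $\phi$: the ornament $\phi(\s,s)$ is obtained from the cycle form of $\s$ — in which every element of $[n]$ occurs exactly once — by replacing each entry $i$ by $s_i$ while leaving the bars in place, so the multiset of absolute values of the letters of $\phi(\s,s)$ is exactly $\{s_1,\dots,s_n\}$. Since each $s_i$ is an unbarred positive integer, $\wt(\phi(\s,s)) = \prod_{i=1}^n x_{|s_i|} = \prod_{i=1}^n x_{s_i}$.

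Finally, because Theorem~\ref{ornbij} asserts that $\phi$ is a bijection from $\Com_{\lambda,j}$ onto $\mathfrak R_{\lambda,j}$, reindexing the sum along $\phi$ gives
$$Q_{\lambda,j} = \sum_{(\s,s)\in\Com_{\lambda,j}} \wt(\phi(\s,s)) = \sum_{R\in\mathfrak R_{\lambda,j}} \wt(R),$$
which is the claim (with $w(R)$ in the statement being this weight $\wt(R)$). I do not expect any real obstacle: well-definedness, injectivity and surjectivity of $\phi$ were all handled in Theorem~\ref{ornbij}, and the only remaining content is the essentially trivial bookkeeping that $\phi$ respects weights. The one point worth stating explicitly is that the identity is being asserted for $0\le j\le n-1$, precisely the range on which $\phi$ was defined (outside this range both sides vanish).
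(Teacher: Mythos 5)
Your proof is correct and is exactly the argument the paper intends; Corollary~\ref{ornth} is stated without a separate proof precisely because it follows by expanding $F_{\Exd(\s),n}$ over $\s$-compatible sequences and reindexing along the weight-preserving bijection $\phi$ of Theorem~\ref{ornbij}, as you do.
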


Corollary~\ref{ornth}  has several interesting consequences.  For one thing,
it can be  used  to prove that the Eulerian quasisymmetric functions
$Q_{\lambda,j}$ are actually symmetric (see Section~\ref{secQ}).  It also
has the following useful consequence.

\begin{cor}
\label{dercor} For all $n,j,k$,  $$ Q_{n,j,k} =  h_k
Q_{n-k,j,0}.$$
\end{cor}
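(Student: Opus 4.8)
The plan is to read the identity directly off the ornament model supplied by Corollary~\ref{ornth}. First I would observe that $\fix(\s)$ is the number of $1$-cycles of $\s$, hence the number of parts of $\lambda(\s)$ equal to $1$; grouping the defining sum for $Q_{n,j,k}$ by cycle type then gives
$$Q_{n,j,k} = \sum_{\scriptsize\begin{array}{c}\lambda\vdash n\\ \lambda \text{ has exactly } k \text{ parts equal to } 1\end{array}} Q_{\lambda,j} = \sum_{R} \wt(R),$$
where, by Corollary~\ref{ornth}, $R$ runs over all ornaments with $n$ letters, $j$ bars, and exactly $k$ necklaces of length $1$.

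The main step is to factor this weighted sum according to the length-$1$ necklaces. By Definition~\ref{orndef} a necklace of length $1$ is just a single unbarred letter, and because a constant circular word $(a,a,\dots,a)$ is never primitive, no necklace of length $\ge 2$ is a single repeated letter; in particular the length-$1$ necklaces of $R$ carry no bars. Hence each $R$ above splits uniquely as a multiset union $R = S \uplus R'$ with $S$ a multiset of $k$ one-letter unbarred necklaces and $R'$ an ornament having $n-k$ letters, $j$ bars and no necklace of length $1$, and conversely every such pair $(S,R')$ reassembles to such an $R$. Since $\wt$ is multiplicative over multiset union, this bijection yields
$$Q_{n,j,k} = \Bigl(\sum_{S}\wt(S)\Bigr)\Bigl(\sum_{R'}\wt(R')\Bigr).$$

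To finish I would identify the two factors: the sum over $R'$ is $Q_{n-k,j,0}$ by Corollary~\ref{ornth} again, while the sum over multisets $S$ of $k$ unbarred singletons is $\sum_{1\le i_1\le\dots\le i_k} x_{i_1}\cdots x_{i_k} = h_k$, giving $Q_{n,j,k} = h_k\,Q_{n-k,j,0}$; the degenerate cases ($k>n$, or $n=k$) are covered by the conventions already fixed for $\S_0$ and for $Q_{0,0,0}$. For such a short corollary there is no real obstacle, but the one point I would take a moment over is the clean separation in the middle step — that the parts of size $1$ of the cycle type correspond exactly to the length-$1$ necklaces and that these are automatically unbarred — after which everything is bookkeeping.
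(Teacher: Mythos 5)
Your argument is correct and is exactly the route the paper intends: the corollary is stated immediately after Corollary~\ref{ornth} as a direct consequence, and your decomposition of an ornament into its (automatically unbarred) singleton necklaces and the complementary ornament, with $h_k$ arising as the generating function for $k$-element multisets of positive integers, is the obvious way to read it off. The one sentence about $(a,a,\dots,a)$ never being primitive is harmless but extraneous — the only facts you actually need are that length-$1$ necklaces are unbarred by definition and that removing them from $R$ leaves an ornament with no length-$1$ necklaces, neither of which requires that observation.
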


It follows from Corollary~\ref{dercor} that Theorem~\ref{introsymgenth}
is equivalent to
\begin{equation*} \label{symgen2}
\sum_{n,j \ge 0} Q_{n,j,0} t^j z^n = {1-t  \over H(zt) -tH(z)},
\end{equation*}
which in turn, is equivalent to the recurrence relation
\begin{equation} \label{rr}
Q_{n,j,0} = \sum_{\scriptsize \begin{array}{c}0 \le m \le n-2
\\ j+m-n < i < j \end{array}} Q_{m,i,0}   h_{n-m}.
\end{equation}

\subsection{Step 2: banners} \label{secban}

In order to establish the recurrence relation (\ref{rr}), we  introduce another type
of configuration, closely related to ornaments.

\begin{defn}  \label{bandef} A  {\em banner} is a  word $B$ over alphabet $\mathcal A$   such that  for all $i =1,\dots ,\ell(B)$, \begin{enumerate}
\item if $B(i)$ is barred then $|B(i)| \ge |B(i+1)|$
\item if $B(i)$ is unbarred then $|B(i)| \le |B(i+1)|$ or $i= \ell (B)$,
\end{enumerate}
where $B(i)$ denotes the $i$th letter of $B$ and $\ell(B)$ denotes the length of $B$.     \end{defn}

A {\em Lyndon word} over an ordered alphabet is a  word that is
strictly lexicographically larger than all its circular rearrangements.  A
{\em Lyndon factorization} of a word over an ordered alphabet is a
factorization into a weakly lexicographically  increasing sequence
of  Lyndon words.  It is a result of Lyndon (see  \cite[Theorem~5.1.5]{p}) that every
word has a unique Lyndon factorization.  The {\em Lyndon type} $\lambda(w)$ of a word $w$
is the partition whose parts are the lengths of the words in its
Lyndon factorization.

To apply the theory of Lyndon words to banners, we use the ordering of $\mathcal A$ given in (\ref{alphorder}).  Using this order,  the  banner $B:= \bar 2 2\bar 7 5\bar 7\bar 547 $  has Lyndon factorization   $$\bar 2 2\cdot \bar 7 5 \cdot\bar 7\bar 547 .$$ So the Lyndon type  of $B $ is the partition $(4,2,2)$.

The weight $\wt(B)$ of a banner $B$ is the product of the weights of
its letters, where as before the weight of a letter $a$ is $x_{|a|}$.  For each partition $\lambda$ and nonnegative
integer $j$, let $\mathfrak B_{\lambda,j}$ be the set of banners
with $j$ bars whose Lyndon type is $\lambda$.

\begin{thm} \label{banprop} For each partition $\lambda$ and nonnegative
integer $j$, there is a weight-preserving bijection
$$\psi: \mathfrak B_{\lambda,j} \to \mathfrak R_{\lambda,j}.$$
Consequently, $$Q_{\lambda,j} = \sum_{B \in  \mathfrak B_{\lambda,j}} \wt(B).$$ \end{thm}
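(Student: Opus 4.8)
The plan is to build the bijection $\psi:\mathfrak B_{\lambda,j}\to\mathfrak R_{\lambda,j}$ by running the Lyndon factorization of a banner and then ``curling up'' each Lyndon factor into a circular word, i.e.\ a necklace. Given a banner $B\in\mathfrak B_{\lambda,j}$, first take its unique Lyndon factorization $B=L_1L_2\cdots L_m$ (with respect to the order \eqref{alphorder}), where the $L_i$ are Lyndon words listed in weakly increasing lexicographic order and $\ell(L_i)$ are the parts of $\lambda$. For each factor $L_i$, let $\psi(B)$ contain the circular word $(L_i)$. Since weights only depend on absolute values of letters and $\psi$ merely rearranges letters into circles, $\wt(\psi(B))=\wt(B)$ and the bar count $j$ is preserved; also the type of the resulting multiset of circular words is exactly $\lambda$ by construction. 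The content of the theorem is therefore that (a) each $(L_i)$ is actually a \emph{necklace} in the sense of Definition~\ref{orndef}, (b) the map is injective, and (c) it is surjective.

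For (a), the key observation is that a Lyndon word, being strictly lexicographically larger than all its rotations, has the property that its last letter, read cyclically, is followed by its first letter, and one must check the necklace conditions (1)--(2) hold \emph{at the wrap-around} as well as internally. Internally, conditions (1)--(2) of a banner (Definition~\ref{bandef}) give exactly conditions (1)--(2) of a necklace for positions $1,\dots,\ell(L_i)-1$; the only issue is the bar/absolute-value constraint linking the last letter of $L_i$ back to the first. Here I expect to use Lyndon-ness: if the last letter were unbarred and strictly larger in absolute value than the first, or barred and strictly smaller, one derives a contradiction with $L_i$ being lexicographically maximal among its rotations (a rotation starting just after the offending letter would be larger). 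A length-one factor must consist of a single unbarred letter, again matching the necklace definition; one should check a banner's length-one Lyndon factor cannot be a single barred letter, which follows from condition (1) of Definition~\ref{bandef} applied with $i=\ell(B)$ — a barred letter is never legal in final position unless followed by something smaller, which a single-letter factor is not. Primitivity of $(L_i)$ is automatic since a Lyndon word is never a proper power.

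For injectivity and surjectivity I would argue that $\psi$ is inverted by: given an ornament $R\in\mathfrak R_{\lambda,j}$, for each necklace $N$ in $R$ pick the rotation of $N$ that is the \emph{lexicographically largest}, read it off as a linear word, and concatenate these linear words across the necklaces of $R$ in weakly increasing lexicographic order. I need two things. First, the lexicographically largest rotation of a necklace is a Lyndon word — this is where primitivity is essential (so there is a \emph{unique} largest rotation), and one must check it is genuinely Lyndon, i.e.\ strictly larger than \emph{all} rotations, using primitivity. Second, concatenating Lyndon words in weakly increasing order yields a word whose Lyndon factorization is exactly that list — this is the uniqueness half of Lyndon's theorem (\cite[Theorem~5.1.5]{p}), so the inverse map lands back in $\mathfrak B_{\lambda,j}$ provided the concatenation is a banner; the banner conditions for the concatenation follow from the necklace conditions on each factor together with the fact that at the junction between factor $L_i$ and $L_{i+1}$ the last letter of $L_i$ is the largest letter of its necklace in a suitable sense — this junction check is the one genuinely fiddly point, and it is essentially the mirror of the wrap-around check in (a). The final sentence of the theorem, $Q_{\lambda,j}=\sum_{B\in\mathfrak B_{\lambda,j}}\wt(B)$, is then immediate from Corollary~\ref{ornth} together with the weight-preserving bijection $\psi$.

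The main obstacle I anticipate is not any single step but the careful bookkeeping around the cyclic/linear interface: verifying that ``cut a necklace at its lexicographically largest rotation'' produces a Lyndon word and, conversely, that the junctions between consecutive Lyndon factors of a banner satisfy the banner inequalities. Both reduce to comparing a letter with the first letter of the next Lyndon block versus the cyclic successor inside the current block, and the bicolored order \eqref{alphorder} (where $k<\bar k<k+1$) has to be used consistently. Once the wrap-around lemma for Lyndon words over $\mathcal A$ is isolated and proved, the rest is routine.
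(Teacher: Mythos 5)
Your strategy --- Lyndon-factorize the banner, curl each factor into a circle, and invert by cutting each necklace at its lexicographically largest rotation and concatenating in weakly increasing order --- is exactly the paper's, down to the same checkpoints. But the justification of the wrap-around check, the very point you flag as fiddly, contains an error. You assert that if the last letter of a factor $L_i$ were barred and strictly smaller in absolute value than the first letter, one would contradict $L_i$ being lexicographically maximal among its rotations. That is not so: for instance $\bar 3\,\bar 1$ is Lyndon in the order (\ref{alphorder}) (it beats its only rotation $\bar 1\,\bar 3$), and its one internal step $\bar 3\to\bar 1$ satisfies Definition~\ref{bandef}; yet its last letter $\bar 1$ is barred with value $1<3$, so the circular word $(\bar 3\,\bar 1)$ fails condition (1) of Definition~\ref{orndef}. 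Lyndon-ness by itself does not exclude a barred last letter.

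What actually rules this out, and what the paper proves, is that the last letter of each Lyndon factor $B_i$ of a banner $B$ is unbarred, and this uses the ambient banner, not just Lyndon-ness. The last letter of $B_k$ is the last letter of $B$, hence unbarred by Definition~\ref{bandef}. For $i<k$ with $\ell(B_i)\ge 2$, the last letter of $B_i$ is strictly less, in the order (\ref{alphorder}), than the first letter of $B_i$ (a Lyndon property), which in turn is at most the first letter of $B_{i+1}$; so the last letter of $B_i$ is strictly less than the letter that follows it inside $B$, which is incompatible with it being barred, since a barred letter in a banner must be followed by a letter of no greater value. Once unbarredness is in hand, the wrap-around of $(B_i)$ is automatic because the first letter of a Lyndon word is maximal. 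Your treatment of length-one factors has a related flaw: you invoke ``condition (1) with $i=\ell(B)$,'' tacitly assuming a single-letter factor sits at the end of $B$, but a weakly increasing Lyndon factorization does not put single letters only at the end; here too the correct argument goes through the banner condition at the junction with $B_{i+1}$ (and, if $B_i=B_{i+1}$, by iterating until one reaches $B_k$, whose last letter is that of $B$). With these repairs, the remaining pieces of your outline --- primitivity, uniqueness of the Lyndon factorization for the inverse, and the banner check at junctions in the reverse direction --- go through as you describe.
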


\begin{proof} First note that there is a natural weight-preserving bijection from the set of Lyndon banners  of length $n$ to the set of necklaces of size $n$.  To go from a Lyndon banner $B$ to a necklace $(B)$ simply  attach  the ends of  $B$ so that the left end follows the right end when read in a clockwise direction.    To go from a necklace back to a banner simply find the lexicographically largest linear word obtained by reading the circular word in a clockwise direction.
The number of bars of $B$  clearly is the  same as that of $(B)$.

Let $B \in \mathfrak B_{\lambda,j} $ and let
$$B = B_1 \cdot B_2 \cdots B_k$$ be the unique Lyndon factorization of $B$.  Note that each $B_i$ is a  Lyndon  banner.  To see this we need only check that the last letter of each $B_i$ is unbarred.  The last letter of $B_k$ is the last letter of $B$; so it is clearly unbarred.  For $i < k$, the  last letter of $B_i$ is strictly less than the first letter of $B_i$, which is less than or equal to the first letter of $B_{i+1}$.  Since the last letter of $B_i$ immediately precedes the first letter of $B_{i+1}$ in the banner $B$,  it must be unbarred.

 Now define
$\psi(B)$ to be the ornament whose necklaces are $$(B_1), (B_2), \dots, (B_k).$$ This map is clearly weight preserving, type preserving, and bar preserving.   To go from an ornament back to a banner simply arrange the Lyndon banners obtained from the necklaces in the ornament in weakly increasing order and then concatenate.
\end {proof}

\subsection{Step 3: the recurrence relation} \label{recsec}
Define a {\em marked sequence} $(\omega,i)$  to be a weakly
increasing finite sequence $\omega$ of positive integers together
with an integer  $i$ such that $1 \le i \le \mbox
{length}(\omega)-1$.  (One can visualize this as a weakly increasing sequence with a mark above any of its elements except the last.) For $n \ge 2$, let $\mathfrak M_n$ be the set of marked
sequences of length $n$.  For $n \ge 0$,  let ${\mathfrak B}^0_n$ be the set
of banners of length $n$ whose Lyndon type has no parts of size
$1$.    It will be convenient to consider the empty word to be a banner of length $0$, weight $1$, with no bars, and whose Lyndon type is the partition of $0$ with no parts.  So ${\mathfrak B}^0_0$  consists of a single element,  namely the empty word.  Note that ${\mathfrak B}^0_1$ is the empty set.

Theorem~\ref{banprop} and
Theorem~\ref{bij} below are all that is needed to establish the
recurrence relation (\ref{rr}), which we have shown is equivalent to
Theorem~\ref{introsymgenth}.

\begin{thm} \label{bij} For all $n \ge 2 $, there is a bijection
$$\gamma:  {\mathfrak B}^0_{n} \to \biguplus_{0 \le m\le n-2}
{\mathfrak B}^0_{m} \times \mathfrak M_{n-m},$$ such that  if
$\gamma(B) = (B^\prime,(\omega,b)) $ then \bq \label{wteq} \wt(B) =
\wt(B^\prime)\wt(\omega)\eq and \bq \label{bbbeq} \bbar(B) = \bbar(B^\prime) + b,\eq
where $\bbar(B)$ denotes the number of bars of $B$. \end{thm}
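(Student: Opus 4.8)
The plan is to build the bijection $\gamma$ by analyzing how the Lyndon factorization of a banner $B \in \mathfrak B^0_n$ interacts with its last letter. Since $B$ has Lyndon type with no part of size $1$, its Lyndon factorization $B = B_1 \cdots B_k$ has every factor of length $\ge 2$, and (as in the proof of Theorem~\ref{banprop}) each $B_i$ is itself a Lyndon banner, so its last letter is unbarred. First I would isolate the final Lyndon factor $B_k$ and think of it as the piece that will produce the marked sequence. The idea is that $B_k$, being a Lyndon banner of length $\ell \ge 2$, should correspond to a marked sequence $(\omega,b)$ of the same length via some canonical encoding; the remaining prefix $B_1 \cdots B_{k-1}$ is again a banner whose Lyndon type has no parts of size $1$ (it is already presented in its Lyndon factorization, since the factors are weakly increasing), so it lies in $\mathfrak B^0_m$ with $m = n - \ell \le n-2$. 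This immediately gives the target decomposition $\biguplus_{0 \le m \le n-2} \mathfrak B^0_m \times \mathfrak M_{n-m}$, and the weight equation (\ref{wteq}) will be automatic because the encoding $B_k \mapsto (\omega,b)$ should be weight-preserving (the multiset of absolute values of letters of $B_k$ equals the multiset of entries of $\omega$), while (\ref{bbbeq}) will hold if the mark position $b$ is defined to record exactly the number of bars in $B_k$.

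The key step, then, is to describe the weight-preserving bijection between Lyndon banners of length $\ell \ge 2$ and marked sequences of length $\ell$ (i.e., weakly increasing sequences $\omega$ of length $\ell$ together with a mark $b$ with $1 \le b \le \ell - 1$), such that the number of bars of the banner equals $b$. In one direction, given a Lyndon banner $L = a_1 \cdots a_\ell$, I would let $\omega$ be the weakly increasing rearrangement of $|a_1|, \dots, |a_\ell|$ and let $b = \bbar(L)$; the constraint $1 \le b \le \ell-1$ must be checked — the last letter is unbarred so $b \le \ell - 1$, and I expect the Lyndon (primitivity) condition together with the banner conditions (1),(2) to force at least one barred letter, giving $b \ge 1$. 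In the other direction, given $(\omega, b)$ I must reconstruct the unique Lyndon banner with those absolute values and exactly $b$ bars; this is where the banner rules (a barred letter is followed by something $\le$ it in absolute value, an unbarred one by something $\ge$ it) pin down, for each multiset of positions of a fixed value, which copies are barred and in what cyclic order the values are arranged, and the Lyndon condition then selects the unique linear representative. This reconstruction is essentially the content of Stembridge's bijection referenced in Section~\ref{recsec}, adapted to the bicolored setting, and showing it is well-defined and inverse to the forward map is the main obstacle.

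I would then assemble the global map: $\gamma(B) = (B_1 \cdots B_{k-1}, (\omega, b))$ where $(\omega,b)$ is the image of the last Lyndon factor $B_k$ under the local bijection. To check $\gamma$ is a bijection I verify: (i) $B_1 \cdots B_{k-1} \in \mathfrak B^0_m$ — it is a banner (concatenation of banners whose junction letters obey the rules since this was already true inside $B$), its displayed factorization is weakly lexicographically increasing hence is the Lyndon factorization, and all parts have size $\ge 2$; (ii) the inverse takes $(B', (\omega,b))$, decodes $(\omega,b)$ to a Lyndon banner $L$ of length $n - \ell(B') \ge 2$, and appends it — one must confirm that $L$ is lexicographically $\ge$ the last Lyndon factor of $B'$ so that $B' \cdot L$ has the correct Lyndon factorization; this last point needs a small argument, possibly requiring the marked-sequence data to also remember enough to force the right lexicographic placement, or an observation that any Lyndon word of length $\ge 2$ can legitimately be appended and the factorization re-sorted. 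The weight identity (\ref{wteq}) and the bar identity (\ref{bbbeq}) follow immediately from the corresponding properties of the local bijection together with $\wt(B) = \wt(B_1\cdots B_{k-1})\,\wt(B_k)$ and $\bbar(B) = \bbar(B_1 \cdots B_{k-1}) + \bbar(B_k)$.
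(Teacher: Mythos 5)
Your plan cannot work because the ``key step''---a weight-preserving bijection between Lyndon banners of length $\ell$ with $j$ bars and marked sequences of length $\ell$ with mark $j$---does not exist once $\ell\ge 4$. If it did, then by Theorem~\ref{banprop} we would get $Q_{(\ell),j}=h_\ell$ for every $1\le j\le \ell-1$, since the weight generating function of all marked sequences of length $\ell$ with a fixed mark is exactly $h_\ell$. This already fails at $\ell=4$, $j=2$: by Proposition~\ref{dimvprop}, the coefficient of $x_1x_2x_3x_4$ in $Q_{(4),2}$ is the number of $4$-cycles with two excedances, namely $4$, whereas that coefficient in $h_4$ is $1$. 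Concretely, $\bar 4\,\bar 3\,1\,2$, $\bar 4\,\bar 2\,1\,3$, $\bar 4\,1\,\bar 3\,2$ and $\bar 4\,2\,\bar 3\,1$ are four distinct Lyndon banners of length $4$ with two bars, all with absolute-value multiset $\{1,2,3,4\}$, but there is only one marked sequence $((1,2,3,4),2)$. The data ``(sorted absolute values, number of bars)'' loses information and cannot be inverted, and this is not a technicality to be fixed by ``Stembridge's bijection''---the counts genuinely disagree.

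There is a second, independent obstruction that you flagged but underestimated: even given some encoding of the removed piece, appending a Lyndon banner $L$ to $B'=B_1\cdots B_{k-1}$ need not yield a banner at all; for example $\bar 5\,4\,\bar 2\,1$ violates Definition~\ref{bandef} at position $2$, although $\bar 5\,4\in\mathfrak B^0_2$ and $\bar 2\,1$ is a Lyndon banner. The paper avoids both problems by replacing the Lyndon factorization with the increasing factorization of D\'esarm\'enien and Wachs (Proposition~\ref{deswa}) and, crucially, by extracting only a carefully chosen portion of the final increasing factor rather than the whole factor: in one case the last factor is absorbed entirely into the marked sequence, and in the other it is merely shortened, leaving a modified last factor behind. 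The inverse map correspondingly either creates a fresh last factor or grows the existing one, and it is exactly this dichotomy that makes the map well-defined and surjective. Nothing of the sort is available if you commit to peeling off an entire Lyndon factor.
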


We will make use of another type of factorization of a
word over an ordered alphabet
used by D\'esarm\'enien and Wachs \cite{dw}.   For any alphabet $A$, let
$A^+$ denote the set of words over $A$ of positive length.

\begin{defn}[\cite{dw}]  An {\em increasing} factorization of a word $w$ of positive length over a
totally ordered alphabet $A$ is a factorization
$w=w_1 \cdot w_2\cdots  w_k$ such that
\begin{enumerate}
\item each $w_i$ is of the form $a_i^{j_i} u_i$, where $a_i \in A$, $j_i > 0$ and $$u_i \in \{x \in A : x < a_i\}^+$$
\item $a_1 \le a_2 \le \dots \le a_k$.
\end{enumerate}
\end{defn}

For example, $87\cdot 8866\cdot 995587 \cdot 95$ is an increasing factorization of
the word $w:= 87886699558795$ over the totally ordered alphabet of positive integers.
 Note that this factorization is different from the Lyndon factorization of $w$, which is   $87\cdot 8866\cdot 99558795$

 \begin{prop}[{\cite[Lemmas 3.1 and  4.3 ]{dw}}]  \label{deswa} A word over an ordered alphabet admits an increasing factorization if and only if its Lyndon type has no parts of size 1.   Moreover,  the increasing factorization is unique.
 \end{prop}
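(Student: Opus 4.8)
The plan is to prove the ``if and only if'' and uniqueness together by relating increasing factorizations directly to Lyndon factorizations. First I would establish the easy direction: if $w$ admits an increasing factorization $w = w_1 \cdot w_2 \cdots w_k$ with $w_i = a_i^{j_i} u_i$ as in the definition, then I claim each block $w_i$ has Lyndon type with no part of size $1$, and moreover the concatenation preserves this property. The point is that a single block $a^j u$ with $u \in \{x : x < a\}^+$ is either a Lyndon word itself (when $j=1$ and $a u$ is already lexicographically maximal among its rotations) or, more generally, its Lyndon factorization consists of words each of which begins with the letter $a$ and contains at least one strictly smaller letter — hence has length at least $2$. Since $a_1 \le a_2 \le \dots \le a_k$ and each $w_i$ begins with $a_i$ and all later letters within the concatenation that equal or exceed $a_i$ only occur at the starts of subsequent blocks, one checks that the Lyndon factorization of the whole word $w$ refines the block decomposition: no Lyndon factor can straddle a block boundary in a way that produces a length-$1$ factor, because a length-$1$ Lyndon factor would be a letter weakly larger than the letter following it, and the structure $a_i^{j_i} u_i$ with $u_i$ strictly below $a_i$ rules this out except possibly at the very end, where the last letter of $w$ has no successor. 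I would make this precise by induction on $k$.

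For the converse and uniqueness, I would argue contrapositively and constructively. Suppose the Lyndon type of $w$ has no part of size $1$; I want to produce the (unique) increasing factorization. The natural approach is to read $w$ left to right and cut a new block precisely at each position $i$ where $w_i \le w_{i+1}$ \emph{and} $w_i$ is the first letter of a new ``descent run'' — more carefully, I would define the block boundaries via the Lyndon factorization: given the Lyndon factorization $w = \ell_1 \cdot \ell_2 \cdots \ell_r$ into weakly increasing Lyndon words, each $\ell_m$ has length $\ge 2$ and begins with its own lexicographically largest letter; I then merge consecutive Lyndon factors that begin with the same first letter into a single increasing-factorization block, and verify the resulting blocks have the form $a^j u$ with $u$ strictly below $a$. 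The key lemma here is that a Lyndon word of length $\ge 2$ over an ordered alphabet has the shape $a \cdot v$ where $a$ is its maximum letter (appearing possibly several times at the front) followed by letters strictly smaller; concatenating Lyndon words with equal maximal first letter in weakly lexicographic order again yields such a shape. Uniqueness then follows because both the Lyndon factorization and the merging rule are canonical.

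The main obstacle I expect is the bookkeeping in the converse direction: correctly characterizing which runs of equal first letters across consecutive Lyndon factors must be merged, and checking that after merging, condition (1) of the increasing factorization definition (namely $u_i \in \{x : x < a_i\}^+$, i.e.\ $u_i$ is nonempty and strictly below $a_i$) holds on the nose — in particular that $u_i$ is genuinely nonempty, which is exactly where the ``no parts of size $1$'' hypothesis is used, since a Lyndon factor of length $1$ would force an empty $u_i$. I would also need to double-check the boundary behavior at the end of $w$, where the relaxed condition ``$|B(i)| \le |B(i+1)|$ or $i = \ell(B)$'' from the banner setting interacts with the factorization; but since Proposition \ref{deswa} is stated for arbitrary words over an ordered alphabet rather than banners specifically, this is just the standard Désarménien--Wachs argument, and I would cite \cite[Lemmas 3.1 and 4.3]{dw} for the details while sketching the correspondence above. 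Since the paper explicitly attributes this to \cite{dw}, a short proof that reconstructs the bijection between increasing factorizations and suitably grouped Lyndon factorizations, with the existence/uniqueness falling out, is all that is needed.
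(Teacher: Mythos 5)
Your reconstruction of the converse direction rests on a false structural claim about Lyndon words, and the construction it leads to goes in the wrong direction. You assert that a Lyndon word of length $\ge 2$ (lexicographically largest among its rotations, in this paper's convention) has the shape $a^j u$ with $a$ its maximum letter and every letter of $u$ strictly below $a$. This fails: with $b>a$, the word $bbaba$ is Lyndon, yet after the initial run $bb$ the tail $aba$ still contains a $b$. Likewise $ba\cdot bba = babba$ is a weakly increasing concatenation of Lyndon words sharing the first letter $b$, but it is not of the form $b^j u$ with $u$ strictly below $b$ --- its unique increasing factorization is $ba\cdot bba$ itself, not the merge. So both the stated ``key lemma'' and the ``merge consecutive Lyndon factors with equal first letter'' rule break down.

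The deeper issue is that the relationship between the two factorizations runs opposite to what you propose. The paper's own worked example just above the Proposition has $w = 87886699558795$ with Lyndon factorization $87\cdot 8866\cdot 99558795$ but increasing factorization $87\cdot 8866\cdot 995587\cdot 95$: the Lyndon factor $99558795$ must be \emph{split}, not merged, while your rule would fuse $87$ and $8866$ (both starting with $8$) into $878866$, which is not a legal block since $78866$ contains an $8$. In fact each block $a_i^{j_i}u_i$ of an increasing factorization is itself a Lyndon word, so the increasing factorization \emph{refines} the Lyndon factorization rather than coarsening it; the block boundaries are exactly the positions that are simultaneously strict ascents and weak left-to-right maxima of the letter sequence, and the ``no parts of size $1$'' hypothesis is what guarantees the final block retains a nonempty tail $u_k$. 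Since the paper gives no proof of this Proposition and simply cites \cite[Lemmas 3.1 and 4.3]{dw}, there is no in-paper argument to compare against, but the sketch you offer would not survive being made precise.
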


\begin{proof}[Proof of Theorem~\ref{bij}]     Given a banner $B$ in $ {\mathfrak B}^0_{n} $, take its unique increasing factorization $$B=B_1 \cdot B_2 \cdots B_k,$$ whose existence is guaranteed  by Proposition~\ref{deswa}.    We will extract an increasing word from $B_k$.
We have
$$B_k= a^p i_1\cdots i_l,$$
where $p,l \ge 1$, and $a > i_1,i_2,\dots,i_l.$  It follows from the definition of banner that $a$ is a barred letter.  Determine the unique index $r$ such that $i_1\ge \dots \ge i_{r-1}$ are barred, while $i_{r} $ is unbarred.     Then let $s $ be  the unique index that satisfies  $r \le s \le l$ and either
 \begin{enumerate}  \item   $i_r \le i_{r+1} \le \dots \le i_{s}$ are all unbarred and less than $i_{r-1}$ (note that $i_s$ can be equal to $i_{r-1}$ in absolute value), while $i_{s+1} > i_{r-1}$ if $s <l$, or
 \item $i_r \le i_{r+1} \le \dots \le i_{s-1}$  are all unbarred and less  than  $i_{r-1}$, and $i_s$ is barred and  less than or equal to $i_{r-1}$.
 \end{enumerate}

  {\bf Case 1: } $s= l$.  In this case (1) must hold since the last letter of a banner is unbarred.  Let $\omega$ be the weakly increasing rearrangement of $B_k$ with bars removed and let
\bq \label{case1} B^\prime = B_1 \cdot B_2 \cdots B_{k-1}.\eq
  To see that $B^\prime$ is a banner, one need only note that the last letter of $B_{k-1}$ is unbarred (as is the last letter of each $B_i$).  Since (\ref{case1}) is an increasing factorization of $B^\prime$, it follows from Proposition~\ref{deswa} that the Lyndon type of $B^\prime$ has no parts of size 1.  Let $$\gamma(B) = (B^\prime, (\omega,b)),$$ where $b$ is the number of bars of $B_k$.   Clearly $p \le b < p+l $; so $(\omega,b) $
  is a marked sequence for which (\ref{wteq}) and (\ref{bbbeq}) hold.
  For example, if
  $$B=\bar 2\bar 2\bar 2 1 \cdot \bar 5 22 \bar 4 2\cdot  \bar 8\bar 8 \bar 7 \bar 5 2235.$$
  then $a=\bar 8$, $p=2$, $ r= 3$ and $s= 6 = l$.  It follows that $$(\omega, b) = (22355788, 4) \mbox{ and }
  B^\prime = \bar 2\bar 2\bar 2 1 \cdot \bar 5 22 \bar 4 2.$$

{\bf  Case 2: }  $s < l$. In this case either (1) or (2) can hold.  Let $b$ be the number  of  bars of $i_1,i_2, \dots, i_s$.  Clearly $b <s$.   If (1) holds then $r>1$ since $i_{s+1} > i_{r-1}$; so $b>0$.   If (2) holds  clearly $b > 0$.   Let $\omega$ be the increasing rearrangement of $i_1,i_2, \dots, i_s$ with bars removed,  let $B_k^\prime =
 a^p i_{s+1} \cdots i_{l}$, let
 \bq \label{case2} B^\prime =  B_1 \cdot B_2 \cdots B_{k-1} \cdot B^\prime_k ,\eq
  and let $$\gamma(B) = (B^\prime, (\omega,b)).$$  Clearly  $B^\prime $ is a banner with increasing factorization given by (\ref{case2})  and $(\omega,b) $
 is a marked sequence for which (\ref{wteq}) and (\ref{bbbeq}) hold.
  For example, if
  $$B=\bar 2\bar 2\bar 2 1 \cdot \bar 5 22 \bar 4 2\cdot  \bar 8\bar 8 \bar 7 \bar 5 2235\bar 6 24$$ then $a=\bar 8$, $p=2$, $ r= 3$, and $s = 6 < l$.  Hence
  $$(\omega, b) = (223557, 2) \mbox{ and }
  B^\prime = \bar 2\bar 2\bar 2 1 \cdot \bar 5 22 \bar 4 2 \cdot \bar 8 \bar 8 \bar 6 24.$$
   If
  $$B=\bar 2\bar 2\bar 2 1 \cdot \bar 5 22 \bar 4 2\cdot  \bar 8\bar 8 \bar 7 \bar 5 223\bar 5 4\bar 6 24$$ then $a=\bar 8$, $p=2$, $ r= 3$, and $s = 6 < l$.  Hence
  $$(\omega, b) = (223557, 3) \mbox{ and }
  B^\prime = \bar 2\bar 2\bar 2 1 \cdot \bar 5 22 \bar 4 2 \cdot \bar 8 \bar 8 4\bar 6 24.$$

 In order to prove that the map $\gamma$  is a bijection, we describe its inverse.
 Let  $ (B^\prime, (\omega,b) ) \in {\mathfrak B}^0_{m} \times \mathfrak M_{n-m}$, where
$0 \le m\le n-2$.
Let
  $$B^\prime = B_1 \cdot B_2 \cdots B_{k-1}$$
  be the unique increasing factorization of $B^\prime$,  whose existence is guaranteed by Proposition~\ref{deswa}, and
  let $a$ be the largest letter of $B_{k-1}$.  We also let $\omega_i$ denote the $i$th letter of $\omega$.

 {\bf Case 1: } $|a| \le \omega_{n-m}$.  Let
 $$B_{k} = \bar{\omega}_{n-m} \cdots \bar{\omega}_{n-m-b+1} \omega_1 \cdots \omega_{n-m-b}.$$
 Clearly $B_{k}$ is a  banner with exactly $b$ bars that are placed on a rearrangement of $\omega$.  Now let
 $$B = B_1 \cdot B_2 \cdots B_{k-1} \cdot B_{k}.$$  It is easy to see that this is an increasing decomposition of a banner and that equations (\ref{wteq}) and (\ref{bbbeq}) hold.

 {\bf Case 2: } $|a| > \omega_{n-m}$.  In this case we expand the banner $B_{k-1}$ by inserting the letters of $\omega$ in the following way.  Suppose
 $$B_{k-1} = a^p j_1\cdots j_l,$$
  where $p,l \ge 1$, and $a > j_i$ for all $i$.
 If $j_1> \bar{\omega}_{n-m-b+1}$  let $$\tilde B_{k-1} = a^p  \bar{\omega}_{n-m} \cdots \bar{\omega}_{n-m-b+1} \omega_1 \cdots \omega_{n-m-b}  j_1,\dots,j_l.$$  Otherwise if $j_1\le \bar \omega_{n-m-b+1}$ let
 \beq\tilde B_{k-1} = a^p  \bar{\omega}_{n-m} \cdots \bar{\omega}_{n-m-b+2} \omega_1 \cdots\omega_{n-m-b}\ \bar{\omega}_{n-m-b+1}  j_1,\dots,j_l.\eeq
 In both cases  $\tilde B_{k-1}$ is a banner.  Now let
 $$B = B_1 \cdot B_2 \cdots  B_{k-2} \cdot \tilde B_{k-1}.$$  It is easy to see that this is an increasing decomposition of a banner and that equations (\ref{wteq}) and (\ref{bbbeq}) hold.
It is also easy to check that the map $( B^\prime, (\omega,b)) \mapsto B$ is the inverse of $\gamma$.
\end{proof}

\begin{remark} The bijection $\gamma$ when restricted to banners with distinct letters (permutations) reduces to   a
bijection that Stembridge
\cite{stem1} constructed to study the representation of the
symmetric group on the cohomology of the toric variety assoiciated
with the type A Coxeter complex (see Section \ref{repthsec}).    For words with distinct letters the notion of  decreasing decomposition coincides with the notion of Lyndon decomposition.    In Stembridge's work the Lyndon decomposition corresponds to the cycle decomposition of a permutation.  Although the term ``marked sequence'' is borrowed  from Stembridge's paper, he  defines the term differently from the way in which we do.  However there is a close connection between his marked sequences and ours.
\end{remark}

\begin{remark} In Section~\ref{othersec} we discuss a connection, pointed out to us by Richard Stanley,  between banners and   words with no adjacent repeats.  This connection can be used to provide an alternative to Step 3 in our proof of Theorem~\ref{introsymgenth}.\end{remark}

\section{Alternative formulations}
In this section we present some equivalent formulations of 
Theorem~\ref{introsymgenth} and some immediate consequences.

\begin{cor}[of Theorem~\ref{introsymgenth}]  Let $Q_n(t,r) = \sum_{j,k \ge 0} Q_{n,j,k}\, t^j \,r^k$. Then $Q_n(t,r)$ satifies the following recurrence relation:
\begin{equation} \label{altrecrel} Q_n(t,r) = r^nh_n + \sum_{k=0}^{n-2} Q_k(t,r) h_{n-k} t [n-k-1]_t. \end{equation}
\end{cor}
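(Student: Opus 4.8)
The plan is to read the recurrence off directly from the second form (\ref{introsymgenth2}) of the generating function in Theorem~\ref{introsymgenth}. Set $\mathcal Q(z):=\sum_{n\ge 0}Q_n(t,r)\,z^n$, regarded as a formal power series in $z$ with coefficients in the ring of symmetric functions with $t$ and $r$ adjoined. By the definition of $Q_n(t,r)$ and Theorem~\ref{introsymgenth},
\[
\mathcal Q(z)=\frac{H(rz)}{1-\sum_{m\ge 2}t[m-1]_t h_m z^m}.
\]
The denominator on the right has constant term $1$, so it is invertible in this ring of formal power series, and the identity may be cleared of denominators to give
\[
\mathcal Q(z)\Bigl(1-\sum_{m\ge 2}t[m-1]_t h_m z^m\Bigr)=H(rz)=\sum_{n\ge 0}h_n r^n z^n.
\]

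Next I would compare coefficients of $z^n$ on the two sides. The coefficient on the left is $Q_n(t,r)-\sum_{m=2}^{n}t[m-1]_t h_m\,Q_{n-m}(t,r)$ and the coefficient on the right is $r^n h_n$; solving for $Q_n(t,r)$ and substituting $k:=n-m$ (so that $m=2,\dots,n$ becomes $k=n-2,\dots,0$, with $h_m=h_{n-k}$ and $[m-1]_t=[n-k-1]_t$) produces exactly (\ref{altrecrel}). The instances $n=0,1$ are consistent with $Q_0(t,r)=1$, which is immediate from $Q_{0,0,0}=1$, and with $Q_1(t,r)=rh_1$, which is immediate since $\mathfrak S_1$ consists of the identity, having no excedances, one fixed point, and empty $\Exd$.

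There is no real obstacle here; the only points deserving a remark are that the manipulation takes place in the completed ring, where the denominator, having constant term $1$, is invertible, and that one should work with the form (\ref{introsymgenth2}) rather than (\ref{introsymgenth1}) --- it is the one whose denominator is already a power series in $z$ with exactly the coefficients $t[m-1]_t h_m$ that appear in (\ref{altrecrel}). Equivalently one could first divide $1-t$ into $H(zt)-tH(z)$ to pass from (\ref{introsymgenth1}) to (\ref{introsymgenth2}), but that equivalence is already recorded in the statement of Theorem~\ref{introsymgenth}.
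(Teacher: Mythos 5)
Your proof is correct and is essentially the same argument as the paper's: both reduce the recurrence to the product identity $\mathcal Q(z)\bigl(1-\sum_{m\ge 2}t[m-1]_t h_m z^m\bigr)=H(rz)$ and compare coefficients of $z^n$. The only difference is direction of presentation --- you work forward from (\ref{introsymgenth2}) to (\ref{altrecrel}), while the paper rewrites (\ref{altrecrel}) as $\sum_{k=0}^n Q_k(t,r)h_{n-k}t[n-k-1]_t=-r^nh_n$ (using $t[-1]_t=-1$ and $[0]_t=0$) and then recognizes the generating function identity.
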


\begin{proof}
The recurrence relation is equivalent to
$$\sum_{k=0}^{n} Q_k(t,r) h_{n-k} t [n-k-1]_t = -r^n h_n.$$ Taking the generating function
we have
$$\left(\sum_{n\ge 0} Q_n(t,r) z^n \right ) \left( \sum_{n\ge 0}h_n t[n-1]_t z^n \right ) =- H(rz).$$  The result follows from this.
\end{proof}

The right hand side of (\ref{introsymgenth1}) is the Frobenius characteristic of a graded permutation representation that Stembridge \cite{stem1} described in terms of $\sg_n$ acting on ``marked words".
From his work we were led to the following formula, which can easily be proved by showing that the right hand side satisfies the recurrence relation (\ref{altrecrel}).

 \begin{cor} \label{formQcor} For all $n \ge 0$, \begin{equation} \label{formQ} Q_n(t,r)=\sum_{m = 0}^{\lfloor {n \over 2} \rfloor}\,\, \!\!\!\!\sum_{\scriptsize
\begin{array}{c} k_0\ge 0  \\ k_1,\dots, k_m \ge 2 \\ \sum k_i = n
\end{array}}
\!\!\!\!\!\!\! r^{k_0}h_{k_0}
\prod_{i=1}^m h_{k_i} t [k_i-1]_{t}.\end{equation}
 \end{cor}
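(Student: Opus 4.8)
The plan is to prove Corollary~\ref{formQcor} exactly as the authors suggest: by verifying that the right-hand side of (\ref{formQ}) satisfies the recurrence relation (\ref{altrecrel}), together with agreement at small $n$. Write $R_n(t,r)$ for the right-hand side of (\ref{formQ}). Since (\ref{altrecrel}) together with the initial data $Q_0(t,r)=1$ (or just the $r^nh_n$ term handling the degenerate cases) determines $Q_n(t,r)$ uniquely for all $n$, it suffices to show that $R_n$ obeys the same recurrence and the same base cases.

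First I would dispose of the base cases: for $n=0$ the only term in (\ref{formQ}) is $m=0$, $k_0=0$, giving $R_0 = r^0 h_0 = 1$, and for $n=1$ the only term is $m=0$, $k_0=1$, giving $R_1 = r h_1$, matching $Q_1(t,r)=r h_1$. Then for $n\ge 2$ I would substitute the expression for $R_k$ into the right-hand side of (\ref{altrecrel}) and show it collapses to $R_n$. Concretely, the plan is to split the sum defining $R_n$ according to the value of $m$. The $m=0$ term of $R_n$ is $r^n h_n$, which is exactly the first term on the right-hand side of (\ref{altrecrel}). For $m\ge 1$, in any admissible tuple $(k_0,k_1,\dots,k_m)$ with $k_1,\dots,k_m\ge 2$, peel off the \emph{last} part $k_m$: write $k_m = n-k$ where $k = k_0 + k_1 + \dots + k_{m-1}$, so $2\le n-k$, i.e.\ $0\le k\le n-2$, and the remaining data $(k_0,\dots,k_{m-1})$ with $m-1$ parts of size $\ge 2$ is exactly an admissible tuple for $R_k$. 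The factor contributed by $k_m$ is $h_{n-k}\,t\,[n-k-1]_t$, which is precisely the factor appearing in (\ref{altrecrel}). Summing over all choices of $k$ and all admissible tuples for the first $k$ letters reproduces $\sum_{k=0}^{n-2} R_k(t,r)\, h_{n-k}\, t\,[n-k-1]_t$. Hence $R_n = r^n h_n + \sum_{k=0}^{n-2} R_k h_{n-k} t[n-k-1]_t$, which is (\ref{altrecrel}) with $R$ in place of $Q$, and induction on $n$ finishes the proof.

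The only genuinely delicate bookkeeping point — which I expect to be the main (if modest) obstacle — is making sure the ranges of summation match up perfectly under the "peel off the last part" bijection: one must check that as $k$ ranges over $0,\dots,n-2$ and $m$ ranges appropriately, the pair $(k,m)$ together with an admissible $(k_0,\dots,k_{m-1})$ for degree $k$ and the single extra part $k_m=n-k$ biject with the admissible tuples for $R_n$ with $m\ge 1$; in particular the constraint $m\le\lfloor n/2\rfloor$ on the $R_n$ side corresponds correctly to $m-1\le\lfloor k/2\rfloor$ on the $R_k$ side given $n-k\ge 2$. This is a routine but necessary verification. Everything else is formal manipulation, so I would keep the write-up short, essentially just presenting the generating-function form of the recurrence as the authors do: equivalently, $\bigl(\sum_{n\ge0}R_n(t,r)z^n\bigr)\bigl(1-\sum_{n\ge 2} h_n t[n-1]_t z^n\bigr) = H(rz)$, which is immediate from the factored shape of (\ref{formQ}) since $\sum_{n\ge0}R_n z^n = H(rz)\cdot\sum_{m\ge0}\bigl(\sum_{n\ge2}h_n t[n-1]_t z^n\bigr)^m = H(rz)\big/\bigl(1-\sum_{n\ge2}h_n t[n-1]_t z^n\bigr)$, and then invoke the corollary preceding it to identify this with $\sum_n Q_n(t,r)z^n$.
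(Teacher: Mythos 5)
Your proposal is correct and is exactly the argument the paper has in mind: the paper states that the formula "can easily be proved by showing that the right hand side satisfies the recurrence relation (\ref{altrecrel})," and your peel-off-the-last-part computation (equivalently, the generating-function factorization $\sum_n R_n z^n = H(rz)/(1-\sum_{n\ge2}h_n t[n-1]_t z^n)$) does precisely that. The range bookkeeping you flag as delicate is in fact automatic, since $k_1,\dots,k_m\ge 2$ with $\sum k_i=n$ forces $m\le\lfloor n/2\rfloor$, so the bound on $m$ is redundant rather than a constraint to be matched.
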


Let $$\left[\begin{array}{c} n \\k\end{array}\right]_q = {[n]_q! \over [k]_q! [n-k]_q!}\, \mbox{  and  } \,\left[\begin{array}{c} n \\k_0,\dots,k_m\end{array}\right]_q = {[n]_q!
\over [k_0]_q![k_1]_q!\cdots [k_m]_q!}.$$ By taking the principal stable specialization of both sides of  the recurrence relation (\ref{altrecrel}) and the formula (\ref{formQ}), we have the following result.

\begin{cor} For all $n \ge 0$,
$$ A_n^{\maj,\exc, \fix}(q,t,r) = r^n + \sum_{k=0}^{n-2} \left[\begin{array}{c} n \\k\end{array}\right]_q\,\,
A_k^{\maj,\exc, \fix}(q,t,r) \,tq[n-k-1]_{tq},$$
and
$$A_n^{\maj,\exc, \fix}(q,t,r)  =   \sum_{m = 0}^{\lfloor {n \over 2} \rfloor}  \!\!\!\!\sum_{\scriptsize
\begin{array}{c} k_0\ge 0  \\ k_1,\dots, k_m \ge 2 \\ \sum k_i = n
\end{array}} \left[\begin{array}{c} n \\k_0,\dots,k_m\end{array}\right]_q\,\,
r^{k_0}
\prod_{i=1}^m tq[k_i-1]_{tq}.$$
\end{cor}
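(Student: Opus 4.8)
The plan is to obtain both identities by applying the stable principal specialization $\Lambda$ to the two symmetric-function statements we already have in hand, namely the recurrence relation (\ref{altrecrel}) and the closed formula (\ref{formQ}) of Corollary~\ref{formQcor}. Recall from (\ref{stablespec1}) that $a_{n,j,k}(q,1) = q^j (q;q)_n \Lambda(Q_{n,j,k})$, so if we set $Q_n(t,r) = \sum_{j,k} Q_{n,j,k}\,t^j r^k$ and apply $\Lambda$ while substituting $t \mapsto tq^{-1}$ (to absorb the factor $q^j$), we get
\[
\Lambda(Q_n(tq^{-1},r))\Big|_{\text{scaled}} \;=\; \frac{1}{(q;q)_n}\sum_{j,k\ge 0} a_{n,j,k}(q,1)\, t^j r^k \;=\; \frac{A_n^{\maj,\exc,\fix}(q,t,r)}{(q;q)_n}.
\]
More precisely, since $\Lambda$ is a ring homomorphism and each $Q_{n,j,k}$ is a symmetric function of degree $n$, I would first record the normalization: $A_n^{\maj,\exc,\fix}(q,t,r) = (q;q)_n\,\Lambda\big(\sum_{j,k} Q_{n,j,k}\,(tq^{-1})^j\, q^j \cdots\big)$ — the cleanest bookkeeping is to define $\widetilde Q_n(q,t,r) := (q;q)_n \Lambda(Q_n(t,r))$ with the evaluation $x_i \mapsto q^{i-1}$, observe $\widetilde Q_n(q,tq,r) = A_n^{\maj,\exc,\fix}(q,t,r)$ by (\ref{stablespec1}) and linearity, and then push $\Lambda$ through (\ref{altrecrel}) and (\ref{formQ}) termwise.

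For the recurrence, apply $\Lambda$ to (\ref{altrecrel}). Using (\ref{stabh}), $\Lambda(h_{n-k}) = 1/(q;q)_{n-k}$, and $\Lambda(r^n h_n) = r^n/(q;q)_n$. The key arithmetic identity is that $(q;q)_n\big/\big((q;q)_k (q;q)_{n-k}\big) = \left[\begin{array}{c} n \\ k\end{array}\right]_q$, which converts the product $\Lambda(Q_k(t,r))\Lambda(h_{n-k})$, once multiplied through by $(q;q)_n$, into $\left[\begin{array}{c} n \\ k\end{array}\right]_q \widetilde Q_k$. Finally the scalar $t[n-k-1]_t$ is unaffected by $\Lambda$ (it involves no $x_i$), and after the substitution $t \mapsto tq$ the factor $[n-k-1]_t$ becomes $[n-k-1]_{tq}$ and the leading $t$ becomes $tq$, giving exactly $tq[n-k-1]_{tq}$. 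This yields the first displayed formula.

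For the second formula, apply $\Lambda$ to (\ref{formQ}) termwise. Each summand is a product $r^{k_0} h_{k_0}\prod_{i=1}^m h_{k_i} t[k_i-1]_t$ with $\sum k_i = n$; under $\Lambda$ the scalar part $t[k_i-1]_t$ is untouched and $\Lambda\big(h_{k_0}\prod h_{k_i}\big) = \prod_{i=0}^m 1/(q;q)_{k_i}$ by (\ref{stabh}) and multiplicativity. Multiplying by $(q;q)_n$ and using $(q;q)_n\big/\prod_{i=0}^m (q;q)_{k_i} = \left[\begin{array}{c} n \\ k_0,\dots,k_m\end{array}\right]_q$ turns each term into $\left[\begin{array}{c} n \\ k_0,\dots,k_m\end{array}\right]_q r^{k_0}\prod_{i=1}^m t[k_i-1]_t$, and the substitution $t\mapsto tq$ converts $t[k_i-1]_t$ to $tq[k_i-1]_{tq}$, producing the claimed identity. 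The main (though modest) obstacle is purely bookkeeping: keeping the $q^j$-for-$t$ substitution consistent with (\ref{stablespec1}) so that $A_n^{\maj,\exc,\fix}$ appears with the correct $tq$ rather than $t$, and making sure the $q$-multinomial collapse of the $(q;q)$-factors is applied to the right groups of indices; there is no analytic difficulty since $\Lambda$ is a ring homomorphism and the formulas being specialized have already been proved.
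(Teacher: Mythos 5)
Your proposal is correct and follows exactly the approach the paper itself uses (its entire proof is the single sentence that both formulas follow by applying the stable principal specialization to (\ref{altrecrel}) and (\ref{formQ})). The bookkeeping you carry out --- using $a_{n,j,k}(q,1)=q^j(q;q)_n\Lambda(Q_{n,j,k})$, the multiplicativity of $\Lambda$, $\Lambda(h_m)=1/(q;q)_m$, the $q$-multinomial collapse, and the substitution $t\mapsto tq$ --- is precisely what that sentence is compressing.
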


Gessel and Reutenauer  \cite{gr} and  Wachs \cite{wa1} derive a major index q-nalaog of the classical formula for the number of derangements in $\S_n$ (or more generally the number of permutations with a given number of fixed points).  As an immediate consequence of \cite[Corollary 3]{wa1}, one can obtain a $(\maj,\exc)$ generalization.  Since this generalization also follows from Corollary~\ref{expgenthfix} and is not explicitly stated in \cite{wa1},  we state and prove it here.   For all $n \in \N$, let $\mathcal D_n$ be the set of derangements in $\S_n$.

\begin{cor}[of Corollary~\ref{expgenthfix}] \label{excderang} For all $n \ge 0$, we have
\begin{equation}\label{fixform} \sum_{\scriptsize \begin{array}{c} \s \in \mathfrak S_n \\ \fix(\s) = k \end{array}} q^{\maj(\s)} t^{ \exc(\s)} =  \left[\begin{array}{c} n \\k\end{array}\right]_q \,\,\sum_{\s \in \mathcal D_{n-k}} q^{\maj(\s)} t^{ \exc(\s)} .\end{equation}
Consequently, \begin{equation}\label{derform} \sum_{\s \in \mathcal D_n} q^{\maj(\s)} t^{ \exc(\s)} = \sum_{k = 0}^n (-1)^{k} q^{k\choose 2} \left[\begin{array}{c} n \\k\end{array}\right]_q A_{n-k}^{\maj,\exc}(q,t).\end{equation}
\end{cor}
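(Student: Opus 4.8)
The plan is to deduce both identities directly from Corollary~\ref{expgenthfix} by rewriting (\ref{expgeneqfix}) in factored form. Write $d_n(q,t):=\sum_{\sigma\in\mathcal D_n}q^{\maj(\sigma)}t^{\exc(\sigma)}$ for the $(\maj,\exc)$-enumerator of derangements. Since a permutation lies in $\mathcal D_n$ exactly when it has no fixed points, setting $r=0$ in (\ref{expgeneqfix}) gives
\[
\sum_{n\ge 0}d_n(q,t)\frac{z^n}{[n]_q!}=\frac{1-tq}{\exp_q(ztq)-tq\exp_q(z)},
\]
and hence (\ref{expgeneqfix}) factors as
\[
\sum_{n\ge 0}A_n^{\maj,\exc,\fix}(q,t,r)\frac{z^n}{[n]_q!}=\exp_q(rz)\sum_{n\ge 0}d_n(q,t)\frac{z^n}{[n]_q!}.
\]

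First I would prove (\ref{fixform}). Expanding $\exp_q(rz)=\sum_{k\ge0}r^k z^k/[k]_q!$ and taking the Cauchy product of the two power series in $z$, the coefficient of $r^kz^n$ on the right-hand side of the displayed factorization is $d_{n-k}(q,t)/([k]_q!\,[n-k]_q!)$, while the coefficient of $r^kz^n$ on the left-hand side is $[n]_q!^{-1}\sum_{\fix(\sigma)=k}q^{\maj(\sigma)}t^{\exc(\sigma)}$. Equating these and multiplying by $[n]_q!$ yields (\ref{fixform}), using $[n]_q!/([k]_q!\,[n-k]_q!)=\left[\begin{smallmatrix}n\\k\end{smallmatrix}\right]_q$.

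Then I would prove (\ref{derform}) by $q$-binomial (Gaussian) inversion. Summing (\ref{fixform}) over $k$ and reindexing $j=n-k$ gives $A_n^{\maj,\exc}(q,t)=\sum_{j=0}^n\left[\begin{smallmatrix}n\\j\end{smallmatrix}\right]_q d_j(q,t)$. Inverting this relation — equivalently, multiplying the $r=1$ case $\sum_n A_n^{\maj,\exc}(q,t)z^n/[n]_q!=\exp_q(z)\sum_n d_n(q,t)z^n/[n]_q!$ by $\Exp_q(-z)$ and invoking the classical identity $\exp_q(z)\Exp_q(-z)=1$ — produces $\sum_n d_n(q,t)z^n/[n]_q!=\Exp_q(-z)\sum_n A_n^{\maj,\exc}(q,t)z^n/[n]_q!$, and extracting the coefficient of $z^n$ gives (\ref{derform}).

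I do not expect a genuine obstacle here: the whole argument is formal once (\ref{expgeneqfix}) is in hand. The only point needing a little care is the bookkeeping with the two $q$-exponentials — in particular, that it is $\Exp_q$, not $\exp_q$, that inverts $\exp_q(z)$ — together with the identity $\exp_q(z)\Exp_q(-z)=1$, which may be cited or checked from the $q$-binomial theorem.
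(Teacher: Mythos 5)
Your argument is correct and is essentially the same as the paper's: both extract the coefficient of $r^k z^n/[n]_q!$ from (\ref{expgeneqfix}) using the factorization $\exp_q(rz)\cdot\frac{1-tq}{\exp_q(ztq)-tq\exp_q(z)}$, identify the second factor (the $r=0$ case) as the derangement generating function, and then obtain (\ref{derform}) by Gaussian inversion. Your extra remark that the inversion can be packaged as multiplication by $\Exp_q(-z)$ using $\exp_q(z)\Exp_q(-z)=1$ is a correct equivalent restatement, not a different route.
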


\begin{proof} 
Since the left hand side of (\ref{fixform}) equals the coefficient of $r^k\frac{z^n}{[n]_q!}$ on the left hand side of (\ref{expgeneqfix}), we have that the left hand side of (\ref{fixform}) is $\left[\begin{array}{c} n \\k\end{array}\right]_q$ times the coefficient of 
$\frac{z^{n-k}}{[n-k]_q!}$ in 
$(1-tq)/(\exp_q(ztq)-tq\exp_q(z))$.   This coefficient is precisely   $ \sum_{\s \in \mathcal D_{n-k}} q^{\maj(\s)} t^{ \exc(\s)} $.

By summing (\ref{fixform}) over all $k$ and applying Gaussian inversion we obtain (\ref{derform}).
\end{proof}

We point out that our results pertaining to major index have comajor index versions.
Recall that the {\em comajor index}  of  $\sigma \in \S_n$  is defined to be
$$\comaj(\s) := \sum_{i \in [n-1] \setminus \Des(\s)} i = {n \choose 2 } - \maj(\s).$$
 For example, we have the following comajor index version of Corollary~\ref{expgenthfix}.

\begin{cor}[of Corollary~\ref{expgenthfix}] \label{comajcor}We have
\begin{equation} \label{expgeneqfixco}
\sum_{n \geq 0}A^{\comaj,\exc,\fix}_n(q,t,r)\frac{z^n}{[n]_q!}=\frac{(1-tq^{-1})\Exp_q(rz)}{\Exp_q(ztq^{-1})-(tq^{-1})\Exp_q(z)}
\end{equation}
\end{cor}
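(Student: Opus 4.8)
The plan is to derive Corollary~\ref{comajcor} from Corollary~\ref{expgenthfix} by the standard substitution that converts major index into comajor index. Since $\comaj(\s) = \binom n2 - \maj(\s)$ for $\s \in \S_n$, we have
\[
A_n^{\comaj,\exc,\fix}(q,t,r) = \sum_{\s \in \S_n} q^{\binom n2 - \maj(\s)} t^{\exc(\s)} r^{\fix(\s)} = q^{\binom n2} A_n^{\maj,\exc,\fix}(q^{-1},t,r).
\]
So the left-hand side of (\ref{expgeneqfixco}) is obtained from the left-hand side of (\ref{expgeneqfix}) by replacing $q$ with $q^{-1}$, replacing $z$ by $z$, and inserting the factor $q^{\binom n2}$ in degree $n$. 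The key observation is that multiplying the degree-$n$ coefficient by $q^{\binom n2}$ and dividing by $[n]_q!$ is the same as dividing by $[n]_{q^{-1}}!$ up to the right power of $q$; more precisely, $q^{\binom n2}/[n]_q! = 1/[n]_{q^{-1}}!$, since $[j]_q = q^{j-1}[j]_{q^{-1}}$ gives $[n]_q! = q^{\binom n2}[n]_{q^{-1}}!$. Hence
\[
\sum_{n \ge 0} A_n^{\comaj,\exc,\fix}(q,t,r)\frac{z^n}{[n]_q!} = \sum_{n \ge 0} A_n^{\maj,\exc,\fix}(q^{-1},t,r)\frac{z^n}{[n]_{q^{-1}}!} = \frac{(1-tq^{-1})\exp_{q^{-1}}(rz)}{\exp_{q^{-1}}(ztq^{-1}) - tq^{-1}\exp_{q^{-1}}(z)},
\]
where the last equality is Corollary~\ref{expgenthfix} with $q$ replaced by $q^{-1}$.

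It remains only to identify $\exp_{q^{-1}}(z)$ with $\Exp_q(z)$. By definition $\exp_{q^{-1}}(z) = \sum_{n \ge 0} z^n/[n]_{q^{-1}}!$, and using $[n]_{q^{-1}}! = q^{-\binom n2}[n]_q!$ we get $\exp_{q^{-1}}(z) = \sum_{n \ge 0} q^{\binom n2} z^n/[n]_q! = \Exp_q(z)$, which is exactly Stanley's $\Exp_q$ as defined in the introduction. Substituting $\exp_{q^{-1}}(z) = \Exp_q(z)$, $\exp_{q^{-1}}(rz) = \Exp_q(rz)$, and $\exp_{q^{-1}}(ztq^{-1}) = \Exp_q(ztq^{-1})$ into the displayed formula yields (\ref{expgeneqfixco}) verbatim.

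There is no real obstacle here; the only point requiring a little care is bookkeeping the arguments of the $\exp$ functions through the $q \mapsto q^{-1}$ substitution, in particular checking that the argument $ztq$ in (\ref{expgeneqfix}) becomes $ztq^{-1}$ and not $zt q^{-1}$ times some spurious power, and that the prefactor $(1-tq)$ becomes $(1-tq^{-1})$ while the $tq$ coefficient in the denominator becomes $tq^{-1}$. All of these follow mechanically once one writes $q^{-1}$ for $q$ everywhere in Corollary~\ref{expgenthfix}. One could alternatively phrase the whole argument as a single ring homomorphism on the relevant power series ring sending $q \mapsto q^{-1}$, but the coefficient-by-coefficient check above is the cleanest to write down.
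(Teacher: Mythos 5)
Your proof is correct and follows exactly the route the paper sketches: the paper's one-line proof says to use the facts $[n]_{q^{-1}}! = q^{-\binom n2}[n]_q!$ and $\exp_{q^{-1}}(z) = \Exp_q(z)$ to deduce the equivalence of (\ref{expgeneqfix}) and (\ref{expgeneqfixco}), which is precisely the $q \mapsto q^{-1}$ substitution you carry out. You have simply written out the steps the paper leaves to the reader.
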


\begin{proof} Use the facts that $[n]_{q^{-1}}! = q^{-{n\choose 2}} [n]_q!$ and $\exp_{q^{-1}}(z) = \Exp_q(z)$ to show that equations (\ref{expgeneqfix}) and (\ref{expgeneqfixco}) are equivalent.
\end{proof}

We also have the following comajor index version of Corollary~\ref{excderang}.
\begin{cor} \label{coexcderang} For all $n \ge 0$,
 we have
\begin{equation*} \sum_{\scriptsize \begin{array}{c} \s \in \mathfrak S_n \\ \fix(\s) = k \end{array}} q^{\comaj(\s)} t^{ \exc(\s)} = q^{k\choose 2}  \left[\begin{array}{c} n \\k\end{array}\right]_q \,\,\sum_{\s \in \mathcal D_{n-k}} q^{\comaj(\s)} t^{ \exc(\s)} .\end{equation*}
Consequently, $$ \sum_{\s \in \mathcal D_n} q^{\comaj(\s)} t^{ \exc(\s)}= \sum_{k = 0}^n (-1)^{k}  \left[\begin{array}{c} n \\k\end{array}\right]_q A_{n-k}^{\comaj,\exc}(q,t).$$
\end{cor}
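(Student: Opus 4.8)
The plan is to deduce both identities of Corollary~\ref{coexcderang} from the corresponding identities of Corollary~\ref{excderang} by the substitution $q\mapsto q^{-1}$, exactly in the spirit of the proof of Corollary~\ref{comajcor}. The elementary ingredients I would record first are: for $\s\in\S_m$ one has $q^{-\maj(\s)}=q^{-\binom m2}q^{\comaj(\s)}$, and hence $A_m^{\maj,\exc}(q^{-1},t)=q^{-\binom m2}A_m^{\comaj,\exc}(q,t)$; the $q$-binomial symmetry $\left[\begin{array}{c}n\\k\end{array}\right]_{q^{-1}}=q^{-k(n-k)}\left[\begin{array}{c}n\\k\end{array}\right]_q$; and the Vandermonde-type identity $\binom n2=\binom k2+k(n-k)+\binom{n-k}2$.

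For the first identity, I would replace $q$ by $q^{-1}$ in $\sum_{\fix(\s)=k}q^{\maj(\s)}t^{\exc(\s)}=\left[\begin{array}{c}n\\k\end{array}\right]_q\sum_{\s\in\mathcal D_{n-k}}q^{\maj(\s)}t^{\exc(\s)}$ and then multiply both sides by $q^{\binom n2}$. On the left this gives $\sum_{\fix(\s)=k}q^{\comaj(\s)}t^{\exc(\s)}$; on the right, after rewriting $\left[\begin{array}{c}n\\k\end{array}\right]_{q^{-1}}$ and converting the derangement sum from $\maj$ to $\comaj$, the accumulated power of $q$ in front of $\left[\begin{array}{c}n\\k\end{array}\right]_q$ is $\binom n2-k(n-k)-\binom{n-k}2=\binom k2$, which is precisely the asserted first identity. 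The consequence follows the same way from $\sum_{\s\in\mathcal D_n}q^{\maj(\s)}t^{\exc(\s)}=\sum_{k=0}^n(-1)^kq^{\binom k2}\left[\begin{array}{c}n\\k\end{array}\right]_qA_{n-k}^{\maj,\exc}(q,t)$: under $q\mapsto q^{-1}$, converting $\maj$ to $\comaj$ on the derangement side and inside each $A_{n-k}^{\maj,\exc}$, the total power of $q$ on the $k$th term is $\binom n2-\binom k2-k(n-k)-\binom{n-k}2=0$, so the $q^{\binom k2}$ of Corollary~\ref{excderang} drops out and one obtains $\sum_{\s\in\mathcal D_n}q^{\comaj(\s)}t^{\exc(\s)}=\sum_{k=0}^n(-1)^k\left[\begin{array}{c}n\\k\end{array}\right]_qA_{n-k}^{\comaj,\exc}(q,t)$.

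Two alternative routes are worth noting. One could sum the first identity over $k$ and apply Gaussian inversion in the form coming from $\exp_q(z)\Exp_q(-z)=1$; since the $q^{\binom k2}$ now sits on the forward side of the inversion, its inverse relation carries no extra power of $q$, giving the consequence directly. Or one could extract the coefficient of $r^kz^n/[n]_q!$ from Corollary~\ref{comajcor}, using $\Exp_q(rz)=\sum_m q^{\binom m2}r^mz^m/[m]_q!$ together with the $r=0$ specialization, and then repeat the argument used for Corollary~\ref{excderang}. I do not expect a genuine obstacle here: the entire content is the $q$-power bookkeeping, which is controlled by $\binom n2=\binom k2+k(n-k)+\binom{n-k}2$, and the one mildly surprising feature — that the comajor-index inversion formula carries no $q^{\binom k2}$, unlike the major-index one — is explained precisely by that power being absorbed into the $\comaj\leftrightarrow\maj$ conversion on the derangement side.
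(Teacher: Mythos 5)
Your proof is correct, and it takes exactly the route the paper implicitly intends: the paper leaves Corollary~\ref{coexcderang} unproved because it is meant to follow from Corollary~\ref{excderang} by the same $q\mapsto q^{-1}$ substitution used to prove Corollary~\ref{comajcor}, and your bookkeeping via $\binom n2=\binom k2+k(n-k)+\binom{n-k}2$ carries that out cleanly, including the correct explanation of why the $q^{\binom k2}$ in the major-index inversion formula disappears in the comajor version.
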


\section{Symmetry and unimodality} \label{secQ}

\subsection{Eulerian quasisymmetric functions} \label{secQ1}
It is well known that the Eulerian numbers $a_{n,j}$ form a symmetric and unimodal sequence for each fixed $n$; i.e., $a_{n,j} = a_{n,n-1-j}$ for all $j=0,1,\dots, n-1$ and $$ a_{n,0} \le a_{n,1} \le  \dots \le a_{n, \lfloor {n-1\over 2} \rfloor} = a_{n, \lfloor {n\over 2} \rfloor} \ge \dots \ge a_{n,n-2}\ge a_{n,n-1} .$$  In this subsection we discuss symmetry and unimodality of  the coefficients of $t^j$ in  the Eulerian quasisymmetric functions and the $q$- and $(q,p)$-Eulerian polynomials.

Let  $f(t):= f_r t^r + f_{r+1} t^{r+1} + \dots + f_{s}t^{s}$ be a nonzero polynomial in $t$ whose coefficients come from a partially ordered ring $R$.  We say that  $f(t)$ is {\em  t-symmetric} with center of symmetry ${ s+r \over 2}$ if   $f_{r+i} = f_{s-i}$ for all $i =0, \dots, s-r$.  We say that  $f(t)$ is also {\em t-unimodal} if
\begin{equation}\label{unimodeq}   f_r \le_R f_{r+1} \le_R \dots \le_R f_{\lfloor {s+r\over 2} \rfloor}= f_{\lfloor {s+r+1\over 2} \rfloor}\ge_R \dots \ge _R f_{s-1} \ge_R f_s.\end{equation}

Let $\Par$ be the set of all partitions of all nonnegative
integers. By choosing a $\Q$-basis $b=\{b_\lambda:\lambda \in
\Par\}$ for the space of symmetric functions, we obtain the partial
order relation given by $f \le_b g $ if $g-f$ is $b$-positive,
where a symmetric function is said to be $b$-positive if it is a
nonnegative  linear combination of elements of the basis
$\{b_\lambda\}$. Here we are concerned with the $h$-basis,
$\{h_\lambda : \lambda \in \Par\}$, where $h_\lambda =
h_{\lambda_1} \cdots h_{\lambda_k}$ for $\lambda = (\lambda_1 \ge
\dots \ge \lambda_k)$, and the Schur basis $\{s_{\lambda}:\lambda
\in \Par\}$.
   Since $h$-positivity implies Schur-positivity, the following result also holds for the Schur basis.

\begin{thm} \label{symunimodth} Using the h-basis  to partially order the ring of symmetric functions, we have for all $n,j,k$, \begin{enumerate}
\item the Eulerian quasisymmetric functions $Q_{n,j,k}$ and $Q_{n,j}$ are h-positive symmetric functions,
 \item  the polynomial $\sum_{j=0}^{n-1}Q_{n,j,k} t^j$ is t-symmetric and t-unimodal with center of symmetry ${n-k \over 2}$,
\item   the polynomial $\sum_{j=0}^{n-1}Q_{n,j} t^j$ is t-symmetric and t-unimodal with center of symmetry ${n-1 \over 2}$.
\end{enumerate}
\end{thm}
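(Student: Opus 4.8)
The plan is to deduce everything from the generating function in Theorem~\ref{introsymgenth}, especially the second form \eqref{introsymgenth2}, together with Corollary~\ref{dercor}. First I would establish $h$-positivity of $Q_{n,j,k}$ (and hence of $Q_{n,j}=\sum_k Q_{n,j,k}$) directly from \eqref{introsymgenth2}: the right-hand side is $H(rz)$ times $\bigl(1-\sum_{n\ge 2}t[n-1]_t h_n z^n\bigr)^{-1}$, and expanding the geometric series shows that each $Q_{n,j,k}$ is a nonnegative integer combination of products $h_{k}h_{k_1}\cdots h_{k_m}$ with all $k_i\ge 2$ — indeed Corollary~\ref{formQcor} makes this completely explicit, since every coefficient appearing there (the coefficients of $t[k_i-1]_t$) is a nonnegative integer. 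This simultaneously proves that the $Q_{n,j,k}$ are symmetric functions and gives part~(1). Since $h$-positivity implies Schur-positivity, the Schur-basis version follows at once.

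Next, for part~(2), by Corollary~\ref{dercor} we have $Q_{n,j,k}=h_k Q_{n-k,j,0}$, so $\sum_{j=0}^{n-1}Q_{n,j,k}\,t^j = h_k \sum_{j}Q_{n-k,j,0}\,t^j$, and since $h_k$ is $h$-positive it suffices to prove that $\sum_j Q_{m,j,0}\,t^j$ is $t$-symmetric and $t$-unimodal (in the $h$-basis) with center of symmetry $m/2$ for every $m$; part~(2) is then the case $m=n-k$, noting the center scales correctly. For this I would use the formula from Corollary~\ref{formQcor} restricted to the derangement part (set $r=0$, forcing $k_0=0$), namely $\sum_j Q_{m,j,0}t^j = \sum_{\substack{k_1,\dots,k_\ell\ge 2\\ \sum k_i=m}}\prod_{i=1}^\ell h_{k_i}\,t[k_i-1]_t$. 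Each factor $t[k_i-1]_t = t+t^2+\dots+t^{k_i-1}$ is a $t$-symmetric, $t$-unimodal polynomial with center of symmetry $k_i/2$ and with $h$-positive (in fact positive integer) coefficients. Here I would invoke the standard facts that a product of $t$-symmetric $t$-unimodal polynomials with nonnegative coefficients in a partially ordered ring is again $t$-symmetric and $t$-unimodal (centers adding), and that a sum of $t$-symmetric $t$-unimodal polynomials sharing a common center of symmetry is again such. Since $\sum k_i = m$, each summand $\prod_i h_{k_i}t[k_i-1]_t$ has center of symmetry $\sum_i k_i/2 = m/2$, independent of the composition, so the common-center hypothesis holds and the whole sum is $t$-symmetric and $t$-unimodal with center $m/2$. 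The $h$-positivity of the $h_{k_i}$ factors is what lets this go through in the $h$-ordered ring rather than just over $\Z$.

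Finally, part~(3) follows by summing part~(2) over $k$: $\sum_{j}Q_{n,j}t^j = \sum_{k=0}^{n}\sum_j Q_{n,j,k}t^j = \sum_{k}h_k\sum_j Q_{n-k,j,0}t^j$, which is a sum of $t$-symmetric $t$-unimodal polynomials whose centers of symmetry are $(n-k)/2$ — these are \emph{not} all equal, so I cannot quote the common-center lemma directly. Instead I would use the stronger closure fact that if $f$ has center $c$ and $g$ has center $c'$ with, say, $c\ge c'$, then $f+g$ need not be unimodal in general — so the right move is to pair up terms: I would argue that $Q_{n,j}$ is itself $t$-symmetric with center $(n-1)/2$ (this can be seen from \eqref{introsymgenth1}, since replacing $t$ by $1/t$ and $z$ by $zt$ in $\frac{(1-t)H(z)}{H(zt)-tH(z)}$ maps it to itself up to the bookkeeping that shifts degree-$n$ terms by $t^{n-1}$), and then obtain unimodality from the $h$-positive, symmetric, unimodal decomposition $\sum_{k}h_k\sum_j Q_{n-k,j,0}t^j$ by the lemma that a nonnegative combination of symmetric unimodal polynomials with centers of the same parity, all $\le$ a target center $c$, summing to something symmetric about $c$, is unimodal about $c$ — equivalently, group the $k$ and use that $\sum_j Q_{m,j,0}t^j$ has all its ``mass'' supported in degrees $\le m-1 \le n-1$ symmetrically about $m/2 \le (n-1)/2$. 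The main obstacle is precisely this last bookkeeping in part~(3): getting unimodality of a sum of unimodal polynomials whose centers of symmetry differ. I expect to handle it via the standard lemma (used e.g.\ by Stanley) that if $f=\sum_i g_i$ where each $g_i$ is symmetric and unimodal with center $c_i$, all $c_i$ of the same parity, and $f$ is itself symmetric about $c=\max_i c_i$, then $f$ is unimodal; here the parity condition holds because $\sum_j Q_{m,j,0}t^j$ is a polynomial in $t$ supported on integers and symmetric about $m/2$, so after multiplying by the appropriate normalization the centers $(n-k)/2$ all have the form (integer or half-integer) matching $(n-1)/2$ once one checks that $Q_{m,j,0}=0$ unless $j$ and $m$ have opposite... which I would verify from the explicit composition formula. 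If that parity matching fails, the fallback is to prove unimodality of $\sum_j Q_{n,j}t^j$ directly by showing the right-hand side of \eqref{introsymgenth2} has the required structure, expanding $(1-\sum_{n\ge2}t[n-1]_th_nz^n)^{-1}$ and tracking centers compositionally as in part~(2) but now including the $H(rz)$ factor at $r=1$, i.e.\ $H(z)=\sum h_{k_0}z^{k_0}$, whose $t$-center is $0=k_0/2\cdot 0$... wait, $h_{k_0}$ contributes $t^0$ with center $0\ne k_0/2$ in general, so this too requires the differing-centers argument — confirming that part~(3) is the genuine crux.
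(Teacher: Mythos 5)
Your treatment of parts (1) and (2) is correct and essentially identical to the paper's: $h$-positivity from Corollary~\ref{formQcor}, then for part (2) the reduction $Q_{n,j,k}=h_kQ_{n-k,j,0}$ and the product/sum lemma for a common center of symmetry $(n-k)/2$.

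Part (3) has a genuine gap, and you half-sense it but never close it. You propose invoking a lemma of the form ``a nonnegative sum of symmetric unimodal polynomials with centers $c_i\le c$, all of the right parity, which happens to be symmetric about $c$, is unimodal about $c$.'' Setting aside whether such a lemma is even true, its hypothesis fails here: in the decomposition $\sum_j Q_{n,j}t^j=\sum_{k=0}^n h_k\sum_j Q_{n-k,j,0}t^j$, the $k=0$ summand has center $n/2$, which is \emph{strictly greater} than the target center $c=(n-1)/2$. So your lemma does not apply, your proposed parity bookkeeping cannot rescue it, and the fallback you sketch (expanding \eqref{introsymgenth2} directly and ``tracking centers compositionally'') runs into exactly the same misaligned-center obstruction, as you yourself note at the end. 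The proposal does correctly identify part (3) as the crux, but it leaves that crux unresolved.

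The paper's resolution is a specific re-pairing that you do not find. Write $G_{k_1,\ldots,k_m}:=\prod_{i=1}^m h_{k_i}t[k_i-1]_t$ over compositions into parts $\ge2$. Then $\sum_j Q_{n,j,0}t^j=\sum G_{k_1,\ldots,k_m}$ (compositions of $n$) and $\sum_{k\ge2}\sum_j Q_{n,j,k}t^j=\sum h_{k_1}G_{k_2,\ldots,k_m}$ (same index set, reading off $k=k_1$). Adding these termwise gives
\[
G_{k_1,\ldots,k_m}+h_{k_1}G_{k_2,\ldots,k_m}=h_{k_1}\bigl(t[k_1-1]_t+1\bigr)G_{k_2,\ldots,k_m}=h_{k_1}(1+t+\cdots+t^{k_1-1})\,G_{k_2,\ldots,k_m},
\]
and now the factor $1+t+\cdots+t^{k_1-1}$ has center $(k_1-1)/2$ while $G_{k_2,\ldots,k_m}$ has center $(n-k_1)/2$, so every summand has center $(n-1)/2$. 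Adding the $k=1$ contribution $\sum_j Q_{n,j,1}t^j$, which by part (2) also has center $(n-1)/2$, finishes the proof using only the common-center lemma. This algebraic identity, turning $t[k_1-1]_t+1$ into $[k_1]_t$ so as to shift the center of the ``leading block'' by $1/2$, is the key idea your proposal is missing.
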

\begin{proof}

Part (1) is a corollary of Theorem~\ref{introsymgenth} (see also Corollary~\ref{formQcor}).

Parts (2) and (3) follow from Theorem~\ref{introsymgenth} and results of Stembridge~\cite{stem1} on the symmetric function given on the right hand side of (\ref{introsymgenth1}).
For the sake of completeness, we include a proof of Parts (2) and (3)  based on Stembridge's work.

It is  well-known that the product of two symmetric unimodal polynomials in $\N[t]$ with respective centers of symmetry $c_1$ and $c_2$, is symmetric and unimodal with center of symmetry $c_1+c_2$. This result and the proof given in  \cite[Proposition 1.2]{st2} hold more generally for polynomials over an arbitrary partially ordered ring.    By Corollary~\ref{formQcor} we have $$\sum_{j=0}^{n-1}Q_{n,j,k} t^j= \sum_{m = 0}^{\lfloor {n-k \over 2} \rfloor}\,\, \!\!\!\!\sum_{\scriptsize
\begin{array}{c}   k_1,\dots, k_m \ge 2 \\ \sum k_i = n-k
\end{array}}
\!\!\!\!\!\!\! h_{k}
\prod_{i=1}^m h_{k_i} t [k_i-1]_{t}$$
 Each term  $h_{k}
\prod_{i=1}^m h_{k_i} t [k_i-1]_{t}$ is t-symmetric and t-unimodal with center of symmetry $\sum_{i \ge 0} {k_i \over 2} ={n-k \over 2}$.  Hence the sum of these terms has the same property.

With a bit more effort we show that Part (3) also follows from Corollary~\ref{formQcor}.
We have $$\sum_{j=0}^{n-1}Q_{n,j} t^j = \sum_{j=0}^{n-1} Q_{n,j,1} t^j + \sum_{j=0}^{n-1} Q_{n,j,0} t^j + \sum_{k \ge 2} \sum_{j=0}^{n-1} Q_{n,j,k} t^j .$$
By Part (2) we need only show that $$X(t) :=  \sum_{j=0}^{n-1} Q_{n,j,0} t^j + \sum_{k \ge 2} \sum_{j=0}^{n-1} Q_{n,j,k} t^j$$ is t-symmetric and t-unimodal with center of symmetry ${n-1 \over 2}$.  For any sequence of positive integers $(k_1,\dots,k_m)$,
let $$G_{k_1,\dots,k_m} := \prod_{i=1}^m h_{k_i} t [k_i-1]_{t}.$$  We have by Corollary~\ref{formQcor},
$$ \sum_{j=0}^{n-1} Q_{n,j,0} t^j  =  \sum_{\scriptsize
\begin{array}{c}  m \ge 0 \\ k_1,\dots, k_m \ge 2 \\ \sum k_i = n
\end{array}} G_{k_1,\dots,k_m} $$ and
$$\sum_{k \ge 2} \sum_{j=0}^{n-1} Q_{n,j,k} t^j  =  \sum_{\scriptsize
\begin{array}{c}  m \ge 0 \\ k_1,\dots, k_m \ge 2 \\ \sum k_i = n
\end{array}}h_{k_1} G_{k_2,\dots,k_m}. $$
Hence $$X(t) = \sum_{\scriptsize
\begin{array}{c}  m \ge 0 \\ k_1,\dots, k_m \ge 2 \\ \sum k_i = n
\end{array}} G_{k_1,\dots,k_m}+ h_{k_1} G_{k_2,\dots,k_m} . $$
We claim that $G_{k_1,\dots,k_m}+ h_{k_1} G_{k_2,\dots,k_m} $ is t-symmetric and t-unimodal with center of symmetry ${n-1 \over 2}$.  Indeed, we have
$$G_{k_1,\dots,k_m}+ h_{k_1} G_{k_2,\dots,k_m} = h_{k_1}(t[k_1 - 1]_t + 1) G_{k_2,\dots,k_m}.$$  Clearly $t[k_1 - 1]_t + 1 = 1+t+\dots t^{k_1-1}$ is t-symmetric and t-unimodal with center of symmetry ${k_1-1 \over 2}$, and $G_{k_2,\dots,k_m}$ is   t-symmetric and t-unimodal with center of symmetry ${n-k_1 \over 2}$.  Therefore our claim holds and $X(t) $ is  t-symmetric and t-unimodal with center of symmetry ${n-1 \over 2}$. \end{proof}

We now obtain  analogous results for $ A_n^{\maj,\des,\exc,\fix}$ and $A_n^{\maj,\des,\exc}$ by applying  the  principal specializations.
For the ring of polynomials $\Q[\bf q]$, where ${\bf q}$ is a list of indeterminants, we use the partial order relation: for  $f({\bf q}),g ({\bf q}) \in\Q[({\bf q})]$, define $f({\bf q}) \le_{{\bf q}} g({\bf q})$ if $g({\bf q})-f({\bf q})$ has nonnegative coefficients.

\begin{lemma}  \label{hposqpos}  If  $f$ is a Schur positive homogeneous symmetric function of degree $n$ then
 $(q;q)_{n} \Lambda( f )$ is a polynomial in $q$ with  nonnegative coefficients and $(p;q)_{n+1}\sum_{m\ge 0}  \Lambda_m ( f )p^m$ is a polynomial in $q$ and $p$ with  nonnegative coefficients.  Consequently if $f$ and $g$ are homogeneous symmetric functions of degree $n$ and  $f \le_{{\rm Schur}} g$ then $$(q;q)_{n} \Lambda (f) \le_{(q)} (q;q)_{n} \Lambda (g) $$ and $$(p;q)_{n+1} \sum_{m\ge 0} \Lambda_m( f  )p^m\le_{(q,p)} (p;q)_{n+1} \sum_{m\ge 0} \Lambda_m (g) p^m .$$
 \end{lemma}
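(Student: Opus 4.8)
The plan is to reduce everything to the single-Schur-function case and then to the single-fundamental-quasisymmetric-function case, where the needed positivity is visible directly from Lemma~\ref{desspec}. First I would observe that by linearity it suffices to prove the two positivity assertions when $f = s_\lambda$ is a single Schur function of degree $n$: the "consequently" clauses then follow immediately, since $f\le_{\mathrm{Schur}} g$ means $g-f$ is a nonnegative integer combination of Schur functions of degree $n$, and $\Lambda$, $\sum_m\Lambda_m(\cdot)p^m$ are linear. So the whole lemma comes down to showing that $(q;q)_n\Lambda(s_\lambda)\in\N[q]$ and $(p;q)_{n+1}\sum_{m\ge 0}\Lambda_m(s_\lambda)p^m\in\N[q,p]$.

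Next I would pass from the Schur basis to the fundamental quasisymmetric basis. Recall the standard fact (e.g.\ from \cite[Chapter 7]{st3}) that a Schur function $s_\lambda$ expands as a \emph{nonnegative integer} combination of the fundamental quasisymmetric functions $F_{S,n}$ — concretely, $s_\lambda = \sum_{T} F_{\Des(T),n}$ summed over standard Young tableaux $T$ of shape $\lambda$, where $\Des(T)\subseteq[n-1]$ is the descent set of $T$. (More generally any Schur-positive homogeneous symmetric function of degree $n$ is a nonnegative integer combination of the $F_{S,n}$.) Hence it suffices to prove the claim with $f$ replaced by a single $F_{S,n}$.

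Finally, for $f = F_{S,n}$ the result is immediate from Lemma~\ref{desspec}: equation (\ref{spec1}) gives $(q;q)_n\Lambda(F_{S,n}) = q^{\sum_{i\in S} i}$, which lies in $\N[q]$, and equation (\ref{spec2}) gives $(p;q)_{n+1}\sum_{m\ge 0}\Lambda_m(F_{S,n})p^m = p^{|S|+1}q^{\sum_{i\in S}i}$, which lies in $\N[q,p]$. Summing over the tableaux (or over the terms of whatever $F$-expansion of $f$ one has) yields polynomials with nonnegative coefficients, as required; and subtracting the expansion for $f$ from that for $g$ when $g-f$ is Schur-positive gives the two order inequalities. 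There is no real obstacle here — the only point requiring a little care is to invoke the correct form of the $F$-expansion of a Schur-positive function (that the coefficients are nonnegative \emph{integers}, so that the positivity survives after applying the specializations), but this is entirely standard.
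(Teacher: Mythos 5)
Your argument is correct and is essentially the paper's own proof: the paper likewise reduces to the expansion of Schur functions as nonnegative integer combinations of fundamental quasisymmetric functions $F_{S,n}$ and then applies Lemma~\ref{desspec}. You have simply spelled out in more detail the reduction to a single $s_\lambda$, the tableaux form of the $F$-expansion, and the final summation, but no new ideas are introduced.
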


 \begin{proof}  This follows from Lemma~\ref{desspec} and the fact that Schur functions are nonnegative  linear combinations of  fundamental quasisymmetric functions (cf. \cite[pp. 360-361]{st3}).
 \end{proof}

\begin{thm} \label{unieulerth}  For all $n,k$,  let $$A^{\maj,\des,\exc}_{n,k}(q,p,t) = \sum_{\scriptsize \begin{array}{c} \s \in \sg_n \\ \fix (\s) = k\end{array}} q^{\maj(\s)} p^{\des(\s)}t^{\exc(\s)}.$$  Then  \begin{enumerate}
\item   $A^{\maj,\des,\exc}_{n,k}(q,p,q^{-1}t)$ is $t$-symmetric with center of symmetry ${n-k \over 2}$
 \item  $A^{\maj,\des,\exc}_{n,0}(q,p,q^{-1}t)$ is t-symmetric and t-unimodal, with center of symmetry ${n \over 2}$
  \item $A^{\maj,\des,\exc}_{n,k}(q,1,q^{-1}t)$ is t-symmetric and t-unimodal, with center of symmetry ${n-k \over 2}$
\item  $A^{\maj,\des,\exc}_{n}(q,1,q^{-1}t)$ is t-symmetric and t-unimodal, with center of symmetry ${n-1 \over 2}$.
\end{enumerate}
\end{thm}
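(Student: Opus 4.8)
The plan is to reduce each assertion to Theorem~\ref{symunimodth} by passing through a principal specialization of the (fixed point) Eulerian quasisymmetric functions, with the substitution $t\mapsto q^{-1}t$ performing the bookkeeping that turns the powers of $q$ attached to the excedance statistic into clean identities. The cases $p=1$, namely (3) and (4), go through the stable principal specialization $\Lambda$; the cases with $p$ free, namely (1) and (2), go through the nonstable specialization via Lemma~\ref{nonstable}.

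For (3) and (4): combining $A^{\maj,\des,\exc}_{n,k}(q,p,t)=\sum_j a_{n,j,k}(q,p)t^j$ with (\ref{stablespec1}) and substituting $t\mapsto q^{-1}t$ gives $A^{\maj,\des,\exc}_{n,k}(q,1,q^{-1}t)=(q;q)_n\,\Lambda\big(\sum_j Q_{n,j,k}\,t^j\big)$, and summing over $k$ gives $A^{\maj,\des,\exc}_{n}(q,1,q^{-1}t)=(q;q)_n\,\Lambda\big(\sum_j Q_{n,j}\,t^j\big)$. By Theorem~\ref{symunimodth}(2),(3) the bracketed polynomials are $t$-symmetric and $t$-unimodal in the $h$-order, with centers of symmetry $\frac{n-k}{2}$ and $\frac{n-1}{2}$; by Lemma~\ref{hposqpos} the operator $f\mapsto(q;q)_n\Lambda(f)$ sends $h$-positive (hence Schur-positive) homogeneous symmetric functions of degree $n$ to polynomials in $q$ with nonnegative coefficients and preserves the partial order, so applying it coefficientwise in $t$ transfers $t$-symmetry and $t$-unimodality (now over $\Q[q]$) with the same centers.

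For (1) and (2): Lemma~\ref{nonstable} together with $A^{\maj,\des,\exc}_{n,k}(q,p,t)=\sum_j a_{n,j,k}(q,p)t^j$ gives, after $t\mapsto q^{-1}t$,
\[
A^{\maj,\des,\exc}_{n,k}(q,p,q^{-1}t)=(p;q)_{n+1}\sum_{m\ge 0}p^m\sum_{i=0}^{k}q^{im}\,\Lambda_m\Big(\sum_j Q_{n-i,j,k-i}\,t^j\Big).
\]
For (1): by Theorem~\ref{symunimodth}(2) each inner polynomial $\sum_j Q_{n-i,j,k-i}t^j$ is $t$-symmetric with center $\frac{(n-i)-(k-i)}{2}=\frac{n-k}{2}$, the same for every $i$; since $\Lambda_m$, multiplication by $q^{im}$ and by $p^m$, the two sums, and multiplication by $(p;q)_{n+1}$ all involve no $t$, they preserve $t$-symmetry and the common center, which proves (1). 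For (2) only the term $i=0$ survives, so $[t^j]A^{\maj,\des,\exc}_{n,0}(q,p,q^{-1}t)=(p;q)_{n+1}\sum_{m\ge 0}p^m\Lambda_m(Q_{n,j,0})$; both $Q_{n,j,0}$ and, for $j$ below the center $\frac{n}{2}$, the difference $Q_{n,j+1,0}-Q_{n,j,0}$ are Schur-positive homogeneous of degree $n$ by Theorem~\ref{symunimodth}, so Lemma~\ref{hposqpos} shows that these coefficients of $t^j$ and their successive differences have nonnegative coefficients in $q$ and $p$; with the $t$-symmetry supplied by (1) this is exactly $t$-unimodality with center $\frac{n}{2}$.

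The one delicate point is the factor $(p;q)_{n+1}$, whose coefficients alternate in sign, so it is not by itself positivity-preserving. In (1) this is harmless because $(p;q)_{n+1}$ carries no $t$. In (2) the resolution is to keep $(p;q)_{n+1}$ bundled with $\sum_m p^m\Lambda_m(\cdot)$ and to invoke the combined positivity statement of Lemma~\ref{hposqpos} rather than distributing the factor. It is also worth checking that the centers of the polynomials $\sum_j Q_{n-i,j,k-i}t^j$ are genuinely independent of $i$ (this is what keeps the sum over $i$ symmetric), and noting that $p=1$ cannot be recovered from the Lemma~\ref{nonstable} formula, since both $(p;q)_{n+1}$ and $\sum_m p^m\Lambda_m(\cdot)$ are singular there; thus (3) and (4) really do require the stable specialization and do not follow from (1) and (2) by setting $p=1$.
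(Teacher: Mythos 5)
Your proof is correct and follows exactly the same route as the paper's (very terse) proof: specializing Theorem~\ref{symunimodth} through Lemma~\ref{nonstable} for parts (1) and (2) and through (\ref{stablespec1}) for parts (3) and (4), using Lemma~\ref{hposqpos} to transfer positivity. One small caveat in your closing remark: since $A^{\maj,\des,\exc}_{n,k}(q,p,t)$ is a polynomial in $p$, the product $(p;q)_{n+1}\sum_m p^m\Lambda_m(\cdot)$ is not actually singular at $p=1$; the real reason (3) does not follow from (1) and (2) is simply that (1) asserts only $t$-symmetry (unimodality for $k\neq 0$ being only conjectural) and (2) covers only $k=0$, so the stable specialization is genuinely needed for (3) and (4).
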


\begin{proof}
Since h-positivity implies Schur positivity, we can use Lemma~\ref{hposqpos} to specialize Theorem~ \ref{symunimodth}.  By  Lemma~\ref{nonstable}, Parts (1)  and (2) are obtained by specializing Part (2) of Theorem~ \ref{symunimodth}.
By (\ref{stablespec1}), Parts (3) and (4) are obtained by specializing Parts (2) and (3) of Theorem~ \ref{symunimodth},  respectively.
\end{proof}

   In
Section~\ref{symcyclesec} we conjecture that the $t$-symmetric polynomial $A^{\maj,\des,\exc}_{n,k}(q,p,q^{-1}t)$  is t-unimodal even when $k \ne 0$ and $p \ne 1$.     However, $A^{\maj,\des,\exc}_{n}(q,p,q^{-1}t)$ is not $t$-symmetric as can be seen in the  computation,

\beq A^{\maj,\des,\exc}_4(q,p,t) = 1 &+&  (3 p + 2pq+pq^2+2p^2q^2+ 2p^2q^3+p^2q^4) t \\ &+& (3p +pq +p^2q +3 p^2q^2 +2 p^2 q^3 +p^3q^4)  t^2\\  &+  & p t^3.
\eeq

\subsection{Cycle type Eulerian quasisymmetric functions}  \label{symcyclesec} By means of ornaments we extend the symmetry  results of Section~\ref{secQ1} to the refinements  $Q_{\lambda,j}$, and the
{\em cycle type $(q,p)$-Eulerian polynomials} defined for each partition $\lambda\vdash n$  by
$$A^{\maj,\des,\exc}_{\lambda}(q,p,t) := \sum_{\scriptsize\begin{array}{c} \s \in \sg_n \\ \lambda(\s) = \lambda  \end{array}}      q^{\maj(\s)}  p^{\des(\s)} t^{\exc(\s)} .$$

\begin{thm} \label{symquasith} For all $\lambda \vdash n$ and $j = 0,1,\dots,n-1$, the Eulerian quasisymmetric function $Q_{\lambda,j}$ is a symmetric function.
\end{thm}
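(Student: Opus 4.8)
The plan is to pass to the ornament model furnished by Corollary~\ref{ornth}, which gives $Q_{\lambda,j}=\sum_{R\in\mathfrak R_{\lambda,j}}\wt(R)$, and to establish symmetry by producing, for each $i\ge 1$, an involution $\varphi_i$ on $\mathfrak R_{\lambda,j}$ that interchanges the exponents of $x_i$ and $x_{i+1}$ in $\wt(R)$ while preserving the type $\lambda$ and the number of bars $j$. Since the adjacent transpositions of the variables generate the whole permutation group of the $x$'s, having all the $\varphi_i$ forces $\sum_R\wt(R)$ to be symmetric (alternatively: $Q_{\lambda,j}$ is quasisymmetric by construction, so invariance under every adjacent transposition already suffices). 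As an ornament is a multiset of necklaces and both $\lambda(R)$ and the bar count are additive over the necklaces of $R$, it is enough to define $\varphi_i$ on a single necklace, and $\varphi_i$ will fix every letter whose absolute value is not $i$ or $i+1$.

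Given a necklace $w$, consider the letters of absolute value in $\{i,i+1\}$; cyclically they form one or more arcs. Case (a): $w$ contains a letter of absolute value outside $\{i,i+1\}$, so each such arc $b_1\cdots b_m$ is a \emph{proper} arc, flanked by letters $b_0,b_{m+1}$ of absolute value $\ne i,i+1$. From Definition~\ref{orndef} one checks that for $s<m$ the bar of $b_s$ is forced when $|b_s|\ne|b_{s+1}|$ (barred exactly at a ``descent'' $i{+}1\to i$) and free when $|b_s|=|b_{s+1}|$; the bar of $b_m$ is forced by $b_{m+1}$ (barred iff $|b_{m+1}|<i$); and the compatibility of $b_0$ with $b_1$ does not depend on which of $i,i+1$ the value $|b_1|$ is. Thus the arc is encoded by the word $v=|b_1|\cdots|b_m|$ over $\{i,i+1\}$ together with the subset of its flat positions that are barred. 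Let $\varphi_i$ replace $v$ by its reverse-complement $v'$ (reverse $v$ and swap $i\leftrightarrow i+1$), transport the chosen barred flats along the induced bijection $s\mapsto m-s$ on $\{1,\dots,m-1\}$, and re-impose the forced bars. Case (b): every letter of $w$ lies in $\{i,i+1\}$. If the absolute-value pattern of $w$ is constant, say all $i$, send $w$ to the necklace with the identical bar pattern and all absolute values $i+1$ (and conversely); if it is not constant, write $w$ cyclically as $i^{a_1}(i{+}1)^{b_1}\cdots i^{a_r}(i{+}1)^{b_r}$ with all $a_t,b_t\ge 1$ and send it to $i^{b_1}(i{+}1)^{a_1}\cdots i^{b_r}(i{+}1)^{a_r}$, transporting the (free) bar choices on run-interiors in the evident way.

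It then remains to verify the required properties of $\varphi_i$. That $\varphi_i^2=\mathrm{id}$, that the output again satisfies the local conditions of Definition~\ref{orndef}, and that $\varphi_i$ interchanges the exponents of $x_i$ and $x_{i+1}$ (every $i$ becomes an $i+1$ and vice versa, bars aside) are immediate from the construction; $\lambda(R)$ is untouched since no arc or necklace changes length. The bar count is preserved arc by arc: reversal is a bijection on flat positions, reverse-complement carries the forced internal bars (the descents) bijectively to forced internal bars, and the last letter of a proper arc has its bar dictated by the unchanged exterior, while in case (b) the number of forced bars (one per maximal $(i{+}1)$-run in the non-constant case, none in the constant case) and the number of free bars are visibly unchanged.

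The point that requires real care — which I expect to be the main obstacle — is checking that $\varphi_i$ sends ornaments to ornaments, i.e.\ that the output necklaces remain \emph{primitive}: one must argue that a nontrivial rotational period of $\varphi_i(w)$ would force one on $w$, exploiting that $\varphi_i$ fixes the positions of the letters of absolute value outside $\{i,i+1\}$ and acts within each arc (or block) in a period-respecting fashion. Together with the routine but delicate bookkeeping that separates forced from free bars (which underlies the bar-count invariance), this is the crux of the proof; granting it, $Q_{\lambda,j}=\sum_{R\in\mathfrak R_{\lambda,j}}\wt(R)$ is symmetric.
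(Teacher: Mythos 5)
Your proposal follows the same broad strategy as the paper --- pass to the ornament model of Corollary~\ref{ornth} and construct, for each $i$, a type- and bar-count-preserving involution on necklaces that exchanges the multiplicities of $i$ and $i{+}1$ --- but the involution you build is genuinely different. Both proofs isolate the maximal arcs of letters with absolute value in $\{i,i{+}1\}$ cut out by the intruders. The paper then splits by the parity of the number of switches in an arc: even arcs get an in-place value swap with bars repaired at the newly bad transitions, while odd arcs get a run-length swap $m_1\leftrightarrow m_2$, $m_3\leftrightarrow m_4,\dots$. You instead apply reverse-complementation uniformly to every proper arc (with bars transported along the reversal) and use the run-length swap only for the all-$\{i,i{+}1\}$ necklaces. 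These really are different maps: on the arc $i,i{+}1,i,i,i{+}1$ the paper's rule returns $i,i{+}1,i,i{+}1,i{+}1$, whereas reverse-complement gives $i,i{+}1,i{+}1,i,i{+}1$. Your version is parity-free but requires slightly more delicate bookkeeping in tracking bars through a position reversal; both rest on the same classification of bar positions as forced (at descents and at the arc boundary) or free (at flats). You are right to flag preservation of primitivity as the point needing care --- the published proof passes over it silently as well --- and in both constructions it follows from locality: the operation commutes with rotation and hence preserves the period of a circular word, so by the involution property it cannot send a primitive necklace to an imprimitive circular word. It would be worth spelling that out rather than leaving it as an expectation.
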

\begin{proof}For each $k \in \PP$, we will construct an involution
$\psi$ on necklaces that  exchanges the number of occurrences of
the letter $k$ (barred or unbarred) and $k+1$ (barred or unbarred)
in a necklace, but preserves the number of occurrences of all
other letters and  the total number of bars. The result will then
follow from Corollary~\ref{ornth}. We start with necklaces that
contain only the letters $\{k,\bar k, k+1,\overline{ k+1}\}$. Let
$R$ be such a necklace.  We may assume without loss of generality
that $k=1$. First replace all $1$'s with $2$'s and all $2$'s with
$1$'s, leaving the bars in their original positions. The problem
is that now each $2$ that is followed by a $1$ lacks a bar (call
such a 2 {\em bad}) and each $1$ that is followed by a $2$ has a
bar (call such a 1 {\em bad}).  Since the number of bad 1's equals
the number of bad 2's, we can move all the bars from the bad 1's
to the bad 2's, thereby obtaining a necklace $R^\prime$ with the
same number of bars as $R$ but with the number of $1$'s and $2$'s
exchanged.  Let $\psi(R) = R^\prime$.  Clearly $\psi^2(R) = R$.
For example if $R= (2\bar 21\bar 1122\bar 2\bar 211111)$ then we
get $(1\bar 12\bar 2211\bar 1\bar 122222)$ before the bars are
adjusted.  After the bars are moved we have $\psi(R)= (1 12\bar
2\bar 211\bar 112222\bar 2)$.

Now we handle the  case in which $R$ has  (barred and unbarred) $k$'s and $k+1$'s, and other letters which we will call intruders.   The intruders enable us to form  linear segments of $R$ consisting only of (barred and unbarred) $k$'s and $k+1$'s.  To obtain such a linear segment start with a letter of value $k$ or $k+1$ that follows an intruder and read the letters of $R$ in a clockwise direction until another intruder is encountered.   For example if
\bq \label{symnec} R= (\bar 5 334\bar 4\bar 33 \bar 33 6\bar 6 \bar 33 3\bar 4 2 4 4)\eq and $k=3$ then the segments are $334\bar 4\bar 33 \bar 33$,  $\bar 333 \bar 4$, and  $ 4  4$.

 There are two types of segments, even segments and odd segments.  An even (odd) segment contains an even (odd) number of switches, where a switch is a letter of value $k$ followed by one of value $k+1$ or a letter of value $k+1$ followed by one of value $k$. We handle the even and odd segments differently.   In an even segment, we replace all  $k$'s with $k+1$'s and all $k+1$'s with $k$'s, leaving the bars in their original positions.  Again, at the switches, we have  bad $k$'s and bad $k+1$'s,  which are equinumerous.  So we move all the bars from the bad $k$'s to the bad $k+1$'s to obtain a good segment.   This preserves the number of bars and exchanges the number of $k$'s and $k+1$'s.  For example, the even segment $334\bar 4\bar 33 \bar 33$ gets replaced by $4 43\bar 3\bar 44 \bar 44$ before the bars are adjusted.  After the bars are moved we have
 $4 \bar 433 \bar 44 \bar 44$.

An odd segment either starts with a $k$ and ends with a $k+1$ or
vice-verse. Both cases are handled similarly.  So suppose we have
an odd segment of the form $$k^{m_1}(k+1)^{m_2}k^{m_3}(k+1)^{m_4}
\dots k^{m_{2r-1}} (k+1)^{m_{2r}},$$ where each $m_i >0$ and the
bars have been suppressed.   The number of switches is $2r-1$.  We
replace it with the odd segment
$$k^{m_2}(k+1)^{m_1}k^{m_4}(k+1)^{m_3} \dots k^{m_{2r}}(k+1)^{m_{2r-1}},$$
and put  bars  in their original positions.  Again  we may have
created bad $k$'s (but not bad $k+1$'s);  so we need to move some
bars around.  The  positions of the bad $k$'s are in the set
$\{N_1+m_2, N_2+m_4, \dots,N_{r}+m_{2r}\}$, where
$N_i=\sum_{t=1}^{2i-2} m_t$. If there is a bar on the $k$ in
position $N_i+m_{2i}$ we move it to position $N_i+m_{2i-1}$, which
had been barless. This preserves the number of bars and exchanges
the number of $k$'s and $k+1$'s. For example, the odd segment
$\bar 333 \bar 4$ gets replaced by $\bar 344 \bar 4$ before the
bars are adjusted.  After the bars are moved we have $ 34\bar 4
\bar 4$.

Let $\psi(R)$ be the necklace obtained by replacing all the segments in the way described above.
For example if $R$ is the necklace given in (\ref{symnec}) then
$$\psi(R) = (\bar 5 4 \bar 433 \bar 44 \bar 44 6\bar 6 34\bar 4 \bar 42 33).$$

It is easy to check that $\psi^2(R) = R$ for all necklaces $R$.
 Now extend the involution $\psi$ to  ornaments by applying $\psi$ to each necklace of the ornament.

\end{proof}

\begin{thm} \label{symth2}For all $\lambda \vdash n$ with exactly $k$ parts equal to 1, and $j = 0,\dots, n-k$,
  $$Q_{\lambda,j} = Q_{\lambda, n-k-j}.$$
  \end{thm}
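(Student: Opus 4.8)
The plan is to construct, for each $\lambda\vdash n$ with exactly $k$ parts equal to $1$ and each $j$, a weight-preserving bijection $\Psi\colon\mathfrak R_{\lambda,j}\to\mathfrak R_{\lambda,n-k-j}$ between the sets $\mathfrak R_{\lambda,j}$ of ornaments of type $\lambda$ with $j$ bars. Since Corollary~\ref{ornth} gives $Q_{\lambda,j}=\sum_{R\in\mathfrak R_{\lambda,j}}\wt(R)$ and $\wt$ depends only on the multiset of absolute values of the letters of an ornament, such a $\Psi$ yields $Q_{\lambda,j}=Q_{\lambda,n-k-j}$ at once, in particular without appeal to Theorem~\ref{symquasith}.

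First I would record the local structure of a necklace. Fix a necklace $N$ of size $m\ge2$, write $w_1\cdots w_m$ for a linear representative and $v_i=|w_i|$, all indices read cyclically. The conditions defining a necklace force $w_i$ to be barred when $v_i>v_{i+1}$ (call $i$ a \emph{descent bond}), force $w_i$ unbarred when $v_i<v_{i+1}$ (an \emph{ascent bond}), and leave $w_i$ free when $v_i=v_{i+1}$ (a \emph{plateau bond}); so $N$ is recovered from its circular word of absolute values $v$ by installing the forced bars and then choosing a set $P(N)$ of plateau bonds at which to bar, and the number of bars of $N$ equals $d(v)+|P(N)|$, where $d(v),a(v),p(v)$ denote the numbers of descent, ascent and plateau bonds (so $d(v)+a(v)+p(v)=m$). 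I would then define $\Psi$ on a size-$m$ necklace $N$ with $m\ge2$ by reversing $N$ (reading it counterclockwise), so that its circular word of absolute values becomes the reversal $v^{\rm r}$ of $v$, and taking its set of barred plateau bonds to be the complement, within the plateau bonds (which coincide for $v$ and $v^{\rm r}$), of $P(N)$; on size-$1$ necklaces $\Psi$ is the identity, and one extends $\Psi$ to ornaments necklace by necklace. Because reversal and plateau-complementation are both intrinsic to a circular word, $\Psi(N)$ does not depend on the chosen representative; $\Psi$ manifestly preserves $\wt$ and the partition of necklace sizes; and since $d(v^{\rm r})=a(v)$ and $p(v^{\rm r})=p(v)$ the necklace $\Psi(N)$ has $a(v)+\bigl(p(v)-|P(N)|\bigr)=m-\bigl(d(v)+|P(N)|\bigr)$ bars. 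As the $k$ size-$1$ necklaces of an ornament of type $\lambda$ carry no bars, $\Psi$ therefore sends an ornament of type $\lambda$ with $j$ bars to one of type $\lambda$ with $(n-k)-j$ bars; and as reversing twice and complementing twice are both the identity, $\Psi$ is an involution, hence the required bijection.

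The step requiring real care, and the one I expect to be the main obstacle, is checking that $\Psi(N)$ is again \emph{primitive}, since without this $\Psi$ need not land among necklaces. Reversal preserves periods, so $v^{\rm r}$ has exactly the periods of $v$, and the descent, ascent and plateau bonds of $v$ are each stable under shifting by any period of $v$. Consequently, for a divisor $d'$ of $m$ with $1\le d'<m$, the set of barred bonds of $\Psi(N)$, namely the ascent bonds of $v$ together with $p(v)\setminus P(N)$, is stable under the shift by $d'$ if and only if $d'$ is a period of $v$ and $P(N)$ is stable under that shift, which is exactly the condition for the set of barred bonds of $N$ (the descent bonds of $v$ together with $P(N)$) to be stable under the shift by $d'$; here one uses that a complement of a union of shift-orbits is again such a union. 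Hence $\Psi(N)$ has period $d'$ precisely when $N$ does, so $\Psi(N)$ is primitive precisely when $N$ is. (If $v$ is itself primitive there is nothing to check; the content is the periodic case, where the primitivity of $N$ is carried entirely by the failure of $P(N)$ to be a union of full shift-orbits of plateau bonds, a failure preserved under complementation.) Combining this with Corollary~\ref{ornth} completes the argument; summing over the partitions $\lambda\vdash n$ with $k$ parts equal to $1$ also recovers the identity $Q_{n,j,k}=Q_{n,n-k-j,k}$ contained in Theorem~\ref{symunimodth}(2).
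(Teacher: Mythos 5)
Your argument is correct, and it takes a genuinely different route from the paper's. The paper's involution $\gamma$ on ornaments does two things: it swaps bars and unbars on every letter of every nonsingleton necklace, and then replaces the $i$-th smallest value appearing in the ornament with the $i$-th largest. This is type-preserving, flips the bar count from $j$ to $n-k-j$, and the primitivity check is immediate (it is a letter-by-letter relabeling, so periods are trivially preserved); but it is not weight-preserving — it reverses the exponent composition of each monomial — so the paper must additionally invoke the (separately proved, nontrivial) symmetry Theorem~\ref{symquasith} to close the argument. Your $\Psi$, which reverses the circular reading direction and complements the choice of barred plateau bonds, is weight-preserving outright, so you get $Q_{\lambda,j}=Q_{\lambda,n-k-j}$ directly from Corollary~\ref{ornth} without any appeal to Theorem~\ref{symquasith}; the price is that primitivity of $\Psi(N)$ is no longer trivial, and your reduction to the shift-stability of $P(N)$ (using that complementation within a shift-stable universe preserves shift-stability, and that the reversal $i\mapsto m-i$ conjugates the shift group to itself) is exactly the point that needs the care you gave it. The one place to tighten the exposition is the phrase that the plateau bonds ``coincide for $v$ and $v^{\rm r}$'': what is literally true is that there is a canonical index-bijection (or, more invariantly, that plateau-ness is a property of the undirected gap between two adjacent letters and hence direction-independent); once stated that way the complementation is unambiguous. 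In short, your proof is a correct, self-contained alternative that trades the paper's reliance on Theorem~\ref{symquasith} for a short primitivity argument.
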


 \begin{proof}
 We construct a type-preserving involution $\gamma$ on ornaments.  Let $R$ be an ornament of type $\lambda$.  To obtain $\gamma(R)$, first we bar each unbarred letter of each nonsingleton necklace of $R$ and unbar each barred letter.  Next for each i, we replace each occurrence of  the $i$th smallest value  in $R$ with the $i$th largest value leaving the bars intact.  Clearly $\gamma(R)$ is an ornament with $n-k-j$ bars whenever $R$ is an ornament with $j$ bars.  Also $\gamma^2(R) = R$.    The result now follows from the fact that  $Q_{\lambda,j} $ is symmetric (Theorem~\ref{symquasith}) and from Corollary~\ref{ornth}.
   \end{proof}

   \begin{remark} Although the equation
   \begin{equation} \label{symth3} Q_{n,j} = Q_{n, n-1-j}\end{equation} follows easily from Theorem~\ref{introsymgenth}, we can give a bijective proof by means of banners, using Theorem~\ref{banprop}.  We construct
 an involution $\tau$ on $\bigcup_{\lambda \vdash n}  \mathfrak B_{\lambda,j}$ that is similar to the involution $\gamma$ used in the proof of Theorem~\ref{symth2}.  Let $B$ be a banner of length $n$ with $j$ bars.  To obtain $\tau(B)$,  first we bar each unbarred letter of $B$, except for the last letter,  and unbar each barred letter.  Next for each i, we replace each occurrence of  the $i$th smallest value  in $B$ with the $i$th largest value, leaving the bars intact.  Clearly $\tau(B)$ is a banner of length $n$, with $n-1-j$ bars.
    \end{remark}

Since  $Q_{n,j,k}$ and $Q_{n,j}$ are h-positive, one might expect that the same is true for the refinement $Q_{\lambda,j}$.  It turns out that this is  not the case as the following  computation using the Maple package ACE \cite{ace} shows,
\begin{equation} \label{counterex} Q_{(6),3} = 2 h_{(4,2)} - h_{(4,1,1)} + h_{(3,2,1)} + h_{(5,1)}.\end{equation}
Therefore Theorem~\ref{symunimodth}   cannot hold  for $Q_{\lambda,n}$, as stated.
However, by expanding in the Schur basis we obtain
$$Q_{(6),3} = 3 s_{(6)} + 3 s_{(5,1)} +  3 s_{(4,2)} + s_{(3,3)} + s_{(3,2,1)},$$  which establishes Schur positivity of $Q_{(6),3}$.  We have  used    ACE  \cite{ace}  to confirm the following conjecture for $\lambda = (n)$ up to $n=9$.

\begin{con} \label{sposconj} Let $\lambda \vdash n$.  For all  $j=0,1,\dots,n-1$, the Eulerian quasisymmetric function
$Q_{\lambda,j}$ is  Schur positive.  Moreover, $Q_{\lambda,j} - Q_{\lambda,j-1}$ is Schur positive if $1\le j \le \frac{n-k}2$, where $k$ is the number of parts of $\lambda$ that are equal to $1$.  Equivalently, the t-symmetric polynomial $\sum_{j=0}^{n-1}Q_{\lambda,j}t^j$ is   t-unimodal under the  partial order relation on  the ring of symmetric functions induced by the Schur basis.  \end{con}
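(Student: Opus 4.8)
The natural plan is to reduce the conjecture, by plethysm, to the case of a partition with a single part, and then to seek a representation-theoretic or crystal-theoretic model for that case exhibiting both properties at once.

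\emph{Reduction to single cycles.} First I would record two reductions already implicit in the earlier sections. If $\lambda=(\mu,1^k)$ with $\mu$ having no part equal to $1$, then the ornament description of Corollary~\ref{ornth} (as in the proof of Corollary~\ref{dercor}) gives $Q_{\lambda,j}=h_k\,Q_{\mu,j}$; since multiplication by $h_k$ preserves Schur positivity and is constant in $t$, the conjecture for $\lambda$ follows from the conjecture for $\mu$, so we may assume $\lambda$ has no parts of size $1$. Second, as indicated in the introduction and carried out in Section~\ref{repthsec}, for such a $\mu$ with $a_i$ parts equal to $i$ one has a plethystic product formula $\sum_j Q_{\mu,j}t^j=\prod_{i\ge 2}h_{a_i}\big[\sum_j Q_{(i),j}t^j\big]$, the $t$-grading being carried through the plethysm. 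Since plethysm by $h_m$ (indeed by any $s_\nu$) sends Frobenius characteristics of genuine modules to Frobenius characteristics of genuine modules and respects the $t$-grading, Schur positivity of each single-cycle $Q_{(i),j}$ propagates to Schur positivity of every $Q_{\mu,j}$. For the $t$-unimodality I would prove a plethystic analogue of the product lemma \cite[Proposition 1.2]{st2} used in the proof of Theorem~\ref{symunimodth}: if $g(t)=\sum_j g_j t^j$ is $t$-symmetric and $t$-unimodal with Schur-positive coefficients and center $c$, then $h_m[g(t)]$ is again $t$-symmetric and $t$-unimodal, with center $mc$, in the Schur partial order. Combined with the additivity of centers under products already used in Section~\ref{secQ1}, this reduces everything to: for each $m\ge 2$ the polynomial $\sum_j Q_{(m),j}t^j$ is $t$-symmetric (known, Theorem~\ref{symth2}), is $t$-unimodal with center $m/2$ in the Schur order, and each $Q_{(m),j}$ is Schur positive.

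\emph{The single-cycle case.} For $\lambda=(m)$, Corollary~\ref{ornth} identifies $Q_{(m),j}$ with the weight-generating function of bicolored necklaces of length $m$ carrying exactly $j$ bars, with $\mathfrak S_m$ acting on the underlying alphabet of values. The goal is to realise the conjectural $\mathfrak S_m$-module of Section~\ref{repthsec} with Frobenius characteristic $Q_{(m),j}$ as an honest (not merely virtual) module and to equip $\bigoplus_j Q_{(m),j}t^j$ with an equivariant hard-Lefschetz structure. Two concrete lines suggest themselves: (a) construct a crystal, or an RSK/coplactic-type bijection, on bicolored necklaces of fixed content, reading off the Schur expansion directly --- in the spirit of the Gessel--Reutenauer \cite{gr} and D\'esarm\'enien--Wachs \cite{dw} machinery already used in Section~\ref{identsec}; and (b) produce an $\mathfrak S_m$-equivariant operator raising the bar count by one, modelled on the hard Lefschetz map for the toric variety $X_m$ of the type $A$ Coxeter complex, whose graded character is the full Eulerian quasisymmetric function $\sum_j Q_{m,j}t^j$ occurring on the right of (\ref{introsymgenth1}), but refined so as to be compatible with the cycle-type decomposition. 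Schur-order unimodality for $j\le m/2$ would then follow from injectivity of this operator below the center, exactly as in the toric picture.

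\emph{The main obstacle.} The crux is this last step, and it is precisely why the statement is only conjectured: no $\mathfrak S_m$-module, geometric object, or combinatorial rule is currently known whose $j$-th graded piece has Frobenius characteristic $Q_{(m),j}$ and which simultaneously displays Schur positivity and the equivariant Lefschetz injections. The computation (\ref{counterex}) showing $Q_{(6),3}$ is not $h$-positive rules out the sought module being a permutation module, so the elementary tools that prove Theorem~\ref{symunimodth} are unavailable; and the plethystic reduction, while it transports both properties cleanly, does not by itself manufacture the base-case module. A sensible intermediate target is to settle Schur positivity of the $Q_{(m),j}$ alone, say by exhibiting a crystal on bicolored necklaces, and only afterwards attack the Schur-order unimodality via a Lefschetz-type argument.
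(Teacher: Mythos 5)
This is a conjecture and the paper offers no proof, so your proposal should be (and is) read as a programme rather than a purported proof. Your first reduction matches the paper: immediately after Corollary~\ref{pleth} the authors note that, by the plethystic product formula of Corollary~\ref{plethform}, Schur positivity of $Q_{\lambda,j}$ reduces to the single-part case $\lambda=(m)$ because plethysm of Schur-positive symmetric functions is Schur positive. You go a step further than the paper in asserting that the $t$-unimodality statement also reduces via a ``plethystic analogue of the product lemma.'' This is in fact correct, but it is not a formal consequence of \cite[Proposition~1.2]{st2}, and you should say why it holds: a $t$-symmetric, $t$-unimodal, Schur-positive $g(t)=\sum_j g_j t^j$ is the graded character of a polynomial $GL\times\mathfrak{sl}_2$-module (for each irreducible $GL$-constituent, the multiplicity sequence across $j$ is a symmetric unimodal sequence of nonnegative integers, hence decomposes into $\mathfrak{sl}_2$-strings), and then $S^m$ of that module is again a polynomial $GL\times\mathfrak{sl}_2$-module whose graded character is $h_m[g(t)]$; $\mathfrak{sl}_2$-theory gives $t$-symmetry and Schur-order $t$-unimodality, the center scales to $mc$ as you state, and combining with multiplicativity of centers under ordinary products yields the center $(n-k)/2$ claimed in the conjecture. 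As written your reduction leans on an analogy rather than an argument, so that gap should be closed before the reduction can be trusted.

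Your diagnosis of the remaining obstacle is exactly the paper's: equation~(\ref{counterex}) shows $Q_{(6),3}$ is not $h$-positive, so no permutation-module construction (of the sort underlying Theorem~\ref{symunimodth}) is available, and no $\mathfrak S_m$-module with Frobenius characteristic $Q_{(m),j}$ is exhibited. The paper does give more partial information than your summary suggests: Conjecture~\ref{cvalcon} proposes an explicit character formula for the putative module $V_{(m),j}$, which is verified computationally up to $m=8$ and is proved for classes with a fixed point via the restriction result Theorem~\ref{resthm} and Proposition~\ref{stem} (Corollary~\ref{cvalcor}). Your line~(b) would require refining the known hard-Lefschetz structure on $H^*(X_n)$, whose graded character equals $\sum_j Q_{n,j}t^j$ by Theorem~\ref{toricth}, so as to be compatible with the cycle-type decomposition; nothing in the paper indicates how to do this, and this is precisely why the statement remains a conjecture.
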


From Stembridge's work \cite{stem2} one obtains a nice combinatorial description of the
coefficients in the Schur function expansion of $Q_{n,j,k}$.   It would be interesting to do the same for the refinement
$Q_{\lambda,j}$,  at least when $\lambda= (n)$.  In Section~\ref{secsing} we discuss the  expansion in the power sum symmetric function basis.

By Lemmas~\ref{nonstable} and~\ref{hposqpos}, we have the following consequence of Theorem~\ref{symth2} (and refinement of Part (1) of Theorem~\ref{unieulerth}).

\begin{thm} \label{desmajsym}    Let $\lambda$ be a partition of $n$ with exactly $k$ parts of size $1$.   Then $A^{\maj,\des,\exc}_{\lambda}(q,p,q^{-1}t)$ is $t$-symmetric with center of symmetry ${n-k\over 2}$.
    \end{thm}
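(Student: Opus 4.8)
The plan is to deduce this from the cycle-type symmetry already established in Theorem~\ref{symth2} by applying the nonstable principal specialization of Lemma~\ref{nonstable}, in exact parallel with the way Part~(1) of Theorem~\ref{unieulerth} was obtained from Part~(2) of Theorem~\ref{symunimodth}. Write $\lambda=(\mu,1^{k})$, where $\mu\vdash n-k$ has no parts equal to $1$, so that Lemma~\ref{nonstable} applies with $\mu$ in the role of the lemma's partition.

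First I would unwind the definitions. Setting $a_{\lambda,j}:=\sum_{\sigma}q^{\maj(\sigma)}p^{\des(\sigma)}$, the sum over $\sigma\in\sg_{n}$ of cycle type $\lambda$ with $\exc(\sigma)=j$, we have $A^{\maj,\des,\exc}_{\lambda}(q,p,t)=\sum_{j\ge0}t^{j}a_{\lambda,j}$, hence
$$A^{\maj,\des,\exc}_{\lambda}(q,p,q^{-1}t)=\sum_{j\ge0}q^{-j}t^{j}a_{\lambda,j}.$$
Now substitute the formula of Lemma~\ref{nonstable} for $a_{\lambda,j}=a_{(\mu,1^{k}),j}$. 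The key observation is that the factor $q^{j}$ produced by that lemma is cancelled exactly by the substitution $t\mapsto q^{-1}t$, leaving
$$A^{\maj,\des,\exc}_{\lambda}(q,p,q^{-1}t)=(p;q)_{n+1}\sum_{m\ge0}\sum_{i=0}^{k}p^{m}q^{im}\,g_{i,m}(t),\qquad g_{i,m}(t):=\sum_{j\ge0}t^{j}\,\Lambda_{m}\!\big(Q_{(\mu,1^{k-i}),j}\big),$$
where the scalars $(p;q)_{n+1}p^{m}q^{im}$ involve no $t$.

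The second step is the point where Theorem~\ref{symth2} enters. The partition $(\mu,1^{k-i})$ is a partition of $n-i$ with exactly $k-i$ parts equal to $1$, and $(n-i)-(k-i)=n-k$; hence Theorem~\ref{symth2} gives $Q_{(\mu,1^{k-i}),j}=Q_{(\mu,1^{k-i}),\,n-k-j}$ for $0\le j\le n-k$, while $Q_{(\mu,1^{k-i}),j}=0$ for $j>n-k$ since $\exc\le n-k$ on permutations of this cycle type. Applying the ring homomorphism $\Lambda_{m}$ and reading off the coefficient of $t^{j}$ gives $[t^{j}]g_{i,m}=[t^{n-k-j}]g_{i,m}$ for every $j$, so each $g_{i,m}$ is $t$-symmetric with center of symmetry ${n-k\over 2}$ (or is identically zero) — crucially, with a center independent of $i$ and $m$. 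Since a linear combination over a coefficient ring free of $t$ of polynomials that are $t$-symmetric about a common center is again $t$-symmetric about that center, and since $A^{\maj,\des,\exc}_{\lambda}(q,p,q^{-1}t)$ is nonzero (cycle type $\lambda$ is realized by at least one permutation), the theorem follows.

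There is no real obstacle here once Theorem~\ref{symth2} and Lemma~\ref{nonstable} are granted; the only thing needing care is the bookkeeping inside Lemma~\ref{nonstable} (whose statement already absorbs the iterated fixed-point recurrence) together with the cancellation of the $q^{j}$ factor. I would also remark that one can avoid mentioning $\Lambda_{m}$ at all: by (\ref{spec2}), $(p;q)_{n+1}\sum_{m}\Lambda_{m}(Q_{\nu,j})p^{m}=\sum_{\sigma}p^{|\Exd(\sigma)|+1}q^{\maj(\sigma)-\exc(\sigma)}$, the sum over $\sigma$ of type $\nu$ with $\exc(\sigma)=j$, so every quantity in sight is manifestly a polynomial, and Lemma~\ref{hposqpos} is not actually needed for the symmetry assertion (it would be required only for an accompanying positivity/unimodality statement, which is merely conjectural here — see Conjecture~\ref{sposconj}).
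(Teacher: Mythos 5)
Your proof is correct and follows the same route the paper intends: specialize the cycle-type symmetry $Q_{(\mu,1^{k-i}),j}=Q_{(\mu,1^{k-i}),n-k-j}$ of Theorem~\ref{symth2} through Lemma~\ref{nonstable}, noting that the $q^{j}$ from the lemma is cancelled by the substitution $t\mapsto q^{-1}t$ and that the resulting center of symmetry ${n-k\over 2}$ is uniform over all $i$ and $m$. Your side remark is also right: the paper's terse proof cites Lemma~\ref{hposqpos}, but that lemma (which transports Schur positivity/inequalities through $\Lambda$ and $\Lambda_m$) is needed only for unimodality-type conclusions; since $t$-symmetry requires only the equality $Q_{\lambda',j}=Q_{\lambda',n-k-j}$, the fact that $\Lambda_m$ is a ring homomorphism (so in particular additive) already suffices, and $h$-positivity/Schur-positivity plays no role.
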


\begin{con} Let $\lambda$ be a partition of $n$ with exactly $k$ parts of size 1.  Then the
t-symmetric polynomial $A_{\lambda}^{\maj,\des,\exc}(q,p,q^{-1}t)$ is t-unimodal (with center of symmetry ${n-k \over 2}$).  Consequently for all $n,k$,  $A_{n,k}^{\maj,\des,\exc}(q,p,q^{-1}t)$ is $t$-symmetric and $t$-unimodal with center of symmetry ${n-k \over 2}$.
\end{con}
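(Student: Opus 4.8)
The $t$-symmetry of $A^{\maj,\des,\exc}_\lambda(q,p,q^{-1}t)$ is already Theorem~\ref{desmajsym}, so the plan is to add $t$-unimodality; the final (``consequently'') clause is then immediate, since summing $A^{\maj,\des,\exc}_\lambda(q,p,q^{-1}t)$ over all $\lambda\vdash n$ with exactly $k$ parts equal to $1$ yields $A^{\maj,\des,\exc}_{n,k}(q,p,q^{-1}t)$, and a sum of $t$-symmetric, $t$-unimodal polynomials sharing the center of symmetry ${n-k\over 2}$ is again $t$-symmetric and $t$-unimodal (the argument of \cite[Proposition 1.2]{st2}, valid over any partially ordered ring). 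So I would fix a cycle type $\lambda=(\mu,1^k)$ with $\mu\vdash n-k$ having no parts of size $1$ and reduce to Conjecture~\ref{sposconj}.

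\textbf{Step 1: reduce to Conjecture~\ref{sposconj}.} By Corollary~\ref{ornth}, an ornament of type $(\mu,1^{k-i})$ is an ornament of type $\mu$ together with a multiset of $k-i$ singleton (hence unbarred) necklaces, so $Q_{(\mu,1^{k-i}),j}=h_{k-i}\,Q_{\mu,j}$ for $0\le i\le k$. Substituting $t\mapsto q^{-1}t$ into Lemma~\ref{nonstable} and using that $\Lambda_m$ is a ring homomorphism, the coefficient of $t^j$ in $A^{\maj,\des,\exc}_\lambda(q,p,q^{-1}t)$ equals
$$c_j\;=\;(p;q)_{n+1}\sum_{m\ge 0}p^m\sum_{i=0}^k q^{im}\,\Lambda_m(h_{k-i})\,\Lambda_m(Q_{\mu,j}),$$
so $c_j-c_{j-1}$ is assembled from $\Lambda_m\big(h_{k-i}(Q_{\mu,j}-Q_{\mu,j-1})\big)$. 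Assuming Conjecture~\ref{sposconj}, the difference $Q_{\mu,j}-Q_{\mu,j-1}$, and hence $h_{k-i}(Q_{\mu,j}-Q_{\mu,j-1})$, is Schur positive for $1\le j\le {n-k\over 2}$. When $p=1$ this already closes the argument: by (\ref{stablespec2}), $c_j-c_{j-1}=(q;q)_n\Lambda(Q_{\mu,j}-Q_{\mu,j-1})$, a polynomial in $q$ with nonnegative coefficients by Lemma~\ref{hposqpos}. For general $p$, Lemma~\ref{hposqpos} gives $\sum_m\Lambda_m\big(h_{k-i}(Q_{\mu,j}-Q_{\mu,j-1})\big)\,\widetilde p^{\,m}=R_i(q,\widetilde p)/(\widetilde p;q)_{n-i+1}$ with $R_i$ having nonnegative coefficients; summing the geometric-type factors, $c_j-c_{j-1}$ collapses to $\sum_{i=0}^k R_i(q,pq^i)\,(p;q)_i$, which is a genuine polynomial but in which the factors $(p;q)_i$ alternate in sign. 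The remaining point is to show this expression is coefficient-wise nonnegative; the relation $h_{k-i+1}(Q_{\mu,j}-Q_{\mu,j-1})=h_1\cdot h_{k-i}(Q_{\mu,j}-Q_{\mu,j-1})$ couples the $R_i$ to one another and should make this bookkeeping tractable, but it is the one new estimate needed in Step 1.

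\textbf{Step 2: reduce Conjecture~\ref{sposconj} to the single-part case.} Writing $\lambda=1^{m_1}2^{m_2}\cdots$ and grouping necklaces by size, Corollary~\ref{ornth} gives
$$\sum_{j\ge 0}Q_{\lambda,j}\,t^j\;=\;\prod_{i\ge 1}h_{m_i}\!\Big[\sum_{j\ge 0}Q_{(i),j}\,t^j\Big],$$
the plethysm treating $t$ as a variable (this is the formula of Section~\ref{repthsec}). Products of Schur-unimodal polynomials with symmetric-function coefficients are Schur-unimodal, with centers of symmetry adding, exactly as in the proof of Theorem~\ref{symunimodth}. For the plethystic factors I would argue via an $\mathfrak{sl}_2$-module: if, for a fixed $i$, the sequence $(Q_{(i),j})_j$ is realized as the graded Frobenius characteristic of a graded $\mathfrak S_i$-module $M=\bigoplus_j M_j$ carrying a commuting $\mathfrak{sl}_2$-action for which the grading is the weight grading and hard Lefschetz holds, then $\Sym^{m_i}M$ carries the same structure and its graded Frobenius characteristics are precisely the $t$-homogeneous components of $h_{m_i}[\sum_j Q_{(i),j}t^j]$; hard Lefschetz forces these to be Schur-unimodal, the Lefschetz maps being injective past the center so that consecutive differences are \emph{actual} characters, hence Schur positive. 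Thus everything reduces to showing, for every $n$, that $(Q_{(n),j})_{j=0}^{n-1}$ is Schur positive and Schur-unimodal --- the $\lambda=(n)$ case of Conjecture~\ref{sposconj}, which the authors verified computationally through $n=9$.

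\textbf{Step 3, and the main obstacle.} What remains is to supply, for each $n$, either (a) a hard-Lefschetz $\mathfrak S_n\times\mathfrak{sl}_2$-module --- e.g.\ the cohomology of a smooth projective $\mathfrak S_n$-variety --- whose $2j$-th graded piece has Frobenius characteristic $\omega Q_{(n),j}$, so that hard Lefschetz delivers Schur positivity and unimodality simultaneously; or (b) a purely combinatorial substitute: an RSK-type bijection that reads the Schur expansion of $Q_{(n),j}$ off the necklaces in $\mathfrak R_{(n),j}$, together with an explicit weight-preserving ``order-raising'' injection $\mathfrak R_{(n),j-1}\hookrightarrow\mathfrak R_{(n),j}$ (inserting one bar) compatible with it. The links of the single-part Eulerian quasisymmetric functions to Stembridge's marked-word representation and to toric geometry discussed in Section~\ref{repthsec} point toward route (a), but no such geometric model is presently known; indeed the Schur positivity of $Q_{(n),j}$ alone is open, and $Q_{(n),j}$ is genuinely not $h$-positive by (\ref{counterex}), so the argument cannot be reduced to the $h$-positive situation of Theorem~\ref{symunimodth}. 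This single-part Schur-unimodality is the crux of the problem; by comparison Steps 1 and 2 are routine.
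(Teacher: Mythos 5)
The statement you are trying to prove is not a theorem of the paper at all: it is stated there as a conjecture (following Theorem~\ref{desmajsym}), and the paper offers no proof. Your proposal does not close it either; it is a conditional reduction with two genuine gaps. The first you acknowledge: everything ultimately rests on the single-part case of Conjecture~\ref{sposconj} (Schur positivity and Schur-unimodality of $Q_{(n),j}$), which is open. On the positive side, your Step 2 is essentially sound and does not really need a geometric model: given the single-part case of Conjecture~\ref{sposconj}, a Schur-positive, $t$-symmetric, Schur-unimodal polynomial can always be written as a nonnegative sum of terms $G_j(t^j+t^{j+1}+\cdots+t^{2c-j})$ with $G_j$ Schur positive, i.e.\ realized formally as a graded $\S_i$-module with a commuting $\mathfrak{sl}_2$-action, and then plethysm with $h_m$ inherits symmetry and unimodality exactly as you say; this parallels the paper's own remark after Corollary~\ref{plethform} that Schur positivity reduces to single parts.

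The second gap is the one you dismiss as routine, and it is not. Your Step 1 handles general $p$ by reducing $c_j-c_{j-1}$ to $\sum_{i=0}^k R_i(q,pq^i)\,(p;q)_i$ with $R_i$ nonnegative, and you concede the alternating signs of $(p;q)_i$ leave the required nonnegativity unproved. This is precisely the obstruction that prevents the authors from proving the statement even in the situation where positivity of the differences is unconditional: for the fixed-point polynomials, $Q_{n,j,k}-Q_{n,j-1,k}$ is $h$-positive up to the center by Theorem~\ref{symunimodth}, yet Theorem~\ref{unieulerth} only yields $t$-unimodality of $A^{\maj,\des,\exc}_{n,k}(q,p,q^{-1}t)$ when $p=1$ or $k=0$, because Lemma~\ref{nonstable} mixes the specializations $\Lambda_m(Q_{n-i,j,k-i})$ with weights $q^{im}$ in a way that Lemma~\ref{hposqpos} does not control. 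In other words, the ``consequently'' half of the conjecture would already follow from known $h$-positivity if your Step 1 bookkeeping worked, so that bookkeeping cannot be routine --- it is an essential part of what remains open. A correct account of the situation is: Schur positivity and unimodality of $Q_{(n),j}$ would give the $p=1$ statement via (\ref{stablespec2}) and Lemma~\ref{hposqpos} (note also that at $p=1$ your difference should read $(q;q)_n\Lambda\bigl(h_k(Q_{\mu,j}-Q_{\mu,j-1})\bigr)$, with the $h_k$ factor retained), but the general-$p$ assertion requires a new idea about the nonstable specialization, in addition to Conjecture~\ref{sposconj}.
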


The next result is easy to prove  and does not rely on the $Q_{\lambda,j}$.
\begin{prop} \label{easyprop} Let $\lambda$ be a partition of $n$ with exactly $k$ parts of size $1$.   Then for all $j=0,1,\dots,n-1$, $$a_{\lambda,j}(q,p) = a_{\lambda,n-k-j}(1/q,q^np).$$
\end{prop}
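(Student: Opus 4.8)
The plan is to prove the identity
\[
a_{\lambda,j}(q,p) = a_{\lambda,n-k-j}(1/q,q^np)
\]
by exhibiting a bijection on the set of permutations of cycle type $\lambda$ that sends a permutation with $j$ excedances to one with $n-k-j$ excedances, while transforming the statistics $\maj$ and $\des$ in the way dictated by the substitution $(q,p)\mapsto(1/q,q^np)$. The natural candidate is the classical complementation-type involution $\sigma\mapsto\hat\sigma$ defined cycle by cycle: writing $\sigma$ in cycle form, replace each entry $i$ by $n+1-i$. Since $\lambda$ is the cycle type of $\sigma$, this map preserves cycle type. The key observations to verify are: (a) an excedance position $i$ of $\sigma$ (i.e.\ $\sigma(i)>i$) that is \emph{not} a fixed point becomes a non-excedance position of $\hat\sigma$, and vice versa, while fixed points remain fixed points; hence $\exc(\hat\sigma) = n - k - \exc(\sigma)$, accounting for the $k$ parts of size $1$ in $\lambda$. (b) After reading off the one-line notation of $\hat\sigma$ from its cycle form (using the standard convention, e.g.\ each cycle written with its largest element first, cycles arranged in increasing order of largest elements — one must pick the convention under which $\hat{\hat\sigma}=\sigma$), one checks how $\Des$ changes.

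First I would set up the cycle-form bookkeeping carefully. The subtle point is that $i\mapsto n+1-i$ acts on \emph{values}, not positions, so to control $\Des$ and $\maj$ one must track what happens to the one-line word. The cleanest route: use the fact (essentially the content of Lemma~\ref{exdlem} and the discussion around $\Exd$) that $\maj(\sigma)-\exc(\sigma) = \sum_{i\in\Exd(\sigma)}i$ depends only on the word $\ov\sigma$ over the bicolored alphabet, and that $\exc$ counts the bars. Then I would show that the involution $\sigma\mapsto\hat\sigma$ corresponds, at the level of the words $\ov\sigma$, to simultaneously (i) complementing all values $i\mapsto n+1-i$ and (ii) swapping the two colors (barred $\leftrightarrow$ unbarred) — exactly the operation appearing in the Remark after Theorem~\ref{symth2} and in the proof of Theorem~\ref{symth2}. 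Complementing values reverses the order relation on $[n]$, which turns ascents into descents; combined with the reading convention this yields $\des(\hat\sigma) = n-1-\des(\sigma) - (\text{correction for }\sigma(1))$ and, more importantly, a clean relation $\maj(\hat\sigma) = \binom n2 - \maj(\sigma) + (\text{linear term in }\des)$. Matching this against the exponents produced by $q\mapsto 1/q$ and $p\mapsto q^np$ — namely, a term $q^{\maj}p^{\des}$ becomes $q^{-\maj}q^{n\,\des}p^{\des}$, and one wants this to equal (up to the global reindexing) the term $q^{\maj(\hat\sigma)}p^{\des(\hat\sigma)}$ — pins down precisely which reading convention makes the identity hold on the nose.

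The main obstacle I anticipate is \textbf{getting the descent/major-index transformation exactly right}, including the off-by-one terms coming from whether position $1$ is an excedance (equivalently, whether $\sigma(1)=1$, which splits into the two cases already seen in Lemma~\ref{exdlem}) and from the choice of canonical one-line representative of a permutation given in cycle form. The excedance count and cycle-type preservation are straightforward; the Mahonian part requires a short but careful computation relating $\Des(\hat\sigma)$ to $[n-1]\setminus\Des(\sigma)$ under value-complementation. I expect that once the reading convention is fixed so that $\,i\in\Des(\hat\sigma)\iff i\notin\Des(\sigma)\,$ for all but a controlled set of positions, one gets $\maj(\hat\sigma)=\comaj(\sigma)+c\cdot\des(\sigma)$ for an explicit constant, and then the substitution $p\mapsto q^np$ absorbs exactly the linear correction, completing the proof. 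Since the statement is flagged as ``easy to prove and does not rely on the $Q_{\lambda,j}$,'' I would keep the argument purely combinatorial and avoid invoking the symmetric function machinery, using at most the elementary identities for $\maj$, $\comaj$, $\des$, $\exc$ recalled in Section~\ref{statsec}.
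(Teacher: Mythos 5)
You identify the correct involution, and your handling of cycle type and excedances is right, but the analysis of $\des$ and $\maj$ goes astray because of a misconception about what the map does. You write that ``$i\mapsto n+1-i$ acts on \emph{values}, not positions,'' and you worry about a ``reading convention'' for recovering one-line notation from cycle form. Neither is right. A permutation is a function; once you have the function $\phi(\sigma)$ its one-line word is determined, with no convention to choose. And in cycle form each letter is simultaneously a position and a value, so replacing every $i$ by $n+1-i$ in the cycles gives $\phi(\sigma)(j)=n+1-\sigma(n+1-j)$ — the one-line word of $\phi(\sigma)$ is the \emph{reverse complement} of that of $\sigma$, not merely the complement. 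Consequently $i\in\Des(\phi(\sigma))$ if and only if $n-i\in\Des(\sigma)$, so $\des(\phi(\sigma))=\des(\sigma)$ exactly (no correction term), and $\maj(\phi(\sigma))=\sum_{i\in\Des(\sigma)}(n-i)=n\,\des(\sigma)-\maj(\sigma)$. Matching $q^{\maj(\phi(\sigma))}p^{\des(\phi(\sigma))}$ against $q^{-\maj(\sigma)}(q^np)^{\des(\sigma)}$ then finishes the proof in one line.

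Your anticipated formula $\des(\hat\sigma)=n-1-\des(\sigma)-(\text{correction})$ is what you would get from complementing values alone (which turns ascents into descents); the built-in position reversal restores $\des$. Likewise the detour through $\ov\sigma$, $\Exd$, and the color-swap of Theorem~\ref{symth2} is unnecessary here and, as you leave it, unverified — the paper's proof needs none of that machinery, only the reverse-complement observation above. So the approach is the paper's, but the crucial step (how the one-line word transforms) is misdiagnosed and would have led you to the wrong transformation rule.
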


\begin{proof}  This follows from the type preserving involution $\phi: \sg_n \to \sg_n$ defined by letting
$\phi(\s) $ be obtained from $\s$ by writing $\s$ in cycle form and replacing each
$i$ by $n-i+1$.   Clearly $\phi$  sends permutations with $j$ excedences to permutations with $n-k-j$ excedences.   Also $i \in \Des(\s)$ if and only if $n-i \in \Des(\phi(\s))$.
It follows that $\phi$ preserves $\des$ and  $$\maj(\phi(\s)) = \des(\s) n- \maj (\s).$$
\end{proof}

By combining Proposition~\ref{easyprop} and Theorem~\ref{desmajsym}, we obtain the following result.
\begin{cor} Let $\lambda$ be a partition of $n$ with exactly $k$ parts of size $1$.  Then for all $j=0,1,\dots,n-1$,
$$a_{\lambda,j}(q,p) = q^{2j+k-n} a_{\lambda,j}(1/q,q^np).$$ Equivalently,  the coefficient of $p^dt^j$ in $A_{\lambda}^{\maj,\des,\exc}(q,p,t)$ is $q$-symmetric with center of symmetry $j + {k+nd -n \over 2}$.\end{cor}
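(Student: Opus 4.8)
The plan is to derive both displayed equations purely formally from the two results just cited, translating the statement of Theorem~\ref{desmajsym} that $A^{\maj,\des,\exc}_{\lambda}(q,p,q^{-1}t)$ is $t$-symmetric with center of symmetry $\tfrac{n-k}{2}$ into a functional equation for the coefficients $a_{\lambda,j}(q,p)$, and then feeding in the involution identity of Proposition~\ref{easyprop}.

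First I would write
\[ A^{\maj,\des,\exc}_{\lambda}(q,p,q^{-1}t)=\sum_{j=0}^{n-1} q^{-j}a_{\lambda,j}(q,p)\,t^{j}, \]
so that $t$-symmetry about $\tfrac{n-k}{2}$ says precisely that the coefficient of $t^{j}$ equals the coefficient of $t^{\,n-k-j}$, i.e. $q^{-j}a_{\lambda,j}(q,p)=q^{-(n-k-j)}a_{\lambda,n-k-j}(q,p)$, which rearranges to $a_{\lambda,j}(q,p)=q^{\,2j+k-n}a_{\lambda,n-k-j}(q,p)$. Next I would invoke Proposition~\ref{easyprop} in the form $a_{\lambda,n-k-j}(q,p)=a_{\lambda,j}(1/q,q^{n}p)$, obtained by applying the proposition with $j$ replaced by $n-k-j$ and simplifying the index $n-k-(n-k-j)=j$. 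Substituting this into the previous line gives exactly
\[ a_{\lambda,j}(q,p)=q^{\,2j+k-n}\,a_{\lambda,j}(1/q,q^{n}p), \]
the first assertion.

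For the ``equivalently'' clause I would expand $a_{\lambda,j}(q,p)=\sum_{d}c_{\lambda,j,d}(q)\,p^{d}$, where $c_{\lambda,j,d}(q)$ is the coefficient of $p^{d}t^{j}$ in $A^{\maj,\des,\exc}_{\lambda}(q,p,t)$. Extracting the coefficient of $p^{d}$ from both sides of the displayed identity (noting $(q^{n}p)^{d}=q^{nd}p^{d}$) yields $c_{\lambda,j,d}(q)=q^{\,2j+k-n+nd}\,c_{\lambda,j,d}(1/q)$, which is exactly the statement that $c_{\lambda,j,d}$ is $q$-symmetric with center of symmetry $\tfrac{1}{2}(2j+k-n+nd)=j+\tfrac{k+nd-n}{2}$.

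Since every step is a formal manipulation, there is no genuine obstacle; the only thing to watch is the bookkeeping — keeping the roles of $q$ and $p$ straight (the original definition writes $a_{\lambda,j}(p,q)$ while the statements use $a_{\lambda,j}(q,p)$), correctly absorbing the $q^{-1}$ built into the $t$-variable of Theorem~\ref{desmajsym}, and checking that the two ``center of symmetry'' conventions — one for the $t$-symmetry of the polynomial, one for the $q$-symmetry of each of its coefficients — are translated consistently.
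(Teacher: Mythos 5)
Your proof is correct and follows exactly the route the paper intends; the authors simply state that the corollary is obtained "by combining Proposition~\ref{easyprop} and Theorem~\ref{desmajsym}" without spelling out the coefficient-extraction bookkeeping, and your write-up supplies precisely those steps (including the correct observation that the arguments of $a_{\lambda,j}$ appear in inconsistent order between the definition and the later statements, but that the intended meaning is clear from context).
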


\begin{con} The coefficient of $p^dt^j$ in $A_{\lambda}^{\maj,\des,\exc}(q,p,t)$ is $q$-unimodal.
\end{con}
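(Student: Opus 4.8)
The plan is to write the polynomial in question as a nonnegative integer combination of $q$-symmetric, $q$-unimodal polynomials that all share one common center of symmetry; such a combination is again $q$-symmetric and $q$-unimodal, so that would finish the proof. Write $f_{\lambda,d,j}(q)$ for the coefficient of $p^dt^j$ in $A^{\maj,\des,\exc}_{\lambda}(q,p,t)$, that is, $f_{\lambda,d,j}(q)=\sum q^{\maj(\s)}$ summed over $\s\in\S_n$ of cycle type $\lambda$ with $\des(\s)=d$ and $\exc(\s)=j$. By the corollary above, $f_{\lambda,d,j}$ is already known to be $q$-symmetric with center of symmetry $j+\tfrac{k+nd-n}{2}$, where $k$ is the number of parts of $\lambda$ equal to $1$, so only $q$-unimodality remains. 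I would first dispose of the case $k>0$: reading the ornament model of $Q_{\lambda,j}$ (Corollary~\ref{ornth}) exactly as in the proof of Corollary~\ref{dercor} gives $Q_{(\mu,1^k),j}=h_k\,Q_{\mu,j}$ whenever $\mu$ has no parts equal to $1$, and feeding this into Lemma~\ref{nonstable} (equivalently, peeling off fixed points at position $1$ via the bijection $\varphi$) expresses $f_{(\mu,1^k),d,j}$ in terms of the $k=0$ quantities convolved with principal specializations of $h_k$, which are unimodal Gaussian binomial coefficients; the (somewhat fiddly) $q$-binomial bookkeeping, including the check that the resulting summands are $q$-symmetric about $j+\tfrac{k+nd-n}{2}$, I omit here. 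The substance is therefore the case of $\lambda$ with no parts equal to $1$, which I treat now.

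\textbf{The case $k=0$.} For such $\lambda$, position $1$ is never a fixed point, so $\des(\s)=|\Exd(\s)|+1$ for every $\s$ of cycle type $\lambda$ (Lemma~\ref{exdlem}); applying Lemma~\ref{desspec} to $Q_{\lambda,j}=\sum_{\exc(\s)=j}F_{\Exd(\s),n}$ then gives
\[
q^j(p;q)_{n+1}\sum_{m\ge0}\Lambda_m(Q_{\lambda,j})\,p^m=\sum_{\substack{\s\in\S_n,\ \lambda(\s)=\lambda\\ \exc(\s)=j}}p^{\des(\s)}q^{\maj(\s)},
\]
so $f_{\lambda,d,j}(q)=q^j[p^d]\bigl((p;q)_{n+1}\sum_m\Lambda_m(Q_{\lambda,j})p^m\bigr)$. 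Using the Schur expansion $Q_{\lambda,j}=\sum_{\nu}c_\nu s_\nu$ together with $s_\nu=\sum_{\tau\in\mathrm{SYT}(\nu)}F_{\Des(\tau),n}$ and Lemma~\ref{desspec} again (which yields $(p;q)_{n+1}\sum_m\Lambda_m(s_\nu)p^m=\sum_{\tau\in\mathrm{SYT}(\nu)}p^{\des(\tau)+1}q^{\maj(\tau)}$), one obtains
\[
f_{\lambda,d,j}(q)=q^j\sum_{\nu\vdash n}c_\nu\,g_\nu^{(d)}(q),\qquad g_\nu^{(d)}(q):=\!\!\sum_{\substack{\tau\in\mathrm{SYT}(\nu)\\ \des(\tau)=d-1}}\!\! q^{\maj(\tau)}.
\]
Since Sch\"utzenberger evacuation preserves $\des$ and sends $\Des(\tau)$ to $\{n-i:i\in\Des(\tau)\}$, each $g_\nu^{(d)}$ is $q$-symmetric with center of symmetry $\tfrac{n(d-1)}{2}$, and this center is the same for every $\nu$. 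Hence the proof reduces to two inputs: (i) $c_\nu\ge0$ for all $\nu$ --- this is precisely Conjecture~\ref{sposconj}; and (ii) $g_\nu^{(d)}(q)$ is $q$-unimodal for every shape $\nu$. Granting (i) and (ii), $f_{\lambda,d,j}$ is $q^j$ times a nonnegative combination of $q$-symmetric, $q$-unimodal polynomials with the common center $\tfrac{n(d-1)}{2}$, hence $q$-symmetric and $q$-unimodal with center $j+\tfrac{nd-n}{2}=j+\tfrac{k+nd-n}{2}$, as required.

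\textbf{Main obstacle.} The crux is input (ii). Input (i) is a standing conjecture of the paper, verified computationally (for single-part $\lambda$ up to $n=9$, and presumably for small $\lambda$ generally), while (ii) --- $q$-unimodality of the $\maj$-enumerator of standard Young tableaux of a fixed shape with a fixed number of descents --- is the genuinely new ingredient and, to my knowledge, is not recorded in full generality. I would attack it by realizing the bigraded polynomial $\sum_{\tau\in\mathrm{SYT}(\nu)}q^{\maj(\tau)}t^{\des(\tau)}$, suitably normalized, as the Hilbert series of a finite-dimensional bigraded module built from the Specht module of shape $\nu$ (via descent representations, or via a coinvariant-algebra / flag-variety-cohomology grading) that carries a commuting, $S_n$-equivariant $\mathfrak{sl}_2$-action whose weight grading records $\des$ and whose homological degree records $\maj$; semisimplicity of that $\mathfrak{sl}_2$-action (a hard-Lefschetz-type statement) would force each fixed-$\des$ slice to be $q$-symmetric and $q$-unimodal. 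The $q$-hook-length formula (which evaluates the $\des$-summed version) and the classical symmetric unimodality of the finite principal specialization $s_\nu(1,q,\dots,q^{N-1})$ --- the character of an $SL_2$-representation, i.e. the $p$-specialized version --- are exactly the shadows one expects such a structure to cast, which makes (ii) plausible; establishing it, together with the $q$-binomial bookkeeping of the $k>0$ reduction (which may itself need an analogous but, I expect, milder unimodality input), is where the real work lies.
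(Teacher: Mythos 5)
The statement you are addressing is not proved in the paper at all: it is stated there as an open conjecture (immediately after the corollary giving $q$-symmetry of the coefficient of $p^dt^j$), so there is no proof of record to compare against, and your proposal does not close the gap either. What you give is a conditional reduction, and you are candid about this: in the $k=0$ case your identity $f_{\lambda,d,j}(q)=q^j\sum_\nu c_\nu g_\nu^{(d)}(q)$ is correct (it is the linear operation ``coefficient of $p^d$ in $(p;q)_{n+1}\sum_m\Lambda_m(\cdot)p^m$'' applied to the two expansions of $Q_{\lambda,j}$, exactly in the spirit of Lemma~\ref{hposqpos}, and the evacuation argument for the common center $n(d-1)/2$ is sound), but it rests on two unproven inputs. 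Input (i) is precisely Conjecture~\ref{sposconj}, which the paper itself leaves open (verified only computationally for $\lambda=(n)$, $n\le 9$); input (ii), $q$-unimodality of $\sum_{\tau\in\mathrm{SYT}(\nu),\,\des(\tau)=d-1}q^{\maj(\tau)}$ for every shape $\nu$, is a genuinely new claim for which you offer only a speculative $\mathfrak{sl}_2$/hard-Lefschetz program, no proof and no reference; note that the unrestricted fake-degree polynomial $\sum_\tau q^{\maj(\tau)}$ is already not unimodal (e.g.\ $\nu=(2,2)$ gives $q^2+q^4$), so (ii) cannot be deduced from standard unimodality facts about principal specializations and appears to be of comparable difficulty to the conjecture you are trying to prove.

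There is also a concrete unresolved step in your $k>0$ reduction. The factorization $Q_{(\mu,1^k),j}=h_kQ_{\mu,j}$ does follow from Corollary~\ref{ornth} as in Corollary~\ref{dercor}, but when you feed it into Lemma~\ref{nonstable} the coefficient of $p^d$ involves the alternating-sign factor $(p;q)_{n+1}$ together with the $m$-dependent weights $q^{im}$, so extracting a fixed power of $p$ is not a positivity- or unimodality-preserving operation; this is exactly why the paper's own machinery (Lemma~\ref{hposqpos}) yields statements about the full polynomial $(p;q)_{n+1}\sum_m\Lambda_m(\cdot)p^m$ rather than about its individual $p$-coefficients, and why the paper proves $t$-unimodality results only after setting $p=1$ or restricting to derangements. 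In short: the reduction of the conjecture to (i) and (ii) in the fixed-point-free case is a reasonable and correct observation, but the statement remains open after your argument, since both inputs and the $k>0$ bookkeeping are unestablished.
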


\section{Further properties of the Eulerian quasisymmetric functions} \label{repthsec}

 \subsection{Partitions with a single part} \label{secsing} The ornament characterization of $Q_{\lambda,j}$ yields  a plethystic formula expressing $Q_{\lambda,j}$ in terms of $Q_{(i),k}$.   (Note $(i)$ stands for the partition with a single part $i$.) Let $f$ be a symmetric function in variables $x_1,x_2,\dots$ and let $g$ be a formal power series with positive integer coefficients.  Choose any ordering of the monomials of $g$, where a monomial appears in the ordering $m$ times if its coefficient is $m$.  The {\em plethysm} of $f$ and $g$, denoted $f[g]$ is defined to be the the formal power series obtained from $f$ by replacing $x_i$ by the $i$th monomial of $g$, for each $i$.  The following result is an immediate consequence of  Corollary~\ref{ornth}.

\begin{cor} \label{plethform} Let $\lambda$ be a partition with $m_i$ parts of size $i$ for all  $i$.
Then \begin{equation}\label{conj}\sum_{j=0}^{|\lambda|-1} Q_{\lambda,j } t^j = \prod_{i\ge 1} h_{m_i}[ \sum_{j=0}^{i-1} Q_{(i), j} t^j].\end{equation}
Consequently,
$$\sum_{n,j \ge 0} Q_{n,j } t^j z^n  = \sum_{n\ge 0} h_n[\sum_{i,j\ge 0}Q_{(i), j} t^j z^i ] .$$
\end{cor}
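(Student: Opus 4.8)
The plan is to read off both identities directly from the ornament description of $Q_{\lambda,j}$ in Corollary~\ref{ornth}, combined with the combinatorial meaning of plethysm with a complete homogeneous symmetric function. First I would specialize Corollary~\ref{ornth} to partitions with a single part: for each $i\ge 1$, $\sum_{j=0}^{i-1} Q_{(i),j}(\x)\,t^j = \sum_{N} \wt(N)\,t^{\bars(N)}$, the sum ranging over all (bicolored) necklaces $N$ of size $i$, where $\bars(N)$ denotes the number of bars of $N$. Each $N$ contributes a single monomial in the $x$-variables and $t$; distinct necklaces may contribute equal monomials, but after collecting terms this is a power series with nonnegative integer coefficients, hence a legitimate argument for the plethysm operation as defined above.

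Next I would unwind $h_m[g]$. Using $h_m = \sum_{i_1 \le \dots \le i_m} x_{i_1}\cdots x_{i_m}$ together with the recipe for plethysm (order the monomials of $g$ with multiplicity and substitute them for the variables of $h_m$), one sees that $h_m[g]$ is exactly the sum over all size-$m$ multisets of monomials of $g$ of the product of those monomials. Applying this with $g = \sum_{j=0}^{i-1} Q_{(i),j}t^j$ and $m = m_i$, the factor $h_{m_i}\!\left[\sum_j Q_{(i),j}t^j\right]$ is the generating function $\sum \prod_{N}\wt(N)\,t^{\bars(N)}$ over multisets of exactly $m_i$ necklaces of size $i$; when $i$ does not occur in $\lambda$ we have $m_i=0$ and $h_0 = 1$, the empty contribution.

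The key step is then the observation that an ornament $R$ of type $\lambda$ is precisely a multiset of necklaces containing exactly $m_i$ necklaces of size $i$ for every $i$, so giving such an $R$ amounts to choosing, independently over $i$, a multiset of $m_i$ necklaces of size $i$. Since $\wt$ and the number of bars are both additive over the necklaces of an ornament, the generating function $\sum_{j} Q_{\lambda,j}t^j = \sum_{R}\wt(R)\,t^{\bars(R)}$ (sum over all ornaments of type $\lambda$) factors as the product over $i$ of the per-size generating functions computed in the previous paragraph, which is exactly $\prod_{i\ge1} h_{m_i}\!\left[\sum_{j=0}^{i-1} Q_{(i),j}t^j\right]$.

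For the displayed consequence I would sum the first identity over all partitions $\lambda$, write $z^{|\lambda|} = \prod_i z^{i m_i}$, and use homogeneity of $h_{m}$ to absorb the powers of $z$ into the plethysm, i.e.\ $h_{m_i}\!\left[\sum_j Q_{(i),j}t^j\right] z^{i m_i} = h_{m_i}\!\left[\sum_j Q_{(i),j}t^j z^i\right]$. The addition rule $h_n[A+B] = \sum_{a+b=n} h_a[A]h_b[B]$ (equivalently $\sum_n h_n\!\left[\sum_i f_i\right] = \prod_i \sum_m h_m[f_i]$) then turns the product over $i$ into $\sum_{n\ge0} h_n\!\left[\sum_{i,j\ge0} Q_{(i),j}t^j z^i\right]$, as claimed. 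I do not anticipate a serious obstacle here; the only points requiring care are matching the paper's concrete monomial-ordering definition of plethysm to the multiset-enumeration interpretation, and confirming that $\sum_j Q_{(i),j}t^j$ genuinely has nonnegative integer coefficients so that the plethysm is well-defined.
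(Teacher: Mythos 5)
Your proof is correct and takes essentially the same route as the paper, which dispenses with the argument by calling it an ``immediate consequence'' of Corollary~\ref{ornth}; you have just spelled out the details — the multiset interpretation of $h_m[\cdot]$, additivity of weight and bar count over an ornament's necklaces, and the addition rule $\sum_n h_n[A+B]=(\sum_a h_a[A])(\sum_b h_b[B])$ for the second display.
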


By specializing (\ref{conj}) we obtain the following result. Recall that $(\lambda,\mu)$ denotes the partition of $m+n$ obtained by  concatenating  $\lambda$ and $\mu$ and then appropriately rearranging the parts.

\begin{cor} Let $\lambda$ and $\mu$ be  partitions of $m$ and $n$, respectively.  If  $\lambda$ and $\mu$ have no  common parts then
$$A^{\maj,\exc}_{(\lambda,\mu)}(q,t) = \left[ \begin{array}{c} m+n \\ m \end{array}\right]_q \,\, A^{\maj,\exc}_{\lambda}(q,t) A^{\maj,\exc}_{\mu}(q,t).$$
Consequently, if $\lambda$ has no parts equal to $1$ then for all $j$,
$$a_{(\lambda, 1^m),j}(q,1) = \left[ \begin{array}{c} m+n \\ m \end{array}\right]_q \,\, a_{\lambda,j}(q,1).$$
\end{cor}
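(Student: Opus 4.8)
The plan is to specialize the plethystic identity (\ref{conj}) of Corollary~\ref{plethform}. For a partition $\nu$, write $m_i(\nu)$ for the number of parts of $\nu$ equal to $i$. The hypothesis that $\lambda$ and $\mu$ have no common parts says that for every $i$ at least one of $m_i(\lambda)$ and $m_i(\mu)$ vanishes; since $(\lambda,\mu)$ has $m_i(\lambda)+m_i(\mu)$ parts of size $i$, it follows (using $h_0=1$) that $h_{m_i((\lambda,\mu))}[g]=h_{m_i(\lambda)}[g]\,h_{m_i(\mu)}[g]$ for every formal power series $g$. Applying (\ref{conj}) to $(\lambda,\mu)$ and regrouping the product over $i$ accordingly, I get
\[
\sum_{j\ge 0}Q_{(\lambda,\mu),j}\,t^j=\Bigl(\sum_{j\ge 0}Q_{\lambda,j}\,t^j\Bigr)\Bigl(\sum_{j\ge 0}Q_{\mu,j}\,t^j\Bigr).
\]
The same identity can also be read off directly from the ornament model of Corollary~\ref{ornth}: when $\lambda$ and $\mu$ share no part, every necklace size occurring in an ornament of type $(\lambda,\mu)$ belongs unambiguously either to $\lambda$ or to $\mu$, so $\mathfrak R_{(\lambda,\mu),j}$ is the disjoint union, over $j_1+j_2=j$, of $\mathfrak R_{\lambda,j_1}\times\mathfrak R_{\mu,j_2}$, and this is weight-multiplicative.

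Next I would apply the stable principal specialization $\Lambda$, which is a ring homomorphism and hence multiplicative, to the displayed identity. By (\ref{stablespec2}), for any $\nu\vdash N$ we have $q^j(q;q)_N\Lambda(Q_{\nu,j})=a_{\nu,j}(q,1)$, so
\[
\sum_{j\ge 0}\Lambda(Q_{\nu,j})\,t^j=\frac{1}{(q;q)_N}\,A^{\maj,\exc}_\nu(q,t/q).
\]
Substituting this for $\nu=(\lambda,\mu),\lambda,\mu$ (with $N=m+n,m,n$ respectively) and clearing the factors $(q;q)_N$ turns the identity of the previous paragraph into
\[
A^{\maj,\exc}_{(\lambda,\mu)}(q,t/q)=\frac{(q;q)_{m+n}}{(q;q)_m(q;q)_n}\,A^{\maj,\exc}_\lambda(q,t/q)\,A^{\maj,\exc}_\mu(q,t/q).
\]
Since $(q;q)_k=(1-q)^k[k]_q!$, the prefactor equals $\left[\begin{array}{c}m+n\\m\end{array}\right]_q$; as this is a polynomial identity in the single variable $t/q$, it remains valid with $t/q$ replaced by $t$, which is the first assertion.

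For the second assertion I would take $\mu=(1^m)$: the only permutation of cycle type $(1^m)$ is the identity of $\S_m$, which has $\maj=\exc=0$, so $A^{\maj,\exc}_{(1^m)}(q,t)=1$, and $\lambda$ having no part equal to $1$ is precisely the requirement that $\lambda$ and $(1^m)$ have no common part. Hence $A^{\maj,\exc}_{(\lambda,1^m)}(q,t)=\left[\begin{array}{c}n+m\\m\end{array}\right]_qA^{\maj,\exc}_\lambda(q,t)$, and comparing coefficients of $t^j$ gives $a_{(\lambda,1^m),j}(q,1)=\left[\begin{array}{c}n+m\\m\end{array}\right]_qa_{\lambda,j}(q,1)$. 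I do not expect a genuine obstacle: the one point needing care is the uniform $q^{-j}$ (equivalently $t\mapsto t/q$) shift incurred in passing between $Q_{\nu,j}$ and the $(\maj,\exc)$-enumerator, but this is harmless since the $t$-degree is additive under the product, so the shift matches consistently on both sides of every equation above.
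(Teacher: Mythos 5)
Your proof is correct, and it is essentially the proof the paper has in mind: the authors state only ``By specializing (\ref{conj}) we obtain the following result,'' and your argument is precisely that specialization, carried out in detail. The key observation — that when $\lambda$ and $\mu$ share no part, each $h_{m_i((\lambda,\mu))}[g]$ factors as $h_{m_i(\lambda)}[g]\,h_{m_i(\mu)}[g]$ because one multiplicity vanishes — is exactly the right way to extract the product formula from Corollary~\ref{plethform}, and the subsequent application of $\Lambda$ together with (\ref{stablespec2}) and the observation that the uniform $t\mapsto t/q$ shift can be undone is handled carefully and correctly. (Your alternative justification via the ornament model of Corollary~\ref{ornth} is also valid and arguably cleaner.)
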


\begin{cor} \label{pleth} For all $j$ and $k$ we have $$Q_{(2^j,1^k),j} = h_j[h_2] h_k.$$
 Moreover,
 \begin{equation}\label{invol} \sum_{\lambda,j} Q_{\lambda,j} t^j z^{|\lambda|} = \prod_{i\ge 1} (1-x_iz)^{-1} \prod_{1 \le i \le j} (1-x_ix_j t z^2)^{-1},\end{equation}
where $\lambda$ ranges over all partitions with no parts greater than $2$.
\end{cor}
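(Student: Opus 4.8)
The plan is to derive both assertions from the plethystic formula of Corollary~\ref{plethform}, for which I first need the two ``atomic'' Eulerian quasisymmetric functions $Q_{(1),\ell}$ and $Q_{(2),\ell}$. These are immediate: the only permutation of cycle type $(1)$ is the identity, which has no excedances and $\Exd=\emptyset$, so $Q_{(1),0}=F_{\emptyset,1}=h_1$; the only permutation of cycle type $(2)$ is $\sigma=21$, which has $\exc(\sigma)=1$ and $\bar\sigma=\bar 2\,1$, so (in the order (\ref{order1})) $\Exd(\sigma)=\Des(\bar 2\,1)=\emptyset$, whence $Q_{(2),0}=0$ and $Q_{(2),1}=F_{\emptyset,2}=h_2$. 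Thus $\sum_\ell Q_{(1),\ell}t^\ell=h_1$ and $\sum_\ell Q_{(2),\ell}t^\ell=t\,h_2$.

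Now for $\lambda=(2^j,1^k)$ we have $m_1=k$, $m_2=j$ and $m_i=0$ otherwise, so Corollary~\ref{plethform} gives
\[
\sum_{\ell}Q_{(2^j,1^k),\ell}\,t^\ell \;=\; h_k[h_1]\cdot h_j[t\,h_2]\;=\;h_k\cdot t^j h_j[h_2],
\]
using $h_k[h_1]=h_k$ and $h_j[t\,h_2]=t^jh_j[h_2]$ (the latter by expanding $h_j$ in power sums, since $p_m[t\,h_2]=t^mp_m[h_2]$). Comparing coefficients of $t^\ell$ yields $Q_{(2^j,1^k),\ell}=0$ for $\ell\neq j$ and $Q_{(2^j,1^k),j}=h_j[h_2]h_k$, which is the first assertion. (Alternatively one can argue directly from Corollary~\ref{ornth}: a size-$1$ necklace is a single unbarred letter, and a size-$2$ necklace is forced by Definition~\ref{orndef} together with primitivity to consist of one barred and one unbarred letter, the barred one of larger-or-equal absolute value, hence to carry exactly one bar and weight a monomial of $h_2$; so an ornament of type $(2^j,1^k)$ automatically has $j$ bars and its weight generating function factors as $h_j[h_2]\,h_k$.)

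For the second assertion, a partition with all parts $\le 2$ is $(2^j,1^k)$ for a unique $(j,k)$ with $|(2^j,1^k)|=2j+k$, so summing the first part gives
\[
\sum_{\lambda,\ell}Q_{\lambda,\ell}\,t^\ell z^{|\lambda|}
\;=\;\sum_{j,k\ge0}t^jh_j[h_2]h_k\,z^{2j+k}
\;=\;\Bigl(\sum_{j\ge0}(tz^2)^jh_j[h_2]\Bigr)\Bigl(\sum_{k\ge0}z^kh_k\Bigr).
\]
Here $\sum_kz^kh_k=H(z)=\prod_{i\ge1}(1-x_iz)^{-1}$, and plethystically substituting the monomials $x_ix_j$ ($i\le j$) of $h_2$ into the identity $\sum_jy^jh_j=\prod_{i\ge1}(1-yx_i)^{-1}$ gives $\sum_jy^jh_j[h_2]=\prod_{1\le i\le j}(1-yx_ix_j)^{-1}$; taking $y=tz^2$ produces the asserted product. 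The only points needing care are the two small $\Exd$ computations above and the plethysm bookkeeping (that the auxiliary variables $t,z$ are inert under plethysm, so their powers factor out cleanly), so I anticipate no genuine obstacle.
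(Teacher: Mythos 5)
Your proof is correct and follows the approach the paper intends: Corollary~\ref{pleth} is presented as a consequence of the plethystic formula of Corollary~\ref{plethform}, and you supply exactly the missing details, namely the base computations $\sum_\ell Q_{(1),\ell}t^\ell = h_1$ and $\sum_\ell Q_{(2),\ell}t^\ell = t\,h_2$, the fact that $t$ is inert so $h_j[th_2] = t^j h_j[h_2]$, and the standard product expansions $\sum_k h_k z^k = \prod_i(1-x_iz)^{-1}$ and $\sum_j h_j[h_2]y^j = \prod_{i\le j}(1-yx_ix_j)^{-1}$. Your alternative argument via Corollary~\ref{ornth} (size-$1$ necklaces are single unbarred letters, size-$2$ necklaces carry exactly one bar) is also correct and is essentially the same computation one level down, since Corollary~\ref{plethform} is itself proved from Corollary~\ref{ornth}.
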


By specializing (\ref{invol}) one can obtain formulas for the  generating functions of the $(\maj,\des,\exc)$ and the $(\maj, \exc)$ enumerators of involutions.  Since $\exc$ and $\fix$ determine each other for involutions, these formulas can be immediately obtained from the formulas that Gessel and Reutenauer derived for $\maj,\des,\fix$ in \cite[Theorem 7.1]{gr}.

It follows from Corollary~\ref{plethform} that in order to prove the conjecture on Schur positivity of    $Q_{\lambda,j}$ (Conjecture~\ref{sposconj}), it suffices to prove it  for all $\lambda$ with a single part because the plethysm of Schur positive symmetric functions is a Schur positive symmetric function.   Note that  if $\lambda$ has no parts greater than $2$, then by Corollary~\ref{pleth}, we conclude that  $Q_{\lambda,j}$ is Schur positive.

The Frobenius characteristic  is a fundamental isomorphism
from the ring of  virtual representations of  the symmetric groups to the ring
of homogeneous symmetric functions over the integers.  We recall the definition here.
For  a virtual representation $V$ of $\sg_n$,  let $\chi^V_\lambda$ denote the value of the  character of $V$ on the conjugacy  class of type $\lambda$.  If  $\lambda$ has $m_i$ parts of size $i$ for each $i$ define
$$ z_\lambda := 1^{m_1} m_1! 2^{m_2} m_2! \cdots n^{m_n} m_n!.$$
Let $$p_\lambda := p_{\lambda_1} \cdots p_{\lambda_k},$$ for $\lambda = (\lambda_1,\dots, \lambda_k), $ where $p_n$ is the power symmetric function $\sum_{ i \ge 0} x_i^n$.
The Frobenius characteristic of a virtual representation $V$ of $\sg_n$ is defined as follows:
$$\ch V:= {1 \over n! }\sum_{\lambda \vdash n}  z_\lambda\,\, \chi^V_\lambda\,\, p_\lambda .$$

Recall that the Frobenius characteristic of a virtual
representation of $\sg_n$ is $h$-positive if and only if it is a
permutation representation induced from  Young subgroups.  Hence
by part (1) of Theorem~\ref{symunimodth},  the Eulerian
quasisymmetric function $Q_{n,j,k}$ is the Frobenius
characteristic of a permutation representation induced from  Young
subgroups.
However (\ref{counterex})
shows that this is not the case in general for the refined
Eulerian quasisymmetric functions $Q_{\lambda,j}$. (It can be
shown that $Q_{(6),3}$ is not the Frobenius characteristic of any
permutation representation at all.) Recall also that the Frobenius
characteristic of a virtual representation is Schur positive if
and only if it is an actual representation. Hence if
$V_{\lambda,j}$ is the virtual representation  whose Frobenius
characteristic is $Q_{\lambda,j}$ then Conjecture~\ref{sposconj}
says that $V_{\lambda,j}$ is  an actual representation.

\begin{prop} \label{dimvprop} Let $\lambda \vdash n$.  Then the dimension of $V_{\lambda,j}$ equals the number of permutations of cycle type $\lambda$ with $j$ excedances.  Moreover, the dimension of $V_{(n),j}$ is the Eulerian number $a_{n-1,j-1}$.
\end{prop}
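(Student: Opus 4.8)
The plan is to compute all of these dimensions by extracting the coefficient of the squarefree monomial $x_1 x_2 \cdots x_n$. First I would record the elementary principle that for any virtual representation $V$ of $\sg_n$ one has $\dim V = [x_1 x_2 \cdots x_n]\,\ch V$, the coefficient of $x_1\cdots x_n$ in $\ch V$. Both sides are linear in $V$, so it suffices to check this on the irreducibles $S^\mu$, where it reduces to the standard fact that the coefficient of $x_1\cdots x_n$ in the Schur function $s_\mu$ is the number of standard Young tableaux of shape $\mu$, i.e.\ $\dim S^\mu$. (Equivalently, in the definition of $\ch$ the only power sum $p_\lambda$ contributing to the monomial $x_1\cdots x_n$ is $p_{1^n}$, with coefficient $n!$, so this coefficient returns $\chi^V_{1^n}=\dim V$.)

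Next I would apply this to $Q_{\lambda,j}=\sum_{\lambda(\s)=\lambda,\ \exc(\s)=j}F_{\Exd(\s),n}$. For any $S\subseteq[n-1]$, a weakly decreasing sequence $i_1\ge\cdots\ge i_n$ satisfies $x_{i_1}\cdots x_{i_n}=x_1\cdots x_n$ only for $(i_1,\dots,i_n)=(n,n-1,\dots,1)$, which has a strict descent at every position; hence the coefficient of $x_1\cdots x_n$ in $F_{S,n}$ is exactly $1$, independent of $S$. Therefore $[x_1\cdots x_n]\,Q_{\lambda,j}$ equals the number of permutations of cycle type $\lambda$ with $j$ excedances, and by the previous paragraph this is $\dim V_{\lambda,j}$. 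This proves the first assertion.

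For the second assertion, note that $Q_{(n),j}$ counts $n$-cycles, so it remains to identify the number of $n$-cycles in $\sg_n$ with $j$ excedances with the Eulerian number $a_{n-1,j-1}$. I would use the bijection sending an $n$-cycle $c$, written in cycle notation starting from its largest entry as $c=(n,c_2,c_3,\dots,c_n)$, to the one-line word $d=c_2c_3\cdots c_n$, a permutation of $[n-1]$. The entries $c_2,\dots,c_n$ are exactly the elements of $[n-1]$, hence exactly the possible excedance positions of $c$ (the value $n$ is never an excedance position). A short case check shows that $c_k$ is an excedance position of $c$ iff $c_{k+1}>c_k$ for $2\le k\le n-1$, while $c_n$ is always an excedance position since $c(c_n)=n>c_n$; hence $\exc(c)=\mathrm{asc}(d)+1$, where $\mathrm{asc}(d)$ is the number of ascents of the word $d$. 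Reversing the word $d$ interchanges ascents and descents, so the number of $n$-cycles with $j$ excedances equals the number of permutations of $[n-1]$ with $j-1$ descents, which by MacMahon's equidistribution of $\des$ and $\exc$ is $a_{n-1,j-1}$.

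The only point needing care is the excedance computation $\exc(c)=\mathrm{asc}(d)+1$: one has to treat separately the ``wrap-around'' relation $c(c_n)=n$, which is the source of both the extra $+1$ and the index shift in the Eulerian number, and to remember that $n$ is never an excedance position of $c$. Everything else is routine bookkeeping. (Alternatively the second assertion can be extracted from the plethystic identity of Corollary~\ref{plethform} via the exponential specialization $h_m\mapsto 1/m!$, but the bijective argument is more transparent.)
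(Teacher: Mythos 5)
Your proof is correct and follows essentially the same path as the paper's: extract the coefficient of $x_1\cdots x_n$ from $Q_{\lambda,j}$ (noting each $F_{\Exd(\s),n}$ contributes exactly $1$), and for the second part use the bijection from $n$-cycles to $\sg_{n-1}$ obtained by writing the cycle with $n$ at the end (equivalently your convention with $n$ at the start) and reading the remaining letters in reverse, which sends excedances to descents with a shift of one. The extra detail you supply (why $\dim V$ is the squarefree coefficient of $\ch V$, the explicit case check for the $+1$) makes the bookkeeping transparent but does not change the argument.
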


\begin{proof} The dimension of $V_{\lambda,j}$ is the coefficient of $x_1x_2 \cdots x_n$ in $Q_{\lambda,j}$.  Using the definition of $Q_{\lambda,j}$, this is the number of permutations of
cycle type $\lambda$ with $j$ excedances.

The set of  $n$-cycles with $j$ excedances maps bijectively to the set $\{\s \in \sg_{n-1} : \des(\s) = j-1\}$.  Indeed, the bijection is obtained by writing the cycle in the form $(c_1,\dots,c_{n-1},n)$ and then extracting the word $c_{n-1}\cdots c_1$.  Hence the number of $n$-cycles with $j$ excedances is the Eulerian number $a_{n-1,j-1}$.
\end{proof}

 We have a conjecture for the character of $V_{(n),j}$, which generalizes Proposition~\ref{dimvprop}.  We have confirmed our conjecture  up to $n=8$ using the Maple package ACE \cite{ace}.  Equivalently, our conjecture gives the coefficients in the expansion of  $Q_{(n),j} $ in the basis of power symmetric functions and implies that  $Q_{(n),j} $ is $p$-positive.  For a polynomial $F(t)=a_0 + a_1 t + ... + a_k t^k$ and a positive integer
$m$, define $F(t)_m$ to be the polynomial obtained from $F(t)$ by erasing all terms
$a_i t^i$ such that $\gcd(m,i) \neq 1$.  For example, if $F(t)=1+t+2t^2+3t^3$
then $F(t)_2=t+3t^3$.
For a partition $\lambda= (\lambda_1,\lambda_2, \dots, \lambda_k)$, define $$g(\lambda): =\gcd(\lambda_1,...,\lambda_k)$$.

\begin{con} For $\lambda = (\lambda_1,\dots,\lambda_k)\vdash n$,
let  $$G_\lambda(t) := \left ( t A_{k-1}(t) \prod_{i=1}^{k}
[\lambda_i]_t \right)_{g(\lambda)}.$$  Then $$\sum_{j=0}^{n-1}
Q_{(n),j} t^j = {1 \over n! }\sum_{\lambda \vdash n}  z_\lambda
G_\lambda(t) p_\lambda. $$ Equivalently, the character of
$V_{(n),j}$ evaluated on conjugacy class $\lambda$ is the
coefficient of $t^j$ in $G_{\lambda}(t)$. \label{cvalcon}
\end{con}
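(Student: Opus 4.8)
The plan is to compute the power-sum expansion of $\sum_{j}Q_{(n),j}(\xx)\,t^j$ directly and match it against the claimed right-hand side; equivalently (this is the ``equivalently'' clause of the conjecture), to show $\big\langle \sum_{j}Q_{(n),j}t^j,\ p_\mu\big\rangle = G_\mu(t)$ for all $\mu\vdash n$, where $\langle-,-\rangle$ is the Hall inner product, so that $\langle\ch V,p_\mu\rangle=\chi^V_\mu$. First I would set $F:=\frac{(1-t)H(z)}{H(zt)-tH(z)}$, which by Theorem~\ref{introsymgenth} with $r=1$ (cf.\ (\ref{introsymgenth2})) equals $\sum_{n,j}Q_{n,j}t^jz^n$. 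By Corollary~\ref{ornth}, $W:=\sum_{i\ge1,\,j\ge0}Q_{(i),j}t^jz^i=\sum_{N}t^{\bbar(N)}z^{|N|}\wt(N)$, the sum over all bicolored necklaces $N$; since an ornament is a multiset of necklaces and $\wt$, $\bbar$ and size are additive over the necklaces of an ornament, we get $F=\mathrm{Exp}[W]$ (using $\sum_{\lambda\vdash n}Q_{\lambda,j}=Q_{n,j}$), where $\mathrm{Exp}[f]:=\sum_{m\ge0}h_m[f]$ is the plethystic exponential with $z$ and $t$ treated as plethystic variables. Hence $W=\mathrm{Log}[F]=\sum_{d\ge1}\tfrac{\mu(d)}{d}\,p_d[\log F]$, and the conjecture is reduced to a statement about the power-sum expansion of $\mathrm{Log}[F]$.

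The second step is to expand the ordinary logarithm $\log F$ in power sums. Putting $v:=\log H(z)-\log H(zt)=\sum_{k\ge1}\tfrac{1-t^k}{k}\,p_k z^k$, one has $F=\frac{(1-t)e^v}{1-te^v}$, hence
\[
\log F \;=\; v\;-\;\log\!\Big(1-\tfrac{t}{1-t}(e^v-1)\Big)\;=\;v+\sum_{r\ge1}\tfrac1r\Big(\tfrac{t}{1-t}\Big)^r(e^v-1)^r .
\]
Since $v$ is a $\Q[t][[z]]$-linear combination of the single power sums $p_k$, only finitely many terms contribute to $[p_\nu z^{|\nu|}]\log F$, namely those producing exactly $\ell:=\ell(\nu)$ power-sum factors. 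Using $[p_\nu z^{|\nu|}]\tfrac{v^\ell}{\ell!}=\tfrac1{\prod_j m_j(\nu)!}\prod_i\tfrac{1-t^{\nu_i}}{\nu_i}$, the identity $\sum_{r\ge1}(r-1)!\,S(\ell,r)\big(\tfrac{t}{1-t}\big)^r=\tfrac{tA_{\ell-1}(t)}{(1-t)^\ell}$ (which follows from $\sum_rS(\ell,r)\,x(x-1)\cdots(x-r+1)=x^\ell$ and the Worpitzky identity $\sum_{j\ge1}j^{\ell-1}t^j=\tfrac{tA_{\ell-1}(t)}{(1-t)^\ell}$, with $S(\ell,r)$ a Stirling number of the second kind), and $z_\nu=\big(\prod_i\nu_i\big)\prod_j m_j(\nu)!$, I expect to reach the closed form
\[
C_\nu(t):=z_\nu\,[p_\nu z^{|\nu|}]\log F=\Big(\prod_i(1-t^{\nu_i})\Big)\,\frac{t\,A_{\ell(\nu)-1}(t)}{(1-t)^{\ell(\nu)}}
\]
for $\ell(\nu)\ge2$; for $\ell(\nu)=1$ the extra ``$v$'' summand changes this to $[\nu_1]_t$, a discrepancy of $1-t^{\nu_1}$.

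The last step is the reassembly. Extracting $p_\mu z^n$ from $W=\sum_{d\ge1}\tfrac{\mu(d)}{d}p_d[\log F]$ forces $d\mid g(\mu)$, and since $z_\mu/(d\,z_{\mu/d})=d^{\,k-1}$ with $k:=\ell(\mu)$ and $\mu/d$ the partition with parts $\mu_i/d$, we get
\[
\Big\langle\textstyle\sum_j Q_{(n),j}t^j,\ p_\mu\Big\rangle=\sum_{d\mid g(\mu)}\mu(d)\,d^{\,k-1}\,C_{\mu/d}(t^d).
\]
Substituting $C_{\mu/d}(t^d)=\prod_i(1-t^{\mu_i})\cdot\tfrac{t^dA_{k-1}(t^d)}{(1-t^d)^k}$, then $\tfrac{t^dA_{k-1}(t^d)}{(1-t^d)^k}=\sum_{a\ge1}a^{k-1}t^{da}$ (Worpitzky again), and finally $\sum_{e\mid\gcd(a,g(\mu))}\mu(e)=[\gcd(a,g(\mu))=1]$, collapses the sum to $\prod_i(1-t^{\mu_i})\cdot\big(\tfrac{tA_{k-1}(t)}{(1-t)^k}\big)_{g(\mu)}$. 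As each $\mu_i$ is divisible by $g(\mu)$, the polynomial $\prod_i(1-t^{\mu_i})$ has only exponents divisible by $g(\mu)$ and hence commutes past $(\cdot)_{g(\mu)}$; this gives $\big(tA_{k-1}(t)\prod_i[\mu_i]_t\big)_{g(\mu)}=G_\mu(t)$. (The case $\mu=(n)$ is included: the $\ell=1$ discrepancy of Step~2 contributes $\sum_{d\mid n}\mu(d)(t^n-1)=0$ for $n\ge2$, so the uniform formula for $C_\nu$ may be used throughout; the statement is to be read for $n\ge2$, with $n=1$ a degenerate exception.)

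The step I expect to demand real care is the power-sum extraction from $\log F$ in Step~2: organizing the expansion of $\log\!\big(1-\tfrac{t}{1-t}(e^v-1)\big)$ so that the Stirling/Eulerian identity surfaces, tracking the automorphism factors $\prod_j m_j(\nu)!$ against $z_\nu$ correctly, and isolating the length-one exception. The remaining issues are formal: fixing the plethystic conventions (how $p_d$ acts on $t$ and $z$, and that $\mathrm{Exp}$ and $\mathrm{Log}$ are mutually inverse on the ring of symmetric functions with coefficients in $\Q[t][[z]]$), and recording the small-$n$ caveat.
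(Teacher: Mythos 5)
This statement is a Conjecture in the paper, not a theorem: Shareshian and Wachs only verify it computationally through $n=8$ (via ACE) and prove it in the special case where $\lambda$ has a part of size $1$ (Corollary~\ref{cvalcor}, via Theorem~\ref{resthm} and Proposition~\ref{stem}).  There is therefore no proof in the paper to compare yours against.

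Your argument is genuinely new relative to what the paper does, and as far as I can tell it is correct: the key observation that the right-hand side of~(\ref{introsymgenth1}) is the plethystic exponential of the single-necklace generating function $W=\sum_{i,j}Q_{(i),j}t^jz^i$ (coming straight from Corollary~\ref{ornth} and the additivity of $\wt$, $\bbar$, $|\cdot|$ over the necklaces of an ornament) reduces the conjecture to computing $[p_\nu z^{|\nu|}]\log F$, which you do correctly.  I checked the power-sum extraction:  with $v=\sum_{k\ge 1}\tfrac{1-t^k}{k}p_kz^k$ one has $\log F=v+\sum_{r\ge1}\tfrac1r\big(\tfrac{t}{1-t}\big)^r(e^v-1)^r$, and the identity $\sum_{r\ge 1}(r-1)!S(\ell,r)\big(\tfrac{t}{1-t}\big)^r=\tfrac{tA_{\ell-1}(t)}{(1-t)^\ell}$ (cleanest via $-\log(1-\tfrac{t}{1-t}(e^v-1))=\sum_{m\ge1}\tfrac{t^m(e^{mv}-1)}{m}$) yields $z_\nu[p_\nu z^{|\nu|}]\log F=\prod_i(1-t^{\nu_i})\cdot\tfrac{tA_{\ell-1}(t)}{(1-t)^\ell}$ for $\ell=\ell(\nu)\ge 2$ and $[\nu_1]_t$ when $\ell=1$.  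The M\"obius reassembly $z_\mu[p_\mu z^n]W=\sum_{d\mid g(\mu)}\mu(d)\,d^{k-1}C_{\mu/d}(t^d)$ then telescopes to $\big(tA_{k-1}(t)\prod_i[\mu_i]_t\big)_{g(\mu)}=G_\mu(t)$, with the $\ell=1$ discrepancy cancelling via $\sum_{d\mid n}\mu(d)=0$ for $n\ge2$, exactly as you say.  I verified the result against the $n\le 4$ character tables in the paper, and checked compatibility with Proposition~\ref{stem} (Stembridge's formula for $[p_\mu z^n]F$) via $F=\exp(\log F)$.  In short, this appears to be a proof of the open Conjecture~\ref{cvalcon} (for $n\ge 2$, with $n=1$ a genuine degenerate exception as you note), not merely a match with the paper.

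Two small points worth recording.  First, the displayed formula $\ch V=\frac1{n!}\sum_\lambda z_\lambda\chi^V_\lambda p_\lambda$ in the paper has a typo — the standard and correct formula is $\ch V=\sum_\lambda\frac{\chi^V_\lambda}{z_\lambda}p_\lambda$ (equivalently $\frac1{n!}\sum_\lambda\frac{n!}{z_\lambda}\chi^V_\lambda p_\lambda$), and it is this that you implicitly use; the literal displayed identity in the conjecture should be read as $\sum_j Q_{(n),j}t^j=\sum_\lambda z_\lambda^{-1}G_\lambda(t)p_\lambda$, i.e., as the ``equivalently'' clause.  Second, your justification of the Stirling--Eulerian identity from the falling-factorial identity plus Worpitzky is a bit elliptical; the derivation via $-\log\big(1-\tfrac{t}{1-t}(e^v-1)\big)=\log(1-t)-\log(1-te^v)=\sum_{m\ge1}\tfrac{t^m(e^{mv}-1)}{m}$ is cleaner and should be spelled out in a final write-up.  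Given that you would be resolving an open problem, I would also recommend an independent check of the plethystic set-up (in particular the conventions $p_d[t]=t^d$, $p_d[z]=z^d$ and the fact that $\mathrm{Log}[F]=\sum_{d\ge1}\tfrac{\mu(d)}{d}p_d[\log F]$ inverts $\mathrm{Exp}$ over $\Q[[t,z]]$-coefficients), and numerical verification against the $n=7,8$ tables in the paper.
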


Conjecture \ref{cvalcon} holds whenever $\lambda_k=1$, see
Corollary \ref{cvalcor}. It follows from this  that the
character of $V_{(n),j}$ evaluated at $\lambda = 1^n$ is the
Eulerian number $a_{n-1,j-1}$. This special case of the conjecture
is also a consequence of  Proposition~\ref{dimvprop} because any character
evaluated at $1^n$ is the dimension of the representation. In the
tables below we give the character values for $V_{(n),j}$ at
conjugacy class $\lambda$, which were computed using  ACE
\cite{ace} and confirm the conjecture up to $n=8$.  The columns
are indexed by $n,j$ and the rows by the partitions $\lambda$.

{\small 
$$
\begin{array}{|c||c|c|}
\hline & 4,1 & 4,2   \\ \hline\hline 4 & 1 & 0
\\ \hline 31 & 1& 1 \\ \hline 2^2 & 1 & 0
\\ \hline 21^2 & 1 & 2
\\ \hline 1^4 & 1 & 4 \\ \hline
\end{array}\hspace{.3in} \begin{array}{|c||c|c|}
\hline & 5,1 & 5,2   \\ \hline\hline 5 & 1 & 1
\\ \hline 41 & 1& 1 \\ \hline 32 & 1 & 2
\\ \hline 31^2 & 1 & 2
\\ \hline 2^2 1 & 1 & 3
\\ \hline 2 1^3 & 1 & 5
\\ \hline 1^5 & 1 & 11 \\ \hline
\end{array}\hspace{.3in}
\begin{array}{|c||c|c|c|}
\hline & 6,1 & 6,2  & 6,3 \\ \hline\hline 6 & 1 & 0 & 0
\\ \hline 51 & 1& 1 & 1
\\ \hline 42 & 1 & 0 & 2
\\ \hline 3^2 & 1 & 2 & 0
\\ \hline 41^2 & 1 & 2 & 2
\\ \hline 321 & 1 & 3 & 4
\\ \hline 2^3 & 1 & 0 & 6
\\ \hline 3 1^3& 1 & 5 & 6
\\ \hline 2^21^2 & 1 & 6 & 10
\\ \hline 2  1^4  &1 & 12 &  22
\\ \hline 1^6& 1 & 26 & 66
\\ \hline
\end{array}$$

\vspace{.1in}$$\begin{array}{|c||c|c|c|}
\hline & 7,1 & 7,2  & 7,3
\\ \hline\hline 7 & 1 & 1 & 1
\\ \hline 61 & 1& 1 & 1
\\ \hline 52 & 1 & 2 & 2
\\ \hline 43 & 1 & 2 & 3
\\ \hline 51^2 & 1 & 2 & 2
\\ \hline 421 & 1 & 3 & 4
\\ \hline 3^21 & 1 & 3 & 5
\\ \hline 32^2 & 1 & 4 & 7
\\ \hline 4 1^3& 1 & 5 & 6
\\ \hline 321^2 & 1 & 6 & 11
\\ \hline 2^31 & 1 & 7 & 16
\\ \hline 3  1^4  &1 & 12 &  23
\\ \hline 2^21^3 & 1 & 13 & 34
\\ \hline 21^5 & 1 & 27 & 92
\\ \hline 1^7& 1 & 57 & 302
\\ \hline
\end{array} \hspace{.3in}
\begin{array}{|c||c|c|c|c|}
\hline & 8,1 & 8,2  & 8,3 & 8,4
\\ \hline\hline 8& 1 & 0 & 1 & 0
\\ \hline 71 & 1& 1 & 1 &1
\\ \hline 62 & 1 & 0 & 2 & 0
\\ \hline 53 & 1 & 2 & 3 & 3
\\ \hline 4^2 & 1 & 0 & 3 & 0
\\ \hline 61^2 & 1 & 2 & 2 & 2
\\ \hline 521 & 1 & 3 & 4 & 4
\\ \hline 431 & 1 & 3 & 5 & 6
\\ \hline 42^2 & 1 & 0 & 7 & 0
\\ \hline 3^22 & 1 & 4 & 8 & 10
\\ \hline 5 1^3& 1 & 5 & 6 & 6
\\ \hline 421^2 & 1 & 6 & 11 & 12
\\ \hline 3^2 1^2 & 1 & 6 & 12 & 16
\\ \hline 32^2 1 & 1 & 7 & 17 & 22
\\ \hline 2^4  &1 & 0 &  23 & 0
\\ \hline 41^4  &1 & 12 &  23 & 24
\\ \hline 321^3 & 1 & 13 & 35 & 46
\\ \hline 2^31^2 & 1 & 14 & 47 & 68
\\ \hline 31^5 & 1 & 27 & 93 & 118
\\ \hline 2^21^4 & 1 & 28 & 119 & 184
\\ \hline 21^6 & 1 & 58 & 359 & 604
\\ \hline 1^8& 1 & 120 &1191& 2416
\\ \hline
\end{array}
$$}

Conjecture \ref{cvalcon} resembles the following immediate consequence of Theorem~\ref{introsymgenth} and Stembridge's   computation  \cite[Proposition 3.3]{stem1} of the coefficients  in the expansion of the $r=1$ case of the right hand side of (\ref{introsymgenth1}), in the basis of power symmetric functions.

\begin{prop}\label{stem}  We have 
$$\sum_{j=0}^{n-1} Q_{n,j} t^j = \frac {1}{n!}\sum_{\lambda \vdash n} z_{\lambda} 
\left(  A_{\ell(\lambda)}(t) \prod_{i=1}^{\ell(\lambda)} [\lambda_i]_t\right) p_\lambda,$$
where $\ell(\lambda)$ denotes the length of the partition $\lambda$ and $\lambda_i$ denotes its $i$th part. 
\end{prop}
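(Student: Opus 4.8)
The plan is to deduce Proposition~\ref{stem} from Theorem~\ref{introsymgenth} by setting $r=1$ and then extracting the power-sum expansion of the resulting symmetric-function generating function; this last step is exactly Stembridge's computation \cite[Proposition~3.3]{stem1}, so I would cite it, but I would also include the short self-contained derivation sketched below. Summing the defining relations $Q_{n,j}=\sum_{k\ge 0}Q_{n,j,k}$ over $k$ and putting $r=1$ in (\ref{introsymgenth1}) gives
$$\sum_{n,j\ge 0}Q_{n,j}(\x)\,t^jz^n=\frac{(1-t)H(z)}{H(zt)-tH(z)}=:F(z),$$
so it suffices to prove $[z^n]F(z)=\Phi_n$, where $\Phi_n:=\sum_{\lambda\vdash n}z_\lambda^{-1}\big(A_{\ell(\lambda)}(t)\prod_{i=1}^{\ell(\lambda)}[\lambda_i]_t\big)p_\lambda$ is the right-hand side of the proposition (written in the form $\ch V=\sum_\lambda z_\lambda^{-1}\chi^V_\lambda p_\lambda$ of the Frobenius characteristic).

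To find a closed form for $\sum_n\Phi_nz^n$ I would group partitions by length. For $\ell\ge 0$ set $G_\ell(z):=\sum_{\lambda:\,\ell(\lambda)=\ell}z_\lambda^{-1}\big(\prod_i[\lambda_i]_t\big)p_\lambda z^{|\lambda|}$; collecting the parts of $\lambda$ by size and using $z_\lambda=\prod_{d}d^{m_d}m_d!$, the multinomial theorem gives $G_\ell(z)=\tfrac1{\ell!}L(z)^{\ell}$ with $L(z):=\sum_{d\ge 1}\tfrac{[d]_t\,p_d\,z^d}{d}$. Since $\Phi_n$ is obtained from the partitions of $n$ by inserting the extra factor $A_{\ell(\lambda)}(t)$, Euler's exponential formula (\ref{expgen}), applied with the power series $L(z)$ (which has zero constant term) in place of $z$, yields
$$\sum_{n\ge 0}\Phi_nz^n=\sum_{\ell\ge 0}A_\ell(t)\,G_\ell(z)=\sum_{\ell\ge 0}\frac{A_\ell(t)}{\ell!}\,L(z)^{\ell}=\frac{1-t}{e^{(t-1)L(z)}-t}.$$
The crux is then the identity $(t-1)[d]_t=t^d-1$: it gives $(t-1)L(z)=\sum_{d\ge 1}\tfrac{(t^d-1)p_dz^d}{d}=\log H(zt)-\log H(z)$, using $\log H(z)=\sum_{d\ge 1}p_dz^d/d$. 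Hence $e^{(t-1)L(z)}=H(zt)/H(z)$, and so the right-hand side above equals $\frac{(1-t)H(z)}{H(zt)-tH(z)}=F(z)$. Comparing coefficients of $z^n$ then gives $\Phi_n=\sum_{j=0}^{n-1}Q_{n,j}t^j$, which is the proposition.

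I do not expect a genuine obstacle: the argument is really a repackaging of \cite[Prop.~3.3]{stem1}, and what remains is only bookkeeping. One must use the correct normalization of $\ch$ (equivalently of the $p_\lambda$-expansion), check that the $t$-degrees agree ($A_{\ell(\lambda)}(t)\prod_i[\lambda_i]_t$ has degree $(\ell-1)+\sum_i(\lambda_i-1)=n-1$, matching $\sum_{j=0}^{n-1}Q_{n,j}t^j$), and note that substituting $L(z)$ into (\ref{expgen}) is legitimate since $L(z)$ has zero constant term. If a fully combinatorial proof is preferred, one can instead start from Corollary~\ref{ornth}, which gives $\sum_jQ_{n,j}t^j=\sum_R\wt(R)\,t^{\bbar(R)}$ summed over all ornaments $R$ of size $n$; the passage from bicolored necklaces to their multisets is precisely the exponential formula used above and reproduces the same power-sum expansion, but the generating-function route seems the shortest.
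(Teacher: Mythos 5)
Your proof is correct and follows the same route as the paper, which simply invokes Theorem~\ref{introsymgenth} at $r=1$ and cites Stembridge for the power-sum expansion of the resulting generating function; your version usefully supplies the short self-contained computation (grouping partitions by $\ell(\lambda)$ to get $L(z)^{\ell}/\ell!$, substituting $L(z)$ into Euler's formula (\ref{expgen}), and using $(t-1)[d]_t=t^d-1$ together with $\log H(z)=\sum_{d\ge1}p_d z^d/d$). One remark on normalization: your $\Phi_n$ carries the coefficient $z_\lambda^{-1}$ where the stated proposition (and the paper's displayed definition of $\ch$) shows $z_\lambda/n!$; yours is the correct factor, as checking $n=2$ against $\sum_j Q_{2,j}t^j=(1+t)h_2=\tfrac{1+t}{2}(p_1^2+p_2)$ confirms, so the statement as printed has a typographical slip that you have silently corrected.
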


By using banners we are able to prove the following curious fact
about the representation $V_{(n),j}$.

\begin{thm} \label{resthm}  For all
$j=0,\dots, n-1$,  the restriction of  $V_{(n),j}$ to $\sg_{n-1}$
is  the permutation representation whose Frobenius characteristic
is $Q_{n-1,j-1}$.
\end{thm}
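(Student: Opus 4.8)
The plan is to recast the statement as a single identity of symmetric functions and then verify that identity using the closed form in Theorem~\ref{introsymgenth} together with the plethystic formula of Corollary~\ref{plethform}. First, recall the standard fact that for a (virtual) representation $W$ of $\sg_n$ one has $\ch({\rm Res}^{\sg_n}_{\sg_{n-1}}W)=h_1^\perp\ch(W)$, where $h_1^\perp$ is the operator adjoint to multiplication by $h_1=p_1$ in the Hall inner product (this is just adjointness of ${\rm Ind}$ and ${\rm Res}$), equivalently the derivation $\partial/\partial p_1$ on the ring of symmetric functions. Since $\ch(V_{(n),j})=Q_{(n),j}$ and since $Q_{n-1,j-1}$ is $h$-positive by Theorem~\ref{symunimodth}(1), hence the Frobenius characteristic of an honest permutation module, the theorem is equivalent to the identity
\begin{equation}\label{reseq}
h_1^\perp Q_{(n),j}=Q_{n-1,j-1}\qquad (n\ge 2,\ 0\le j\le n-1).
\end{equation}

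To prove \eqref{reseq} I would package both sides into generating functions. Put $\Phi=\Phi(t,z):=\sum_{i\ge1,\,j\ge0}Q_{(i),j}\,t^jz^i$ and $\mathcal Q=\mathcal Q(t,z):=\sum_{n,j\ge0}Q_{n,j}\,t^jz^n$. Setting $r=1$ in Theorem~\ref{introsymgenth} (and using $\sum_kQ_{n,j,k}=Q_{n,j}$) gives the closed form $\mathcal Q=\dfrac{(1-t)H(z)}{H(zt)-tH(z)}$, while Corollary~\ref{plethform} gives the plethystic identity $\mathcal Q=\sum_{n\ge0}h_n[\Phi]=H[\Phi]$, where $H=\sum_nh_n$ and the plethysm is taken with $t,z$ as auxiliary variables. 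Now $h_1^\perp=\partial/\partial p_1$ touches only the power sums in the $\xx$-variables, commutes with $t$ and $z$, satisfies $\partial_{p_1}H=H$ (immediate from $h_1^\perp h_n=h_{n-1}$), and annihilates $p_k[\Phi]$ for every $k\ge2$ (since $p_k[\Phi]$ involves only power sums $p_m$ with $k\mid m$). Writing $H=\exp(\sum_{k\ge1}\frac1kp_k)$ and applying the chain rule for $\partial/\partial p_1$ therefore gives $h_1^\perp\mathcal Q=H[\Phi]\cdot h_1^\perp\Phi=\mathcal Q\cdot h_1^\perp\Phi$, whence $h_1^\perp\Phi=(h_1^\perp\mathcal Q)/\mathcal Q$.

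Next I would compute $h_1^\perp\mathcal Q$ from the closed form. Using $h_1^\perp H(cz)=czH(cz)$ for any monomial $c$ in $t$, together with the quotient rule, one finds after a short computation
$$h_1^\perp\mathcal Q=\frac{(1-t)^2\,z\,H(z)\,H(zt)}{\bigl(H(zt)-tH(z)\bigr)^2},\qquad\text{and hence}\qquad h_1^\perp\Phi=\frac{(1-t)\,z\,H(zt)}{H(zt)-tH(z)}=z\bigl(t\mathcal Q+1-t\bigr).$$
Thus $h_1^\perp\Phi=z(1-t)+\sum_{n,j'\ge0}Q_{n,j'}\,t^{j'+1}z^{n+1}$, and I would finish by comparing the coefficients of $t^jz^i$ on both sides: for $i\ge2$ the term $z(1-t)$ contributes nothing, leaving exactly \eqref{reseq}. (For $i=1$ the two sides match directly, since $h_1^\perp Q_{(1),0}=h_1^\perp h_1=1$ and $Q_{(1),j}=0$ for $j\ge1$.) This gives the theorem.

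I expect the one place requiring genuine care is the middle step: fixing the plethysm convention once the auxiliary variables $t,z$ are present, and checking rigorously that $\partial/\partial p_1$ kills $p_k[\Phi]$ for $k\ge2$ and that the chain rule applies — all of which becomes mechanical once everything is expanded in the power-sum basis. A purely bijective proof along the lines the authors indicate is also possible: one realizes $h_1^\perp$ on the banner model of $Q_{(n),j}$ from Theorem~\ref{banprop} by deleting a newly adjoined letter from a Lyndon banner and applying a local ``unbar, and cut if necessary'' correction; there the delicate case is when the adjoined letter is not strictly extremal among the letters already present, so I would favour the generating-function route above.
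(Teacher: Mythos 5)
Your proof is correct, and it takes a genuinely different route from the paper's. The authors work with the polynomial truncations $\widetilde{Q_{(n),j}}$ (setting $x_i=0$ for $i>n$), use the finite-variable restriction formula $\widetilde{\ch\, V\!\downarrow}_{\sg_{n-1}} = \frac{\partial}{\partial x_n}\widetilde{\ch\, V}\,|_{x_n=0}$, and then argue bijectively via banners (Theorem~\ref{banprop}): a Lyndon banner of length $n$ with $j$ bars and letters of value at most $n$ survives $\frac{\partial}{\partial x_n}|_{x_n=0}$ exactly when its first letter is $\bar n$ and all others have smaller value, and deleting that $\bar n$ gives an arbitrary banner of length $n-1$ with $j-1$ bars, i.e.\ $\widetilde{Q_{n-1,j-1}}$. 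You instead apply $h_1^\perp=\partial/\partial p_1$ to the generating function, combining the plethystic identity $\mathcal Q=H[\Phi]$ from Corollary~\ref{plethform} with the closed form from Theorem~\ref{introsymgenth}; the identity $h_1^\perp Q_{(n),j}=Q_{n-1,j-1}$ then follows from a quotient-rule computation. I checked the computation: from $h_1^\perp H(cz)=czH(cz)$ one gets $h_1^\perp\mathcal Q=(1-t)^2zH(z)H(zt)/(H(zt)-tH(z))^2$, and dividing by $\mathcal Q$ gives $h_1^\perp\Phi=(1-t)zH(zt)/(H(zt)-tH(z))=z(1-t)+tz\mathcal Q$, whose $t^jz^n$ coefficients (for $n\ge 2$) give exactly the required identity; the plethysm step is also sound, since with $t,z$ treated as plethystic variables $p_k[\Phi]$ for $k\ge 2$ is a series in $p_2,p_3,\dots$ (and $t^k,z^k$) and is annihilated by $\partial/\partial p_1$. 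Your route is more algebraic and arguably more robust once the generating-function machinery is set up; the paper's route is more transparent combinatorially and fits the banner-based toolkit the authors develop, and also directly exhibits the permutation basis of the restricted module rather than merely deducing $h$-positivity abstractly from Theorem~\ref{symunimodth}.
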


\begin{proof}
Given a homogeneous symmetric function $f$ of degree $n$, let $\tilde f$ be the polynomial obtained from $f$ by setting  $x_i =0$  for all $i > n$.   Since $f$ is symmetric, $\tilde f$ determines $f$.

We use the fact that
for any virtual representation $V$ of $\sg_n$, \begin{equation} \label{vcheq} \widetilde {\ch \,V \!\!\downarrow}_{\sg_{n-1} }= {\partial \over \partial x_n} \widetilde{ \ch V }\,|_{x_n=0}.\end{equation}

By Theorem~\ref{banprop},  $\widetilde{Q_{(n),j}}$ is the sum of weights of Lyndon banners of length $n$, with $j$ bars, whose letters have value  at most $n$.  The partial derivative with respect to $x_n$ of  the weight of a such a Lyndon banner $B$ is $0$  unless  $n$ or $\bar n$ appears in $B$.  Since $B$  is Lyndon, $\bar n$ must be its first letter.  If the partial derivative is not $0$ after setting $x_n=0$ then all the other letters of $B$  must be less in absolute value than $n$.  In this case,  the partial derivative is the weight of the banner $B^\prime$ obtained from $B$ by removing its first letter $\bar n$.
We thus have
\bq \label{partialeq}  {\partial \over \partial x_n} \widetilde{ \ch V_{(n),j}} |_{x_n=0} = \sum_{B^\prime} \wt(B^\prime),\eq
 where $B^\prime$ ranges over the set of all banners obtained by removing the first letter from a Lyndon banner of length $n$ with $j$ bars, whose first letter is $\bar n$ and whose other letters have value strictly less than $n$.     Clearly this is the set of all banners of length $n-1$, with $j-1$ bars, whose letters have value at most $n-1$.
  Thus the sum  on the right hand side of (\ref{partialeq}) is precisely $\widetilde {Q_{n-1,j-1}}$.  It therefore follows from (\ref{vcheq}) that
$$\widetilde {\ch \,V_{(n),j} \!\!\downarrow}_{\sg_{n-1} } = \widetilde {Q_{n-1,j-1}},$$ which implies
$$ {\ch \,V_{(n),j} \!\!\downarrow}_{\sg_{n-1} } =  {Q_{n-1,j-1}},$$
 \end{proof}
 
Theorem \ref{resthm}, along with Proposition~\ref{stem}, allows us  to
prove that Conjecture \ref{cvalcon} holds when $\lambda$ has a part
of size one.

\begin{cor} \label{cvalcor}
Let $\lambda=(\lambda_1,\ldots,\lambda_k=1)$ be a partition of $n$
and let $\sigma \in \S_n$ have cycle type $\lambda$. Then the
character of $V_{(n),j}$ evaluated at $\sigma$ is the coefficient
of $t^j$ in $tA_{k-1}(t)\prod_{i=1}^{k}[\lambda_i]_t$.
\end{cor}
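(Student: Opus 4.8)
The plan is to deduce this from Theorem~\ref{resthm} and Proposition~\ref{stem}. Since $\lambda$ has a part equal to $1$, a permutation $\sigma$ of cycle type $\lambda$ fixes a point; after conjugating we may assume $\sigma$ fixes $n$, so $\sigma$ belongs to the standard copy of $\S_{n-1}$ in $\S_n$ and, viewed as an element of $\S_{n-1}$, has cycle type $\lambda':=(\lambda_1,\ldots,\lambda_{k-1})\vdash n-1$, a partition with $\ell(\lambda')=k-1$. Because restricting a representation to a subgroup does not change character values on elements of that subgroup, $\chi^{V_{(n),j}}_\lambda$ equals the value at cycle type $\lambda'$ of the character of $V_{(n),j}\downarrow_{\S_{n-1}}$.

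First I would invoke Theorem~\ref{resthm}, which gives $\ch\bigl(V_{(n),j}\downarrow_{\S_{n-1}}\bigr)=Q_{n-1,j-1}$. Writing $\chi_{\mu}(f)$ for the value at cycle type $\mu$ of the virtual character with Frobenius characteristic $f$ (well defined since $\ch$ is an isomorphism and the $p_\mu$ form a basis), this says $\chi^{V_{(n),j}}_\lambda=\chi_{\lambda'}(Q_{n-1,j-1})$. Multiplying by $t^j$ and summing over $j$,
\[
\sum_{j=0}^{n-1}\chi^{V_{(n),j}}_\lambda\, t^j \;=\; t\,\chi_{\lambda'}\!\Bigl(\sum_{j'\ge 0} Q_{n-1,j'}\,t^{j'}\Bigr).
\]
Next I would read off from Proposition~\ref{stem}, applied with $n-1$ in place of $n$, that $\chi_{\mu}\bigl(\sum_{j'}Q_{n-1,j'}t^{j'}\bigr)=A_{\ell(\mu)}(t)\prod_{i=1}^{\ell(\mu)}[\mu_i]_t$ for every $\mu\vdash n-1$. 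Taking $\mu=\lambda'$ and combining,
\[
\sum_{j=0}^{n-1}\chi^{V_{(n),j}}_\lambda\, t^j \;=\; t\,A_{k-1}(t)\prod_{i=1}^{k-1}[\lambda_i]_t \;=\; t\,A_{k-1}(t)\prod_{i=1}^{k}[\lambda_i]_t,
\]
the last step because $[\lambda_k]_t=[1]_t=1$; extracting the coefficient of $t^j$ proves the corollary.

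I do not anticipate a genuine obstacle, since all the substance is already packaged in Theorem~\ref{resthm} and Proposition~\ref{stem}. The only care needed is bookkeeping: tracking the index shift $j\mapsto j-1$ (equivalently the factor $t$) produced by Theorem~\ref{resthm}, noting that deleting a fixed point drops the number of parts from $k$ to $k-1$ so that the Eulerian polynomial appearing is $A_{k-1}$, and checking the boundary conventions (e.g.\ $Q_{n-1,-1}=0$ matches $V_{(n),0}=0$, consistent with $n$-cycles having at least one excedance for $n\ge 2$).
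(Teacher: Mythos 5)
Your proposal is correct and follows exactly the same route as the paper's own proof: use the fact that $\sigma$ fixes a point to view it inside $\S_{n-1}$, invoke Theorem~\ref{resthm} to replace $V_{(n),j}$ by the $\S_{n-1}$-representation with Frobenius characteristic $Q_{n-1,j-1}$, and then read off the character value from Proposition~\ref{stem}. You merely spell out in full the last step, which the paper compresses into one sentence, including the bookkeeping for the shift $j\mapsto j-1$, the drop from $k$ to $k-1$ parts, and the harmless factor $[\lambda_k]_t=[1]_t=1$.
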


\begin{proof}
We may assume that $\sigma$ fixes $n$, which alllows us to think
of $\sigma$ as an element of $\S_{n-1}$.  Let $V_{n-1,j-1}$ be the
representation of $\S_{n-1}$ whose Frobenius characteristic is
$Q_{n-1,j-1}$.  By Theorem \ref{resthm}, the character value in
question is equal to the character value of $\sigma$ on
$V_{n-1,j-1}$.  Corollary
\ref{cvalcor} now follows from Proposition~\ref{stem}.
\end{proof}

\subsection{Other occurences} \label{othersec} The Eulerian quasisymmetric functions refine symmetric functions that have appeared earlier in the literature.        A multiset derangement of order $n$ is a $2 \times n$  array  of  positive integers whose top row is weakly increasing, whose bottom row is a rearrangement of the top row, and whose columns contain distinct entries.  An excedance of a multiset derangement $D = (d_{i,j})$  is a column $j$ such that $d_{1,j} < d_{2,j}$.   Given a multiset derangement $D=(d_{i,j})$, let $x^D:=\prod_{i=1}^n x_{d_{1,i}}$.   For all $j < n$, let   $\mathcal D_{n,j}$ be the set of all derangements in $\mathfrak S_n$ with $j$ excedances and let $\mathcal {MD}_{n,j}$ be the set of all multiset derangements of order $n$ with $j$ excedances. Set
$$d_{n,j}({\bf x}) :=  \sum_{D\in \mathcal {MD}_{n,j}} {\rm x}^D.$$ Askey and Ismail \cite{ai} (see also \cite{kz}) proved the following $t$-analog of MacMahon's \cite[Sec. III, Ch. III]{mac1}
result on multiset derangements  \begin{equation}\label{macderang}  \sum_{j,n \ge 0} d_{n,j}({\bf x})t^j z^n= { 1\over 1 - \sum_{i \ge 2} t[i-1]_t e_i z^i}.\end{equation}
\begin{cor}[to Theorem~\ref{introsymgenth}]  For all $n,j \ge 0 $ we have
\begin{equation} \label{derange1} d_{n,j}({\bf x}) = \omega Q_{n,j,0} = \sum_{\sigma \in \mathcal D_{n,j}} F_{[n-1]\setminus \Exd(\s) ,n},\end{equation} where $\omega$ is the standard involution on the ring of symmetric functions, which takes $h_n$ to $e_n$. Consequenty,
\begin{equation}\label{derangeq1}d_{n,j}(1,q,q^2,\dots) = (q;q)_{n}^{-1} \sum_{\sigma \in \mathcal D_{n,j}} q^{\comaj(\s)+j },
\end{equation} and
\begin{equation}\label{derangeq2} \sum_{m\ge 0} p^m \Lambda_m d_{n,j}({\bf x}) = (p;q)_{n+1}^{-1}\sum_{\sigma \in \mathcal D_{n,j}} q^{\comaj(\s)+j} p^{n-\des(\s)+1}.
\end{equation}
\end{cor}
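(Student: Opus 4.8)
The plan is to identify $d_{n,j}(\xx)$ with $\omega Q_{n,j,0}$ by a generating function comparison, and then to read off both the fundamental quasisymmetric expansion and its two specializations. First I would set $r=0$ in (\ref{introsymgenth2}): since $H(0)=h_0=1$, this gives $\sum_{n,j\ge 0}Q_{n,j,0}(\xx)t^jz^n=\bigl(1-\sum_{n\ge 2}t[n-1]_th_nz^n\bigr)^{-1}$. Because $\omega$ is a ring homomorphism on the ring of symmetric functions with $\omega(h_n)=e_n$, and the displayed identity is an identity of formal power series in $z$ (with $t$ a scalar) over that ring, $\omega$ may be applied term by term, yielding $\sum_{n,j\ge 0}\omega Q_{n,j,0}(\xx)t^jz^n=\bigl(1-\sum_{n\ge 2}t[n-1]_te_nz^n\bigr)^{-1}$. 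Comparing this with the Askey--Ismail formula (\ref{macderang}) gives $d_{n,j}(\xx)=\omega Q_{n,j,0}$ for every $n,j\ge 0$ (and in particular shows $d_{n,j}$ is symmetric); this is the first equality of (\ref{derange1}).

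For the second equality of (\ref{derange1}), I note that by definition $Q_{n,j,0}=\sum_\sigma F_{\Exd(\sigma),n}$, where $\sigma$ ranges over the permutations in $\sg_n$ with $\exc(\sigma)=j$ and $\fix(\sigma)=0$, that is, over $\mathcal D_{n,j}$, and that $Q_{n,j,0}$ is a symmetric function by Theorem~\ref{introsymgenth}. I would then invoke the standard extension of $\omega$ to an involution of the ring $\mathcal Q$ of quasisymmetric functions under which $\omega(F_{S,n})=F_{[n-1]\setminus S,\,n}$ (cf.\ \cite[Ch.~7]{st3}); since this extension restricts to the usual $\omega$ on symmetric functions, applying it to the symmetric function $Q_{n,j,0}$ gives $\omega Q_{n,j,0}=\sum_{\sigma\in\mathcal D_{n,j}}F_{[n-1]\setminus\Exd(\sigma),n}$, as asserted. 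This identification of $\omega$ on fundamental quasisymmetric functions is the main (and essentially only) point requiring care; the generating-function comparison above and the specializations below are routine bookkeeping with Lemmas~\ref{desspec} and~\ref{exdlem}.

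Finally, the ``consequently'' formulas follow by specializing (\ref{derange1}) term by term. Since $d_{n,j}(1,q,q^2,\dots)=\Lambda(d_{n,j}(\xx))$, applying $\Lambda$ and using (\ref{spec1}) with $S=[n-1]\setminus\Exd(\sigma)$ reduces matters to the computation $\sum_{i\in S}i={n\choose 2}-\sum_{i\in\Exd(\sigma)}i={n\choose 2}-(\maj(\sigma)-\exc(\sigma))=\comaj(\sigma)+j$, where I used (\ref{exd}) and $\exc(\sigma)=j$; this gives (\ref{derangeq1}). For (\ref{derangeq2}) I would instead apply (\ref{spec2}) to each $F_{[n-1]\setminus\Exd(\sigma),n}$: the exponent of $q$ is again $\comaj(\sigma)+j$, while the exponent of $p$ is $|S|+1=n-|\Exd(\sigma)|$, and since a derangement satisfies $\sigma(1)\ne 1$, (\ref{exdsize}) gives $|\Exd(\sigma)|=\des(\sigma)-1$, so the exponent of $p$ equals $n-\des(\sigma)+1$. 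Summing over $\sigma\in\mathcal D_{n,j}$ then produces (\ref{derangeq2}).
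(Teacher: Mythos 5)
Your proof is correct and follows essentially the same route as the paper: setting $r=0$ in (\ref{introsymgenth2}), applying $\omega$ and comparing with the Askey--Ismail formula for the first equality; then invoking the extension of $\omega$ to $\mathcal Q$ sending $F_{S,n}\mapsto F_{[n-1]\setminus S,n}$ for the second; and finally specializing via Lemmas~\ref{desspec} and~\ref{exdlem}, using $\exc(\sigma)=j$ and (since derangements satisfy $\sigma(1)\ne 1$) $|\Exd(\sigma)|=\des(\sigma)-1$ to compute the exponents. The computations are all correct.
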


\begin{proof} The right hand side of (\ref{macderang}) can be obtained by applying $\omega$ to the right hand side of  (\ref{introsymgenth2}) and setting  $r=0$.  Hence the first equation of (\ref{derange1}) holds.

The involution  on the ring $\mathcal Q$ of quasisymmetric funcitons defined  by
$ F_{S,n} \mapsto F_{[n-1]\setminus S,n}$ restricts to  $\omega$ on the ring of symmetric functions (cf. \cite[Exercise 7.94.a]{st3}).  Hence  the second equation of (\ref{derange1})  follows from the definition of $Q_{n,j,0}$.  Equations (\ref{derangeq1}) and (\ref{derangeq2}) now  follow from Lemmas~\ref{desspec} and~\ref{exdlem}.
\end{proof}

Let $W_n$ be the set of all words of length $n$ over alphabet $\PP$ with no adjacent repeats, i.e.,
$$W_n := \{w \in \PP^n : w(i )\ne w(i+1)\,\, \forall i = 1,2,\dots,n-1\}. $$  Define the enumerator
$$Y_n(x_1,x_2,\dots):= \sum_{w\in  W_n} {\bf x}^w,$$ where
${\bf x}^w:=x_{w(1)} \cdots x_{w(n)}$.
In \cite{csv} Carlitz, Scoville and Vaughan prove  the identity
\begin{equation}\label{carl} \sum_{n \ge 0} Y_n({\bf x}) z^n = {\sum_{i \ge 0} e_i z^i \over 1 - \sum_{i \ge 2} (i-1) e_i z^i}.\end{equation}  (See Dollhopf, Goulden and Greene  \cite{dgg} and Stanley \cite{st4} for alternative proofs.)   It was observed by Stanley  that there is a straightforward generalization of (\ref{carl}).
\begin{thm}[Stanley (personal communication)]  \label{stanth} For all $j <n$, define $$Y_{n,j}(x_1,x_2,\dots):= \sum_{\scriptsize \begin{array}{c} w\in  W_n \\ \des(w) = j\end{array}} {\bf x}^w.$$ Then
\begin{equation}\label{carlg} \sum_{n \ge 0} Y_{n,j}({\bf x})t^j z^n = {(1-t) E(z) \over E(zt) - t E(z)} ,\end{equation} where  $$E(z) = \sum_{n \ge 0} e_n z^n.$$
\end{thm}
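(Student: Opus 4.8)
The plan is to obtain Theorem~\ref{stanth} from Theorem~\ref{introsymgenth} by applying the standard involution $\omega$. First I would set $r=1$ in~(\ref{introsymgenth1}), which gives
$$\sum_{n,j\ge 0}Q_{n,j}(\x)\,t^jz^n=\frac{(1-t)H(z)}{H(zt)-tH(z)}.$$
Since $\omega$ is a ring homomorphism on symmetric functions with $\omega(h_n)=e_n$, it carries $H(z)$ to $E(z)$ and $H(zt)$ to $E(zt)$; applying it coefficientwise (legitimate because each $Q_{n,j}$ is a symmetric function, by Theorem~\ref{introsymgenth}) yields
$$\sum_{n,j\ge 0}\omega(Q_{n,j})\,t^jz^n=\frac{(1-t)E(z)}{E(zt)-tE(z)},$$
which is exactly the right-hand side of~(\ref{carlg}). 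Hence the theorem is equivalent to the family of identities $\omega(Q_{n,j})=Y_{n,j}$ for $n\ge 1$, $0\le j\le n-1$ (the case $n=0$ being trivial). Because the complementation involution $F_{S,n}\mapsto F_{[n-1]\setminus S,n}$ on quasisymmetric functions restricts to $\omega$ on symmetric functions (\cite[Ex.~7.94.a]{st3}) and $Q_{n,j}$ is symmetric, this in turn amounts to proving
$$\sum_{\substack{\s\in\S_n\\ \exc(\s)=j}}F_{[n-1]\setminus\Exd(\s),n}(\x)=\sum_{\substack{w\in W_n\\ \des(w)=j}}\x^{\,w}.$$

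To prove this identity I would use the link between banners and words with no adjacent repeats mentioned in the Remark after Theorem~\ref{bij}. By the banner characterization (Theorem~\ref{banprop}), $Q_{n,j}=\sum_B\wt(B)$ over banners $B$ of length $n$ with $j$ bars. Forgetting bars shows that, for a word $v$ over $\PP$, the banners above $v$ are produced by barring every strict descent position of $v$ (forced), leaving every strict ascent position and the last position unbarred (also forced), and choosing a bar or not freely at each $i$ with $v_i=v_{i+1}$; hence these banners contribute $t^{\des(v)}(1+t)^{\operatorname{eq}(v)}$, where $\operatorname{eq}(v)=\#\{i:v_i=v_{i+1}\}$, so that
$$\sum_{n,j\ge0}Q_{n,j}\,t^jz^n=\sum_{v}\x^{\,v}\,t^{\des(v)}(1+t)^{\operatorname{eq}(v)}z^{|v|},$$
the sum over all finite words $v$ over $\PP$. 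One is then reduced to recognizing $\omega$ of the right-hand side as $\sum_{n}z^n\sum_{w\in W_n}\x^{\,w}t^{\des(w)}$. A fully self-contained alternative avoids the quasisymmetric bookkeeping entirely: apply $\omega$ to the recurrence of Corollary~\ref{formQcor} at $r=1$ to get
$$\sum_{j\ge0}\omega(Q_{n,j})\,t^j=e_n+\sum_{k=0}^{n-2}\Big(\sum_{j\ge0}\omega(Q_{k,j})\,t^j\Big)e_{n-k}\,t[n-k-1]_t,$$
and then show combinatorially that $Y_n(t,\x):=\sum_{j\ge0}Y_{n,j}t^j$ satisfies the same recurrence with the same initial data ($Y_0=1$, $Y_1=e_1$), whence $Y_{n,j}=\omega(Q_{n,j})$ by induction on $n$. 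Here $e_n$ accounts for the strictly increasing words (those in $W_n$ with no descent), while each factor $e_{n-k}\,t[n-k-1]_t$ accounts for a suffix of length $n-k\ge2$ obtained from a strictly increasing word of length $n-k$ together with a cut producing $\ell$ descents, $1\le\ell\le n-k-1$ --- this is the Carlitz--Scoville--Vaughan / Dollhopf--Goulden--Greene peeling of a word with no adjacent repeats (\cite{csv,dgg}), refined by $\des$.

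The hard part is this combinatorial identification. The obstacle is that $\omega$ operates on symmetric functions, not on monomial generating functions, so the expansion $Y_{n,j}=\sum_w\x^{\,w}$ cannot be transported term by term; one must combine the symmetry of $\omega(Q_{n,j})$ with a weight- and statistic-preserving bijection on the underlying objects --- banners versus no-adjacent-repeat words, or (using Corollary~\ref{dercor} together with~(\ref{derange1}), which reduces the claim to $Y_{n,j}=\sum_{k\ge0}e_k\,d_{n-k,j}$) strictly-increasing-word times multiset-derangement versus no-adjacent-repeat word --- matching $\des$ on words against $\exc$ by way of Lemma~\ref{exdlem}; or else carry out the recursive peeling above carefully enough that the descent between the peeled suffix and the remaining prefix is correctly bookkept. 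The recursion route looks the most robust.
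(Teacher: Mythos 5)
You correctly identify the mechanism, and it matches the paper's: applying $\omega$ to the $r=1$ case of~(\ref{introsymgenth1}) (legitimate since each $Q_{n,j}$ is symmetric by Theorem~\ref{symquasith}) shows that Theorem~\ref{stanth} is equivalent to the family of identities $Y_{n,j}=\omega Q_{n,j}$, and via the banner characterization (Theorem~\ref{banprop}) this is exactly the content of the paper's equation~(\ref{recipeq}). Your intermediate computation --- that the banners above a fixed base word $v$ contribute $t^{\des(v)}(1+t)^{\operatorname{eq}(v)}$, so that $\sum_{n,j} Q_{n,j}t^jz^n = \sum_v \x^v\, t^{\des(v)}(1+t)^{\operatorname{eq}(v)}z^{|v|}$ summed over all finite words $v$ over $\PP$ --- is also correct.

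However, you do not close the gap you yourself flag, and it is the crux. Knowing the monomial expansion of $Q_{n,j}$ does not let you compute $\omega Q_{n,j}$ term by term, so the direct count stalls exactly where you say it does. The device the paper uses to close this gap is $P$-partition reciprocity for zigzag posets: fix $S\subseteq[n-1]$ and let $P_S$ be the zigzag poset the paper describes; banners of length $n$ whose barred positions form the set $S$ are the $P_S$-partitions, words in $W_n$ with descent set $S$ are the strict $P_S$-partitions of the same poset, and Stanley's reciprocity theorem gives the $\omega$ relation between the two weight enumerators; summing over $S$ with $|S|=j$ yields $Y_{n,j}=\omega Q_{n,j}$. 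Your recurrence alternative is likewise incomplete: that $Y_n(t,\x):=\sum_j Y_{n,j}t^j$ satisfies the $\omega$-image of~(\ref{altrecrel}) at $r=1$ is precisely what would have to be proved, and the peeling you sketch does not obviously produce the convolution $\sum_{k} Y_k(t,\x)\,e_{n-k}\,t[n-k-1]_t$, because the seam between prefix and suffix must respect both the no-adjacent-equal condition and the descent count. (A small citation slip: the recurrence you write comes from~(\ref{altrecrel}), not from Corollary~\ref{formQcor}, which is the closed product formula.) In short: the scaffolding matches the paper, but the decisive step --- $P$-partition reciprocity, or an equivalent device for applying $\omega$ across the banner/word correspondence --- is missing.
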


By combining  Theorems~\ref{introsymgenth} and \ref{stanth} we conclude that
$$Q_{n,j} = \omega Y_{n,j}.$$  Stanley (personal communication)  observed that there is a combinatorial interpretation of this identity in terms of P-partition reciprocity \cite[Section 4.5]{st5}.
 Indeed, words in $W_n$ with  fixed descent set $S\subseteq [n-1]$ can be identified with strict $P$-partitions where $P$ is  the poset on $\{p_1,\dots,p_n\}$ generated by cover relations $p_i < p_{i+1}$ if $i \in S$ and $p_{i+1} < p_{i}$ if  $i \notin S$.  Banners of length $n$ in which the set of positions of barred letters equals $S$ can be identified with $P$-partitions for the same poset $P$.  It therefore follows from
P-partition reciprocity that
\begin{equation} \label{recipeq} Y_{n,j}({\bf x}) = \omega \sum_{B} \wt(B), \end{equation}  summed over all banners of length $n$ with $j$ bars.

In \cite{st4} Stanley views words in $W_n$  as proper colorings of a path $P_n$ with $n$ vertices and $Y_n$  as the chromatic symmetric function of $P_n$.  The chromatic symmetric function of a graph $G=(V,E)$ is a symmetric function analog of the chromatic polynomial $\chi_G$ of $G$.   Stanley  \cite[Theorem 4.2]{st4} also defines a symmetric function analog of   $(-1^{|V|})\chi_G(-m)$ which enumerates all pairs $(\eta,c)$ where $\eta$ is an acyclic orientation of $G$ and $c:V\to [m]$ is a coloring satisfying $c(u) \le c(v) $ if $(u,v)$ is an edge of $\eta$.   For $G=P_n$, one can see that these pairs can be identified with banners of length $n$.  Hence Stanley's reciprocity theorem for chromatic symmetric functions \cite[Theorem 4.2]{st4}   reduces to an identity that is refined by (\ref{recipeq}) when $G= P_n$.

Another interesting combinatorial interpretation of the Eulerian quasisymmetric functions comes from Gessel (personal communication).  He considers the set $U_n$ of words  of length $n$ over the alphabet $\PP$ with no double (i.e., adjacent)  descents  and no descent in the last position $n-1$;  and proves
\begin{equation} \label{ges} \sum_{n \ge 0} z^n \sum_{w \in U_n} {\bf x}^w t^{\des(w)} (1+t) ^{n-1-2\des(w)} = {(1-t) H(z) \over H(zt) - t H(z)} .\end{equation}

The symmetric function  on the right hand side of (\ref{ges}) has
also occurred  in the work of   Processi \cite{pr}, Stanley
\cite{st2}, Stembridge \cite{stem1,stem2}, and Dolgachev and Lunts
\cite{dl}.  They studied a representation of the symmetric group
on the cohomology of the toric variety $X_n$ associated with the
Coxeter complex of $\sg_n$.  (See, for example \cite{br}, for a
discussion of Coxeter complexes and \cite{fu} for an explanation
of how toric varieties are associated to polytopes.)  The action
of $\sg_n$ on the Coxeter complex determines an action on $X_n$
and thus a linear representation on the cohomology groups of
$X_n$. As $X_n$ is a complex manifold (of dimension $n-1$),
$H^{d}(X_n)=0$ whenever $d$ is odd. The action of $\sg_n$ on $X_n$
induces a representation of $\sg_n$ on the cohomology
$H^{2j}(X_n)$ for each $j = 0,\dots,n-1$. Stanley \cite{st2},
using a formula of Procesi \cite{pr}, proves that
$$\sum_{n\ge 0} \sum_{j=0}^{n-1} \ch H^{2j}(X_n)\,t^{j} z^n
= {(1-t) H(z) \over H(zt) -tH(z)}. $$  Combining this with
Theorem~\ref{introsymgenth} yields the following conclusion.

\begin{thm}\label{toricth}  For all $j = 0,1, \dots, n-1$,
$$\ch H^{2j}(X_n)=Q_{n,j}.$$
\end{thm}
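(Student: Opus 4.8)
The plan is to obtain Theorem~\ref{toricth} by matching the generating function of Theorem~\ref{introsymgenth} against the Procesi--Stanley generating function for the cohomology of $X_n$ that is recalled immediately before the statement. First I would note that $Q_{n,j} = \sum_{k \ge 0} Q_{n,j,k}$ directly from the definitions, since the permutations $\sigma \in \sg_n$ with $\exc(\sigma) = j$ are partitioned by the value $k = \fix(\sigma)$. Setting $r = 1$ in equation~(\ref{introsymgenth1}) and extracting the coefficient of $z^n$ therefore gives
$$\sum_{n \ge 0}\sum_{j \ge 0} Q_{n,j}\, t^j z^n = \frac{(1-t)H(z)}{H(zt) - tH(z)}.$$
Because $\exc(\sigma) \le n-1$ for every $\sigma \in \sg_n$ with $n \ge 1$ (no excedance position lies in $\{n\}$) and because $Q_{0,0} = 1$, the inner sum on the left in fact runs only over $0 \le j \le n-1$.

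Next I would invoke the formula of Stanley \cite{st2}, based on Procesi's formula \cite{pr}, recalled above, namely
$$\sum_{n \ge 0}\sum_{j = 0}^{n-1} \ch H^{2j}(X_n)\, t^j z^n = \frac{(1-t)H(z)}{H(zt) - tH(z)},$$
where the range $0 \le j \le n-1$ is the correct one since $X_n$ is a complex manifold of complex dimension $n-1$, so all odd cohomology vanishes and $H^{2j}(X_n) = 0$ for $j > n-1$. The two displayed right-hand sides are the same rational expression in the complete homogeneous symmetric functions $h_n$, so the left-hand sides agree as formal power series in $z$, polynomial in $t$, with coefficients in the ring of symmetric functions. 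Comparing the coefficient of $t^j z^n$ on both sides then yields $\ch H^{2j}(X_n) = Q_{n,j}$ for all $0 \le j \le n-1$, which is the assertion.

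The substance of the argument lies entirely in Theorem~\ref{introsymgenth}, whose proof occupies Section~\ref{identsec}, and in the Procesi--Stanley computation taken from the literature; once both are available, Theorem~\ref{toricth} is immediate. The only point that requires any care is the bookkeeping of the two summation ranges and the verification that the two generating functions are literally equal as elements of the same coefficient ring, so that term-by-term comparison of coefficients is legitimate --- no further bijective or representation-theoretic input is needed.
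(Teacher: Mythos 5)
Your proposal is correct and follows exactly the route the paper takes: the authors state that the theorem follows by "combining" the Procesi--Stanley generating function with Theorem~\ref{introsymgenth}, and your argument simply spells out that combination (setting $r=1$, using $Q_{n,j}=\sum_k Q_{n,j,k}$, and matching coefficients of $t^j z^n$).
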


In the next section we study  another occurrence of the representation whose Frobenius characteristic  is $Q_{n,j}$.

\part{Poset topology}

 \section{A conjecture of Bj\"orner and Welker on Rees products} \label{rep}

 In this section we consider   a  partially ordered set introduced
 by Bj\"orner and Welker  \cite{bw}, which originally motivated
the work in this paper.  Bj\"orner and Welker conjectured and
Jonsson \cite{jo} proved that the
absolute value of the M\"obius invariant of this poset is a
derangement number. We refine this result by showing that the
absolute value of the poset's M\"obius function evaluated at any
pair that includes the poset's bottom element,  is an Eulerian
number.  We prove an equivariant version and a $q$-analog of both
the Bj\"orner-Welker-Jonsson result and  our refinement that
involves the Eulerian quasisymmetric functions $Q_{n,j}$  and the
$q$-Eulerian numbers $a_{n,j}(q,1)$.

All posets in this paper are finite.
Recall that for a poset $P$, the {\it order complex} $\Delta P$ is the abstract simplicial complex whose vertices are the elements of $P$ and whose $k$-simplices are totally ordered subsets of size $k+1$ from $P$.  The (reduced) homology of $P$ is given by
$\rh_k(P):= \rh_k(\Delta P;\C)$.

Given ranked posets $P,Q$ with respective rank functions $r_P,r_Q$, the {\it Rees product} $P \ast Q$ is the poset whose underlying set is
\[
\{(p,q) \in P \times Q:r_P(p) \geq r_Q(q) \},
\]
with order relation given by $(p_1,q_1) \leq (p_2,q_2)$ if and only if all of the conditions
\begin{itemize}
\item $p_1 \leq_P p_2$,
\item $q_1 \leq_Q q_2$, and
\item $r_P(p_1)-r_P(p_2) \geq r_Q(q_1)-r_Q(q_2)$
\end{itemize}
hold.  In other words, $(p_2,q_2)$ covers $(p_1,q_1)$ in $P \ast Q$ if and only if $p_2$ covers $p_1$  in $P$ and either $q_2=q_1$ or $q_2$ covers $q_1$ in $Q$.

Rees products were introduced by Bj\"orner and Welker in \cite{bw}, where they study connections between poset topology and commutative algebra.  (Rees products of affine semigroup posets arise from the ring-theoretic Rees construction on semigroup algebras.)
We will need the following result from \cite{bw}.  Recall that a poset is said to be Cohen-Macaulay if it, all its open intervals, and all its open principal (upper and lower) order ideals have vanishing  homology below the top dimension.  It is a well-known fact that Cohen-Macaulay posets are ranked.
\begin{thm}[Bj\"orner and Welker \cite{bw}] \label{bwrees} The Rees product of two Cohen-Macaulay posets is a Cohen-Macaulay poset.
\end{thm}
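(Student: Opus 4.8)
The plan is to verify the homological definition of Cohen--Macaulayness directly and inductively. Write $R=P*Q$; by the definition recalled just above, $R$ is Cohen--Macaulay exactly when $\Delta R$, every open interval $(u,v)_R$, and every open principal lower or upper order ideal $R_{<v}$, $R_{>u}$ has reduced homology vanishing below its top dimension. Since $R$ is ranked with $r(p,q)=r_P(p)$, every one of these is again the order complex of a small piece of $R$, and the argument runs by induction on a suitable size statistic of the pair $(P,Q)$: assume the theorem for all pairs of smaller size and deduce it for $(P,Q)$.

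The bookkeeping step is to recognize each such piece as (the proper part of) a Rees product of Cohen--Macaulay posets that are \emph{strictly smaller} than $P$ or $Q$. Concretely, an open interval $\big((p_1,q_1),(p_2,q_2)\big)_R$ is isomorphic to the proper part of $[p_1,p_2]_P*[q_1,q_2]_Q$, where $[p_1,p_2]_P$ and $[q_1,q_2]_Q$ carry the rank functions of $P$ and $Q$ shifted by constants (the shift on the $Q$-factor being dictated by the requirement that $(p_1,q_1)$ lie in the Rees product); an open principal lower ideal $R_{<(p_2,q_2)}$ is isomorphic to the corresponding proper part of $P_{\le p_2}*Q_{\le q_2}$, and symmetrically for upper ideals. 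Closed intervals and principal order ideals of a Cohen--Macaulay poset are again Cohen--Macaulay --- their own intervals and principal ideals are either intervals/ideals of the ambient poset or cones, so the required vanishing holds --- so each factor above is Cohen--Macaulay, and it is \emph{properly} contained in $P$ (resp.\ $Q$) unless both coordinates of the relevant endpoint are the global bottom/top of $P$, $Q$. Hence the inductive hypothesis applies and settles every such piece, leaving only the full complex $\Delta(P*Q)$ together with the order complexes of the ideals below a maximal element $(\hat1_P,\hat1_Q)$ and above a minimal element $(\hat0_P,\hat0_Q)$ (when these exist), which are structurally the same problem.

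What remains --- and what I expect to be the main obstacle --- is the homology of $\Delta(P*Q)$ itself. The plan is to exploit the (always surjective) projection $\pi\colon P*Q\to P$, $(p,q)\mapsto p$, whose fibre over the principal ideal $P_{\le p}$ is the Rees product $P_{\le p}*Q$. For $p$ below the top of $P$ this is a Cohen--Macaulay poset strictly smaller than $P$ (it is an order ideal of $P$; when $P$ has a maximum it is the open principal lower ideal below that maximum, hence Cohen--Macaulay by hypothesis), so the fibres are Cohen--Macaulay by induction, and a poset fibre theorem for Cohen--Macaulayness --- together with the Cohen--Macaulayness of $\widehat P=P\cup\{\hat0,\hat1\}$ and a dimension count matching the rank strata of $\Delta(P*Q)$ --- yields the vanishing. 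The genuinely delicate point is the case in which $P$ already has a maximum $\hat1_P$, so that the fibre over $\hat1_P$ is all of $P*Q$ and, moreover, $\Delta(P*Q)$ and the complex of the ideal below $(\hat1_P,\hat1_Q)$ become mutually entangled; here I would break the circle by peeling. The set $(P\setminus\{\hat1_P\})*Q$ is an order ideal of $P*Q$ and is Cohen--Macaulay by induction (because $P\setminus\{\hat1_P\}$ is the open principal lower ideal of $\hat1_P$ in $P$), and $P*Q$ is obtained from it by adjoining the maximal elements $(\hat1_P,q)$, whose closed stars in $\Delta(P*Q)$ are contractible cones and whose links are open principal lower ideals of $P*Q$ --- most of which are the smaller Rees products already handled in the second paragraph. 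A Mayer--Vietoris computation over these stars then expresses $\rh_*(\Delta(P*Q))$ through Cohen--Macaulay pieces. Matching up the rank functions and dimension shifts through all of these identifications, and arranging the nested inductions so that the Mayer--Vietoris genuinely closes up, is where the real work lies; everything else is formal.
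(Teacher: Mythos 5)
First, a note on framing: the paper does not prove Theorem~\ref{bwrees}. It is quoted from Bj\"orner and Welker \cite{bw} and used as a black box, so there is no in-paper proof to compare against; what follows assesses your proposal on its own terms.

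There is a genuine gap, and it sits precisely in the step you label as ``bookkeeping.'' You assert that an open interval $\big((p_1,q_1),(p_2,q_2)\big)$ in $R=P*Q$ is, after shifting ranks, the proper part of the Rees product $[p_1,p_2]_P*[q_1,q_2]_Q$, and that $R_{<(p_2,q_2)}$ is the proper part of $P_{\le p_2}*Q_{\le q_2}$. Neither holds. Unwinding the order relation in $R$, the element $(p,q)$ lies in $[(p_1,q_1),(p_2,q_2)]_R$ exactly when $p_1\le p\le p_2$, $q_1\le q\le q_2$, \emph{and both}
\[
r_P(p)-r_P(p_1)\ \ge\ r_Q(q)-r_Q(q_1)
\qquad\text{and}\qquad
r_P(p_2)-r_P(p)\ \ge\ r_Q(q_2)-r_Q(q).
\]
The first inequality is the Rees condition of $[p_1,p_2]*[q_1,q_2]$ after the rank shift you describe, but the second --- the constraint coming from $(p,q)\le(p_2,q_2)$ --- has no counterpart in that Rees product. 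A Rees product imposes a one-sided bound $r_P\ge r_Q$; an interval in a Rees product imposes a two-sided ``corridor'' on $r_P(p)-r_Q(q)$. The two posets really differ: in $B_3*\{0<1<2<3\}$, the interval from $(\emptyset,0)$ to $(\{1,2,3\},2)$ has rank sizes $(1,6,6,1)$, whereas $B_3*\{0<1<2\}$ has rank sizes $(1,6,9,3)$, and an element such as $(\{1,2,3\},0)$ lies in the latter but not in the former. The same obstruction defeats the claim for $R_{<(p_2,q_2)}$, which again carries the extra from-above inequality that $P_{\le p_2}*Q_{\le q_2}$ does not. (By contrast, your claim for open principal \emph{upper} order ideals is correct: for $(p,q)>(p_1,q_1)$ the membership condition $r_P(p)\ge r_Q(q)$ is forced by $r_P(p)-r_P(p_1)\ge r_Q(q)-r_Q(q_1)$ together with $r_P(p_1)\ge r_Q(q_1)$, so $R_{>(p_1,q_1)}$ really is $\big(P_{\ge p_1}*Q_{\ge q_1}\big)$ minus its minimum.) Consequently the class of Rees products of Cohen--Macaulay posets is \emph{not} closed under passing to open intervals or open principal lower ideals, your induction does not close, and the difficulty is not confined to $\Delta(P*Q)$ itself as your last paragraph suggests --- it already appears in the step you took to be routine. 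A repair would need either to identify and handle the genuine class of subposets that arise (these doubly-constrained ``corridor'' posets interpolate between Rees products and, at the extreme where the corridor has width zero, Segre products) or to sidestep the interval analysis entirely, e.g.\ by a shellability or poset-fiber-theorem argument that certifies all open intervals at once, which is closer to how Bj\"orner and Welker actually proceed in \cite{bw}.
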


Before proceeding we establish some poset notation and terminology.  We say that a poset $P$ is {\em bounded} if it has a minimum element $\hat 0_P$ and a maximum element $\hat 1_P$.  For any poset $P$, let $\wh{P}$ be the
bounded poset obtained from $P$ by adding a minimum element  and a
maximum element  and let $P^+$ be the poset obtained from $P$ by adding only a maximum element. For a  poset $P$ with minimum element $\hz_P$,
let $P^-=P \setminus \{\hz_P\}$.  For $x \le y$ in $P$,  let $(x,y)$ denote the open interval $\{z \in P : x <z <y\}$  and $[x,y]$ denote the closed interval $\{z \in P : x \le z \le y\}$.

The M\"obius invariant of any bounded poset $P$  is given by
$$\mu(P) := \mu_P(\hat 0_P,\hat 1_P),$$ where $\mu_P$ is the M\"obius function on $P$.   It follows
from a  well known result of P. Hall (see \cite[Proposition 3.8.5]{st5}) and the Euler-Poincar\'e formula that if poset $P$ has length $n$ then
\begin{equation} \label{eupon} \mu(\hat P) =\sum_{i=0}^n (-1)^i \dim \tilde H_i(P).\end{equation}  Hence if
 $P$ is Cohen-Macaulay then for all $x \le y$ in $P$
\begin{equation} \label{eupon2} \mu_P(x,y) = (-1)^{r} \dim \tilde H_r((x,y)),\end{equation}
where $r= r_P(y)-r_P(x)-2$, and if $y=x$ or $y$ covers $x$ we set $\tilde H_r((x,y)) = \C$.

Let $B_n$ be the Boolean algebra on the set $[n]$ and let $C_n$ be the chain $\{1<2<\ldots<n\}$.
  Jonsson \cite{jo} uses discrete Morse theory to prove the conjecture of     Bj\"orner and Welker \cite{bw} that
\begin{equation} \label{jj}
\mu(\wh{B_n^- \ast C_n})=(-1)^nd_n,
\end{equation}
where $d_n$ is the number of derangements in $\mathfrak S_n$.

In Theorem~\ref{bncn} below we give a refinement  of (\ref{jj}).   Indeed, (\ref{jj})
follows immediately from Theorem \ref{bncn} below, the well-known formula
\begin{equation}
d_n=\sum_{m=0}^{n} (-1)^m{{n} \choose {m}}(n-m)!
\end{equation}
and the recursive definition of the M\"obius function.

\begin{thm} \label{muintth}
Let $S \subseteq [n]$ have size $m>0$.  Then for $1 \leq j \le m$ we
have
\[
\mu_{\wh{B_n^- \ast C_n}}(\hz,(S,j))=(-1)^{m+1}a_{m,j-1}
\]
\label{bncn}
\end{thm}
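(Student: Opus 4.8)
The plan is to prove this by induction on $m=|S|$, using only the recursive definition of the M\"obius function together with a binomial identity for Eulerian numbers. First some reductions. Because the Rees-product order relation, restricted to elements lying weakly below $(S,j)$, involves only subsets of $S$ and elements of $C_n$ weakly below $j$, the interval $[\hz,(S,j)]$ of $\wh{B_n^-\ast C_n}$ depends only on $m$ and $j$; one may therefore take $S=[m]$, so that this interval is nothing but $[\hz,([m],j)]$ inside $\wh{B_m^-\ast C_m}$. Moreover $\wh{B_n^-\ast C_n}$ is Cohen--Macaulay by Theorem~\ref{bwrees} (Boolean algebras and chains being Cohen--Macaulay), hence so is $[\hz,(S,j)]$, and therefore $\mu_{\wh{B_n^-\ast C_n}}(\hz,(S,j))=(-1)^{\ell-2}\dim\rh_{\ell-2}\bigl((\hz,(S,j))\bigr)$, where $\ell$ is the length of $[\hz,(S,j)]$. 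So it suffices to evaluate this single Betti number, the overall sign being then forced by $\ell$.

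For the evaluation I would run the recurrence $\mu(\hz,(S,j))=-\sum_{\hz\le x<(S,j)}\mu(\hz,x)$. Apart from $\hz$, the elements strictly below $(S,j)$ are the pairs $(T,i)$ with $\emptyset\ne T\subsetneq S$ and $i$ lying in a ``window'' determined by the membership requirement on $(T,i)$ together with the three inequalities defining $(T,i)\le(S,j)$; unwinding these gives the range $\max(1,\,|T|+j-m)\le i\le\min(|T|,j)$, and in particular $1\le i\le|T|$, so the inductive hypothesis applies to each such $(T,i)$. Substituting the inductive values $\mu(\hz,(T,i))=\pm\,a_{|T|,\,i-1}$, grouping the pairs by $|T|=t$ (there being $\binom{m}{t}$ choices of $T$ for each $t$), and using $\mu(\hz,\hz)=1$, the recurrence collapses to an identity of the shape
\[
\sum_{t}\binom{m}{t}(-1)^{t}\!\!\!\sum_{k=\max(0,\,t+j-m-1)}^{\min(t-1,\,j-1)}\!\!\! a_{t,k}\ =\ \pm 1 ,
\]
to be verified for all $m\ge1$ and $1\le j\le m$.

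I would prove this identity by extracting the coefficient of $z^m/m!$ from the product $e^{z}\cdot\bigl((1-u)/(e^{z(u-1)}-u)\bigr)$, using the classical exponential generating function $\sum_{t\ge0}A_t(u)\,z^t/t!=(1-u)/(e^{z(u-1)}-u)$ for the Eulerian polynomials $A_t(u)=\sum_k a_{t,k}u^k$: the windowed partial sums of the $a_{t,k}$ appearing above are exactly the coefficients this product produces. Alternatively, the inner sum counts permutations of $[t]$ with at most $j-1$ excedances and, via the excedance reflection $\exc\mapsto(t-1)-\exc$ of Proposition~\ref{easyprop}, at most $m-j$ non-excedances, which suggests a sign-reversing-involution proof. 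A more conceptual route that sidesteps the identity entirely is to replace each $T$ by its complement $S\setminus T$ --- which turns the Boolean coordinate into its order dual --- and to recognise the open interval $(\hz,(S,j))$ as an open principal lower order ideal of the Rees product $(B_m^{\ast})^-\ast C_m$; the general theorem of Section~\ref{treesec} relating the homology of lower order ideals of $P^-\ast C_m$ to the homology of $P^{\ast}\ast T_{t,m}$, together with Theorem~\ref{introtreeth}, then delivers the Betti number $a_{m,j-1}$ directly.

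The base case $m=1$ is immediate, since $[\hz,(\{s\},1)]$ is a two-element chain. The main obstacle is the Eulerian identity displayed above: pinning down the window of admissible $i$ exactly and then checking that the alternating binomial sums telescope is where essentially all of the work lies; the Cohen--Macaulay input serves only to fix the sign and to rule out lower-dimensional homology.
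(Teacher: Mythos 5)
Your plan --- reduce to $S=[m]$, fix the overall sign by Cohen--Macaulayness, and induct on $m$ via the M\"obius recurrence, reducing the statement to an alternating binomial identity in Eulerian numbers --- is sound and genuinely different from the paper's. The paper derives Theorem~\ref{bncn} from the Tree Lemma (Theorem~\ref{tree}), which converts $\sum_j\mu(\wh{I_j(P)})t^j$ into $-\mu((P^\ast\ast T_{t,n})^+)$; self-duality of $B_n$ then reduces everything to a single evaluation of $\mu((B_n\ast T_{t,n})^+)$, carried out via the uniform-poset recurrence (Proposition~\ref{propuniform}) and Euler's exponential generating function. Your closing ``more conceptual route'' is exactly this, so you do correctly identify the paper's method as a fallback.

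The gap is in the proposed proof of your displayed identity. The coefficient of $z^m/m!$ in $e^z\cdot(1-u)/(e^{z(u-1)}-u)$ is simply $\sum_t\binom{m}{t}A_t(u)$: there is no $(-1)^t$, and --- crucially --- no $t$-dependent window on $k$. The window $\max(0,t+j-m-1)\le k\le\min(t-1,j-1)$ is precisely where the Rees-product rank inequality enters, and the Eulerian EGF alone does not see it. A working generating-function argument must build the window in: sum the identity against $u^jz^m/m!$, set $s=m-t$, and note that for fixed $k,s$ the admissible $j$ contribute $u^{k+1}[s+1]_u$. This yields
\[
u\Bigl(\sum_{t\ge1}A_t(u)\frac{(-z)^t}{t!}\Bigr)\Bigl(\sum_{s\ge0}[s+1]_u\frac{z^s}{s!}\Bigr)=\frac{u(e^{uz}-e^z)}{1-u}=-\sum_{m\ge1}\frac{z^m}{m!}\sum_{j=1}^m u^j,
\]
where the middle equality uses $e^{z(1-u)}-u=e^{-uz}(e^z-ue^{uz})$. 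This confirms the identity with right-hand side $-1$, and feeding that back through your recursion forces the value $(-1)^m a_{m,j-1}$ rather than the stated $(-1)^{m+1}a_{m,j-1}$ (already visible at $m=1$, where $[\hz,(S,1)]$ is a two-element chain and $\mu=-1$). So the EGF idea is salvageable, but the particular product you wrote does not produce the required sums; as you yourself flag, pinning this identity down is where essentially all the work lies, and your proposal leaves it undone.
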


We will present two  different  proofs of Theorem \ref{bncn} both as consequences of   general results on  Rees products that we derive.
The first proof, which is given in  Section~\ref{treesec},
is based on the recursive definition of M\"obius function applied to the  Rees product of $B_n$ with a poset whose Hasse diagram is a  tree and the second proof, which is given in Section~\ref{elsec}, involves the theory of lexicographic shellability.  The first proof yields an $\mathfrak S_n$-equivariant
version (Theorem~\ref{bncnsn}) and a $q$-analog (Theorem~\ref{bncnq})  of Theorem \ref{bncn}.   By combining this $q$-analog with a $q$-analog of  the second proof, we obtain a new Mahonian permutation statistic whose joint distribution with $\des$ is equal to the joint distribution of $\maj$ and $\exc$.

   Let $P$ be any ranked and bounded poset of length $n$.  Note that the minimal elements of $P^- \ast C_n$ are of the form $(a,1)$ where $a$ is an atom of $P$, and the maximal elements of $P^- \ast C_n$ are of the form $(\hat 1_P, j)$ where $j \in [n]$.
For
$j \in [n]$, set $$
I_{j}(P):=\{x \in P^- \ast C_n: x< (1_P,j)\}.$$

    Suppose a  group $G$ acts on a  poset $P$ by order preserving bijections (we say that $P$ is a $G$-poset).  The group $G$ acts simplicially on $\Delta P$ and thus arises a linear representation of $G$ on each  homology group of $P$.    Now suppose $P$ is ranked of length $n$.  The given
action also determines an action of $G$  on $P \ast X$  for
any length $n$ ranked poset $X$  defined by $g(a,x)=(ga,x)$ for all
$a \in P$, $x \in X$ and $g \in G$.  For a bounded ranked $G$-poset $P$ of length $n$, the  action of $G$  on $P$ restricts to an action on $P^-$, which gives an action of $G$ on $P^- \ast C_n$.  This action restricts to an action of $G$ on each subposet $I_j(P)$.

Since $B_n^-$ and $C_n$ are Cohen-Macaulay, it follows from Theorem~\ref{bwrees} that
$B_n^- \ast C_n$ is a Cohen-Macaulay poset.  Hence by (\ref{eupon2}), Theorem~\ref{muintth} is equivalent to
\begin{equation} \label{hnin}
\dim \tilde H_{n-2}(I_{j}(B_n))=a_{n,j-1},
\end{equation}
 for each $j \in [n]$.
The symmetric group $\mathfrak S_n$ acts on $B_n$ in an obvious way and therefore on each $I_j(B_n)$.    We prove the following result in Section~\ref{treesec}.
\begin{thm} \label{bncnsn} For all $j=1,2,\dots,n$,  \begin{equation} \label{iq}
{\rm {ch}}(\rh_{n-2}(I_{j}(B_n) )\otimes {\rm {sgn}})=Q_{n,j-1}.
\end{equation}
\end{thm}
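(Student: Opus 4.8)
The plan is to compute the $\mathfrak S_n$-equivariant homology of $I_j(B_n)$ directly from the structure of the poset, via a combinatorial model for a basis of the top homology together with the $\mathfrak S_n$-action on it. First I would recall that $B_n^-\ast C_n$ is Cohen--Macaulay (Theorem~\ref{bwrees}), so each lower ideal $I_j(B_n)$, being an open principal order ideal, has homology concentrated in the top degree $n-2$; hence $\rh_{n-2}(I_j(B_n))$ carries a genuine $\mathfrak S_n$-representation and its dimension is $a_{n,j-1}$ by (\ref{hnin}) (equivalently Theorem~\ref{muintth}). The elements of $I_j(B_n)$ are pairs $(S,i)$ with $\emptyset\ne S\subseteq[n]$, $i\le\min(|S|,?)$; a maximal chain in the order complex corresponds to a saturated chain $\emptyset\lessdot S_1\lessdot S_2\lessdot\cdots\lessdot S_{n}=[n]$ in $B_n$ together with a choice, at each step, of whether the $C_n$-coordinate stays or increases, arranged so the final coordinate is $j$. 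The saturated chain in $B_n$ is the same data as a permutation $\sigma=\sigma_1\cdots\sigma_n$ (namely $S_k=\{\sigma_1,\dots,\sigma_k\}$), and a standard analysis of which ``stay/increase'' patterns are compatible with reaching $(\,[n],j)$ shows the admissible patterns are governed by the descent set of $\sigma$; this is exactly where the Eulerian number $a_{n,j-1}$ enters.

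The key technical step is to exhibit an explicit basis of $\rh_{n-2}(I_j(B_n))$ indexed by a set on which the $\mathfrak S_n$-action is transparent, and to identify the character. I would use the fact that a Cohen--Macaulay poset of the relevant length has its top homology spanned by (signed sums over) maximal chains, and set up an EL-labeling (or lexicographic shelling) of $I_j(B_n)^+$ so that the homology is spanned by the \emph{decreasing} maximal chains; an EL-labeling of $B_n$ pulls back to one of $I_j(B_n)$ just as Simion's labeling of $B_n(q)$ will be used for the $q$-analog in Section~\ref{elsec}. The decreasing chains will be indexed by permutations $\sigma$ with $\exc(\sigma)=j-1$ (this is the crux: the combinatorics of the labeling must convert the descent-type count above into an excedance count, which is precisely the content of Lemma~\ref{exdlem} relating $\Exd$, $\maj-\exc$ and $\des$), and the quasisymmetric function recording the $\mathfrak S_n$-module structure of the span of these chains, after twisting by the sign character, will be $\sum_{\exc(\sigma)=j-1}F_{\Exd(\sigma),n}=Q_{n,j-1}$. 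The sign twist appears because passing from the order complex of $I_j(B_n)$ to its homology, and the ``ch'' of a chain-indexed module built from a Boolean-algebra action, naturally produces $\omega$; tensoring with $\sgn$ undoes the $\omega$, matching Theorem~\ref{introhomol} in the form stated there (${\rm ch}\,\rh_{n-2}(I_j(B_n))=\omega\sum_k Q_{n,j-1,k}=\omega Q_{n,j-1}$), and (\ref{iq}) is the same statement after applying $\omega$ again.

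Alternatively, and this is the route I would actually carry out in detail, I would avoid the shelling and instead \emph{derive} Theorem~\ref{bncnsn} from the tree result Theorem~\ref{introtreeth} together with the promised ``general result relating the homology of lower order ideals of $P^-\ast C_n$ to the homology of the Rees product $P^\ast\ast T_{t,n}$'' (stated in the introduction as the engine behind Theorems~\ref{introhomol} and~\ref{intohomolq}). Concretely: apply that general principle with $P=B_n$ (self-dual), extract the coefficient of $t^j$ in the symmetric-function identity $\ch\tilde H_{n-2}((B_n\ast T_{t,n})^-)=\sum_{j,k}\omega Q_{n,j,k}\,t^{j+1}$, which isolates $\sum_k\omega Q_{n,j-1,k}=\omega Q_{n,j-1}$ as the equivariant homology of the ``slice'' of the tree Rees product corresponding to the $j$-th maximal element, i.e.\ of $I_j(B_n)$ up to the sign twist; then tensor with $\sgn$, i.e.\ apply $\omega$, to get $\rh_{n-2}(I_j(B_n))\otimes\sgn$ with Frobenius characteristic $Q_{n,j-1}$. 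The main obstacle is establishing this transfer principle cleanly in the equivariant setting — one must check that the $\mathfrak S_n$-action on $B_n$ is compatible with the bijection between chains in $I_j(B_n)$ and appropriate chains in the tree Rees product, and that the homology long exact sequences / Mayer--Vietoris arguments used to decompose $\tilde H_{n-2}((B_n\ast T_{t,n})^-)$ into the $I_j$-pieces are $\mathfrak S_n$-equivariant; once that is in place the identification with $Q_{n,j-1}$ is immediate from Theorem~\ref{introtreeth} and the definition of the Eulerian quasisymmetric functions.
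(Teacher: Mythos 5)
Your second, ``main'' route is exactly the paper's proof: combine the equivariant tree computation $\ch\,\rh_{n-2}((B_n\ast T_{t,n})^-)=t\sum_{j}\omega Q_{n,j}t^{j}$ (Theorem~\ref{treeeq123}, eq.~(\ref{treeeq3})) with a general equivariant transfer principle identifying $\bigoplus_j t^j\rh_{n-2}(I_j(P))$ with the top homology of $(P^\ast\ast T_{t,n})^-$ (the Equivariant Tree Lemma, Theorem~\ref{gtreelem}); then use self-duality of $B_n$, extract the coefficient of $t^j$, and tensor with $\sgn$ (apply $\omega$) to obtain $Q_{n,j-1}$. You correctly identify this transfer principle as the crux. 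The one place your sketch diverges from the paper's implementation is in how that lemma is proved: you propose long exact sequence/Mayer--Vietoris arguments, whereas the paper instead runs the (equivariant) M\"obius recursion --- Sundaram's identity~(\ref{equimob}) --- against the $G$-equivariant antiisomorphism $R_i(P)\to R_i(P^\ast)$ of Lemma~\ref{antii}, which is what cleanly interchanges the principal lower ideals of $P^-\ast C_n$ with slices of the dual Rees product with a tree. Your first, alternative route via an EL-labeling of $\wh{I_j(B_n)}$ cannot deliver the equivariant statement as sketched: a shelling gives a basis of $\rh_{n-2}(I_j(B_n))$ indexed by ascent-free maximal chains, but $\mathfrak S_n$ does not permute those chains (the labeling is not $\mathfrak S_n$-equivariant), so the character is not read off from the shelling. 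Accordingly the paper uses the shelling only for the non-equivariant dimension formula and its $q$-analog in Section~\ref{elsec}, not for Theorem~\ref{bncnsn}.
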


Combining this with
Theorem~\ref{toricth} yields,

\begin{cor} Let $X_n$ be the toric variety associated with the type A Coxeter complex.  For all $1\le j \le n$,
$$H^{2j}(X_n) \cong_{\mathfrak S_n} \tilde H_{n-2}(I_{j+1}(B_n)) \otimes \sgn,$$
where ${\rm {sgn}}$ is the one dimensional vector space on which $\mathfrak S_n$ acts according to the sign character.
\end{cor}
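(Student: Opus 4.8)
The plan is to deduce the corollary formally from the two Frobenius-characteristic computations already in hand: Theorem~\ref{toricth}, which gives $\ch H^{2j}(X_n) = Q_{n,j}$, and Theorem~\ref{bncnsn}, which gives $\ch(\tilde H_{n-2}(I_{j}(B_n)) \otimes \sgn) = Q_{n,j-1}$. So the corollary is really just a matter of composing these two identifications and invoking that the Frobenius characteristic determines a representation.

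First I would recall the standard fact that $\ch$ restricts to a bijection between the isomorphism classes of genuine (finite-dimensional, complex) $\mathfrak S_n$-representations and the Schur-positive homogeneous symmetric functions of degree $n$; in particular, two genuine $\mathfrak S_n$-modules with the same Frobenius characteristic are isomorphic. I would then note that both modules in the statement are genuine: $H^{2j}(X_n)$ is an honest cohomology representation, and since $B_n^- \ast C_n$ is Cohen--Macaulay (Theorem~\ref{bwrees}), the open lower order ideal $I_{j+1}(B_n)$ has reduced homology concentrated in degree $n-2$, so $\tilde H_{n-2}(I_{j+1}(B_n))$, and hence its tensor product with $\sgn$, is a genuine representation. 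Therefore it suffices to check that the two sides have equal Frobenius characteristic.

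Next I would apply Theorem~\ref{bncnsn} with its index $j$ replaced by $j+1$ (legitimate exactly when $1 \le j+1 \le n$, i.e. $0 \le j \le n-1$), obtaining $\ch(\tilde H_{n-2}(I_{j+1}(B_n)) \otimes \sgn) = Q_{n,(j+1)-1} = Q_{n,j}$. By Theorem~\ref{toricth}, $\ch H^{2j}(X_n) = Q_{n,j}$ over the same range. Since the two characteristics coincide, injectivity of $\ch$ on genuine representations yields $H^{2j}(X_n) \cong_{\mathfrak S_n} \tilde H_{n-2}(I_{j+1}(B_n)) \otimes \sgn$, as claimed.

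There is no real obstacle here: the corollary is a one-line consequence of the two cited theorems together with the fact that $\ch$ separates representations. The only subtlety worth flagging is the shift in the index, namely that $H^{2j}$ is matched with $I_{j+1}$ rather than $I_j$; this is dictated by the identity $Q_{n,j} = Q_{n,(j+1)-1}$ relating the outputs of Theorems~\ref{toricth} and~\ref{bncnsn}, and it pins down the natural range $0 \le j \le n-1$ for the stated isomorphism.
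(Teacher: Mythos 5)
Your argument is correct and is exactly the (implicit) proof the paper intends: compare Frobenius characteristics via Theorems~\ref{toricth} and~\ref{bncnsn}, note that both modules are genuine representations (Cohen--Macaulayness for the poset homology side), and conclude by injectivity of $\ch$ on genuine representations. You have also correctly diagnosed that the index range printed in the corollary, $1 \le j \le n$, should read $0 \le j \le n-1$ so that both cited theorems apply after the shift $j \mapsto j+1$.
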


 It would be interesting to find a topological explanation for this isomorphism, in particular one that extends the isomorphism to other Coxeter groups.

Our equivariant version of the Bj\"orner-Welker-Jonsson result  involves the multiset derangement enumerator $d_{n,j}({\bf x})$ discussed in Section~\ref{othersec}.
\begin{cor}\label{equibjwe}  For all $n\ge 1$,
$$\ch(\tilde H_{n-1}(B_n^- \ast C_n)) =\sum_{j=0}^{n-1} d_{n,j}({\bf x})=\omega \sum_{j= 0}^{n-1}  Q_{n,j,0}.$$
\end{cor}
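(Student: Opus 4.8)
The plan is to obtain the corollary from Theorem~\ref{introhomol} together with the recursion $Q_{n,j,k}=h_k\,Q_{n-k,j,0}$ of Corollary~\ref{dercor}, arguing by induction on $n$ about how the homology of $\bar P:=B_n^-\ast C_n$ is assembled from the homologies of its open principal lower order ideals $I_j(B_n)$. Since $B_n^-$ and $C_n$ are Cohen--Macaulay, $\bar P$ is Cohen--Macaulay by Theorem~\ref{bwrees}, so $\rh_i(\bar P)=0$ for $i\ne n-1$; likewise each $I_j(B_n)$ has reduced homology only in degree $n-2$, and so does $Q:=\bar P\setminus\{x_1,\dots,x_n\}$, the poset obtained by deleting the $n$ maximal elements $x_j=([n],j)$ --- which is $\mathfrak S_n$-equivariantly the Cohen--Macaulay Rees product $(B_n\setminus\{\emptyset,[n]\})\ast C_n$. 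The $\mathfrak S_n$-action fixes every $x_j$, hence preserves $Q$ and each $I_j(B_n)$.

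First I would produce a short exact sequence. Because $x_1,\dots,x_n$ are pairwise incomparable maximal elements, each chain of $\bar P$ meets $\{x_1,\dots,x_n\}$ in at most one point, and a chain through $x_j$ is $\{x_j\}\uplus\sigma$ with $\sigma$ a chain of $I_j(B_n)$ (possibly empty); hence $\Delta\bar P=\Delta Q\cup\bigcup_j\bigl(x_j\ast\Delta I_j(B_n)\bigr)$, the cone $x_j\ast\Delta I_j(B_n)$ meets $\Delta Q$ exactly in $\Delta I_j(B_n)$, and distinct cones meet inside $\Delta Q$. Collapsing $\Delta Q$ therefore identifies $\Delta\bar P/\Delta Q$ $\mathfrak S_n$-equivariantly with $\bigvee_{j=1}^n\Sigma\,\Delta I_j(B_n)$; plugging this into the long exact sequence of the pair $(\Delta\bar P,\Delta Q)$ and using $\rh_{n-1}(Q)=0=\rh_{n-2}(\bar P)$ gives the short exact sequence of $\mathfrak S_n$-modules
$$0\longrightarrow\rh_{n-1}(\bar P)\longrightarrow\bigoplus_{j=1}^n\rh_{n-2}(I_j(B_n))\longrightarrow\rh_{n-2}(Q)\longrightarrow 0 .$$
Applying $\ch$ and Theorem~\ref{introhomol}, $\ch\rh_{n-1}(\bar P)=\omega\sum_{j=0}^{n-1}Q_{n,j}-\ch\rh_{n-2}(Q)$.

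Next I would compute the correction term. Covering $Q=(B_n\setminus\{\emptyset,[n]\})\ast C_n$ by the subposets $J_a$ $(a\in[n])$ of all $(S,i)\in Q$ with $a\notin S$, one has $\bigcap_{a\in A}J_a\cong B_{n-|A|}^-\ast C_{n-|A|}$ $\mathfrak S_{[n]\setminus A}$-equivariantly (reading $B_0^-\ast C_0$ as the empty poset, order complex $\{\emptyset\}$, with $\rh_{-1}=\C$). As all these intersections are Cohen--Macaulay, their reduced homologies sit in complementary degrees, so the equivariant Mayer--Vietoris spectral sequence of the cover $\{\Delta J_a\}$ is concentrated on one diagonal and degenerates, giving $\ch\rh_{n-2}(Q)=\sum_{r=1}^n e_r\cdot\ch\rh_{\,(n-r)-1}(B_{n-r}^-\ast C_{n-r})$, the elementary symmetric function $e_r$ (rather than $h_r$) arising because the Mayer--Vietoris differential is alternating in the $r$ chosen indices. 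By the inductive hypothesis $\ch\rh_{\,(n-r)-1}(B_{n-r}^-\ast C_{n-r})=\omega\sum_jQ_{n-r,j,0}$, so by $\omega h_r=e_r$ and Corollary~\ref{dercor},
$$\ch\rh_{n-2}(Q)=\omega\sum_{r=1}^n h_r\sum_jQ_{n-r,j,0}=\omega\sum_{r\ge 1}\sum_jQ_{n,j,r}.$$
Substituting into the previous display and using $Q_{n,j}=\sum_{k\ge 0}Q_{n,j,k}$ yields $\ch\rh_{n-1}(\bar P)=\omega\sum_{j=0}^{n-1}Q_{n,j,0}$, the second asserted equality; the first, $\omega\sum_jQ_{n,j,0}=\sum_jd_{n,j}(\mathbf x)$, is $(\ref{derange1})$. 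The base case $n\le 1$ is immediate ($\bar P$ is a point and $Q_{1,0,0}=0$).

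The short exact sequence and the symmetric-function manipulations are routine. The step I expect to be the main obstacle is the computation of $\ch\rh_{n-2}(Q)$: one must run the equivariant Mayer--Vietoris argument with care --- in particular handling the empty face in the degenerate intersection $B_0^-\ast C_0$ and pinning down the sign twist --- so that exactly the fixed-point-free summands $Q_{n,j,0}$ survive. (This step is the equivariant upgrade of the recursive M\"obius-function computation that deduces the Bj\"orner--Welker--Jonsson formula from Theorem~\ref{muintth}, with Corollary~\ref{dercor} playing the role of the classical identity $d_n=\sum_m(-1)^m\binom nm(n-m)!$.)
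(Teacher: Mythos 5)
Your argument is correct, and it takes a genuinely different route from the paper's. The paper proves Corollary~\ref{equibjwe} by applying Sundaram's Lefschetz-character identity~(\ref{sund}) once to the Cohen--Macaulay poset $\wh{B_n^-\ast C_n}$, which produces the alternating-sum relation (\ref{sundeq2}) across all ranks; it then passes to generating functions in $z$, uses $E(z)H(-z)=1$ together with Theorem~\ref{introsymgenth}~(\ref{introsymgenth2}) at $t=1$, and matches against the Askey--Ismail identity~(\ref{macderang}) and (\ref{derange1}). You instead proceed topologically: you split $\Delta(B_n^-\ast C_n)$ as $\Delta Q$ glued to the cones $x_j\ast\Delta I_j(B_n)$, extract from the long exact sequence of the pair the short exact sequence $0\to\rh_{n-1}(\bar P)\to\bigoplus_j\rh_{n-2}(I_j(B_n))\to\rh_{n-2}(Q)\to 0$ (this is exactly what one also obtains by subtracting the Sundaram relation for $\wh{Q}$ from the one for $\wh{B_n^-\ast C_n}$, so it is consistent), and then compute the correction term $\ch\rh_{n-2}(Q)$ by an equivariant Mayer--Vietoris spectral sequence for the cover $\{J_a\}$, using induction on $n$ and the plethystic identity $Q_{n,j,k}=h_k Q_{n-k,j,0}$ of Corollary~\ref{dercor} in place of the generating-function manipulations. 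I checked the $n=2$ and $n=3$ cases of your formula $\ch\rh_{n-2}(Q)=\sum_{r\ge 1}e_r\,\ch\rh_{n-r-2}(B_{n-r}^-\ast C_{n-r})$ (including the $\sgn_{\sg_r}$ twist, which does give $e_r$ rather than $h_r$, and the degenerate $r=n$ term where $\rh_{-1}(\{\emptyset\})=\C$), and they come out right. What the paper's route buys is a closed, one-shot computation needing no induction and no equivariant spectral sequence, at the cost of invoking the full generating-function identity; what yours buys is a more transparent topological picture of where the fixed-point-free summands $Q_{n,j,0}$ come from, replacing the analytic identity by the combinatorial recursion of Corollary~\ref{dercor}. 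The step you rightly flag as the main technical obstacle --- the equivariant MV spectral sequence on $Q$, concentrated on the antidiagonal $p+q=n-2$ and hence degenerating, with the alternating twist producing $e_r$ --- is correct, but if you wanted a shortcut you could instead apply Sundaram's identity~(\ref{sund}) directly to $\wh{Q}$ and to $\wh{B_n^-\ast C_n}$ and subtract, which gives the same short exact sequence without running the Čech double complex.
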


\begin{proof}  We use the following
result of Sundaram \cite{sun} (see \cite[Theorem 4.4.1]{w1}):
 If $G$ acts on a bounded poset $P$ of length $n$  then we have the virtual $G$-module isomorphism,
\begin{equation}\label{sund}  \bigoplus_{r=0}^n (-1)^{r} \bigoplus_{x \in P/G} \tilde H_{r-2}((\hat 0,x)) \uparrow_{G_x}^G \cong_G 0,\end{equation}
where  $P/G$ denotes a complete set of orbit representatives, $G_x$ denotes the stabilizer of $x$, and $\uparrow_{G_x}^G$
denotes the induction of the $G_x$ module  from $G_x$ to $G$.
   Here $H_{r-2}((\hat 0,x))$ is the trivial representation of $G_x$ if  the rank of $x$ is  $r = 0,1$.   Applying this to the Cohen-Macaulay $\sg_n$-poset  $\wh{B_n^- * C_n}$, we have
$$\tilde H_{n-1}(B_n^- * C_n) \cong_{\sg_n}\bigoplus_{m=0}^n (-1)^{n-m} \bigoplus_{j=1}^{m}  \left(\tilde H_{m-2}(I_j(B_m)) \otimes 1_{\sg_{n-m}}\right)\uparrow_{\sg_m\times\sg_{n-m}}^{\sg_n},$$
where $1_G$ denotes the trivial representation of a group $G$.  From this we obtain
\begin{equation} \label{sundeq2} \ch \tilde H_{n-1}(B_n^- * C_n)= \sum_{m=0}^n (-1)^{n-m} \sum_{j=1}^{m} \ch \tilde H_{m-2}(I_j(B_m)) h_{n-m}
\end{equation}
Hence
\begin{eqnarray*} \sum_{n \ge 0} \ch \tilde H_{n-1}(B_n^- \ast C_n)\, z^n &=& H(-z)\sum_{n\ge 0}z^n\sum_{j =1}^n \ch \tilde H_{n-2}(I_j(B_n))\\ &=& H(-z) \sum_{n \ge 0  }  z^n \sum_{j = 0}^{n-1}\omega(Q_{n,j} ).\end{eqnarray*}
Recall that $E(z) = \sum_{n\ge 0} e_n z^n$.  By the  fact that $E(z) H(-z) =1$, and (\ref{introsymgenth2}), we thus have
$$ \sum_{n \ge 0} \ch \tilde H_{n-1}(B_n^- \ast C_n)\, z^n= {1\over 1-\sum_{n\ge 2}  (n-1) e_n z^n}.$$ The result now follows from (\ref{introsymgenth2}) and  (\ref{macderang}).\end{proof}

Let $B_n(q)$ be the lattice of subspaces of the $n$-dimensional
vector space $\ff_q^n$ over the finite field $\ff_q$.   Then
$B_n(q)$ is bounded and ranked of length $n$, with the rank of a
subspace equaling its dimension.   Like $B_n$,  the $q$-analog
$B_n(q)$ is Cohen-Macaulay and therefore $B_n(q)^- \ast C_n$ is
Cohen-Macaulay.  Hence  $I_{j}(B_n(q))$ has vanishing homology
below its top dimension $n-2$. In  Section~\ref{treesec} we  prove
the following result.
\begin{thm} \label{bncnq}
For all $j=1,2,\dots,n$,
\begin{equation} \label{bncnqgen}
\dim \rh_{n-2}(I_{j}(B_n(q)))=
 \sum_{\scriptsize\begin{array}{c} \s \in\sg_n \\ \exc(\s) = j-1 \end{array}} q^{\comaj(\s)+j-1}. \end{equation}
\end{thm}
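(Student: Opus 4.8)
The plan is to deduce the theorem from the tree formula already recorded in the introduction. By Theorem~\ref{introtreeth},
$$\dim\rh_{n-2}\bigl((B_n(q)\ast\ttn)^-\bigr)=t\,q^{\binom n2}A^{\maj,\exc}_n(q^{-1},qt),$$
and the idea is to combine this with a general transfer result relating the lower order ideals of $P^-\ast C_n$ to the Rees product $(P^\ast\ast\ttn)^-$. As recorded just above the statement, $B_n(q)^-\ast C_n$ is Cohen--Macaulay, so each $I_j(B_n(q))$ has reduced homology concentrated in degree $n-2$. The transfer result I want is the identity of polynomials in $t$
$$\sum_{j=1}^{n}\dim\rh_{n-2}(I_j(P))\,t^j=\dim\rh_{n-2}\bigl((P^\ast\ast\ttn)^-\bigr),$$
valid for every bounded, ranked, Cohen--Macaulay poset $P$ of length $n$, where $P^\ast$ denotes the dual of $P$. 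Since the subspace lattice is self-dual, $(B_n(q))^\ast\cong B_n(q)$, so applying the transfer result with $P=B_n(q)$ and inserting the tree formula gives
$$\sum_{j=1}^{n}\dim\rh_{n-2}(I_j(B_n(q)))\,t^j=t\,q^{\binom n2}\sum_{\s\in\sg_n}q^{-\maj(\s)}(qt)^{\exc(\s)}=\sum_{\s\in\sg_n}q^{\comaj(\s)+\exc(\s)}\,t^{\exc(\s)+1},$$
where I used $\comaj(\s)=\binom n2-\maj(\s)$. Collecting the terms with $\exc(\s)=j-1$ and comparing coefficients of $t^j$ yields precisely \eqref{bncnqgen}. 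This last step is routine, so the real content is the transfer result (the tree formula I am entitled to assume, as Theorem~\ref{introtreeth} appears in the excerpt).

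For the transfer result I would induct on $n$, computing with M\"obius functions: all posets involved are Cohen--Macaulay, so by \eqref{eupon} the dimension of top homology equals the absolute value of the M\"obius invariant of the poset obtained by adjoining $\hz$ and $\ho$, and that invariant unwinds recursively. The crucial structure is the self-similarity of $P^\ast\ast\ttn$: writing $\rho$ for the root of $\ttn$ and $c_1,\dots,c_t$ for its children, the set $\{(p,\rho):p\in P^\ast\}$ is an isomorphic copy of $P^\ast$ at the bottom, while hanging above it are $t$ pieces, one over each $c_i$, each isomorphic (after shifting ranks upward by one) to a Rees product of $(P^\ast)^-$ with $T_{t,n-1}$. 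Peeling off the root via the recursive definition of $\mu$ expresses $\dim\rh_{n-2}((P^\ast\ast\ttn)^-)$ in terms of (i) the bottom copy of $P^\ast$, which---using $P^{\ast\ast}=P$ and reading the chain of tree-ranks above it as a copy of $C_n$---reassembles the homology of the ideals $I_j(P)$, and (ii) the $t$ identical subtree pieces, handled by the inductive hypothesis. The $t$-fold branching of the tree is exactly what turns the single family $\{I_j(P)\}_j$ into a polynomial in $t$; tracking one factor of $t$ per branch shows that the $t^j$-coefficient is $\dim\rh_{n-2}(I_j(P))$. (An equivalent route builds an explicit acyclic matching on $\Delta((P^\ast\ast\ttn)^-)$ whose critical cells split, for each $j$, into a copy of $\rh_{n-2}$ of an $I_j$-type ideal; that variant also supplies the equivariant refinement behind Theorem~\ref{bncnsn}.) I expect this bookkeeping to be the main obstacle: keeping the induction honest through the rank shift and the removal of the minimum, and checking that the $t$ subtree contributions do not cancel against one another---which is where Cohen--Macaulayness of every poset in sight is indispensable.

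Finally, although Theorem~\ref{introtreeth} may be quoted as stated, I record how its $q$-part is obtained, since the present proof rests on it. A second use of the same self-similarity, now internal to $B_n(q)\ast\ttn$ (fix a coatom of $B_n(q)$, and use that the interval above a $k$-dimensional subspace of $\ff_q^n$ is isomorphic to $B_{n-k}(q)$), yields for $m_n(q,t):=\dim\rh_{n-2}((B_n(q)\ast\ttn)^-)$ a recurrence
$$m_n(q,t)=t\,q^{\binom n2}+\sum_{k=0}^{n-2}\left[\begin{array}{c}n\\k\end{array}\right]_q m_k(q,t)\,t\,q^{\binom{n-k}2}[n-k-1]_t,\qquad m_0(q,t)=t.$$
This is exactly the recurrence satisfied by $t\,q^{\binom n2}A^{\maj,\exc}_n(q^{-1},qt)$, which follows from Theorem~\ref{expgenth} (equivalently, from the $r=1$ specialization of \eqref{altrecrel} under the stable principal specialization, after the substitution $q\mapsto q^{-1}$ and multiplication by $t\,q^{\binom n2}$). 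Matching the two recurrences and their initial data establishes Theorem~\ref{introtreeth}, and the chain of reductions in the first paragraph then completes the proof.
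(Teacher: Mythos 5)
Your first paragraph reproduces exactly the paper's reduction: apply the Tree Lemma (Theorem~\ref{tree}) with $P=B_n(q)$, use that $B_n(q)$ is self-dual and Cohen--Macaulay, insert the tree formula~(\ref{treeeq2}), and extract the coefficient of $t^j$. The gap is in your sketch of the transfer identity (Tree Lemma) itself. The paper proves it not by induction on $n$ but by a direct M\"obius computation: one expands $\mu((P^\ast\ast\ttn)^+)=-\sum_{\alpha}\mu_{P^\ast\ast\ttn}((\hat 1_P,\hat 0_T),\alpha)$ and groups by the tree-rank $j$ of $\alpha$. The rank-$0$ layer (your ``bottom copy of $P^\ast$'') contributes zero; at rank $j$ there are $t^j$ tree elements, each contributing the same amount, namely $\sum_{(a,x_j)\in R_j(P^\ast)}\mu_{R_j(P^\ast)}((\hat 1_P,x_0),(a,x_j))$, where $R_j(P^\ast)$ is the closed lower order ideal of $P^\ast\ast\{x_0<\cdots<x_n\}$ generated by $(\hat 1_P,x_j)$. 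The decisive step---the one you gloss as ``reassembles the homology of the ideals $I_j(P)$''---is identifying this sum with $\mu(\wh{I_j(P)})$. The paper achieves it via the antiisomorphism $\psi_j\colon R_j(P)\to R_j(P^\ast)$ given by $(a,x_i)\mapsto(a,x_{j-i})$ (Lemma~\ref{antii}), which reverses the ideal and undoes the dualization, followed by the M\"obius bookkeeping of Corollary~\ref{sumcor}. Your sketch offers no replacement for this antiisomorphism, which is the real content of the Tree Lemma.

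Your inductive framing also has a structural defect: after peeling off the root of $\ttn$, the subposet lying above each child is (after a rank shift) a Rees product with $(P^\ast)^-$, which has no minimum element, so your transfer identity---quantified over \emph{bounded} ranked posets of length $n$---cannot be invoked on it, and it is not clear how to set up a working inductive hypothesis. The paper's direct, non-inductive argument sidesteps this entirely. (Your last paragraph's recurrence for $m_n(q,t)$ is in fact correct, but it is not what the paper uses to prove~(\ref{treeeq2}); the paper derives instead the uniform-poset M\"obius recurrence of Proposition~\ref{propuniform}, which takes the form $\sum_{k=0}^n W_k(B_n(q))[k+1]_t\,\mu_{n-k}(q,t)=-1$. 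Since you explicitly quote Theorem~\ref{introtreeth} as given, this is a side remark rather than a gap.)
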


\begin{cor} \label{bncnqcor} For all $n\ge 0$, let $\mathcal D_n$ be the set of derangements in $\sg_n$.  Then
\begin{equation*} \dim \tilde H_{n-1}(B_n(q)^- \ast C_n)= \sum_{\sigma \in \mathcal D_n} q^{\comaj(\s) + \exc(\s)}.\end{equation*}
\end{cor}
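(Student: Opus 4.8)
The plan is to deduce Corollary~\ref{bncnqcor} from Theorem~\ref{bncnq} exactly as the Bj\"orner--Welker--Jonsson identity~(\ref{jj}) is deduced from Theorem~\ref{bncn}, and as Corollary~\ref{equibjwe} is deduced from Theorem~\ref{bncnsn}, with subspace counting over $\ff_q$ playing the role of the $\sg_n$-action. Since $B_n(q)$ and $C_n$ are Cohen--Macaulay, Theorem~\ref{bwrees} makes $B_n(q)^-\ast C_n$ Cohen--Macaulay, so its reduced homology is concentrated in degree $n-1$, and by~(\ref{eupon}) one has $\dim\tilde H_{n-1}(B_n(q)^-\ast C_n)=(-1)^{n-1}\mu(\wh{B_n(q)^-\ast C_n})$. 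Thus it suffices to evaluate this M\"obius invariant.

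To evaluate it I would run the recursive definition of $\mu$ on $\wh{B_n(q)^-\ast C_n}$. Its elements above $\hz$ are the pairs $(U,j)$ with $U$ a nonzero subspace of $\ff_q^n$, and for such a pair with $\dim U=m$ the closed interval $[\hz,(U,j)]$ is isomorphic to $\wh{I_j(B_m(q))}$: after identifying $U$ with $\ff_q^m$, the pairs lying strictly below $(U,j)$ are precisely those forming $I_j(B_m(q))$ inside $\wh{B_m(q)^-\ast C_m}$. Since $\ff_q^n$ has $\left[\begin{array}{c}n\\m\end{array}\right]_q$ subspaces of dimension $m$, and since~(\ref{eupon2}) and the Cohen--Macaulayness of each $I_j(B_m(q))$ convert each interval M\"obius value into a signed top homology dimension, collecting terms in the recursion should give
\begin{equation*}
\dim\tilde H_{n-1}(B_n(q)^-\ast C_n)=\sum_{m=0}^{n}(-1)^{n-m}\left[\begin{array}{c}n\\m\end{array}\right]_q\;\sum_{j=1}^{m}\dim\tilde H_{m-2}(I_j(B_m(q))),
\end{equation*}
the $m=0$ summand read as $1$. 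This is the $q$-analogue of~(\ref{sundeq2}), with Sundaram's equivariant formula~(\ref{sund}) replaced by the plain M\"obius recursion.

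Next I would substitute Theorem~\ref{bncnq}. Whenever $\exc(\sigma)=j-1$ one has $q^{\comaj(\sigma)+j-1}=q^{\comaj(\sigma)+\exc(\sigma)}$, and as $j$ runs over $1,\dots,m$ the classes $\{\sigma\in\sg_m:\exc(\sigma)=j-1\}$ partition $\sg_m$, so $\sum_{j=1}^m\dim\tilde H_{m-2}(I_j(B_m(q)))=\sum_{\sigma\in\sg_m}q^{\comaj(\sigma)+\exc(\sigma)}=A_m^{\comaj,\exc}(q,q)$. Reindexing by $k=n-m$ then turns the displayed formula into
\begin{equation*}
\dim\tilde H_{n-1}(B_n(q)^-\ast C_n)=\sum_{k=0}^{n}(-1)^{k}\left[\begin{array}{c}n\\k\end{array}\right]_q A_{n-k}^{\comaj,\exc}(q,q),
\end{equation*}
which is exactly the right-hand side of the derangement formula in Corollary~\ref{coexcderang} evaluated at $t=q$. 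Hence $\dim\tilde H_{n-1}(B_n(q)^-\ast C_n)=\sum_{\sigma\in\mathcal D_n}q^{\comaj(\sigma)+\exc(\sigma)}$, which is the assertion.

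The step I expect to be the main obstacle is establishing the first displayed identity, the $q$-analogue of~(\ref{sundeq2}): one must check that every principal interval $[\hz,(U,j)]$ of $\wh{B_n(q)^-\ast C_n}$ really is a copy of some $\wh{I_j(B_m(q))}$, so that no intervals of a different combinatorial type intervene in the recursion, and one must follow the signs coming from the M\"obius recursion and from~(\ref{eupon}) and~(\ref{eupon2}) carefully enough to land on the stated formula with the correct overall sign. After that, the corollary follows formally from Theorem~\ref{bncnq} and Corollary~\ref{coexcderang}.
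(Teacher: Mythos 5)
Your proposal matches the paper's proof exactly: both use Cohen--Macaulayness of $B_n(q)^-\ast C_n$ together with the M\"obius recursion (exploiting that every lower interval $[\hz,(U,j)]$ with $\dim U = m$ is a copy of $\wh{I_j(B_m(q))}$) to reach the alternating-sum formula, then substitute Theorem~\ref{bncnq} and invoke Corollary~\ref{coexcderang} at $t=q$. The detail you flag as the ``main obstacle'' -- verifying the interval isomorphism and tracking signs -- is precisely what the paper compresses into the phrase ``the M\"obius function recurrence is equivalent to,'' so your expansion of that step is a faithful unpacking of the same argument rather than a different route.
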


\begin{proof}   Since $B_n(q)^- \ast C_n$ is
Cohen-Macaulay and the number of $m$-dimensional subspaces of $\F_q^n$ is  $ \left[\begin{array}{c} n \\m\end{array}\right]_q $, the M\"obius function recurrence for $(B_n(q)^- \ast C_n) \cup \{\hat 0, \hat 1\}$ is equivalent to 
$$ \dim \tilde H_{n-1}(B_n(q)^- \ast C_n) = \sum_{m=0}^n  \left[\begin{array}{c} n \\m\end{array}\right]_q(-1)^{n-m} \sum_{j=1}^m \dim \tilde H_{m-2}(I_j(B_m(q))).$$
It therefore follows from Theorem~\ref{bncnq} that
$$\dim \tilde H_{n-1}(B_n(q)^- \ast C_n) = \sum_{m=0}^n  \left[\begin{array}{c} n \\m\end{array}\right]_q(-1)^{n-m} \sum_{\sigma \in \mathfrak S_m} q^{\comaj(\s) + \exc(\s)}.$$
The result thus follows from Corollary~\ref{coexcderang} \end{proof}

It follows from   Theorems~\ref{bncnsn} and~\ref{bncnq} that the symmetry properties of $Q_{n,j}$ and $a_{n,j}(q,1)$ given in Theorems~\ref{symth3} and \ref{desmajsym}, respectively, are both consequences of the following general result. We suspect that there is a general Rees product result which implies the unimodality properties as well.

 \begin{prop} Let $G$ be a group and let $P$ be a ranked and bounded $G$-poset of length $n$.  Then for all $j \in [n]$ we have the following isomorphism of $G$-posets,
 $$I_j(P) \cong_{G} I_{n-j+1}(P).$$
 \end{prop}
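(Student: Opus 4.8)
The plan is to produce a single $G$-equivariant poset automorphism $\Phi$ of the Rees product $P^{-}\ast C_n$ that interchanges the elements $(\hat 1_P,j)$ and $(\hat 1_P,n+1-j)$; restricting $\Phi$ to the corresponding open principal lower ideals then yields $I_j(P)\cong_G I_{n-j+1}(P)$ immediately, since a poset automorphism carries the set of elements strictly below $x$ onto the set of elements strictly below $\Phi(x)$. Throughout, write $r$ for the rank function of $P$, normalized so that $r(\hat 0_P)=0$ and $r(\hat 1_P)=n$. In the notation of Section~\ref{rep}, $P^{-}\ast C_n$ is the set of pairs $(p,i)$ with $p\in P\setminus\{\hat 0_P\}$ and $1\le i\le r(p)$, ordered by declaring $(p_1,i_1)\le(p_2,i_2)$ precisely when $p_1\le_P p_2$, $i_1\le i_2$, and $r(p_2)-r(p_1)\ge i_2-i_1$.

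Define
\[
\Phi(p,i):=\bigl(p,\ r(p)+1-i\bigr).
\]
First I would check that $\Phi$ maps $P^{-}\ast C_n$ to itself and is an involution: from $1\le i\le r(p)$ one gets $1\le r(p)+1-i\le r(p)$, so $(p,r(p)+1-i)\in P^{-}\ast C_n$, and $\Phi(\Phi(p,i))=(p,i)$ is clear. In particular $\Phi$ is a bijection. Note also that $\Phi(\hat 1_P,j)=(\hat 1_P,r(\hat 1_P)+1-j)=(\hat 1_P,n+1-j)$, which lies in $C_n$ for every $j\in[n]$.

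The one genuine computation, and the step I expect to be the crux, is that $\Phi$ preserves the order relation; it amounts to routine bookkeeping with the three Rees inequalities, and the only place one can slip is in pinning down the conventions (the rank function on $C_n$ and the direction of the third inequality) compatibly with the setup of Section~\ref{rep}. Given $(p_1,i_1)$ and $(p_2,i_2)$, put $i_k':=r(p_k)+1-i_k$. The three conditions defining $(p_1,i_1')\le(p_2,i_2')$ are: $p_1\le_P p_2$; $i_1'\le i_2'$, which rearranges to $r(p_2)-r(p_1)\ge i_2-i_1$; and $r(p_2)-r(p_1)\ge i_2'-i_1'$, which, using $i_2'-i_1'=r(p_2)-r(p_1)+i_1-i_2$, rearranges to $i_1\le i_2$. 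Hence $(p_1,i_1')\le(p_2,i_2')$ holds if and only if $p_1\le_P p_2$, $i_1\le i_2$, and $r(p_2)-r(p_1)\ge i_2-i_1$, i.e.\ if and only if $(p_1,i_1)\le(p_2,i_2)$. Since $\Phi$ is a bijection satisfying this biconditional, it is a poset automorphism of $P^{-}\ast C_n$.

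Finally I would assemble the conclusion. Since $G$ acts on $P$ by poset automorphisms, it fixes $\hat 0_P$ and $\hat 1_P$ and preserves $r$, and it acts on $P^{-}\ast C_n$ by $g\cdot(p,i)=(gp,i)$; therefore
\[
\Phi\bigl(g\cdot(p,i)\bigr)=\bigl(gp,\ r(gp)+1-i\bigr)=\bigl(gp,\ r(p)+1-i\bigr)=g\cdot\Phi(p,i),
\]
so $\Phi$ is $G$-equivariant. As $\Phi$ is an automorphism sending $(\hat 1_P,j)$ to $(\hat 1_P,n+1-j)$, it carries $\{x\in P^{-}\ast C_n:x<(\hat 1_P,j)\}$ bijectively and order-isomorphically onto $\{x:x<(\hat 1_P,n+1-j)\}$; that is, $\Phi$ restricts to a $G$-equivariant poset isomorphism $I_j(P)\to I_{n-j+1}(P)$, which is the proposition. (In fact $\Phi$ gives slightly more: the ideals $I_j(P)$ and $I_{n+1-j}(P)$ are visibly images of one another under a single automorphism of the ambient poset $P^{-}\ast C_n$.)
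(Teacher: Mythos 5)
Your proof is correct and uses essentially the same map as the paper: the paper directly defines $f:I_j(P)\to I_{n-j+1}(P)$ by $f(x,i)=(x,r_P(x)+1-i)$ and states that it is a well-defined $G$-equivariant poset isomorphism, which is precisely the restriction of your ambient involution $\Phi$. Your extra observation that $\Phi$ is an automorphism of all of $P^-\ast C_n$ sending $(\hat 1_P,j)$ to $(\hat 1_P,n+1-j)$ is a clean way to package the same computation, but it is not a different argument.
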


 \begin{proof} Let $f: I_j(P)  \to  I_{n-j+1}(P)$ be the map defined by
$ f(x,i) = (x,r_P(x)+1-i)$.  It is straightforward  to check that this is  a well-defined poset isomorphism, which
commutes with the action of $G$.
 \end{proof}

\section{Rees products with trees} \label{treesec}

 For $n,t \in \PP$, let $\ttn$ be the poset whose Hasse diagram is
a complete $t$-ary tree of height $n$, with the root at the bottom.  By {\em complete}  we mean that every nonleaf node has exactly $t$ children and that all the leaves are distance $n$ from the root. The following result, which  is  interesting in its own right, will be  used to prove the results of Section~\ref{rep}.

\begin{thm}\label{treeeq123} For all $n,t\ge 1$ we have
\begin{eqnarray}\label{treeeq1}\dim \tilde H_{n-2} ((B_n * \ttn)^-) &=& tA_n(t)\\
\label{treeeq2}\dim \tilde H_{n-2} ((B_n(q) * \ttn)^-) &=&    t A^{\comaj,\exc}_n(q,qt) \\
\label{treeeq3} \ch \tilde H_{n-2} ((B_n * \ttn)^-) &=& t \sum_{j=0}^{n-1} \omega Q_{n,j}t^j.\end{eqnarray}
\end{thm}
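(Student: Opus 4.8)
The plan is to prove the three identities together by induction on $n$, after reducing them to computations of reduced Euler characteristics. The lattices $B_n$ and $B_n(q)$ are Cohen--Macaulay, and so is the tree poset $T_{t,n}$: every closed interval of $T_{t,n}$ is a chain, every open principal lower order ideal is a chain, and every open principal upper order ideal has a minimum, hence a conical order complex. So Theorem~\ref{bwrees} shows that $B_n*T_{t,n}$ and $B_n(q)*T_{t,n}$ are Cohen--Macaulay, and since the link of the bottom element in the order complex of a Cohen--Macaulay poset is again Cohen--Macaulay, $(B_n*T_{t,n})^-$ and $(B_n(q)*T_{t,n})^-$ have reduced homology concentrated in the single degree $n-2$. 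Thus to prove (\ref{treeeq3}) it suffices to compute the virtual $\S_n$-module $\sum_r(-1)^r\tilde H_r((B_n*T_{t,n})^-)$, and to prove (\ref{treeeq1}) and (\ref{treeeq2}) it suffices to compute $\mu(\widehat{(B_n*T_{t,n})^-})$ and $\mu(\widehat{(B_n(q)*T_{t,n})^-})$ --- all of which I would obtain recursively.

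The recursion should exploit the self-similarity of $T_{t,n}$. Write $T_{t,n}$ as a root $\rho$ carrying $t$ pendant copies $T^{(1)},\dots,T^{(t)}$ of $T_{t,n-1}$. The ``branch pieces'' $P^{(c)}:=\{(S,v)\in(B_n*T_{t,n})^-:v\in\{\rho\}\cup T^{(c)}\}$ cover $(B_n*T_{t,n})^-$, and every multiple intersection of the $P^{(c)}$ is the subposet $\{(S,\rho):S\neq\emptyset\}\cong B_n^-$, whose order complex is a cone; a Mayer--Vietoris argument then gives, $\S_n$-equivariantly, $\tilde H_*((B_n*T_{t,n})^-)\cong\bigoplus_{c=1}^t\tilde H_*(P^{(c)})$, which by symmetry of the branches is $t$ copies of $\tilde H_*(P^{(1)})$. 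Next I would use that above an atom $\{i\}$ the interval in $B_n$ is $\cong B_{n-1}$ (above a line in $B_n(q)$ it is $\cong B_{n-1}(q)$) and that the Rees depth constraint truncates the tree factor by one level, so that the closed upper order ideal of an atom $(\{i\},\rho)$ of $(B_n*T_{t,n})^-$ is isomorphic to the contractible poset $B_{n-1}*T_{t,n-1}$; peeling the contractible layer $B_n^-$ off $P^{(1)}$ and iterating should express $\tilde H_*((B_n*T_{t,n})^-)$ in terms of the $\tilde H_*((B_m*T_{t,m})^-)$ for $m<n$. Carried out equivariantly --- Sundaram's identity (\ref{sund}), applied to the relevant bounded Cohen--Macaulay $\S$-posets, is the clean device for converting the Cohen--Macaulay vanishing into a recurrence of virtual modules --- this should yield, for $R_n:=\ch\tilde H_{n-2}((B_n*T_{t,n})^-)$ (with the convention $R_0:=t$, which emerges from the $t$-fold branching at the deepest recursive step),
$$R_n \;=\; t\,e_n \;+\; \sum_{m=2}^{n} t\,[m-1]_t\,e_m\,R_{n-m},$$
equivalently $\sum_{n\ge0}R_nz^n=tE(z)\big/\bigl(1-\sum_{m\ge2}t[m-1]_te_mz^m\bigr)$. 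The same decomposition for $B_n(q)$, with flags of subspaces of $\F_q^n$ in place of flags of subsets, gives the parallel recurrence for $d_n(q):=\dim\tilde H_{n-2}((B_n(q)*T_{t,n})^-)$, now with a Gaussian binomial $\left[{n\atop m}\right]_q$ counting the relevant subspaces and a power of $q$ recording the corank data along each flag.

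To finish I would match these recurrences to the stated right-hand sides. Applying $\omega$ to (\ref{introsymgenth2}) and setting $r=1$ gives $\sum_{n,j}(\omega Q_{n,j})t^jz^n=E(z)\big/\bigl(1-\sum_{m\ge2}t[m-1]_te_mz^m\bigr)$ (equivalently, $t$ times this is the generating function of the $r=1$, $\omega$-image of the recurrence (\ref{altrecrel})), so the recurrence above forces $R_n=t\sum_{j=0}^{n-1}\omega Q_{n,j}t^j$, which is (\ref{treeeq3}). Extracting the coefficient of $x_1x_2\cdots x_n$ from both sides of (\ref{treeeq3}) --- it equals $\dim\tilde H_{n-2}((B_n*T_{t,n})^-)$ on the left and $t\sum_j\#\{\sigma\in\S_n:\exc(\sigma)=j\}\,t^j=tA_n(t)$ on the right --- gives (\ref{treeeq1}). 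Finally, the $q$-recurrence has generating function equal to the principal specialization of the same functional equation; by Theorem~\ref{expgenth} (equivalently Corollary~\ref{expgenthfix}) this is $\sum_n tA_n^{\comaj,\exc}(q,qt)z^n$, provided one identifies the corank statistic of a maximal flag in $B_n(q)$ with $\comaj(\sigma)+\exc(\sigma)$ --- for which Lemmas~\ref{desspec} and~\ref{exdlem} are the tools --- and this proves (\ref{treeeq2}); setting $q=1$ there reproves (\ref{treeeq1}) as well, since $A_n^{\comaj,\exc}(1,t)=A_n(t)$.

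The main obstacle is the recursion itself: distilling it cleanly from the poset while correctly carrying the $\S_n$-action (for (\ref{treeeq3})) and the $q$-weight (for (\ref{treeeq2})). The equivariant bookkeeping is the delicate point --- one must recognize that reassembling the recursive pieces along the tree corresponds to inductions from Young subgroups, so that the answer has the \emph{product}/plethystic shape of (\ref{introsymgenth2}) rather than being a bare direct sum of homologies. On the $q$-side, recognizing the flag corank statistic as $\comaj+\exc$ is exactly what couples the subspace-lattice computation to the permutation-statistic formula of Theorem~\ref{expgenth}. Once the recurrences are in hand, the comparison with Theorem~\ref{introsymgenth} and Corollary~\ref{expgenthfix} is routine.
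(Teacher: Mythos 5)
Your Mayer--Vietoris observation --- that $(B_n * T_{t,n})^-$ is covered by the $t$ branch pieces $P^{(c)}$, all of whose multiple intersections equal the contractible ideal $\{(S,\rho):S\neq\emptyset\}\cong B_n^-$, so that reduced homology splits $\S_n$-equivariantly into $t$ copies of $\tilde H_*(P^{(1)})$ --- is correct and is a genuinely different opening move from the paper's. The paper never decomposes along branches. Instead it isolates the notion of a \emph{uniform} poset (all upper principal intervals from a fixed rank isomorphic), proves that $R:=(B_n*T_{t,n})^+$ is uniform with $[y,\hat 1_R]\cong(B_{n-k}*T_{t,n-k})^+$ for every rank-$k$ element $y$, and applies the M\"obius recursion $\sum_{y\in R}\mu_R(y,\hat 1_R)=0$ (equivariantly, Sundaram's identity on the dual) directly to $R$. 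Since $W_k(R)=W_k(B_n)[k+1]_t$, this gives in one stroke the recurrence $\sum_{k=0}^n[k+1]_t\,h_k\,L_{n-k}(t)=-h_n$, with no branch decomposition and no peeling. That uniformity mechanism is what your proposal is missing.

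The genuine gap is the step you yourself flag as the ``main obstacle'': you never derive the recurrence $R_n=te_n+\sum_{m\ge 2}t[m-1]_t e_m R_{n-m}$ from the poset. ``Peeling the contractible layer $B_n^-$ off $P^{(1)}$ and iterating'' does not by itself produce the coefficients $t[m-1]_t e_m$: the complementary upper order ideal of $P^{(1)}$ is a Rees product $B_n^-*T_{t,n-1}$ with no unique minimum, and its interaction with the peeled layer is not a clean splitting. Nor is your recurrence what Sundaram's identity yields when applied to $(B_n*T_{t,n})^+$ --- that gives the paper's $h$-shaped recurrence displayed above, not your $e$-shaped one, which is in fact the $\omega$-image of (\ref{altrecrel}) at $r=1$, i.e.\ a restatement of the target rather than an output of the topology. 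The $q$-step has a secondary gap: the paper's $q$-recurrence carries only the Gaussian binomial $\bigl[\begin{smallmatrix}n\\k\end{smallmatrix}\bigr]_q$ counting subspaces (no additional $q$-power), and $\comaj+\exc$ emerges only after solving the resulting generating-function identity via Corollary~\ref{comajcor}; your proposed ``flag corank'' heuristic, with Lemmas~\ref{desspec} and~\ref{exdlem} as the tools, is not the right device (those lemmas specialize fundamental quasisymmetric functions, they do not weight flags in $B_n(q)$). The closing generating-function comparison (applying $\omega$ to (\ref{introsymgenth2}) at $r=1$, and extracting the coefficient of $x_1\cdots x_n$ to deduce (\ref{treeeq1}) from (\ref{treeeq3})) is correct and matches the paper.
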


\begin{cor}  For all $n\ge 1$ we have
\begin{eqnarray*} \dim \tilde H_{n-2} ((B_n * C_{n+1})^-) &=&n!\\
\dim  \tilde H_{n-2} ((B_n(q) * C_{n+1})^-) &=&   \sum_{\sigma \in \mathfrak S_n } q^{\comaj(\sigma) +\exc(\sigma)} \\
 \ch \tilde H_{n-2} ((B_n *C_{n+1})^-) &=&\omega \sum_{j=0}^{n-1}  Q_{n,j}.\end{eqnarray*}
\end{cor}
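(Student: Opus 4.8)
The plan is to deduce this corollary directly from Theorem~\ref{treeeq123} by setting $t=1$. The one observation needed is that $C_{n+1}$ coincides with the tree $T_{1,n}$: the complete $1$-ary tree of height $n$ with root at the bottom has a single leaf, every non-root node has a unique parent, and the longest root-to-leaf path has $n+1$ vertices, so its Hasse diagram is exactly that of the chain $\{1<2<\dots<n+1\}$. Hence $(B_n \ast C_{n+1})^- = (B_n \ast T_{1,n})^-$ and $(B_n(q) \ast C_{n+1})^- = (B_n(q) \ast T_{1,n})^-$, and all three quantities in the corollary are obtained from the corresponding left-hand sides of (\ref{treeeq1})--(\ref{treeeq3}) by taking $t=1$.

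Next I would substitute $t=1$ into the right-hand sides. For the homology dimension of $(B_n \ast C_{n+1})^-$, Theorem~\ref{treeeq123} gives $tA_n(t)|_{t=1} = A_n(1)$, and $A_n(1) = \sum_{\sigma \in \mathfrak S_n} 1^{\des(\sigma)} = n!$ from the definition of the Eulerian polynomial. For the $q$-analog, the right-hand side of (\ref{treeeq2}) at $t=1$ is $A_n^{\comaj,\exc}(q,q\cdot 1) = A_n^{\comaj,\exc}(q,q) = \sum_{\sigma \in \mathfrak S_n} q^{\comaj(\sigma)} q^{\exc(\sigma)}$, which is exactly $\sum_{\sigma \in \mathfrak S_n} q^{\comaj(\sigma)+\exc(\sigma)}$. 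For the Frobenius characteristic, the right-hand side of (\ref{treeeq3}) at $t=1$ is $\sum_{j=0}^{n-1} \omega Q_{n,j}\, 1^j = \omega\sum_{j=0}^{n-1} Q_{n,j}$, using linearity of $\omega$.

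There is essentially no obstacle here: every nontrivial ingredient is already packaged in Theorem~\ref{treeeq123}, and this corollary is a pure specialization of it. The only step that deserves a line of justification in the write-up is the identification $C_{n+1} = T_{1,n}$, which is immediate from the definition of $T_{t,n}$ recalled just before Theorem~\ref{treeeq123}.
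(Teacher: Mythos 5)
Your proof is correct and follows exactly the approach the paper intends: the corollary is stated immediately after Theorem~\ref{treeeq123} with no separate proof, precisely because it is the $t=1$ specialization once one notes $T_{1,n}=C_{n+1}$, which is the identification you make. The evaluations $A_n(1)=n!$, $A_n^{\comaj,\exc}(q,q)=\sum_\sigma q^{\comaj(\sigma)+\exc(\sigma)}$, and $\sum_j \omega Q_{n,j}\cdot 1^j=\omega\sum_j Q_{n,j}$ are all carried out correctly.
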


\subsection{Uniform posets}
 Our goal in  this subsection is to prove Theorem~\ref{treeeq123}.

A bounded ranked poset $P$ is said to be {\em uniform} if $[x,\hat 1_P] \cong [y,\hat 1_P]$ whenever $r_P(x) = r_P(y)$ (see \cite[Exercise 3.50]{st5}).  We will say that a sequence of posets $(P_0, P_1,  \dots, P_n)$ is uniform if for each $k = 0,1, \dots,n$, the poset
$P_k$ is uniform of  length $k$ and
$$  P_k \cong [x,\hat 1_{P_n}]$$ for  each $x \in P_n$ of rank $n-k$.    The sequences $(B_0,\dots,B_n)$ and
$(B_0(q),\dots,B_n(q))$ are examples of uniform sequences as are the sequences of set partition lattices $(\Pi_0,\dots, \Pi_n)$ and the sequence of face lattices  of  cross polytopes $(\wh {PCP_0},\dots,\wh {PCP_n})$.

The following result is easy to verify.
\begin{prop}  \label{uniformtree} Suppose $P$ is a uniform poset of length $n$. Then for all $t \in {\mathbb P}$,   the poset $R:=(P \ast T_{t,n})^+$ is uniform of length $n+1$.
Moreover, if $x \in P$ and $y \in R$ with $r_P(x) = r_R(y) =k$ then
 $$[y,\hat 1_R ] \cong ([x,\hat 1_P] \ast T_{t,n-k})^+.$$
\end{prop}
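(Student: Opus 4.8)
The plan is to extract everything from the \textbf{moreover} clause: if we show that for $y\in R$ with $r_R(y)=k\le n$ the upper interval $[y,\hat 1_R]$ is isomorphic to $\big([x,\hat 1_P]\ast T_{t,n-k}\big)^+$ for any $x\in P$ of rank $k$, then the right-hand side depends only on $k$, so all upper intervals above rank-$k$ elements agree and $R$ is uniform. Thus the whole task is an explicit description of the upper intervals of $R$. To set up, recall that the rank function of a Rees product $A\ast B$ is $(a,b)\mapsto r_A(a)$ (immediate from the covering description in the definition). Since $P$ has length $n$, the poset $P\ast\ttn$ is ranked of length $n$ with minimum $(\hat 0_P,\hat 0_{\ttn})$, and a pair $(p,v)$ is maximal in it precisely when $p=\hat 1_P$ and $v$ is a leaf of $\ttn$, so all maximal elements have rank $n$. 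Hence $R=(P\ast\ttn)^+$ is bounded and ranked of length $n+1$, with $r_R(\hat 1_R)=n+1$; and an element $y$ of $R$ of rank $k\le n$ has the form $y=(p,v)$ with $r_P(p)=k$, and we set $j:=r_{\ttn}(v)$, so $0\le j\le k$.

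The core step is to identify $[y,\hat 1_R]$ for $y=(p,v)$ as above. Deleting the top, $[y,\hat 1_R]\setminus\{\hat 1_R\}$ consists of the pairs $(p',v')$ with $p\le_P p'$, $v\le_{\ttn}v'$, and $r_P(p')-r_P(p)\ge r_{\ttn}(v')-r_{\ttn}(v)$, i.e.\ the $P$-coordinate rises at least as fast as the tree coordinate. Here $p'$ ranges over the interval $[p,\hat 1_P]$, which is ranked of length $n-k$ with rank function $r_P(\cdot)-r_P(p)$ (covers in the interval are covers in $P$), and $v'$ ranges over the set of elements $\ge v$ in $\ttn$, i.e.\ the subtree consisting of $v$ and its descendants; since $\ttn$ is a complete $t$-ary tree of height $n$, this subtree is a complete $t$-ary tree of height $n-j$, a copy of $T_{t,n-j}$ with depth function $r_{\ttn}(\cdot)-r_{\ttn}(v)$. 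Under the identity map on pairs, the membership inequality above becomes exactly the condition $r_{[p,\hat 1_P]}(p')\ge r_{T_{t,n-j}}(v')$ defining $[p,\hat 1_P]\ast T_{t,n-j}$, and the order conditions of $P\ast\ttn$ translate verbatim into those of $[p,\hat 1_P]\ast T_{t,n-j}$, because subtracting the fixed base ranks $r_P(p)$ and $r_{\ttn}(v)$ leaves every rank \emph{difference} unchanged. Hence $[y,\hat 1_R]\setminus\{\hat 1_R\}\cong[p,\hat 1_P]\ast T_{t,n-j}$, and since $\hat 1_R$ sits above this subposet and covers exactly its maximal elements, $[y,\hat 1_R]\cong\big([p,\hat 1_P]\ast T_{t,n-j}\big)^+$.

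It remains to replace $T_{t,n-j}$ by $T_{t,n-k}$ and to invoke the uniformity of $P$. When $j<k$ the tree $T_{t,n-j}$ is taller than $[p,\hat 1_P]$, but a Rees product $A\ast B$ only involves the elements of $B$ of rank at most the length of $A$, and its underlying set and order depend only on that truncation of $B$; truncating the complete $t$-ary tree $T_{t,n-j}$ at rank $n-k\ (\le n-j)$ discards only the top levels and returns the complete $t$-ary tree $T_{t,n-k}$, so $[p,\hat 1_P]\ast T_{t,n-j}=[p,\hat 1_P]\ast T_{t,n-k}$. By uniformity of $P$ we have $[p,\hat 1_P]\cong[x,\hat 1_P]$ whenever $r_P(x)=k$, so $[y,\hat 1_R]\cong\big([x,\hat 1_P]\ast T_{t,n-k}\big)^+$, as claimed; as this is independent of $y$, the poset $R$ is uniform of length $n+1$. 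I expect no serious obstacle: the only points requiring care are the rank-function bookkeeping in the interval identification (keeping straight ambient ranks versus ranks inside an interval or a subtree) and remembering the truncation observation in the case $j<k$, where the subtree above $v$ is strictly taller than $[p,\hat 1_P]$.
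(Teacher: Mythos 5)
Your proof is correct. The paper itself gives no argument for Proposition \ref{uniformtree} (it is stated with the remark ``The following result is easy to verify''), so there is no proof to compare against; your write-up simply supplies the omitted verification, and it is sound. The two points that actually need care are handled properly: (i) the interval identification $[y,\hat 1_R]\setminus\{\hat 1_R\}\cong [p,\hat 1_P]\ast T_{t,n-j}$ works because the Rees-product order depends only on rank differences, which are invariant under shifting both base ranks, and because the containment $(p',v')\ge(p,v)$ automatically forces $r_P(p')\ge r_T(v')$ so no extra constraint is lost; and (ii) the truncation observation $[p,\hat 1_P]\ast T_{t,n-j}=[p,\hat 1_P]\ast T_{t,n-k}$ when $j<k$, since only the bottom $n-k+1$ levels of the tree can appear paired with an interval of length $n-k$, and for a complete $t$-ary tree that truncation is again a complete $t$-ary tree. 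One small remark: the order relation as printed in the paper has the inequality reversed relative to the covering-relation description immediately following it; you used the covering-consistent version, which is the intended one.
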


\begin{prop}\label{propuniform} Let $(P_0, P_1,  \dots, P_n)$ be a uniform sequence of  posets.
Then for all $t \in {\mathbb P}$,
\begin{equation} \label{uniformrec}  1+\sum_{k=0}^n W_k(P_n) [k+1]_t \mu((P_{n-k}*T_{t,n-k})^+) =0,\end{equation} where    $W_k(P)$ is the number of elements of rank $k$ in $P$.
\end{prop}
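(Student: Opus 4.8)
The plan is to read off the identity (\ref{uniformrec}) from the defining recurrence of the M\"obius function, applied to the bounded poset $R := (P_n \ast T_{t,n})^+$. First I would note that $R$ is bounded: its maximum is the adjoined top element $\hat 1_R$, and its minimum is the pair $(\hat 0_{P_n}, \hat 0_{T_{t,n}})$, since $\hat 0_{P_n}$ and the root $\hat 0_{T_{t,n}}$ both have rank $0$, so this pair lies in the Rees product and sits weakly below every element of $P_n \ast T_{t,n}$. As $\hat 0_R < \hat 1_R$, the recurrence $\sum_{x \le y \le \hat 1_R}\mu_R(y,\hat 1_R) = 0$ (valid for any $x < \hat 1_R$), applied with $x = \hat 0_R$, gives
$$\sum_{y \in R}\mu_R(y,\hat 1_R) = 0.$$

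Next I would evaluate the terms. The term $y = \hat 1_R$ contributes $\mu_R(\hat 1_R,\hat 1_R) = 1$. Every other $y$ lies in $P_n \ast T_{t,n}$, which is ranked with $r_{P_n \ast T_{t,n}}(a,b) = r_{P_n}(a)$ (as is clear from the description of the cover relations in a Rees product), so such a $y$ has a well-defined rank $k \in \{0,1,\dots,n\}$. Because $(P_0,\dots,P_n)$ is a uniform sequence, $P_n$ is uniform and $[x,\hat 1_{P_n}] \cong P_{n-k}$ for every $x \in P_n$ of rank $k$; hence Proposition~\ref{uniformtree} applies and shows that $[y,\hat 1_R] \cong (P_{n-k}\ast T_{t,n-k})^+$ for every $y \in R$ of rank $k$. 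Therefore $\mu_R(y,\hat 1_R) = \mu\big((P_{n-k}\ast T_{t,n-k})^+\big)$, a quantity that depends only on $k$.

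It then remains to count the number $N_k$ of rank-$k$ elements of $R$, equivalently of $P_n \ast T_{t,n}$. Such an element is a pair $(a,b)$ with $r_{P_n}(a) = k$ and $r_{T_{t,n}}(b) \le k$: there are $W_k(P_n)$ choices for $a$, and since the complete $t$-ary tree $T_{t,n}$ has exactly $t^i$ elements at distance $i$ from the root for $0 \le i \le n$, there are $\sum_{i=0}^{k} t^i = [k+1]_t$ choices for $b$ (using $k \le n$). Thus $N_k = W_k(P_n)[k+1]_t$, and substituting into the displayed identity produces exactly
$$1 + \sum_{k=0}^{n} W_k(P_n)[k+1]_t\,\mu\big((P_{n-k}\ast T_{t,n-k})^+\big) = 0,$$
which is (\ref{uniformrec}). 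The only step requiring genuine care is the appeal to Proposition~\ref{uniformtree}: one must verify its hypotheses in the present setting, namely that $P_n$ is uniform and that its rank-$k$ principal upper order ideal is (isomorphic to) $P_{n-k}$ — both of which are part of the definition of a uniform sequence — after which the argument is just bookkeeping with ranks and the tree's level sizes. I do not anticipate any further obstacle.
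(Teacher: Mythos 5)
Your proof is correct and follows essentially the same route as the paper: identify $R = (P_n \ast T_{t,n})^+$, apply the recursive definition of the M\"obius function in the form $\sum_{y\in R}\mu_R(y,\hat 1_R)=0$ (i.e., applied to the dual of $R$), use Proposition~\ref{uniformtree} together with the uniform-sequence hypothesis to show $\mu_R(y,\hat 1_R)$ depends only on the rank $k$ of $y$ and equals $\mu((P_{n-k}*T_{t,n-k})^+)$, and count rank-$k$ elements of $R$ as $W_k(P_n)[k+1]_t$. The only difference is that you spell out the bookkeeping the paper leaves implicit.
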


\begin{proof}
Let $R:= (P_n \ast T_{t,n})^+$ and  let $y$ have rank $k$ in $R$. By Proposition~\ref{uniformtree}, $$\mu_R(y, \hat 1_R) =\mu((P_{n-k}*T_{t,n-k})^+) . $$
Clearly $$W_k(R) = W_k(P_n) [k+1]_t$$  for all $0\le k \le n$. Hence (\ref{uniformrec}) is just the  recursive
definition of the M\"obius function applied to the dual of $R$.
\end{proof}

 To prove (\ref{treeeq1}) either take dimension in (\ref{treeeq3})  or  set $q=1$ in the proof of (\ref{treeeq2}) below.

 \begin{proof}[Proof of (\ref{treeeq2})]     We  apply Proposition~\ref{propuniform} to the uniform sequence $(B_0(q)$, $B_1(q)$, \dots, $B_n(q))$. The number of $k$-dimensional subspaces of $\F_q^n$ is given by  $$W_k(B_n(q))= \left[\begin{array}{c} n
 \\ k\end{array}\right]_q.$$  Write $\mu_n(q,t)$ for $\mu((B_n(q) \ast \ttn)^+)$. Hence by Proposition~\ref{propuniform},
  \begin{equation} \label{recbn2q}
\sum_{k=0}^n  \left[\begin{array}{c} n \\k\end{array}\right]_q [k+1]_t\mu_{n-k}(q,t)=-1.
\end{equation}

Setting
\[
F_{q,t}(z):=\sum_{j \geq 0}\mu_j(q,t)\frac{z^j}{[j]_q!}
\]
and
\[
G_{q,t}(z):=\sum_{k \geq 0}[k+1]_t\frac{z^k}{[k]_q!},
\]
we derive from (\ref{recbn2q}) that
\begin{equation} \label{gbnq}
F_{q,t}(z)=-\exp_q(z)G_{q,t}(z)^{-1}.
\end{equation}
If we assume $t >1$ we have
\begin{eqnarray*}
G_{q,t}(z) & = & \frac{1}{1-t}\sum_{k \geq 0}(1-t^{k+1})
\frac{z^k}{[k]_q!}
\\ & = & \frac{\exp_q(z)-t\exp_q(tz)}{1-t}.
\end{eqnarray*}
We calculate that
$$F_{q,t}(-z) = -(1-t) - t  \frac{(1-t)\exp_q(-tz)}{\exp_q(-z)-t\exp_q(-tz)}.$$
Using the fact that $\exp_q(-z) \Exp_q(z) = 1$, we have
$$F_{q,t}(-z) = -(1-t) - t  \frac{(1-t)\Exp_q(z)}{\Exp_q(tz)-t\Exp_q(z)}.$$
It now follows from 
Corallary~\ref{comajcor} that for all $n \ge 1$ and $t >1$,
\begin{equation}\label{mueq}\mu_n(q,t) = (-1)^{n-1} t \sum_{\sigma \in \mathfrak S_n} q^{\comaj(\sigma) + \exc(\sigma)} t^{\exc(\sigma)}.\end{equation}

One can see from (\ref{recbn2q}) and induction that  $\mu_n(q,t)$ is a polynomial in $t$.
Hence since (\ref{mueq}) holds for infinitely many integers $t$, it
holds as an identity of polynomials, which implies that it holds for $t=1$.

By Theorem~\ref{bwrees},  the poset $(B_n(q)*\ttn)^-$ is Cohen-Macaulay.  Hence (\ref{treeeq2}) holds.
  \end{proof}

  We say that a bounded ranked $G$-poset $P$ is $G$-uniform  if the following holds,
\begin{itemize}
\item  $P$ is uniform
\item $G_x \cong G_y$ for all $x,y \in P$ such that $r_P(x) = r_P(y)$
\item there is an isomorphism between $[x, \hat 1_P]$ and $[y, \hat 1_P]$ that intertwines the  actions of $G_x$ and $G_y$ for all $x,y \in P$ such that $r_P(x) = r_P(y)$.  We will write
$$ [x, \hat 1_P] \cong_{G_x,G_y}[y, \hat 1_P].$$
\end{itemize}
Given a sequence of groups $G=(G_0, G_1, \dots, G_n)$.  We say that a sequence of posets
$(P_0, P_1, \dots, P_n)$ is $G$-uniform if
\begin{itemize}
\item  $P_k$ is $G_k$-uniform of length $k$ for each $k$
\item  $G_k \cong (G_n)_x$ and $P_k \cong_{G_k,(G_n)_x} [x,\hat 1_{P_n}]$  whenever $r_{P_n}(x) = n-k$.
\end{itemize}
For example, the sequence $(B_0,\dots,B_n)$ is  $(\mathfrak S_0,\dots,\mathfrak S_n)$-uniform.

The following proposition is easy to verify.
\begin{prop}[Equivariant version of  Propsition \ref{uniformtree}] \label {guniformtree} Suppose $P$ is a $G$-uniform poset of length $n$. Then for all $t \in {\mathbb P}$,  $R:=(P \ast T_{t,n})^+$ is $G$-uniform of length $n+1$.
Moreover, if $x \in P$ and $y \in R$ with $r_P(x) = r_R(y) =k$ then
 $$[y,\hat 1_R ] \cong_{G_y,G_x} ([x,\hat 1_P] \ast T_{t,n-k})^+.$$
\end{prop}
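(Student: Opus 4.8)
The plan is to bootstrap from the non-equivariant Proposition~\ref{uniformtree} by observing that the $G$-action on a Rees product of the shape $P \ast T_{t,n}$ only moves the $P$-coordinate, so every isomorphism produced in the proof of Proposition~\ref{uniformtree} that is ``the identity on the tree coordinate'' is automatically equivariant.

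First I would fix the action of $G$ on $R = (P \ast T_{t,n})^+$: set $g \cdot (p,c) = (gp, c)$ for $(p,c) \in P \ast T_{t,n}$ and $g \cdot \hat 1_R = \hat 1_R$. Since each $g$ acts on $P$ by an order- and rank-preserving bijection and leaves the $T_{t,n}$-coordinate fixed, it is an automorphism of $R$; hence $R$ is a $G$-poset, and by Proposition~\ref{uniformtree} it is uniform of length $n+1$. Next comes the stabilizer bookkeeping. Every $y \in R$ of rank $k \le n$ is of the form $y = (p,c)$ with $r_P(p) = k$, and because $g$ fixes the $T_{t,n}$-coordinate we have $G_y = G_p$; the unique element of rank $n+1$ is $\hat 1_R$, with $G_{\hat 1_R} = G$. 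Since $P$ is $G$-uniform, $G_p \cong G_x$ for every $x \in P$ with $r_P(x) = k$, which supplies the isomorphism of stabilizers demanded by the definition of $G$-uniformity (both the ``$G_x \cong G_y$'' clause internal to $R$ and, for the sequence version, the recursive ``$G_k \cong (G_n)_x$'' clause).

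It remains to upgrade the interval isomorphism of Proposition~\ref{uniformtree} to an equivariant one. For $y = (p,c)$ of rank $k \le n$, I would inspect the isomorphism $\Phi \colon [y, \hat 1_R] \to ([p, \hat 1_P] \ast T_{t,n-k})^+$ produced there (taking the auxiliary parameter to be $p$ itself) and note that it is the identity on the tree coordinate and sends $\hat 1_R$ to the adjoined maximum; hence it intertwines the two copies of the $G_p$-action $g \cdot (p', d) = (gp', d)$. On the other hand, $G$-uniformity of $P$ gives a $G_x$--$G_p$ equivariant poset isomorphism $[x, \hat 1_P] \to [p, \hat 1_P]$; being an isomorphism of ranked posets it is rank-preserving, so taking its Rees product with the identity on $T_{t,n-k}$ and then adjoining maxima yields a $G_x$--$G_p$ equivariant isomorphism $([x, \hat 1_P] \ast T_{t,n-k})^+ \to ([p, \hat 1_P] \ast T_{t,n-k})^+$. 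Composing with $\Phi$ gives $[y, \hat 1_R] \cong_{G_y, G_x} ([x, \hat 1_P] \ast T_{t,n-k})^+$, the asserted statement, and composing two such maps gives $[y, \hat 1_R] \cong_{G_y, G_{y'}} [y', \hat 1_R]$ for any $y, y'$ of equal rank; the rank-$(n{+}1)$ case ($y = \hat 1_R$) is vacuous since no $x \in P$ has rank $n+1$. The only step that is not purely formal — and the reason this is ``easy to verify'' rather than automatic — is checking that the isomorphism of Proposition~\ref{uniformtree} genuinely leaves the tree coordinate untouched, so that equivariance is inherited for free; everything else is bookkeeping.
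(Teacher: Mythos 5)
The paper offers no written proof of this proposition, stating only that it is ``easy to verify,'' so there is no argument to compare against; your proposal supplies the natural one and is essentially correct. One recurring phrase should be fixed, though: you twice assert that the isomorphism $\Phi\colon [y,\hat 1_R]\to([p,\hat 1_P]\ast T_{t,n-k})^+$ from Proposition~\ref{uniformtree} ``is the identity on the tree coordinate'' / ``leaves the tree coordinate untouched.'' This cannot be literally true, since the tree coordinate lives in $T_{t,n}$ on the source and in the different tree $T_{t,n-k}$ on the target. What $\Phi$ actually leaves untouched is the \emph{$P$-coordinate}: it has the form $(p',c')\mapsto(p',\phi(c'))$, where $\phi$ is a fixed tree isomorphism from the first $n-k$ levels above $c$ in $T_{t,n}$ to $T_{t,n-k}$, crucially independent of $p'$. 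That $\Phi$ fixes the $P$-coordinate and moves the tree coordinate in a $p'$-independent way is exactly what makes it intertwine the two $G_p$-actions, since those actions live entirely on the $P$-coordinate. Your very next sentence about intertwining reads correctly once this is corrected, so the slip is terminological rather than a gap in the argument; the remaining bookkeeping (identifying $G_{(p,c)}=G_p$, composing with the $G_x$--$G_p$ equivariant isomorphism $[x,\hat 1_P]\cong[p,\hat 1_P]$, and taking the Rees product with the identity on $T_{t,n-k}$) is sound.
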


If $(P_0, P_1,  \dots, P_n)$ is a $(G_0,G_1,\dots,G_n)$-uniform sequence of  posets, we can view
$G_k $ as a subgroup of $G_{n}$ for each $k=0,\dots,n$. For $G$-uniform poset $P$, let $ W_k(P;G)$ be the number of $G$-orbits of  the  rank $k$ elements of $P$.
 The Lefschetz character of  a $G$-poset
$P$ of length $n\ge 0$  is defined to be the virtual representation $$L(P;G) := \bigoplus_{j=0}^{n} (-1)^j \tilde H_j(P).$$
Note that by (\ref{eupon}) the dimension of the Lefschetz character $L(P;G)$  is precisely $\mu(\hat P)$.

\begin{prop}[Equivariant verision of Proposition \ref{propuniform}] \label{gpropuniform} Let $(P_0, P_1,  \dots, P_n)$ be a $(G_0,G_1,\dots,G_n)$-uniform sequence of  posets.
Then for all $t \in {\mathbb P}$,
\begin{equation} \label{guniformrec}  1_{G_n}\oplus \bigoplus_{k=0}^n W_k(P_n;G_n) [k+1]_t L((P_{n-k}*T_{t,n-k})^-;G_{n-k}) \uparrow^{G_n}_{G_{n-k}}=0.\end{equation} \end{prop}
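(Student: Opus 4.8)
The plan is to run the proof of Proposition~\ref{propuniform} almost verbatim, with the ordinary recursive definition of the M\"obius function replaced by its equivariant, Lefschetz–character version. As there, I would set $R:=(P_n\ast\ttn)^+$; by Proposition~\ref{guniformtree} this is a $G_n$‑uniform poset of length $n+1$, bounded, with minimum $(\hat 0_{P_n},\mathrm{root})$ and with maximum $\hat 1_R$ the adjoined top, and $G_n$ acts on it through its action on $P_n$, trivially on the tree $\ttn$.

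The tool I would invoke is the equivariant analogue of the M\"obius recursion: for every bounded $G$‑poset $Q$ of length at least $1$ there is a virtual $G$‑module identity
\[
\bigoplus_{y\in Q/G}\,L\bigl((y,\hat 1_Q);\,G_y\bigr)\uparrow^{G}_{G_y}\;=\;0,
\]
with the usual degenerate conventions $L\bigl((\hat 1_Q,\hat 1_Q);G\bigr)=1_G$ and $L\bigl((y,\hat 1_Q);G_y\bigr)=-1_{G_y}$ for $y$ a coatom. This is the Lefschetz–character form of Sundaram's identity~(\ref{sund}) \cite{sun}: one derives it from~(\ref{sund}) by induction on the length of $Q$, applying~(\ref{sund}) to the closed intervals $[y,\hat 1_Q]$, and on dimensions it is exactly the relation $\sum_{y\in Q}\mu_Q(y,\hat 1_Q)=0$ that drives the proof of Proposition~\ref{propuniform}. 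Applying this identity with $Q=R$ and grouping the sum by the rank $k=r_R(y)$, the unique element $y=\hat 1_R$, of rank $n+1$, contributes the term $1_{G_n}$, and it remains to read off the contribution of the rank‑$k$ elements for $0\le k\le n$, which are precisely the elements $y=(p,v)$ of $P_n\ast\ttn$ with $r_{P_n}(p)=k$.

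For such a $y$, Proposition~\ref{guniformtree} provides an isomorphism $[y,\hat 1_R]\cong\bigl([p,\hat 1_{P_n}]\ast T_{t,n-k}\bigr)^+$ carrying the $(G_n)_y$‑action to that on the right‑hand side; here $(G_n)_y=(G_n)_p$ because $G_n$ fixes $\ttn$ pointwise. The definition of a $G$‑uniform sequence then identifies $[p,\hat 1_{P_n}]$ equivariantly with $P_{n-k}$ under $(G_n)_p\leftrightarrow G_{n-k}$. Deleting the minimum $y$ and the maximum $\hat 1_R$ gives, compatibly with these actions,
\[
(y,\hat 1_R)\;\cong\;(P_{n-k}\ast T_{t,n-k})^-,
\]
so $L\bigl((y,\hat 1_R);(G_n)_y\bigr)$ is identified with $L\bigl((P_{n-k}\ast T_{t,n-k})^-;\,G_{n-k}\bigr)$. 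Finally, because $G_n$ acts trivially on $\ttn$, the $G_n$‑orbits of rank‑$k$ elements of $R$ are the pairs consisting of a $G_n$‑orbit of rank‑$k$ elements of $P_n$ together with an element $v\in\ttn$ with $r_{\ttn}(v)\le k$; since $\ttn$ has $t^j$ elements of rank $j$, these contribute, graded by $r_{\ttn}(v)$, with total multiplicity $W_k(P_n;G_n)\,[k+1]_t$. Substituting the interval identification and this multiplicity into the displayed identity for $Q=R$ yields~(\ref{guniformrec}).

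The step I expect to be the crux is the first one: making available the Lefschetz–character form of the equivariant M\"obius recursion — equivalently, checking that Sundaram's identity~(\ref{sund}) upgrades from top homology to full Lefschetz characters, so that no Cohen--Macaulay hypothesis on $P_{n-k}\ast T_{t,n-k}$ is required — together with the routine but essential verification that the poset isomorphisms furnished by Proposition~\ref{guniformtree} and by the $G$‑uniform structure are compatible with the relevant stabiliser actions, so that the induction symbols $\uparrow^{G_n}_{G_{n-k}}$ in~(\ref{guniformrec}) are correctly interpreted. All remaining bookkeeping is the same as in the proof of Proposition~\ref{propuniform}, now carried out one $G_n$‑orbit at a time.
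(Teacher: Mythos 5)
Your proposal is correct and follows essentially the same route as the paper: apply the equivariant M\"obius recursion~(\ref{equimob}) (the Lefschetz--character form of Sundaram's identity~(\ref{sund}), obtained by passing to the dual) to $R=(P_n\ast\ttn)^+$, identify the open intervals $(y,\hat 1_R)$ with $(P_{n-k}\ast T_{t,n-k})^-$ equivariantly via Proposition~\ref{guniformtree} and the $G$-uniform structure, and count orbits using $W_k(R;G_n)=W_k(P_n;G_n)[k+1]_t$. The paper derives~(\ref{equimob}) directly by applying~(\ref{sund}) to the dual poset rather than by the induction on length you sketch, but the resulting identity and the rest of the argument are the same.
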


\begin{proof}
Sundaram's equation (\ref{sund}) applied to the dual of a $G$-poset $P$ is equivalent to the following equivariant version of the recursive definition of the M\"obius function:
\begin{equation} \label{equimob} \bigoplus_{y \in P/G} L((y,\hat 1_P);G_y) \uparrow_{G_y}^G = 0, \end{equation}
where $L((y,\hat 1_P);G_y)$ is the trivial representation if $y=\hat 1_P$ and is the negative of the trivial representation if $ y$ is covered by $\hat 1_P$.  We apply (\ref{equimob}) to the $G_n$-uniform poset $R:= (P_n \ast T_{t,n})^+$.   Let $y$ have rank $k$ in $R$.
It follows from Proposition~\ref{guniformtree} that
$$L((y, \hat 1_R) ;(G_{n})_{y})\uparrow_{(G_{n})_{y}}^{G_n}\cong L((P_{n-k}*T_{t,n-k})^- ;G_{n-k})\uparrow_{G_{n-k}}^{G_n}.$$
Clearly, $$W_k(R;G_n) =  W_k(P_n;G_n) [k+1]_t $$ for all $ k $.  Thus  (\ref{guniformrec}) follows from (\ref{equimob}).
\end{proof}

   \begin{proof}[Proof of (\ref{treeeq3})] Now we apply Proposition~\ref{gpropuniform} to the  $(\mathfrak S_0, \mathfrak S_1,\dots, \mathfrak S_n)$-uniform sequence $(B_0, B_1, \dots, B_n)$. Let  $$L_n(t):= \ch \,L((B_{n}*T_{t,n})^-;\mathfrak S_{n}).$$ Clearly $W_k(B_n;\mathfrak S_n) = 1$.  Therefore by Proposition~\ref{gpropuniform},  \begin{equation} \label{grecbn2}
\sum_{k=0}^n  [k+1]_t h_k L_{n-k}(t)=-h_n.
\end{equation}

 Setting
\[
F_t(z):=\sum_{j \geq 0}L_j(t){z^j}
\]
and
\[
G_t(z):=\sum_{k \geq 0}[k+1]_t h_k {z^k}
\]
we derive from (\ref{grecbn2}) that
\begin{equation} \label{gbn}
F_t(z)G_t(z) =- H(z).
\end{equation}
Now if $t >1$,
\begin{eqnarray*}
G_t(z) & = & \frac{1}{1-t}\sum_{k \geq 0}(1-t^{k+1}) h_k
{z^k}
\\ & = & \frac{H(z)-tH(tz)}{1-t},
\end{eqnarray*}
and we thus have
\begin{equation} \label{gqbn}
F_t(z)=-\frac{(1-t)H(z)}{H(z)-tH(tz)}.
\end{equation}
We calculate that
\begin{equation} \label{gqp}
F_t(-z)=-(1-t) - t\frac{(1-t)H(-tz)}{H(-z) - t H(-tz)}.
\end{equation}
 Using the fact that $H(-z)E(z) = 1$ we have
$$F_t(-z)=-(1-t) - t\frac{(1-t)E(z)}{E(tz)-tE(z)}.$$
By applying the standard symmetric function involution $\omega$, we obtain
$$ \omega F_t(-z) = -(1-t) - t\frac{(1-t)H(z)}{H(tz)-tH(z)}.$$
It follows from this and Theorem~\ref{introsymgenth} that for all $n \ge 1$ and $t >1$,
\begin{equation} \label{gnewpr}
\omega L_n(t)=(-1)^{n-1}t \sum_{j=0}^{n-1} Q_{n,j} t^j
\end{equation}

By (\ref{grecbn2}) and induction, $L_n(t)$ is a polynomial in $t$.  Hence (\ref{gnewpr}) holds for $t=1$ as well.
Since $(B_n*\ttn)^-$ is Cohen-Macaulay we are done.
   \end{proof}

\subsection{The tree lemma}
 By the following result, since $B_n$ is self-dual and Cohen-Macaulay, Theorem~\ref{bncn} is equivalent to (\ref{treeeq1}), and   since $B_n(q)$ is self-dual and Cohen-Macaulay, Theorem~\ref{bncnq}
is equivalent to (\ref{treeeq2}).

\begin{thm}[Tree Lemma] \label{tree}
Let $P$ be a bounded, ranked poset of length $n$.  Then for all $t
\in {\mathbb P}$,
\begin{equation} \label{treeeq}
\sum_{j=1}^{n}\mu(\wh{I_j(P)}) t^j = - \mu((P^**\ttn)^+),
\end{equation}
where $P^\ast$ is the dual of $P$.
\end{thm}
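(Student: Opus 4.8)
The plan is to compute $\mu((P^**\ttn)^+)$ by the recursive definition of the Möbius function, peeling off according to the first coordinate. Recall that the elements of $P^**\ttn$ are pairs $(a,v)$ with $a\in P^*$, $v\in\ttn$, and $r_{P^*}(a)\ge r_{\ttn}(v)$; the maximum $\hat 1$ of $(P^**\ttn)^+$ covers exactly the pairs $(\hat 1_{P^*},v)$ with $v$ a leaf of $\ttn$, i.e.\ $r_{\ttn}(v)=n$. Since $\hat 1_{P^*}$ corresponds to $\hat 0_P$, these top elements are $(\hat 0_{P^*},v)$ for $v$ at height $n$. I would instead use the M\"obius recurrence \emph{from the bottom}: the minimum of $(P^**\ttn)^+$ is $\hat 0=(\hat 0_{P^*},\hat 0_{\ttn})$, which corresponds to $\hat 1_P$. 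So $\mu((P^**\ttn)^+) = -\sum_{x>\hat 0}\mu(\hat 0,x)$ over all $x$ below $\hat 1$, but it is cleaner to dualize first: $(P^**\ttn)^+$ has $\hat 0$ corresponding to $\hat 1_P$, and the interval above an element $(a,v)$ with $r_{P^*}(a)=k$, $r_{\ttn}(v)=\ell$ is, by an analogue of Proposition~\ref{uniformtree} applied coordinatewise (which holds here because $\ttn$ is ``uniform'' in the weak sense that every up-set of a height-$\ell$ node is a copy of $T_{t,n-\ell}$), isomorphic to $([a,\hat 1_{P^*}]*T_{t,n-\ell})^+$. The key point is that $[a,\hat 1_{P^*}] \cong [\hat 0_P, a^*]^*$ where $a^*$ is the element of $P$ corresponding to $a\in P^*$.

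Here is the main line. Set $S := (P^**\ttn)^+$ and apply the recursive definition of the M\"obius function to the dual $S^*$, equivalently: $\mu(S) = -\sum_{(a,v)} \mu_S((a,v),\hat 1_S)$ summed over all $(a,v)\in P^**\ttn$ together with the contribution of the bottom. The atoms of $P^**\ttn$ are the pairs $(b, w)$ with $b$ an atom of $P^*$ (so $b$ corresponds to a coatom of $P$) and $w$ a child of the root of $\ttn$; there are $t$ choices of $w$ for each such $b$. More usefully, group all elements $(a,v)$ by the value $j := r_{\ttn}(v)\in\{1,\dots,n\}$ and by $a$ itself. For fixed $v$ of height $j$ there are $t^j$ such $v$'s (as $\ttn$ is a complete $t$-ary tree, each height is reached by $t^j$ nodes), and for each, $a$ ranges over $\{a\in P^*: r_{P^*}(a)\ge j\}$. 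The contribution of $(a,v)$ to $\sum\mu_S((a,v),\hat 1_S)$ depends only on $r_{P^*}(a)$ and $j$ via the interval isomorphism above. I would reorganize the double sum so that the inner sum over $a$ with $r_{P^*}(a)$ ranging, combined with the M\"obius values of the truncated Rees products $([a,\hat 1_{P^*}]*T_{t,n-j})^+$, telescopes into exactly $\mu(\wh{I_j(P)})$ — this is the heart of the argument and where I expect the real work to be. Concretely, fixing $j$ and a height-$j$ node $v$, the sub-poset $\{(a,v'): v'\le v \text{ in }\ttn,\ r_{P^*}(a)\ge r_{\ttn}(v')\}\cup\{\hat 1\}$ below $(\hat 1_{P^*}$-side$)$ should be identified, after passing to the dual, with $\wh{I_j(P)}$ — because $I_j(P)$ is $\{x\in P^-*C_n: x<(\hat 1_P,j)\}$ and the chain $C_j\subseteq C_n$ from $1$ to $j$ is exactly the spine from the root to $v$ in $\ttn$.

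So the concrete steps are: (1) establish the interval/up-set isomorphism $[(a,v),\hat 1_S]\cong ([a,\hat 1_{P^*}]*T_{t,n-j})^+$ for $r_{\ttn}(v)=j$; (2) for each height-$j$ node $v$ of $\ttn$, identify the principal lower-order ideal of $S$ generated by the top element $(\hat 0_{P^*},v)$ (which is $(\hat 1_P, j)$ on the $P$ side) with $\wh{I_j(P)}$, via the dual of $P$ and the fact that the unique root-to-$v$ path in $\ttn$ is a $j$-chain; (3) count: there are $t^j$ nodes at height $j$, so summing M\"obius contributions over all maximal elements of $S$ of ``$\ttn$-height'' $j$ gives $t^j\,\mu(\wh{I_j(P)})$ — wait, I should be careful: the maximal elements of $P^**\ttn$ have $P^*$-coordinate $\hat 1_{P^*}$ and $\ttn$-coordinate a leaf, so the ``$j$'' that indexes them in the statement must come in through the structure of $I_j$; I would reconcile this by noting $I_j(P)$ lives inside $P^-*C_n$, and the copies of $C_n$ sitting inside $\ttn$ are exactly the root-to-leaf paths, of which there are $t^n$, but the ideals $I_j$ for $j=1,\dots,n$ arise from truncating at height $j$; (4) assemble via the M\"obius recurrence on $S^*$ to get $-\mu(S)=\sum_{j=1}^n (\text{multiplicity})\cdot\mu(\wh{I_j(P)})t^{?}$ and check the powers of $t$ and multiplicities collapse to exactly $\sum_{j=1}^n\mu(\wh{I_j(P)})t^j$ after accounting for the $t$-ary branching (each node at height $j$ that is \emph{not} on the path to a fixed leaf contributes, and the generating-function bookkeeping $[k+1]_t$ from Proposition~\ref{propuniform} is what produces the clean $t^j$). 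The main obstacle is precisely this last bookkeeping — matching the combinatorial weight $t^j$ in \eqref{treeeq} against the tree's branching structure — which I would handle by first proving the identity after applying the generating-function machinery of Proposition~\ref{propuniform} to both sides (the left side via the M\"obius recurrence for each $\wh{I_j(P)}$, the right side directly), reducing \eqref{treeeq} to an identity of formal power series that both sides are already known to satisfy.
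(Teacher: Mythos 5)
Your high-level strategy matches the paper's: apply the M\"obius recurrence to $(P^* * T_{t,n})^+$ from its minimum $(\hat 1_P,\hat 0_T)$, group elements $(a,v)$ by $j := r_{T_{t,n}}(v)$, and use the facts that there are $t^j$ nodes at height $j$ and that the lower interval $[\hat 0,(a,v)]$ depends only on $a$ and $j$ (since $[\hat 0_T, v]$ is a chain of length $j$). But the core of the proof --- the step you flag as ``where I expect the real work to be'' --- is missing, and without it the argument does not close.

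The missing ingredient is what the paper establishes as Lemma~\ref{antii} and Corollary~\ref{sumcor}. Define $R_i(P)$ to be the closed lower order ideal of $P * \{x_0 < \cdots < x_n\}$ generated by $(\hat 1_P, x_i)$, so that $R_i^+(P) \cong I_i(P)^+$ (Lemma~\ref{qi}). The paper then constructs an explicit antiisomorphism $\psi_i \colon R_i(P) \to R_i(P^*)$ by $(a,x_j) \mapsto (a, x_{i-j})$, and combines it with the M\"obius recurrence for $\wh{I_i(P)}$ to get
\[
\mu(\wh{I_i(P)}) = \sum_{(a,x_i)\in R_i(P^*)}\mu_{R_i(P^*)}\bigl((\hat 1_P,x_0),(a,x_i)\bigr).
\]
It is exactly this identity that turns the sum over all $a$ with $r_{P^*}(a)\geq j$ (for fixed $j$) into $\mu(\wh{I_j(P)})$. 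You gesture at ``identifying the principal lower-order ideal of $S$ generated by $(\hat 0_{P^*},v)$ with $\wh{I_j(P)}$,'' but for $j > 0$ the element $(\hat 0_{P^*},v) = (\hat 1_P,v)$ is not in $P^* * T_{t,n}$ at all, since $r_{P^*}(\hat 0_{P^*}) = 0 < j$. There is no single lower order ideal of $S$ isomorphic to $\wh{I_j(P)}$; what you need is the aggregate identity above, and that requires the antiisomorphism.

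A few further inaccuracies: you alternate between the standard recurrence (summing $\mu(\hat 0, x)$) and the dual one (summing $\mu(x,\hat 1)$), and your ``step (1)'' asks for the \emph{upper} interval isomorphism $[(a,v),\hat 1_S]\cong([a,\hat 1_{P^*}]*T_{t,n-j})^+$, which belongs to Proposition~\ref{uniformtree} and feeds a different argument (the one for uniform posets), not the Tree Lemma. Your claim that the contribution of $(a,v)$ ``depends only on $r_{P^*}(a)$ and $j$'' holds only for uniform $P$, but the Tree Lemma makes no uniformity hypothesis; what is actually true and actually used is the weaker fact that the contribution is independent of the choice of $v$ at height $j$. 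Finally, the fallback plan of routing everything through the generating-function machinery of Proposition~\ref{propuniform} cannot work, since that proposition assumes uniformity as well, and that is precisely what makes the Tree Lemma a strictly more general statement.
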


Before we can prove Theorem~\ref{tree}, we need a few lemmas.
  Set
\[
R(P):=P \ast\{x_0<x_1<\ldots <x_n\}
\]
and for $i \in [n]$ let $R_i(P)$ be the closed lower order ideal in $R(P)$ generated
by $(\hat 1_P,x_i)$.
Set
\[
R_i^+(P):=\{(a,x_j) \in R_i(P):j>0\}
\]
and
\[
R_i^-(P):=R_i(P) \setminus R_i^+(P).
\]

\begin{lemma}
The posets $R_i^+(P)$ and $I_i(P)^+ $ are
isomorphic. \label{qi}
\end{lemma}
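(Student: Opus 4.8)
The plan is to read off both posets explicitly from the definition of the Rees product and then exhibit the evident order isomorphism. Unwinding the definitions, $R_i^+(P)$ is the set of pairs $(p,x_j)$ with $1\le j\le i$ and $j\le r_P(p)\le n-i+j$, ordered as a subposet of $R(P)=P\ast\{x_0<\cdots<x_n\}$: the inequality $j\le r_P(p)$ is exactly the condition that $(p,x_j)$ be an element of $R(P)$, the condition $j\ge 1$ is the ``$+$'', and $j\le i$ together with $r_P(p)\le n-i+j$ is exactly the condition $(p,x_j)\le(\hat 1_P,x_i)$, i.e.\ membership in the lower ideal $R_i(P)$. On the other side, since the minimum of $P^-$ sits at rank $0$ (so $r_{P^-}=r_P-1$) and the $k$-th element of $C_n$ sits at rank $k-1$, one computes that $I_i(P)=\{x\in P^-\ast C_n: x<(\hat 1_P,i)\}$ is the set of pairs $(p,k)$ with $p\in P^-$, $1\le k\le i$, $k\le r_P(p)\le n-i+k$, and $(p,k)\ne(\hat 1_P,i)$; hence $I_i(P)^+$ is this set with one extra top element $\hat 1$ adjoined.

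First I would define $\Phi\colon R_i^+(P)\to I_i(P)^+$ by $\Phi(\hat 1_P,x_i):=\hat 1$ and $\Phi(p,x_j):=(p,j)$ on every other element. For well-definedness: any $(p,x_j)\in R_i^+(P)$ has $r_P(p)\ge j\ge 1$, so $p\ne\hat 0_P$ and thus $p\in P^-$; the chain of inequalities $1\le j\le i$, $j\le r_P(p)\le n-i+j$ is term for term the one defining membership of $(p,j)$ in $I_i(P)$; and $(p,x_j)\ne(\hat 1_P,x_i)$ forces $(p,j)\ne(\hat 1_P,i)$. Bijectivity is then clear, since $(\hat 1_P,x_i)$ is the maximum of $R_i^+(P)$ (it generates the ideal $R_i(P)$, and lies in $R_i^+(P)$ because $i\ge 1$), $\hat 1$ is the maximum of $I_i(P)^+$, and on the complements the assignment $(p,k)\mapsto(p,x_k)$ is a two-sided inverse because membership of $(p,k)$ in $I_i(P)$ is exactly membership of $(p,x_k)$ in $R_i^+(P)\setminus\{(\hat 1_P,x_i)\}$. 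Finally, $\Phi$ and $\Phi^{-1}$ preserve order: the three clauses defining $\le$ in a Rees product all transfer verbatim under $x_j\leftrightarrow j$, because the first-coordinate comparison is untouched, $x_j\le x_{j'}\iff j\le j'$, and the rank-difference clause is unchanged since $r_{P^-}(p')-r_{P^-}(p)=r_P(p')-r_P(p)$ and $r_{C_n}(j')-r_{C_n}(j)=j'-j=r_{C_{n+1}}(x_{j'})-r_{C_{n+1}}(x_j)$. Together with the fact that a top element is mapped to a top element, this shows $\Phi$ is a poset isomorphism.

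The only real subtlety --- the ``main obstacle'' --- is the index bookkeeping: $R(P)$ is built from the $(n{+}1)$-element chain $\{x_0<\cdots<x_n\}$ while $I_i(P)$ is built from the $n$-element chain $C_n=\{1<\cdots<n\}$, and $P^-$ carries the rank function $r_P-1$ rather than $r_P$; these two shifts cancel, which is why the correct correspondence on second coordinates is $x_j\leftrightarrow j$ (and not $x_j\leftrightarrow j+1$), and also why the element $(\hat 1_P,x_i)$ --- whose naive image $(\hat 1_P,i)$ is precisely the element deleted in passing to the strict lower ideal $I_i(P)$ --- must be rerouted to the adjoined top of $I_i(P)^+$. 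Once the conventions are pinned down, the verification is entirely mechanical.
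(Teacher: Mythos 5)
Your proof is correct and follows the same route as the paper's own proof, which is the one-line statement that the map $(a,x_j)\mapsto(a,j)$ is an isomorphism. You have simply made explicit the rank bookkeeping (the shift in $r_{P^-}$ versus $r_P$ and in $\{x_0<\cdots<x_n\}$ versus $C_n$ cancelling, and the rerouting of $(\hat 1_P,x_i)$ to the adjoined top of $I_i(P)^+$) that the paper leaves implicit.
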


\begin{proof}
The map that sends $(a,x_j)$ to $(a,j)$ is an isomorphism.
\end{proof}

  An {\it antiisomorphism} from poset $X$ to a poset $Y$ is
an isomorphism $\psi$ from $X$ to $Y^*$.  In other words, $\psi$
is an order reversing bijection from $X$ to $Y$ with order
reversing inverse.

\begin{lemma}
For $0 \leq i \leq n$, the map $\psi_i:R_i(P) \rightarrow
R_i(P^\ast)$ given by $\psi_i((a,x_j))=(a,x_{i-j})$ is an
antiisomorphism. \label{antii}
\end{lemma}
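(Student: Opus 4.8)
The plan is simply to unwind the definitions. The only ingredients needed are that the chain $\{x_0<x_1<\cdots<x_n\}$ carries the rank function $r(x_j)=j$, that $r_{P^*}=n-r_P$ since $P$ has length $n$, and that $\hat 0_{P^*}=\hat 1_P$. The key observation is that the defining inequality of the Rees‑product order involves only \emph{differences} of ranks, so that performing the dualization $P\leadsto P^*$ (which negates every difference $r_P(b)-r_P(a)$) together with the reflection $x_j\mapsto x_{i-j}$ (which negates every difference $k-j$ of chain‑coordinates) leaves that inequality intact; this is exactly what makes $\psi_i$ order‑reversing.

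First I would record explicit descriptions of the two posets. Since $(\hat 1_P,x_i)$ has rank $n$ in $R(P)$, writing out $(a,x_j)\le(\hat 1_P,x_i)$ together with the membership condition $r_P(a)\ge j$ for $R(P)$ gives
\[
R_i(P)=\{(a,x_j):\ 0\le j\le i,\ \ j\le r_P(a)\le n-i+j\},
\]
and the identical computation for $P^*$, using $r_{P^*}=n-r_P$ and $\hat 1_{P^*}=\hat 0_P$, yields
\[
R_i(P^*)=\{(a,x_k):\ 0\le k\le i,\ \ i-k\le r_P(a)\le n-k\}.
\]
Substituting $k=i-j$ turns the second constraint system into the first, so $\psi_i$ indeed maps $R_i(P)$ into $R_i(P^*)$; its two‑sided inverse is the analogous map built from $P^*$ (recalling $(P^*)^*=P$ and $i-(i-j)=j$), hence $\psi_i$ is a bijection. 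Note this also covers the degenerate case $i=0$, where $R_0(P)\cong P$, $R_0(P^*)\cong P^*$ and $\psi_0$ is the canonical identification realizing $P\cong(P^*)^*$.

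It then remains to check that $\psi_i$ reverses the order. Since $R_i(P)$ and $R_i(P^*)$ are lower order ideals, hence induced subposets, of $R(P)$ and $R(P^*)$, it suffices to test the relation in these ambient posets. By the defining conditions of the Rees‑product order, $(a,x_j)\le(b,x_k)$ in $R(P)$ iff $a\le_P b$, $j\le k$, and $r_P(b)-r_P(a)\ge k-j$. Unwinding the claim $\psi_i(b,x_k)=(b,x_{i-k})\le(a,x_{i-j})=\psi_i(a,x_j)$ in $R(P^*)$ gives the three conditions $b\le_{P^*}a$, $\ i-k\le i-j$, and $r_{P^*}(a)-r_{P^*}(b)\ge(i-j)-(i-k)$, which — using $b\le_{P^*}a\iff a\le_P b$ and $r_{P^*}=n-r_P$ — are respectively $a\le_P b$, $\ j\le k$, and $r_P(b)-r_P(a)\ge k-j$: exactly the original conditions. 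Therefore $(a,x_j)\le(b,x_k)$ in $R_i(P)$ if and only if $\psi_i(b,x_k)\le\psi_i(a,x_j)$ in $R_i(P^*)$, so $\psi_i$ is an antiisomorphism. There is no genuine obstacle here beyond keeping the rank‑function bookkeeping straight and noticing that the order relation may be tested inside the ambient Rees products.
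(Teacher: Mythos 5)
Your proof is correct and follows essentially the same route as the paper's: a direct unwinding of the Rees-product definitions, checking well-definedness via the rank inequalities and order-reversal by observing that both dualization and the reflection $x_j\mapsto x_{i-j}$ negate the relevant rank differences, so the Rees condition is preserved. Packaging the well-definedness argument as an explicit set-theoretic description of $R_i(P)$ and $R_i(P^\ast)$ is a cosmetic reorganization of the same calculation; the substance is identical.
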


\begin{proof}
We show first that $\psi_i$ is well-defined, that is, if $(a,x_j)
\in R_i(P)$ then $(a,x_{i-j}) \in R_i(P^\ast)$.  For $a \in P$ and
$j \in \{0,\ldots,n\}$ we have $(a,x_j) \in R_i(P)$ if and only if
the three conditions
\begin{itemize}
\item[(1)] $0 \leq j \leq i$ \item[(2)] $r_P(a) \geq j$
\item[(3)] $n-r_P(a) \geq i-j$
\end{itemize}
hold.  If (1), (2), (3) hold then so do all of
\begin{itemize}
\item[($1^\prime$)] $0 \leq i-j \leq i$ \item[($2^\prime$)]
$r_{P^\ast}(a)=n-r_P(a) \geq i-j$  \item[($3^\prime$)]
$n-r_{P^\ast}(a)=r_P(a) \geq j=i-(i-j)$,
\end{itemize}
and ($1^\prime$), ($2^\prime$), ($3^\prime$) together imply that $(a,x_{i-j}) \in
R_i(P^\ast)$.  The map $\psi^\ast_i:R_i(P^\ast)\rightarrow R_i(P)$
given by $\psi^\ast_i((a,x_j))=(a,x_{i-j})$ is also well-defined by
the argument just given, and $\psi^\ast_i=\psi_i^{-1}$, so
$\psi_i$ is a bijection.

Now for $(a,x_j)$ and $(b,x_k)$ in
$R_i(P)$, we have $(a,x_j)<(b,x_k)$ if and only if the three
conditions
\begin{itemize}
\item [(4)] $a \leq_P b$ \item[(5)] $j \leq k$  \item[(6)]
$r_P(b)-r_P(a) \geq k-j$
\end{itemize}
hold.  If (4), (5), (6) hold then so do all of
\begin{itemize}
\item[($4^\prime$)] $b \leq_{P^\ast} a$ \item[($5^\prime$)] $i-k \leq i-j$ \item[($6^\prime$)] $r_{P^\ast}(a)-r_{P^\ast}(b)=r_P(b)-r_P(a) \geq
k-j=(i-j)-(i-k)$,
\end{itemize}
and ($4^\prime$), ($5^\prime$), ($6^\prime$) together imply that in $R_i(P^\ast)$ we have
$(b,x_{i-k}) \leq (a,x_{i-j})$.  Therefore, $\psi_i$ is order
reversing, and the same argument shows that $\psi_i^\ast$ is order
reversing.
\end{proof}

\begin{cor}\label{sumcor}
 For $1 \leq i \leq n$ we have
\begin{equation} \label{sumeq}
\mu(\wh{I_i(P)})=\sum_{(a,x_i) \in
R_i(P^\ast)}\mu_{R_i(P^\ast)}((\hat 1_P,x_0),(a,x_i)).
\end{equation}
\end{cor}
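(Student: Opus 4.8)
The plan is to push the whole computation into the bounded poset $R_i(P)$, using only Lemmas~\ref{qi} and~\ref{antii} together with elementary M\"obius-function identities. First I would record the structural facts I need: for $1\le i\le n$ the poset $R_i(P)$ is bounded, with $\hat 0_{R_i(P)}=(\hat 0_P,x_0)$ and $\hat 1_{R_i(P)}=(\hat 1_P,x_i)$; moreover $R_i^-(P)=\{(a,x_0):r_P(a)\le n-i\}$ is a lower order ideal of $R_i(P)$, $R_i^+(P)$ is the complementary upper order ideal, and the two extreme elements of $R_i(P)$ lie respectively in $R_i^-(P)$ and $R_i^+(P)$. I would also note the bookkeeping fact that $(a,x_i)\in R_i(P^\ast)$ exactly when $r_{P^\ast}(a)=n-r_P(a)\ge i$, i.e.\ exactly when $(a,x_0)\in R_i^-(P)$; so, under $a\leftrightarrow(a,x_0)$, the index set on the right-hand side of (\ref{sumeq}) is precisely $R_i^-(P)$.

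Next I would apply the antiisomorphism $\psi_i$ of Lemma~\ref{antii}. Since $\psi_i((\hat 1_P,x_i))=(\hat 1_P,x_0)$ and $\psi_i((a,x_0))=(a,x_i)$, and since $(a,x_0)\le_{R_i(P)}(\hat 1_P,x_i)$ precisely when $(a,x_i)\in R_i(P^\ast)$, the fact that an antiisomorphism reverses the arguments of the M\"obius function gives $\mu_{R_i(P^\ast)}((\hat 1_P,x_0),(a,x_i))=\mu_{R_i(P)}((a,x_0),(\hat 1_P,x_i))$ for each relevant $a$. Hence the right-hand side of (\ref{sumeq}) equals $\sum_{z\in R_i^-(P)}\mu_{R_i(P)}(z,\hat 1_{R_i(P)})$. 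Now I would invoke the identity $\sum_{z\in R_i(P)}\mu_{R_i(P)}(z,\hat 1_{R_i(P)})=0$ (valid because $R_i(P)$ has more than one element when $i\ge 1$) and split the sum along $R_i(P)=R_i^-(P)\uplus R_i^+(P)$, obtaining $\sum_{z\in R_i^-(P)}\mu_{R_i(P)}(z,\hat 1_{R_i(P)})=-\sum_{z\in R_i^+(P)}\mu_{R_i(P)}(z,\hat 1_{R_i(P)})$. Because $R_i^+(P)$ is an upper order ideal containing $\hat 1_{R_i(P)}$, each interval $[z,\hat 1_{R_i(P)}]$ is unchanged on passing from $R_i(P)$ to $R_i^+(P)$, and Lemma~\ref{qi} identifies $R_i^+(P)$ with $I_i(P)^+$ carrying $\hat 1_{R_i(P)}$ to $\hat 1_{I_i(P)^+}$; so the last quantity is $-\sum_{z\in I_i(P)^+}\mu_{I_i(P)^+}(z,\hat 1_{I_i(P)^+})$.

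It remains to recognize this as $\mu(\wh{I_i(P)})$. Writing $\wh{I_i(P)}=I_i(P)^+\cup\{\hat 0\}$ with $\hat 0$ a newly adjoined minimum, the defining recurrence of the M\"obius function gives $\mu(\wh{I_i(P)})=-\sum_{\hat 0<z\le\hat 1}\mu_{\wh{I_i(P)}}(z,\hat 1)$, where $\hat 1=\hat 1_{I_i(P)^+}$; here the index set $\{z:\hat 0<z\le\hat 1\}$ is exactly $I_i(P)^+$, and for each such $z$ the interval $[z,\hat 1]$ omits $\hat 0$, so $\mu_{\wh{I_i(P)}}(z,\hat 1)=\mu_{I_i(P)^+}(z,\hat 1)$. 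Chaining the displayed equalities then yields (\ref{sumeq}).

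The argument is essentially bookkeeping, and the one place I would be most careful is the two index-set translations: verifying that $\psi_i$ carries the pair $((\hat 1_P,x_0),(a,x_i))$ to an honest interval of $R_i(P)$ exactly when $(a,x_i)\in R_i(P^\ast)$, and checking the rank inequalities that make $(R_i^-(P),R_i^+(P))$ a complementary ideal/filter pair with the extreme elements placed correctly. Once those identifications are pinned down, each step in the chain of equalities is immediate.
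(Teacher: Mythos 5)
Your proof is correct and follows essentially the same route as the paper's: the same chain of equalities (antiisomorphism via Lemma~\ref{antii}, the split $R_i(P)=R_i^-(P)\uplus R_i^+(P)$ together with the M\"obius recursion and restriction to the upper order ideal $R_i^+(P)$, then Lemma~\ref{qi}), merely traversed from the right-hand side of (\ref{sumeq}) to the left rather than the other way around.
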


In case the notation has confused the reader, we remark before
proving Corollary \ref{sumcor} that the sum on the right side of
equality (\ref{sumeq}) is taken over all pairs $(a,x_i)$ such that
$a \in P$ with $r_P(a) \leq n-i$ (so $r_{P^\ast}(a) \geq i$),
and that $\hat 1_P$, being the maximum element of $P$, is the
minimum element of $P^\ast$ (so $(\hat 1_P,x_0)$ is the minimum
element of $R_i(P^\ast)$).

\begin{proof}
We have
\begin{eqnarray*}
\mu(\wh{I_i(P)})& = & -\sum_{\alpha \in I_i(P)^+}\mu_{\wh{I_i(P)}}(\alpha,(\hat 1_P,i)) \\ & = &
-\sum_{\beta \in R_i^+(P)}\mu_{R_i^+(P)}(\beta,(\hat 1_P,x_i)) \\ &
= & \sum_{\gamma=(a,x_0) \in
R_i^-(P)}\mu_{R_i(P)}(\gamma,(\hat 1_P,x_i)) \\ & = &
\sum_{\gamma=(a,x_0) \in R_i^-(P)}
\mu_{R_i(P^\ast)}(\psi_i((\hat 1_P,x_i)),\psi_i(\gamma)) \\ & = &
\sum_{\gamma=(a,x_0) \in
R_i^-(P)}\mu_{R_i(P^\ast)}((\hat 1_P,x_0),(a,x_i)) \\ & = &
\sum_{(a,x_i) \in
R_i(P^\ast)}\mu_{R_i(P^\ast)}((\hat 1_P,x_0),(a,x_i)).
\end{eqnarray*}
Indeed, the first equality follows from the definition of the
M\"obius function; the second follows from Lemma \ref{qi}; the
third follows from the definition of the M\"obius function and the
fact that $\mu_{R_i^+(P)}$ is the restriction of $\mu_{R_i(P)}$ to
$R_i^+(P) \times R_i^+(P)$ (as $R_i^+(P)$ is an upper order ideal in
$R_i(P)$); the fourth follows from Lemma \ref{antii} and the last
two follow from the definition of $\psi_i$.
\end{proof}

\begin{proof}[Proof of Tree Lemma (Theorem~\ref{tree})] The poset $\ttn$ has  exactly $t^j$ elements of rank $j$ for each $j=0,\dots, n$.  Let $r_T$ be the rank function of $\ttn$ and let $\hat 0_T$ be the minimum element of $\ttn$.

We have \begin{eqnarray*}  \mu((P^**\ttn)^+)
 & = &
-\sum_{\alpha \in P^\ast \ast \ttn}\mu_{P^\ast \ast
\ttn}((\hat 1_P,\hat 0_T),\alpha) \\ & = & -\sum_{j=0}^{n} \sum_{\alpha
\in P^\ast_{n,t,j}}\mu_{P^\ast \ast \ttn}((\hat 1_P,\hat 0_T),\alpha),
\end{eqnarray*}
where
\[
P^\ast_{n,t,j}:=\{(a,w) \in P^\ast \ast \ttn: r_T(w)=j\}.
\]

We have
\begin{eqnarray*}
\sum_{\alpha \in P^\ast_{n,t,0}}\mu_{P^\ast \ast
\ttn}((\hat 1_P,\hat 0_T),\alpha) & = & \sum_{a \in P^\ast}\mu_{P^\ast
\ast \ttn}((\hat 1_P,\hat  0_T),(a,\hat 0_T)) \\ & = & \sum_{a \in
P^\ast}\mu_{P^\ast}(\hat 1_P,a) \\ & = & 0.
\end{eqnarray*}

Now fix $j \in [n]$.  For any $w \in \ttn$ with $r_T(w)=j$, the
interval $[\hat 0_T,w]$ in $\ttn$ is a chain of length $j$.  Therefore,
for any $(a,w) \in P^\ast_{n,t,j}$, the interval
$[(\hat 1_P,\hat 0_T),(a,w)]$  in $P^\ast \ast \ttn$ is isomorphic with
the interval $[(\hat 1_P,x_0),(a,x_j)]$ in $R_j(P^\ast)$.  For any $a \in
P^\ast$, the four conditions
\begin{itemize} \item $r_{P^\ast}(a) \geq j$, \item $(a,w) \in
P^\ast_{n,t,j}$ for some $w \in \ttn$, \item $(a,v) \in
P^\ast_{n,t,j}$ for every $v \in \ttn$ satisfying $r_T(v)=j$,
\item $(a,x_j) \in R_j(P^\ast)$
\end{itemize}
are all equivalent.  There are exactly $t^j$ elements $v \in \ttn$
 of rank $j$. It follows that
\[
\sum_{\alpha \in P^\ast_{n,t,j}}\mu_{P^\ast \ast
\ttn}((\hat 1_P,\hat 0 _T),\alpha)=t^j\sum_{(a,x_j) \in
R_j(P^\ast)}\mu_{R_j(P^\ast)}((\hat 1_P,x_0),(a,x_j)),
\]
and the Tree Lemma now follows from Corollary \ref{sumcor}.
\end{proof}

Since $B_n$ is Cohen-Macaulay and self-dual, the following result shows that Theorem \ref{bncnsn} is equivalent to (\ref{treeeq3}).

\begin{thm}[Equivariant Tree Lemma] \label{gtreelem} Let $P$ be a bounded, ranked $G$-poset of length $n$.
Then for all $t \in {\mathbb P}$,
\begin{equation} \label{eqtreeeq}
\bigoplus_{j=1}^{n}t^j L(I_j(P);G) \cong_G - L((P^**\ttn)^-;G).
\end{equation}
Consequently, if $P$ is Cohen-Macaulay then for all $t \in \PP$,
$$\bigoplus_{j=1}^n  t^j \tilde H_{n-2}(I_j(P)) \cong_G  \tilde H_{n-1}((P^**\ttn)^-).$$
\end{thm}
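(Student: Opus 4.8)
The plan is to lift the proof of the non-equivariant Tree Lemma (Theorem~\ref{tree}) essentially verbatim, replacing each M\"obius value $\mu(\widehat Q)$ by the Lefschetz character $L(Q;G)$ of the corresponding $G$-poset, and replacing each appeal to the recursive definition of the M\"obius function by the appropriate instance of Sundaram's equivariant identity (\ref{sund}), used in the form (\ref{equimob}). Beyond the M\"obius recursion, the proof of Theorem~\ref{tree} uses only: the identification $R_i^+(P)\cong I_i(P)^+$ of Lemma~\ref{qi}; the antiisomorphisms $\psi_i\colon R_i(P)\to R_i(P^*)$ of Lemma~\ref{antii}; and the fact that $[\hat 0_T,w]$ is a chain of length $r_T(w)$ in $\ttn$. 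All of these are compatible with the $G$-action: $G$ acts on $R(P)$, $R_i(P)$, $R_i^\pm(P)$ and $P^**\ttn$ fixing the second coordinate, so the maps of Lemmas~\ref{qi} and~\ref{antii} commute with $G$, the stabilizer of $(a,w)$ is $G_a$, and the interval isomorphism $[(\hat 1_P,\hat 0_T),(a,w)]\cong[(\hat 1_P,x_0),(a,x_{r_T(w)})]$ in the proof of Theorem~\ref{tree} is $G_a$-equivariant. The point that deserves care is that an antiisomorphism of $G$-posets $X\to Y$ is the same as a $G$-isomorphism $X\to Y^*$, and since a poset and its dual have literally the same order complex as a $G$-simplicial complex, it induces $G$-module isomorphisms $\tilde H_*(X)\cong_G\tilde H_*(Y)$, and likewise on every closed subinterval; so $\psi_i$ transports Lefschetz characters of intervals of $R_i(P)$ to those of $R_i(P^*)$.

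The first step is the equivariant version of Corollary~\ref{sumcor}: for $1\le i\le n$,
\[
L(I_i(P);G)\;\cong_G\;\bigoplus_{(a,x_i)\in R_i(P^*)/G}\, L\bigl((\hat 1_P,x_0),(a,x_i)\bigr)^{\!\circ}_{R_i(P^*)}\uparrow_{G_a}^{G},
\]
where $L(u,v)^{\circ}$ is the Lefschetz character of the open interval $(u,v)$, with the conventions of (\ref{eupon2}) (the trivial $G_a$-module when $v=u$, its negative when $v$ covers $u$). One proves this by running the chain of equalities in the proof of Corollary~\ref{sumcor}: apply (\ref{equimob}) to the bounded $G$-poset $R_i(P)$ to split its Lefschetz-character sum over $R_i^+(P)\sqcup R_i^-(P)$; identify the $R_i^+(P)$-part, via the $G$-isomorphism $R_i^+(P)\cong I_i(P)^+$ of Lemma~\ref{qi} together with (\ref{equimob}) applied to $\widehat{I_i(P)}$, with $-L(I_i(P);G)$; and transport the $R_i^-(P)$-part through the $G$-equivariant antiisomorphism $\psi_i$ of Lemma~\ref{antii} into the claimed sum over $R_i(P^*)$, checking that the degenerate cases on the two sides carry matching $\pm$-trivial-module conventions.

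The second step proves (\ref{eqtreeeq}) by lifting the main computation of Theorem~\ref{tree}. Apply (\ref{equimob}) to the bounded $G$-poset $R:=(P^**\ttn)^+$. The summand indexed by $\hat 1_R$ contributes $L\bigl((P^**\ttn)^-;G\bigr)$, and the remaining summands, indexed by the elements $(a,w)\in P^**\ttn$, are grouped by $j:=r_T(w)$. The $j=0$ block is $\bigoplus_{a\in P^*/G} L\bigl((\hat 1_P,a)_{P^*};G_a\bigr)\uparrow_{G_a}^{G}$, which vanishes: it is exactly (\ref{equimob}) for $P$, since $(\hat 1_P,a)_{P^*}=(a,\hat 1_P)_{P}$ as $G_a$-posets. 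For $j\ge 1$, each of the $t^j$ elements $w$ of rank $j$ gives, via the $G_a$-equivariant interval isomorphism $[(\hat 1_P,\hat 0_T),(a,w)]\cong[(\hat 1_P,x_0),(a,x_j)]$ in $R_j(P^*)$, the same contribution, so the $j$-block equals $t^j\bigoplus_{(a,x_j)\in R_j(P^*)/G} L\bigl((\hat 1_P,x_0),(a,x_j)\bigr)^{\!\circ}_{R_j(P^*)}\uparrow_{G_a}^{G}$, which by the equivariant Corollary~\ref{sumcor} is $t^jL(I_j(P);G)$. Summing, $L\bigl((P^**\ttn)^-;G\bigr)\oplus\bigoplus_{j=1}^{n}t^jL(I_j(P);G)\cong_G 0$, which is (\ref{eqtreeeq}).

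For the consequence: when $P$ is Cohen--Macaulay, Theorem~\ref{bwrees} (applied after the standard $\hat0/\hat1$ adjustments, exactly as in the discussion preceding this section) shows that $(P^**\ttn)^-$ has reduced homology concentrated in degree $n-1$ and each $I_j(P)$ in degree $n-2$; then $L((P^**\ttn)^-;G)=(-1)^{n-1}\tilde H_{n-1}((P^**\ttn)^-)$ and $L(I_j(P);G)=(-1)^{n-2}\tilde H_{n-2}(I_j(P))$, so the virtual identity (\ref{eqtreeeq}) collapses to the genuine $G$-module isomorphism $\bigoplus_{j=1}^{n}t^j\tilde H_{n-2}(I_j(P))\cong_G\tilde H_{n-1}((P^**\ttn)^-)$, the signs cancelling. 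I expect the main obstacle to be bookkeeping rather than a genuine difficulty: organizing the induced-representation sums correctly and, above all, handling the degenerate intervals ($v=u$ or $v$ covers $u$) consistently across the four posets $\widehat{I_i(P)}$, $R_i(P)$, $P^*$ and $(P^**\ttn)^+$ so that the $\pm\mathbf 1$ conventions line up; the conceptual heart is simply the observation that antiisomorphisms of $G$-posets induce $G$-isomorphisms of order complexes, which is what allows the M\"obius-function proof of Theorem~\ref{tree} to be upgraded word for word.
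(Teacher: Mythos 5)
Your proposal is correct and follows essentially the same route the paper takes: it upgrades the Tree Lemma proof term by term, substituting Sundaram's equivariant identity for the M\"obius recursion, checking that Lemmas~\ref{qi} and \ref{antii} are $G$-equivariant, establishing the equivariant form of Corollary~\ref{sumcor} (this is exactly the paper's equation~(\ref{equisumeq})), and then re-running the Tree Lemma computation. The only quibble is cosmetic: when applying the recursion to $R=(P^**\ttn)^+$ with \emph{lower} open intervals (as the Tree Lemma proof demands), you are really invoking the original form (\ref{sund}) rather than its dual (\ref{equimob}), and the summand producing $L((P^**\ttn)^-;G)$ is the one indexed by $\hat 1_R$ in that lower-interval form (you wrote this correctly in spirit), not by $\hat 0_R$; since (\ref{sund}) and (\ref{equimob}) are dual restatements of one another this has no effect on the argument.
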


\begin{proof}  The proof is an equivariant version of the proof of the Tree Lemma.
In particular,  the  isomorphism of Lemma~\ref{qi} is $G$-equivariant,
as is the antiisomorphism of Lemma~\ref{antii}.

  The equivariant version of (\ref{sumeq}) is
\begin{equation} \label{equisumeq}  L(I_i(P);G) = \bigoplus_{(a,x_i) \in R_i(P^\ast)/G} L(((\hat 1_P,x_0),(a,x_i));G_a)\uparrow_{G_a}^G.\end{equation}
 To prove (\ref{equisumeq}) we let (\ref{equimob}) play the role of the recursive definition of M\"obius function in the proof of (\ref{sumeq}).

 To prove
(\ref{eqtreeeq}) we follow the proof of the Tree Lemma again letting (\ref{equimob}) play the role of the recursive definition of M\"obius function, and in the last step applying  (\ref{equisumeq}) instead of (\ref{sumeq}).
\end{proof}

\section{Lexicographical shellability and a new mahonian statistic} \label{elsec}
In this section we  show that the posets
$\wh{I_j(B_n)}$ and $\wh{I_j(B_n(q))}$ are EL-shellable and use the theory of lexicographical shellability to compute their
M\"obius invariants.  This will yield a second proof of Theorem~\ref{muintth} and
a new Mahonian permutation statistic to serve as a partner for the Eulerian statistic $\des$ in the
joint $(\maj,\exc)$-distribution.

We  recall some basic facts from the theory of lexicographic
shellability  (cf. \cite{bj, bjwa1, bjwa2, bjwa3, w1}).  Let $P$ be a  bounded poset and let $\Cov(P)$
be the set of pairs $(x,y) \in P \times P$ such that $y$ covers
$x$ in $P$.  Let $L$ be another poset and let $W$ be the set of
all finite sequences of elements of $L$.  The given ordering of
$L$ induces a lexicographic ordering $\preceq$ on $W$.
 An {\it edge
labeling} of $P$ by $L$ is a function $\lambda:\Cov(P) \rightarrow
L$.  Given such a function $\lambda$ and a saturated chain
$C=\{x_1<\ldots<x_m\}$ from $P$, we write $\lambda(C)$ for
$(\lambda(x_1,x_2),\ldots,\lambda(x_{m-1},x_m)) \in W$.  An {\it
ascent} in $C$ is any $i \in [m-1]$ satisfying
$\lambda(x_i,x_{i+1})<\lambda(x_{i+1},x_{i+2})$. We say
$\lambda$ is {\it increasing} on $C$ if each  $i \in [m-1]$ is an
ascent in $C$.  The edge labeling $\lambda$ is an {\it
EL-labeling} of $P$ if whenever $x<y$ in $P$ there is a unique
maximal chain $C$ in the interval $[x,y]$ on which $\lambda$ is
increasing and for all other maximal chains $D$ in $[x,y]$ we have
$\lambda(C) \prec \lambda(D)$.  A poset that admits an EL-labeling is said to be
EL-shellable.

The notion of EL-shellability for {\em ranked}  posets was introduced by Bj\"orner in \cite{bj}.  A more general concept called CL-shellability,  introduced  by
Bj\"orner and Wachs  in \cite{bjwa1}, also associates label sequences with maximal chains of a poset. We will not define CL-labelings here.  (Both notions were subsequently extended to all  bounded posets in \cite{bjwa3}.)     Given an EL-labeling or a CL-labeling $\lambda$  on
$P$, we call a maximal chain $C$ from $P$ {\it ascent free} if its label sequence
contains no ascent.

One of the main results in the theory of
lexicographic shellability for ranked posets is the following result.

\begin{thm}[Bj\"orner \cite{bj}, Bj\"orner and Wachs \cite{bjwa1}]
If $\lambda$ is an EL-labeling (or more generally a CL-labeling) of a bounded ranked  poset $P$ of length $n$
then $\Delta(P\setminus\{\hat 0,\hat 1\})$ is homotopy equivalent to a wedge of $c$ spheres of dimension $(n-2)$,
where $c$ is the number of ascent free maximal chains.
 Consequently
\[
\mu_P(\hz,\ho)=(-1)^nc.
\]
\label{elth}
\end{thm}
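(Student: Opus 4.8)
The plan is to deduce the statement from the theory of shellable simplicial complexes. Write $\overline P := P \setminus \{\hat 0, \hat 1\}$. Since $P$ is ranked of length $n$, every maximal chain of $P$ has $n+1$ elements, so the order complex $\Delta(\overline P)$ is pure of dimension $n-2$, its facets being the sets $\overline C := C \setminus \{\hat 0,\hat 1\}$ for $C$ a maximal chain of $P$. I would order these facets by ordering the corresponding maximal chains $C$ lexicographically by their label words $\lambda(C)$ (refining arbitrarily to a linear order when labels tie), and show that this is a \emph{shelling order} of $\Delta(\overline P)$. Granting this, two standard facts finish the argument: a pure shellable complex of dimension $d$ is homotopy equivalent to a wedge of $d$-spheres, one for each \emph{homology facet} (a facet $F$ whose restriction face $\calR(F)$ equals $F$); and the homology facets here are exactly the ascent-free maximal chains. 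Thus $\Delta(\overline P) \simeq \bigvee_c S^{n-2}$, so $\tilde H_i(\overline P) = 0$ for $i \neq n-2$ and $\dim \tilde H_{n-2}(\overline P) = c$, and then $(\ref{eupon})$ gives $\mu_P(\hat 0,\hat 1) = \sum_i (-1)^i \dim \tilde H_i(\overline P) = (-1)^{n-2}c = (-1)^n c$.

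The heart of the matter is the verification that the lexicographic order is a shelling. Write a maximal chain of $P$ as $\hat 0 = y_0 \lessdot y_1 \lessdot \cdots \lessdot y_n = \hat 1$, with labels $\ell_i := \lambda(y_{i-1},y_i)$, so $i \in [n-1]$ is an ascent exactly when $\ell_i < \ell_{i+1}$. For the facet $F_C = \{y_1,\dots,y_{n-1}\}$ I would take the restriction face to be
\[
\calR(F_C) := \{\, y_i \ : \ i \in [n-1] \text{ is not an ascent of } C \,\},
\]
and verify the usual criterion: a subset $G \subseteq F_C$ lies in an earlier facet iff $\calR(F_C) \not\subseteq G$. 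The forward direction uses only intervals of length $2$: if $y_i \in \calR(F_C)$ then $y_{i-1} \lessdot y_i \lessdot y_{i+1}$ is not the increasing maximal chain of $[y_{i-1},y_{i+1}]$, so by the defining property of $\lambda$ the unique increasing chain of that interval runs through some $y_i' \neq y_i$ and is lexicographically strictly smaller; replacing $y_i$ by $y_i'$ gives a lexicographically earlier maximal chain $D$ with $\overline D \supseteq F_C \setminus \{y_i\}$, so every $G$ missing $y_i$ is covered. For the converse, assume $\calR(F_C) \subseteq G \subseteq F_C$ and that $\overline D \supseteq G$ for some maximal chain $D$. Cutting $C$ at the elements of $G \cup \{\hat 0,\hat 1\}$ into blocks $[u,v]$, all omitted elements sit at ascent positions, so the portion of $\lambda(C)$ inside each block is strictly increasing; hence, by the defining property of $\lambda$ again, that portion of $C$ is the lexicographically least maximal chain of $[u,v]$. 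Comparing $D$ with $C$ block by block forces $\lambda(D) \succeq \lambda(C)$, and equality would force $D = C$ (uniqueness of the increasing chain in each block); so no such $D$ precedes $C$. Finally, $\calR(F_C) = F_C$ holds precisely when $C$ has no ascent, so there are exactly $c$ homology facets.

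I expect the main obstacles to be organizational rather than conceptual. One is marshalling the general shellability toolkit invoked above as a black box: the equivalence between the recursive and the restriction-map formulations of ``shelling'', and the description of the homotopy type of a pure shellable complex in terms of its homology facets. The other, genuinely more delicate, is the passage from EL-labelings to CL-labelings: there the label of a cover depends on a chosen root beneath it, so the block-by-block comparison above must be redone while tracking roots; this is precisely the refinement of Bj\"orner and Wachs, and the argument goes through in the same spirit. One should also dispose of the degenerate cases---$G = \emptyset$, and blocks that abut $\hat 0$ or $\hat 1$---but these are automatic because $\hat 0, \hat 1 \in G \cup \{\hat 0,\hat 1\}$.
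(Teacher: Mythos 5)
The paper does not prove Theorem~\ref{elth}; it quotes it as a background result, attributed to Bj\"orner and to Bj\"orner--Wachs, so there is no in-paper proof to compare against. Your reconstruction is the standard shelling argument that underlies those citations, and for the EL case it is correct: order facets by lexicographic comparison of label words (breaking ties arbitrarily), take the restriction face of $F_C$ to be the set of elements of $C$ at weak-descent positions, and verify the shelling criterion. Your forward direction via length-two intervals is exactly right (the increasing chain of $[y_{i-1},y_{i+1}]$ is lexicographically strictly smaller, and substituting its middle element produces an earlier facet containing $F_C \setminus \{y_i\}$). Your converse via the block decomposition cut out by $G \cup \{\hat 0, \hat 1\}$ is also right: only ascent positions of $C$ are omitted, so $C$'s portion of each closed subinterval is the unique increasing, hence lex-least, maximal chain there, and a block-by-block comparison with any $D \supseteq G$ forces $\lambda(D) \succeq \lambda(C)$ with equality only if $D=C$. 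The identification of homology facets with ascent-free chains and the Euler-characteristic computation $\mu_P(\hat 0,\hat 1) = (-1)^{n-2}c = (-1)^n c$ are standard and correct. You accurately flag the one genuine gap, namely that for CL-labelings the label of a cover depends on a rooted chain beneath it, so the block-by-block comparison must be carried out tracking roots; you invoke the Bj\"orner--Wachs refinement for this rather than supplying it. A self-contained proof of the theorem as stated (which covers CL) would have to do that work, but since the paper itself only cites the result, your treatment is at the same level of rigor as the paper's.
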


To use Theorem \ref{elth} for our purposes, we need the following
result.

\begin{lemma}
Let $P$ be a bounded ranked poset.
Let $\lambda:\Cov(P) \rightarrow L$ be an EL-labeling  of $P$.  Let $(\hz_P,1)$ denote the minimum element of
$\wh{I_j(P)}$ and let $(\hat 1_P,j)$ denote the maximum element.  Define the edge labeling $$\lambda^+:\Cov(\wh{I_j(P)})
\rightarrow L \times \{0<1\}$$ by
\[
\lambda^+((x,h),(y,i))=(\lambda(x,y),i-h).
\]
Then $\lambda^+$ is an EL-labeling of $\wh{I_j(P)}$.
\label{elrees}
\end{lemma}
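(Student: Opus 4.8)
The plan is to verify the definition of an EL-labeling directly, interval by interval, using the fact that every saturated chain in the Rees product $P^-\ast C_n$ (and hence in $\wh{I_j(P)}$) decomposes into a saturated chain of $P$ together with a record of the steps at which the $C_n$-coordinate increases.

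First I would describe $\Cov(\wh{I_j(P)})$ and check that $\lambda^+$ is well defined. An interior cover has the form $(x,h)\lessdot(y,i)$ with $x\lessdot y$ in $P$ and $i-h\in\{0,1\}$; the bottom element $(\hz_P,1)$ is covered exactly by the pairs $(a,1)$ with $a$ an atom of $P$; and $(\hat 1_P,j)$ covers exactly the pairs $(c,i)$ with $c$ a coatom of $P$. In every case the first coordinate $(x,y)$ is a covering relation of the bounded poset $P$, so $\lambda(x,y)$ makes sense (including when $x=\hz_P$ or $y=\hat 1_P$), and the second coordinate lies in $\{0,1\}$; thus $\lambda^+$ is a well-defined edge labeling into $L\times\{0<1\}$.

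Next, for $(x,h)\le(y,i)$ in $\wh{I_j(P)}$, I would set up the bijection between maximal chains of $[(x,h),(y,i)]$ and pairs $(\mathcal C,S)$, where $\mathcal C\colon x=x_0\lessdot\cdots\lessdot x_m=y$ is a maximal chain of $[x,y]$ in $P$ and $S\subseteq[m]$ is the set of steps at which the second coordinate goes up, subject to $|S|=i-h$ and to natural in-interval inequalities on the partial sums of $S$. Under this bijection the $\lambda^+$-label sequence of the chain is $\bigl((\lambda(x_{k-1},x_k),\mathbf 1[k\in S])\bigr)_{k=1}^m$. Letting $\mathcal C_0$ be the unique $\lambda$-increasing maximal chain of $[x,y]_P$ guaranteed by the EL-property of $\lambda$, with strictly increasing labels $\mu_1<\cdots<\mu_m$, I would then show there is exactly one profile $S_0$ making $\lambda^+$ increasing along the lift of $\mathcal C_0$, and that no other profile on $\mathcal C_0$, and no lift of any other $\mathcal C$, is $\lambda^+$-increasing. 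Lexicographic minimality of this chain follows by comparing label sequences step by step: if the underlying $P$-chains differ, the comparison is inherited from $\lambda(\mathcal C_0)\prec\lambda(\mathcal C)$ at the first discrepant step; if they agree, it is decided by the $\{0<1\}$-coordinates at the first step where the profiles differ.

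The crux — and the step I expect to require the most care — is pinning down $S_0$ and proving its uniqueness: showing that, with the chosen order on $L\times\{0<1\}$, the strict increase of $\mu_1,\dots,\mu_m$ together with the in-interval constraints on $S$ leaves a single admissible increasing profile, and handling the adjoined bottom $(\hz_P,1)$ and top $(\hat 1_P,j)$ separately, since there the covering relations of $\wh{I_j(P)}$ are not inherited from $P^-\ast C_n$. Once this is done, the rest is a routine transcription of the EL-property of $\lambda$ through the chain decomposition.
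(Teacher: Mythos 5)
Your outline matches the paper's: encode each maximal chain of $[(w,k),(z,l)]$ in $\wh{I_j(P)}$ as a maximal chain of $[w,z]$ in $P$ together with a $\{0,1\}$-jump profile, and verify the two EL-conditions interval by interval. But you defer exactly the step that carries the content --- ``I would then show there is exactly one profile $S_0$\ldots and that no lift of any other $\mathcal{C}$ is $\lambda^+$-increasing'' --- and that step is not as routine as your closing sentence suggests. If $\lambda^+$ is increasing on a lift, the $\{0,1\}$-coordinates weakly increase (so equal $0^{m-l+k}1^{l-k}$) and the $L$-labels $a_1,\dots,a_m$ weakly increase, with strictness forced wherever the $\{0,1\}$-coordinate is constant; but at the one transition position $p=m-l+k$ the product order on $L\times\{0<1\}$ does not exclude a tie $a_p=a_{p+1}$. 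A general EL-labeling can carry a maximal chain whose label sequence is strictly increasing except for a single consecutive repeat, and lifting such a chain with the jump placed at the repeat produces a second $\lambda^+$-increasing chain, so ``no lift of any other $\mathcal{C}$ is increasing'' does not follow from the uniqueness clause of the EL-property alone. (The paper's own proof passes over this with a bare ``thus''; in its applications the label sequence of every maximal chain of $B_n$, or of $B_n(q)$ with Simion's labeling, is a permutation, so ties cannot occur --- that is what makes the step go through.) Your plan supplies neither the argument nor the observation that an extra feature of $\lambda$ is being used.

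Your lex-minimality step is also compressed in a way that hides real work. Because $L\times\{0<1\}$ is ordered as a product, at the first position $t$ where the two chains of $\wh{I_j(P)}$ disagree one must obtain the inequality in both coordinates, not only the $L$-coordinate inherited from $\lambda(\mathcal{C}_0)\prec\lambda(\mathcal{C})$. The paper closes this with a case split: if $t\le m-l+k$ the increasing chain's $\{0,1\}$-coordinate at step $t$ is $0$, while if $t>m-l+k$ a rank count forces the competitor's $\{0,1\}$-coordinate at step $t$ to be $1$; combined with the strict $L$-comparison when the $P$-elements at step $t$ differ (or the strict $0<1$ comparison when they coincide), this gives the required inequality in the product order. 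Filling these two points is the substance of the lemma; the surrounding chain bookkeeping is the routine part.
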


\begin{proof}
Let $(w,k)<(z,l)$ in $\wh{I_j(P)}$.  Then $w<z$ in $P$ and there is a
unique maximal chain $C=\{w=x_0<\dots<x_m=z\}$ in $[w,z]$ on
which $\lambda$ is increasing.  Thus if $\lambda^+$ is increasing on
the maximal chain $D=\{(y_0,f_0)<\dots<(y_m,f_m)\}$ in the
interval $I=[(w,k),(z,l)]$ then $y_j=x_j$ for $0 \leq j \leq m$.
Moreover, if $\lambda^+(D)=((a_1,d_1),\dots,(a_m,d_m))$ and
$\lambda^+$ is increasing on $D$ then we must have $d_i=0$ for $1 \leq
i \leq m-l+k$ and $d_i=1$ for $m-l+k<i \leq m$.  Since the
sequence $(d_1,\ldots,d_m)$, along with $f_0=k$ determines $f_i$
for $1 \leq i \leq m$, it follows that there is a unique maximal
chain $D$ in $I$ on which $\lambda^+$ is increasing.

Now let
$E=\{(v_0,e_0)< \dots <(v_m,e_m)\}$ be another maximal chain in
$I$.  Assume that $(v_i,e_i)=(y_i,f_i)$ for  $1 \leq i <t$ but
$(v_t,e_t) \neq (y_t,f_t)$.  If $v_t =y_t$ then clearly $e_t \neq f_t$.  If $v_t \neq y_t$ then we must have $\lambda(v_{t-1},v_t)>
a_t=\lambda(y_{t-1},y_t)$ in $L$.  Indeed, it is a basic property of EL-labelings that if $P$ is a poset with EL-labeling $\lambda$ then for each interval $[x,y]$, if $a$ covers $x$ in the unique increasing maximal chain of  $[x,y]$ and $b$ is an atom of $[x,y]$ other than $a$, then $\lambda(x,a) < \lambda(x,b)$ (cf. \cite[Proposition 2.5]{bj}, \cite[Lemma~5.3]{bjwa3}).
  In either case if
$t \leq m- l+k$ then we have $f_t=f_{t-1}$, and if $t>m-l+k$ then
$e_t=e_{t-1}+1$. It follows that in all cases we have
$\lambda^+((v_{t-1},e_{t-1}),(v_t,e_t)) >
\lambda^+((y_{t-1},f_{t-1}),(y_t,f_t))$ in $L \times \{0<1\}$.
Thus $\lambda^+(D) \prec \lambda^+(E)$ as required.
\end{proof}

\begin{remark} The EL-labeling $\Lambda^+$ given in Lemma~\ref{elrees} can be generalized in a straightforward way to the general case in which the chain $C_n$ in the Rees product $(P\setminus \{0\})*C_n$ is replaced by an arbitrary ranked EL-shellable poset.  An analogous results holds for CL-labelings. 
\end{remark} 

 Given an EL-labeling $\lambda^+$ as in Lemma \ref{elrees}, we
need to describe its ascent free chains.
For $k = 0,\dots, n-1$, let $S_{n,k}$ be the set of sequences $(0=d_1,\dots, d_n) \in \{0,1\}^n$ such that  the number of $d_i$ equal to 1 is $k$. Given any maximal chain $D = \{(x_0,f_0) < \dots <(x_n,f_n)\}$ of $\wh{I_j(P)}$, we have that $\{\hat 0_P=x_0 <x_1 <\dots <x_n=\hat 1_P\}$ is a maximal chain of $P$ and $(f_1-f_0, f_2-f_1, \dots,f_n-f_{n-1}) \in  S_{n,j-1}$.
Conversely,
given any maximal chain $C = \{\hat 0_P = x_0 <x_1< \dots <x_n = \hat 1_P\}$  of $P$ and any $d\in  S_{n,j-1}$, there is a unique maximal chain $D = \{(x_0,f_0) < \dots <(x_n,f_n)\}$ of $\wh{I_j(P)}$ such that $f_i-f_{i-1} = d_i$ for all $i\in [n]$.  Hence the maximal chains of $\wh{I_j(P)}$ can be identified with
pairs $(C,d)$ where $C$ is a maximal chain of $P$ and $d\in S_{n,j-1}$.
We have $$\lambda^+(C,d) = ((\lambda_1(C),d_1),\dots,(\lambda_n(C),d_n)), $$
where $\lambda_i(C)$ is the $ith$ entry $\lambda(x_{i-1},x_i)$ of $\lambda(C)$.
The following result clearly holds.
\begin{prop} \label{propascfree}   The maximal chain $(C,d)$ of $\wh{I_j(P)}$ is ascent free if and only if
 \begin{equation} \label{ascfreeeq} \forall i \in [n], \, \,\,\,\lambda_i(C) <\lambda_{i+1}(C) \implies d_i =1 \mbox{ and } d_{i+1} = 0. \end{equation}
holds.
\end{prop}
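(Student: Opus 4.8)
The plan is to prove the proposition by simply unwinding the definitions; it is a formal statement about the order relation on the label poset, with no content beyond what has already been set up.

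First I would recall, from the paragraph preceding the proposition, that a maximal chain $(C,d)$ of $\wh{I_j(P)}$ carries the $\lambda^+$-label sequence $\lambda^+(C,d)=\big((\lambda_1(C),d_1),\ldots,(\lambda_n(C),d_n)\big)$, an $n$-term sequence of elements of $L\times\{0<1\}$, and that by definition of an EL-labeling $(C,d)$ is ascent free exactly when no $i\in[n-1]$ is an ascent, i.e.\ when $(\lambda_i(C),d_i)\not<(\lambda_{i+1}(C),d_{i+1})$ in $L\times\{0<1\}$ for every such $i$. Next I would record when a strict inequality in $L\times\{0<1\}$ can hold: by the order used for $\lambda^+$ in Lemma~\ref{elrees}, one has $(\lambda_i(C),d_i)<(\lambda_{i+1}(C),d_{i+1})$ if and only if $\lambda_i(C)<\lambda_{i+1}(C)$ in $L$ and $d_i\le d_{i+1}$; and since $d_i,d_{i+1}\in\{0,1\}$, the condition $d_i\le d_{i+1}$ fails only in the single case $d_i=1$ and $d_{i+1}=0$.

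Putting these together, position $i$ is an ascent if and only if $\lambda_i(C)<\lambda_{i+1}(C)$ and $(d_i,d_{i+1})\neq(1,0)$; equivalently, position $i$ is \emph{not} an ascent if and only if $\lambda_i(C)<\lambda_{i+1}(C)$ forces $d_i=1$ and $d_{i+1}=0$. Conjoining over all $i\in[n-1]$ is precisely condition~(\ref{ascfreeeq}), and the proposition follows. The only thing to be careful about — and the step I would write out explicitly rather than invoke the word ``lexicographic'' loosely — is the order on $L\times\{0<1\}$: it is the hybrid relation ``first coordinate strictly smaller, second coordinate weakly smaller'', and it is exactly this asymmetry that produces the conclusion $d_i=1$, $d_{i+1}=0$ in~(\ref{ascfreeeq}) (rather than, say, $d_i=0$, $d_{i+1}=1$). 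Beyond pinning that down there is no genuine obstacle, which is why the paper can assert that the result ``clearly holds''.
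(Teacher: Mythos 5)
Your approach is the only sensible one — the proposition really is just a definition-chase — and your final biconditional is correct, so the proof goes through in the cases where the paper actually uses it. But the one step you flag as delicate is slightly misdiagnosed. You assert that the order on $L\times\{0<1\}$ is a hybrid one ("first coordinate strictly smaller, second coordinate weakly smaller"); under that order, however, $(a,0)$ and $(a,1)$ would be incomparable, and this contradicts the proof of Lemma~\ref{elrees}, which in the case $v_t=y_t$, $t\le m-l+k$ explicitly needs $(\lambda(y_{t-1},y_t),1)>(\lambda(y_{t-1},y_t),0)$ to conclude $\lambda^+(D)\prec\lambda^+(E)$. The order must therefore be the ordinary product order on $L\times\{0<1\}$. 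Under the product order your step-three biconditional ``$(\lambda_i,d_i)<(\lambda_{i+1},d_{i+1})$ iff $\lambda_i<\lambda_{i+1}$ and $d_i\le d_{i+1}$'' is not a formal consequence of the order alone: it also has the case $\lambda_i=\lambda_{i+1}$, $d_i<d_{i+1}$, which is an ascent but does not satisfy $\lambda_i<\lambda_{i+1}$. What makes the biconditional — and hence Proposition~\ref{propascfree} as stated — correct is the additional fact, implicit throughout this section, that consecutive labels of a maximal chain of $P$ are distinct; this is also what the paper tacitly uses in the $y_j=x_j$ step of Lemma~\ref{elrees}, and it does hold in both applications, since the $\lambda$-labels of a maximal chain of $B_n$ or of $B_n(q)$ are the entries of a permutation. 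So the asymmetry you correctly observed (it is $(d_i,d_{i+1})=(1,0)$ rather than $(0,1)$) comes not from a nonstandard order on the label set, but from the combination of the product order with the distinctness of consecutive $\lambda$-labels.
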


We turn now to the specific case where $P=B_n$.  The labeling
$\lambda:\Cov(B_n) \rightarrow \{1<2<\ldots <n\}$ given by
$\lambda(S,T)=x$ if $T \setminus S=\{x\}$ is an EL-labeling,
and for any maximal chain $C$ from $B_n$, each $i \in [n]$ appears
exactly once in the sequence $\lambda(C)$.  We can therefore view
the sequence $\lambda(C)$ as a permutation in $\S_n$ (in one
line notation).   Hence by the discussion preceding Proposition~\ref{propascfree}, we can identify label sequences of maximal chains of $\wh{I_j(B_n)}$
with pairs $(\sigma, d)$, where $\sigma \in \mathfrak S_n$ and $d \in S_{n,j-1}$.
Since each permutation in $\mathfrak S_n$ is the label sequence of a
unique maximal chain of $B_n$, each pair $(\sigma,d)$, where $\sigma \in \mathfrak S_n$ and $d \in S_{n,j-1}$,  is the label sequence of a unique maximal chain of $\wh{I_j(B_n)}$.
It will be convenient to view these pairs as barred permutations (that is,  type B permutations).
Indeed, we identify $(\sigma, d)$ with the barred permutation $\sigma^B$ in which
$$\sigma^B(i) = \begin{cases} \sigma(i) &\mbox{ if }d_i = 0 \\ \ol{\sigma(i)} &\mbox{ if }d_i = 1 \end{cases} $$ for each $i \in [n]$.  Let $|\sigma^B|$ be the permutation obtained from $\sigma^B$ by removing the bars.   Thus Proposition~\ref{propascfree} in the case of $P=B_n$ asserts that
$\sigma^B$ is the label sequence of an ascent free maximal chain of $\wh{I_j(B_n)}$ if and only if $\sigma^B$ is a barred permutation of length $n$  that satisfies
\begin{itemize}
\item[(A)] $\sigma^B$ has exactly $j-1$ bars
\item[(B)] $\sigma^B(1)$ is not barred
\item[(C)] $|\sigma^B|(i) <|\sigma^B|(i+1) \implies \sigma^B(i) \mbox{ is barred and } \sigma^B(i+1) \mbox { is not,}$ for each $i \in [n-1]$.
\end{itemize}

  Let $\mathcal B_{n,j-1}$ be the set of all such barred permutations.
  By Theorem~\ref{elth} we have the following result.

  \begin{thm} \label{eldim} For all $j\in[n]$, $$\dim \tilde H_{n-2}(I_j(B_n)) = |\mathcal B_{n,j-1}|.$$
  \end{thm}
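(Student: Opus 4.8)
The plan is to read off the statement from the lexicographic shellability of $\wh{I_j(B_n)}$. First I would take the standard edge labeling $\lambda$ of $B_n$ defined by $\lambda(S,T)=x$ whenever $T\setminus S=\{x\}$; this is an EL-labeling, and the label sequence of a maximal chain of $B_n$ is a permutation in $\sg_n$ written in one-line notation, with each permutation occurring exactly once. Applying Lemma~\ref{elrees} with $P=B_n$ produces the EL-labeling $\lambda^+$ of the bounded ranked poset $\wh{I_j(B_n)}$, which has length $n$ (its maximal chains have $n+1$ elements, as in the discussion preceding Proposition~\ref{propascfree}). Theorem~\ref{elth} then gives that $\Delta(I_j(B_n))$ is homotopy equivalent to a wedge of $c$ spheres of dimension $n-2$, where $c$ is the number of ascent-free maximal chains of $\wh{I_j(B_n)}$ relative to $\lambda^+$; in particular $\dim\tilde H_{n-2}(I_j(B_n))=c$.

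It then remains to count the ascent-free maximal chains, which is pure bookkeeping. By the discussion preceding Proposition~\ref{propascfree}, the maximal chains of $\wh{I_j(B_n)}$ correspond bijectively to pairs $(\sigma,d)$ with $\sigma\in\sg_n$ and $d\in S_{n,j-1}$, encoded as barred permutations $\sigma^B$ of length $n$ having exactly $j-1$ bars and with $\sigma^B(1)$ unbarred; these are conditions (A) and (B). Proposition~\ref{propascfree} says $(\sigma,d)$ is ascent-free precisely when $\lambda_i(C)<\lambda_{i+1}(C)$ forces $d_i=1$ and $d_{i+1}=0$ for every $i$, and translated through the encoding this is exactly condition (C). Hence the ascent-free maximal chains are in bijection with $\mathcal B_{n,j-1}$, so $c=|\mathcal B_{n,j-1}|$ and the theorem follows.

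There is no real obstacle here: the substantive work is entirely contained in Lemma~\ref{elrees} (verifying that $\lambda^+$ is an EL-labeling) and Proposition~\ref{propascfree} (identifying its ascent-free chains), both of which may be assumed. The only points deserving a sentence of care are that $\lambda$ is genuinely an EL-labeling of $B_n$ whose chain labels run bijectively over $\sg_n$ — standard, e.g. \cite{bj,w1} — and that $\wh{I_j(B_n)}$ is ranked of length $n$, so that Theorem~\ref{elth} places the (unique nonzero) reduced homology of $I_j(B_n)$ in degree $n-2$; the latter follows from the Cohen--Macaulayness of $B_n^-\ast C_n$ noted in Section~\ref{rep} together with the fact that maximal chains of $\wh{I_j(B_n)}$ have $n+1$ elements.
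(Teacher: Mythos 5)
Your proof is correct and follows essentially the same path as the paper: the standard EL-labeling of $B_n$, Lemma~\ref{elrees} to lift it to $\wh{I_j(B_n)}$, Proposition~\ref{propascfree} (specialized to $P=B_n$) to translate ascent-free chains into the barred permutations of $\mathcal B_{n,j-1}$, and Theorem~\ref{elth} to convert the count into a homology dimension. The only minor redundancy is invoking Cohen--Macaulayness to locate the homology in degree $n-2$; once $\lambda^+$ is known to be an EL-labeling of the ranked poset $\wh{I_j(B_n)}$ of length $n$, Theorem~\ref{elth} already gives the wedge-of-$(n-2)$-spheres statement directly.
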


  \begin{remark}  One can use Theorem~\ref{bncnsn} and Gessel's formla (\ref{ges}) to obtain an alternative proof of Theorem~\ref{eldim}, which does not involve lexicographic shellability.
  \end{remark}

Below we construct a  bijection between $\mathcal B_{n,j}$ and $\{\sigma \in \mathfrak S_n: \des(\sigma) = j\}$, resulting in an alternative proof that   $\dim \tilde H_{n-2}(I_j(B_n)) $ is the Eulerian number $a_{n,j-1}$.  But first we  use a result of Simion to derive a
 $q$-analog of Theorem~\ref{eldim}.

  \begin{thm}[Simion \cite{si}] There is an EL-labeling  $\lambda$ for $B_n(q)$ such that \begin{enumerate} \item for each maximal chain $C$ of $B_n(q)$, we have $\lambda(C) \in \mathfrak S_n$ \item for each $\sigma \in \mathfrak S_n$, there are $q^{\inv(\sigma)}$ maximal chains $C$ of $B_n(q)$ with $\lambda(C) = \sigma$.
  \end{enumerate}
  \end{thm}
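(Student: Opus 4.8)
The plan is to write down the labeling explicitly and then check, in order, condition (1), the EL property, and the enumerative condition (2). Fix the standard complete flag $0 = E_0 \subset E_1 \subset \cdots \subset E_n = \F_q^n$ with $E_i = \langle e_1,\dots,e_i\rangle$, and for $U \in B_n(q)$ define the \emph{jump set} $J(U) := \{\, i \in [n] : \dim(U\cap E_i) > \dim(U\cap E_{i-1})\,\}$, so that $|J(U)| = \dim U$. First note that $U \subseteq W$ forces $J(U)\subseteq J(W)$: if $i\in J(U)$ but $i\notin J(W)$ then $W\cap E_i\subseteq E_{i-1}$, hence $U\cap E_i\subseteq E_{i-1}$, contradicting $i\in J(U)$. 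In particular, for a cover $U\lessdot W$ the set $J(W)\setminus J(U)$ is a singleton, and we \emph{define} $\lambda(U,W)$ to be its unique element. For a maximal chain $C\colon 0 = V_0\lessdot V_1\lessdot\cdots\lessdot V_n = \F_q^n$ we then get $\emptyset = J(V_0)\subsetneq J(V_1)\subsetneq\cdots\subsetneq J(V_n) = [n]$, each step adding one element, so $\lambda(C) = (\lambda(V_0,V_1),\dots,\lambda(V_{n-1},V_n))$ is a listing of $[n]$, i.e.\ a permutation in $\S_n$; this is condition (1).

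For condition (2), I would identify $\lambda(C)$ with the relative position of the flag $(V_i)$ with respect to $E_\bullet$. Recall that this relative position is the unique $w\in\S_n$ with $\dim(V_i\cap E_j) = \#\{k\le i : w(k)\le j\}$ for all $i,j$; differencing in $j$ gives $\dim(V_i\cap E_j) - \dim(V_i\cap E_{j-1}) = 1$ exactly when $j\in\{w(1),\dots,w(i)\}$, so $J(V_i) = \{w(1),\dots,w(i)\}$ and hence $\lambda(C) = (w(1),\dots,w(n)) = w$. Since $\mathit{GL}_n(\F_q)$ acts transitively on complete flags with the stabilizer of $E_\bullet$ equal to the group $B$ of invertible upper triangular matrices, the flags at relative position $w$ from $E_\bullet$ form the single Bruhat cell $BwB/B$, of cardinality $q^{\ell(w)} = q^{\inv(w)}$; as $\inv(w) = \inv(w^{-1})$, any convention issue in the definition of relative position is harmless. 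Thus exactly $q^{\inv(\sigma)}$ maximal chains $C$ satisfy $\lambda(C) = \sigma$. (A convention-free alternative is a direct recursion: a line $V_1$ with $J(V_1) = \{\sigma_1\}$ may be chosen in $q^{\sigma_1-1}$ ways, $\sigma_1-1$ is precisely the number of inversions of $\sigma$ using position $1$, and passing to $\F_q^n/V_1$ reduces the remaining count to the standardization of $(\sigma_2,\dots,\sigma_n)$; induction then finishes it.)

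Finally I would check the EL conditions. The structural point is that every interval $[U,W]$ of $B_n(q)$ is again a subspace lattice carrying a labeling of the same kind: $X\mapsto X/U$ identifies $[U,W]$ with the subspace lattice of $\bar W := W/U$, and writing $\bar E_{(i)}$ for the image of $W\cap E_i$ in $\bar W$ one computes $\dim \bar E_{(i)} = \dim(W\cap E_i) - \dim(U\cap E_i)$, a nondecreasing function of $i$ running from $0$ to $\dim\bar W$ and jumping exactly at $i\in J(W)\setminus J(U)$; so the distinct $\bar E_{(i)}$ form a complete flag of $\bar W$, and under the identification the restriction of $\lambda$ to $[U,W]$ is exactly the jump-set labeling attached to that flag, with label alphabet $J(W)\setminus J(U)$ taken in its natural order. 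Being order-isomorphic to the labeling of all of $B_m(q)$ with $m = \dim\bar W$, it therefore suffices to verify the EL conditions on the whole poset $B_n(q)$. But a maximal chain whose label sequence is strictly increasing must have label sequence $(1,2,\dots,n)$, which forces $J(V_i) = \{1,\dots,i\}$, hence $V_i = E_i$; so the standard flag is the unique increasing maximal chain, and its label $(1,\dots,n)$ is lexicographically least among all label sequences (all of which are permutations of $[n]$). This yields the EL conditions for every interval, so $\lambda$ is an EL-labeling.

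The main obstacle I anticipate is not any single computation but the bookkeeping in the interval-reduction step: one must check carefully that the restriction of $\lambda$ to an arbitrary interval is, after an order-preserving relabeling of the alphabet, genuinely a jump-set labeling of a smaller subspace lattice relative to an induced flag. Once that is in hand, condition (1) is immediate, condition (2) is the Bruhat cell cardinality after the identification of $\lambda(C)$ with the relative-position permutation, and the EL verification collapses to the trivial observation that the only increasing permutation of $[n]$ is the identity.
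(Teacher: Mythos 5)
The paper cites this result to Simion \cite{si} without giving a proof, so there is no in-paper argument to compare against; I am therefore assessing your proof on its own merits. Your jump-set (Schubert cell) labeling relative to the standard flag $E_\bullet$ is a valid construction and the argument is correct. Condition (1) is immediate from the strict nesting $\emptyset = J(V_0) \subsetneq J(V_1) \subsetneq \cdots \subsetneq J(V_n) = [n]$, each step adding one element. Condition (2) follows either from the Bruhat cell count $|BwB/B| = q^{\ell(w)}$ once $\lambda(C)$ is identified with the relative position of the flag $(V_i)$ with respect to $E_\bullet$, or from your peeling recursion, both of which check out. Your EL verification via the quotient $W/U$ is the right mechanism; note that the reduction lands you in a subspace lattice of dimension $m$ equipped with the induced flag $\bar E_\bullet$, which is generally \emph{not} the standard flag, so the phrasing ``it suffices to verify the EL conditions on the whole poset $B_n(q)$'' should really be ``the full-poset argument works relative to an arbitrary complete flag.'' Fortunately your full-poset argument (an increasing label sequence must be $(1,\dots,n)$, which forces $J(V_i) = \{1,\dots,i\}$, hence $V_i$ equals the $i$th flag step, and $(1,\dots,n)$ is lexicographically least among permutations) uses nothing special about the standard coordinates, so it applies verbatim to any complete flag and the reduction closes. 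Whether your labeling coincides with Simion's original one is immaterial, since the theorem asserts only existence.
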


    It follows from (1)  of Simion's result that  for each maximal chain $(C,d)$ of $\wh{I_j(B_n(q))}$,  the label sequence $\lambda^+(C,d)$ can be viewed as a barred permutation of length $n$. Hence by Proposition~\ref{propascfree}, a maximal chain $(C,d)$   is  ascent-free if and only if
$\lambda^+(C,d) \in \mathcal B_{n,j-1}$.
By (2) of Simion's result we have that for each $\sigma^B \in \mathcal B_{n,j-1}$, the number of maximal chains of $\wh{I_j(B_n(q))}$ whose label seqence is $\sigma^B$ is $q^{\inv(|\sigma^B|)}$.  Thus by Theorem~\ref{elth}, we have the following $q$-analog of Theorem~\ref{eldim}.

   \begin{thm}\label{dimhomel} For all $j\in[n]$, $$\dim \tilde H_{n-2}(I_j(B_n(q))) = \sum_{\sigma^B\in \mathcal B_{n,j-1}}  q^{\inv(|\sigma^B|)}.$$
  \end{thm}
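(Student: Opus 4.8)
The plan is to run the theory of lexicographic shellability exactly as in the proof of Theorem~\ref{eldim}, but now keeping track of the multiplicities introduced by Simion's labeling of $B_n(q)$. First I would feed Simion's EL-labeling $\lambda$ of $B_n(q)$ into Lemma~\ref{elrees} to obtain the EL-labeling $\lambda^+$ of $\wh{I_j(B_n(q))}$. Since $\wh{I_j(B_n(q))}$ has length $n$, Theorem~\ref{elth} tells us that $\Delta(I_j(B_n(q)))$ is homotopy equivalent to a wedge of $c$ spheres of dimension $n-2$, where $c$ is the number of ascent-free maximal chains of $\wh{I_j(B_n(q))}$; in particular $\dim \tilde H_{n-2}(I_j(B_n(q))) = c$. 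So the whole problem reduces to counting ascent-free maximal chains.

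Next I would identify those chains. By the discussion preceding Proposition~\ref{propascfree}, every maximal chain of $\wh{I_j(B_n(q))}$ is a pair $(C,d)$, where $C$ is a maximal chain of $B_n(q)$ and $d \in S_{n,j-1}$, and $\lambda^+(C,d) = ((\lambda_1(C),d_1),\dots,(\lambda_n(C),d_n))$. Property (1) of Simion's theorem gives $\lambda(C) \in \mathfrak S_n$, so $\lambda^+(C,d)$ can be read as a barred permutation $\sigma^B$ with $|\sigma^B| = \lambda(C)$ and bars recorded by $d$. Proposition~\ref{propascfree} then says that $(C,d)$ is ascent-free precisely when $\sigma^B$ satisfies conditions (A), (B), (C) defining $\mathcal B_{n,j-1}$. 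This translation is the same one already carried out for $P = B_n$; the point to verify is that it goes through verbatim once $\lambda$ is Simion's labeling rather than the Boolean one, which it does because that argument used only property (1) of the labeling (i.e.\ that each $\lambda(C)$ is a permutation) together with Proposition~\ref{propascfree}.

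Finally I would count with multiplicity. The crucial observation is that whether $(C,d)$ is ascent-free depends on $C$ only through $\lambda(C)$, hence only on the barred permutation $\sigma^B = (|\sigma^B|,d)$; and conversely $d$ is recovered from $\sigma^B$. Thus the ascent-free maximal chains of $\wh{I_j(B_n(q))}$ are exactly the pairs $(C,d)$ with $(\lambda(C),d) \in \mathcal B_{n,j-1}$, and for a fixed $\sigma^B \in \mathcal B_{n,j-1}$ the number of maximal chains $C$ of $B_n(q)$ with $\lambda(C) = |\sigma^B|$ is $q^{\inv(|\sigma^B|)}$ by property (2) of Simion's theorem. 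Summing over $\mathcal B_{n,j-1}$ yields $c = \sum_{\sigma^B \in \mathcal B_{n,j-1}} q^{\inv(|\sigma^B|)}$, which with Theorem~\ref{elth} gives the asserted formula. The only genuine obstacle is the bookkeeping of the middle paragraph --- confirming that the ascent-free condition for $\lambda^+$ is insensitive to which chain $C$ realizes a given label $\sigma$ --- but this is immediate once one notes that $\lambda^+(C,d)$ depends on $C$ solely through $\lambda(C)$.
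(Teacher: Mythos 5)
Your proposal is correct and follows the paper's argument exactly: feed Simion's EL-labeling into Lemma~\ref{elrees}, invoke Theorem~\ref{elth}, identify ascent-free chains with elements of $\mathcal B_{n,j-1}$ via Proposition~\ref{propascfree} and Simion's property (1), and count them with multiplicity using Simion's property (2). Nothing to add.
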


For our induction proofs, we need to extend the definition of $\mathcal B_{n,j}$ to barred permutations over arbitrary finite subsets $X$ of $\PP$.    A barred permutation over $X$ is a linear arrangement of the elements of $X$ with bars over some (or none) of the elements.   Let $\mathcal B_X$ be the set of barred permutations $\sigma^B$ of $X$ that satisfy (B) and (C) of the definition of $\mathcal B_{n,j}$ given above.  Let $ \mathfrak S_X$ be the set of ordinary permutations of $X$.  If $X =\emptyset$ then $\mathfrak S_X = \mathcal B_X = \{\theta\}$.   For $\sigma^B \in \mathcal B_X$, let $|\sigma^B|$ be the permutation in $\mathfrak S_X$ obtained by removing the bars from $\sigma^B$.

 Given barred permutations $\alpha \in \mathcal B_A$ and $\beta \in \mathcal B_B$, where $A$ and $B$ are disjoint, let $\alpha \cdot \beta$ denote the barred permutation in
 $\mathcal B_{A\uplus B} $ obtained by concatenating the words $\alpha$ and $\beta$.
    We  define a map  $$\varphi: \biguplus_{\scriptsize \begin{array}{c} X \subset \PP \\ |X| < \infty\end{array} } \mathcal B_X \to \biguplus_{\scriptsize \begin{array}{c} X \subset \PP \\ |X| < \infty\end{array} } \mathfrak S_X,$$ recursively by
     $$\varphi(\sigma^B) = \begin{cases} \theta &\mbox{ if } \sigma^B= \theta
     \\ m\cdot \varphi(\alpha) &\mbox{ if  }\sigma^B  = \alpha \cdot  m \\ \varphi(\alpha)\cdot m &\mbox{ if } \sigma^B= \alpha \cdot \bar m  \mbox{ and }  \alpha \ne \theta  \\
     \varphi(\beta)\cdot m \cdot \varphi(\alpha) &\mbox{ if } \sigma^B= \alpha\cdot \bar m \cdot \beta \mbox{ and }  \alpha,\beta \ne \theta,  \end{cases}
  $$
 where   $m$ is the minimum letter of $|\sigma^B|$.

 \begin{lemma} \label{bijlem} The map $\varphi$ is a well-defined bijection which satisfies
 \begin{enumerate}
 \item $\varphi(\mathcal B_X) = \mathfrak S_X$
 \item $\des(\varphi(\sigma^B) )= \bars(\sigma^B)$, where $\bars(w)$ denotes the number of barred letters of  a barred permutation $w$
 \end{enumerate} for all finite subsets $X $ of $\PP$ and $\sigma^B \in \mathcal B_X$. \end{lemma}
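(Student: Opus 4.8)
The plan is to prove, by strong induction on $|X|$, the more precise assertion that for every finite $X \subset \PP$ the restriction of $\varphi$ to $\mathcal B_X$ is a bijection onto $\mathfrak S_X$ satisfying $\des(\varphi(\sigma^B)) = \bars(\sigma^B)$ for all $\sigma^B \in \mathcal B_X$. The statement of the lemma follows at once, since each $\mathcal B_X$ is then mapped bijectively onto $\mathfrak S_X$, giving both a global bijection and part (1).

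First I would check that $\varphi$ is unambiguously defined on $\mathcal B_X$. Let $m = \min X$. Condition (C) in the definition of $\mathcal B_X$ forces any unbarred occurrence of $m$ to lie in the last position (an earlier occurrence has a strictly larger successor, hence, by (C), would have to be barred), and condition (B) forbids $\bar m$ in the first position. Hence exactly one of the four clauses defining $\varphi$ applies to a given $\sigma^B \in \mathcal B_X$, and each recursive call strictly shrinks the ground set, so $\varphi$ is well defined; moreover $\varphi(\sigma^B)$ is visibly a rearrangement of $X$. The key point for the induction is that the factors produced by the decomposition again lie in the appropriate $\mathcal B$: in $\sigma^B = \alpha\cdot\bar m\cdot\beta$ (and likewise in the other clauses) the prefix $\alpha$ inherits (B) and (C) because the position just before $\bar m$ carries a descent of $|\sigma^B|$ and so imposes no condition via (C), while the suffix $\beta$ inherits (B) because (C) applied at the position of $\bar m$ forces the letter following $\bar m$ to be unbarred. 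Conversely, appending $m$ or $\bar m$ to elements of $\mathcal B$ in the prescribed ways always yields an element of $\mathcal B_X$, by the same check of (B) and (C).

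These observations yield a bijection
$$\mathcal B_X \;\longleftrightarrow\; \mathcal B_{X\setminus\{m\}} \ \uplus\ \bigl(\mathcal B_{X\setminus\{m\}}\setminus\{\theta\}\bigr) \ \uplus \biguplus_{\substack{A\uplus B = X\setminus\{m\}\\ A,B\neq\emptyset}} \mathcal B_A \times \mathcal B_B ,$$
the three blocks corresponding respectively to $m$ unbarred (at the end), $\bar m$ at the end, and $\bar m$ in the interior. The set $\mathfrak S_X$ admits the exactly parallel decomposition according to whether $m$ is first, last (but not alone), or interior, and the four clauses defining $\varphi$ say precisely that $\varphi$ carries each block of the first decomposition to the corresponding block of the second, acting as $\varphi$ (respectively as $(\alpha,\beta)\mapsto(\varphi(\beta),\varphi(\alpha))$) on the constituent factors. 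By the inductive hypothesis $\varphi$ is a bijection on every factor, hence on every block, hence $\varphi\colon \mathcal B_X \to \mathfrak S_X$ is a bijection.

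Part (2) then follows by tracking descents through the same case split, using $m = \min X$: putting $m$ at the front of a word creates no descent, putting it at the end creates exactly one, and splicing $u\cdot m\cdot v$ with $u,v$ nonempty creates exactly one descent, namely at the step $u\to m$ (the step $m\to v$ being an ascent). In each clause this matches the change in $\bars$ — which is $0$ when $m$ is appended unbarred and $1$ when $\bar m$ is appended or spliced in — so the equality $\des = \bars$ propagates from $\alpha$ (and $\beta$) up to $\sigma^B$. I expect the only genuinely delicate step to be the case analysis of the second paragraph: verifying that conditions (B) and (C) force the claimed location of the minimum letter and are simultaneously inherited by, and recoverable from, the pieces $\alpha$ and $\beta$. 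Once that combinatorial skeleton is in place, the bijectivity and the descent bookkeeping are routine inductions.
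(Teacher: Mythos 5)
Your proof is correct and follows essentially the same route as the paper's: an induction on $|X|$ driven by the position of the minimum letter $m$, together with the observation that conditions (B) and (C) force an unbarred $m$ to the end and forbid $\bar m$ at the front, and that (B), (C) pass to the pieces $\alpha,\beta$. The only presentational difference is that the paper exhibits the inverse map $\psi$ explicitly and verifies that $\varphi$ and $\psi$ are mutually inverse, whereas you argue bijectivity by matching the block decomposition of $\mathcal B_X$ (indexed by the position of $m$ or $\bar m$) with the parallel decomposition of $\mathfrak S_X$ by the position of $m$; these two arguments are substantively equivalent.
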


\begin{proof} By (C) of the definition of $\mathcal B_X$,  if letter $m $ is unbarred in the word $\sigma^B \in \mathcal B_X$ then it is the last letter of
   $\sigma^B $.  By (B) if $m$ is barred it cannot be the first letter.  Hence the four cases of the definition of $\varphi$ cover all possibilities.   It is  also clear from the definition of $\mathcal B_X$ that if $\sigma^B \in \mathcal B_X$ and $|X| >1$ then  $\alpha \in \mathcal B_A$ and $\beta \in \mathcal B_{X\setminus( {A}\cup \{m\})}$ for some subset $A\subsetneq X$.  Hence by induction on $|X|$ we have that $\varphi$ is a well-defined map that takes elements of
$\mathcal B_X$ to $\mathfrak S_X$.

To show that $\varphi$ is a bijection satisfying (1) we  construct its inverse.  Define
$$\psi: \biguplus_{\scriptsize \begin{array}{c} X \subset \PP \\ |X| < \infty\end{array} } \mathfrak S_X \to \biguplus_{\scriptsize \begin{array}{c} X \subset \PP \\ |X| < \infty\end{array} } \mathcal B_X,$$ recursively by
  $$\psi(\sigma) = \begin{cases} \theta &\mbox{ if $ \sigma= \theta$} \\ \psi(\delta) \cdot m &\mbox{ if $\sigma= m \cdot \delta$}\\ \psi(\gamma)\cdot \bar m &\mbox{ if }\sigma = \gamma \cdot m \mbox{ and }  \gamma \ne \theta
  \\ \psi(\delta)\cdot  \bar m\cdot  \psi(\gamma) &\mbox{ if }\sigma= \gamma \cdot m \cdot  \delta \mbox{ and }  \gamma,\delta \ne \theta \end{cases},
  $$
 where $m$ is the minimum element of $\sigma$.
 One can see that conditions (B) and (C) of the definition of $\mathcal B_X$ hold for $\psi(\sigma)$ whenever they hold for $\psi(\gamma) $ and $\psi(\delta) $.  Hence by induction   $\psi$ is a well defined map.  One can easily also show by induction that $\varphi$ and $\psi$ are inverses of each other.

We also prove (2) by induction on $|X|$, with the base case $|X|=0$ being trivial.  We do the fourth case and leave the others to the reader.    Clearly
$$\bars(\sigma^B) = \bars(\alpha) + 1 + \bars(\beta).$$
 Since $m$ is the smallest element of $X$ and is not the first letter of $\varphi(\sigma^B)$, we have $$\des(\varphi(\sigma^B)) = \des(\varphi(\beta)) +1 + \des(\varphi(\alpha)).$$
 By induction we conclude that (2) holds in this case.
 \end{proof}

We now describe the permutation statistic that corresponds to $\inv(| \cdot|)$ under the map $\varphi$.
For a permutation $\sigma \in \mathfrak S_X$, an {\it admissible
inversion} of $\sigma$ is a pair $(\sigma(i),\sigma(j))$ such that
\begin{itemize}
\item $1 \leq i<j \leq n$ \item $\sigma(i) >\sigma(j)$, and \item
either \begin{itemize} \item $j<n$ and $\sigma(j)<\sigma(j+1)$ or
\item there is some $k$ such that $i<k<j$ and $\sigma(k)<\sigma(j)$.
\end{itemize} \end{itemize}

We write $\rm{ai}(\sigma)$ for the number of admissible inversions of
$\sigma$.  For example, if $\sigma = 3167542$  then the admissible inversions are
$(3,1)$ and $(3,2)$.  So $\rm{ai}(\sigma) = 2$.

\begin{lemma} \label{invtoai}  For all $\sigma^B \in \mathcal B_X $, $$\inv(|\sigma^B|) = {|X| \choose 2}- \ai(\varphi(\sigma^B) )$$ \end{lemma}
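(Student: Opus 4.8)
The plan is to prove Lemma~\ref{invtoai} by induction on $|X|$, paralleling the recursive definition of the bijection $\varphi$ from Lemma~\ref{bijlem}. The base case $|X| \le 1$ is immediate since both sides vanish. For the inductive step I would fix $\sigma^B \in \mathcal B_X$ with $|X| = N \ge 2$, let $m$ be the minimum letter of $X$, and split into the same four cases used to define $\varphi$. In each case I must compare $\inv(|\sigma^B|)$ with $\inv$ of the smaller barred permutation(s) obtained by deleting $m$, compare $\ai(\varphi(\sigma^B))$ with $\ai$ of the corresponding permutation(s) in $\mathfrak S_X$ of smaller size, and verify the bookkeeping balances against the change in $\binom{|X|}{2}$.

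The first case, $\sigma^B = \alpha \cdot m$ with $m$ unbarred and last: here $|\sigma^B| = |\alpha| \cdot m$, and since $m$ is the smallest letter, every element of $\alpha$ forms an inversion with the final $m$, so $\inv(|\sigma^B|) = \inv(|\alpha|) + (N-1)$. On the other side $\varphi(\sigma^B) = m \cdot \varphi(\alpha)$; since $m$ is smallest and sits in position $1$, it contributes no inversions and I must check it creates no new admissible inversions either (a pair involving $m$ as the larger element is impossible; as the smaller element it could only help other pairs become admissible, but prepending the global minimum does not change which $(\sigma(i),\sigma(j))$ with $i,j \ge 2$ are admissible—the ``some $k$ with $\sigma(k) < \sigma(j)$'' or the ``$\sigma(j) < \sigma(j+1)$'' conditions are unaffected by an element before position $i$, and a pair with $i=1$ would need $m > \sigma(j)$, impossible). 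Hence $\ai(\varphi(\sigma^B)) = \ai(\varphi(\alpha))$, and $\binom{N}{2} - \binom{N-1}{2} = N-1$ matches exactly. The second case $\sigma^B = \alpha \cdot \bar m$ with $\alpha \ne \theta$ is the key delicate one: now $\varphi(\sigma^B) = \varphi(\alpha) \cdot m$, so $m$ is the last entry of an $N$-element permutation and forms $N-1$ inversions with everything before it; I need to show exactly which of these are admissible. Since $m$ is globally minimal, the pair $(\sigma(i), m)$ is an inversion for all $i < N$, and it is admissible iff there is $k$ with $i < k < N$ and $\sigma(k) < m$ — impossible — or $j = N$ is not the last position — false. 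Wait: so no inversion ending at the final $m$ is admissible, giving $\ai(\varphi(\sigma^B)) = \ai(\varphi(\alpha))$; meanwhile $\inv(|\sigma^B|) = \inv(|\alpha|) + 0$ since $\bar m$ at the end of $|\sigma^B|$ — no, $|\sigma^B| = |\alpha|\cdot m$ too, so $\inv(|\sigma^B|) = \inv(|\alpha|) + (N-1)$ again. Then I would need $\binom{N}{2} - \ai(\varphi(\sigma^B)) = \inv(|\alpha|) + (N-1)$, i.e.\ $\binom{N-1}{2} + (N-1) - \ai(\varphi(\alpha)) = \inv(|\alpha|) + (N-1)$, which is exactly the inductive hypothesis for $\alpha$. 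Good.

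For the remaining two cases ($\sigma^B = m \cdot \delta$ cannot occur since $m$ barred-or-not: if $m$ is unbarred it violates being non-first unless it's the whole word; actually the relevant fourth case is $\sigma^B = \alpha \cdot \bar m \cdot \beta$ with $\alpha, \beta \ne \theta$), I would carefully track the split: $|\sigma^B| = |\alpha| \cdot m \cdot |\beta|$ gives $\inv(|\sigma^B|) = \inv(|\alpha|) + \inv(|\beta|) + |\alpha| + |\beta| + (\text{inversions between }|\alpha|\text{ and }|\beta|)$, and $\varphi(\sigma^B) = \varphi(\beta) \cdot m \cdot \varphi(\alpha)$, where the block $\varphi(\beta)$ comes entirely before the block $\varphi(\alpha)$, so one must understand inversions and admissible inversions across these blocks and involving the central $m$. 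The main obstacle I anticipate is precisely this fourth case: correctly accounting for admissible inversions that cross the $\varphi(\beta)\mid m\mid \varphi(\alpha)$ boundaries, since admissibility is a non-local condition (it depends on later entries or intervening smaller entries), and showing the cross terms rearrange into $\binom{|X|}{2}$ minus the sum of the two inductive contributions. A clean way to handle this is to observe that $m$, being the global minimum, makes the ``intervening smaller element'' clause automatically satisfiable for many pairs, and to organize the count by whether a pair lies within a block, straddles $m$, or straddles both blocks, reducing everything to the two inductive hypotheses plus elementary counting of the form $\binom{a+b+1}{2} = \binom{a}{2} + \binom{b}{2} + ab + a + b$.
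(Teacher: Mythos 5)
Your overall strategy is exactly the paper's: induction on $|X|$ following the recursive cases of $\varphi$, and your treatment of $\sigma^B=\alpha\cdot m$ and $\sigma^B=\alpha\cdot\bar m$ is correct. Two issues in the fourth case $\sigma^B=\alpha\cdot\bar m\cdot\beta$ (with $\alpha\in\mathcal B_A$, $\beta\in\mathcal B_B$, both nonempty) keep this from being a proof. The first is a computational slip: your $\inv$ formula carries a spurious $+|\beta|$. Since $m$ is the global minimum and sits between $|\alpha|$ and $|\beta|$, it forms $|A|$ inversions with $|\alpha|$ and none with $|\beta|$, so the correct count is $\inv(|\sigma^B|)=\inv(|\alpha|)+\inv(|\beta|)+|A|+\inv(A,B)$, where $\inv(A,B):=|\{(a,b):a\in A,\,b\in B,\,a>b\}|$. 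Your formula would give the wrong answer when pushed through.

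The second, more important issue is that the step on which the whole lemma hinges is the identity
$$\ai\bigl(\varphi(\beta)\cdot m\cdot\varphi(\alpha)\bigr)=\ai(\varphi(\beta))+|B|+\ai(\varphi(\alpha))+\inv(B,A),$$
and you only gesture at it. Your plan of classifying pairs by how they sit relative to the two blocks and $m$ is correct, but you should actually carry it out: pairs inside $\varphi(\beta)$ (resp.\ $\varphi(\alpha)$) contribute exactly $\ai(\varphi(\beta))$ (resp.\ $\ai(\varphi(\alpha))$), because the ``$\sigma(j)<\sigma(j+1)$'' test at the right boundary of each block fails in the long word for the same reason it fails in the block alone — the next entry is $m$ (smaller) or does not exist; every pair $(i,|B|+1)$ with $i\le |B|$ is an inversion and is admissible since $\tau(|B|+1)=m<\tau(|B|+2)$ (this uses $\alpha\ne\theta$), giving $|B|$; every $B$--$A$ inversion straddling both blocks is admissible via $k$ at $m$'s position, giving $\inv(B,A)$; and no pair beginning at $m$'s position is an inversion. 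Plugging this together with the corrected $\inv$ formula and $\binom{|A|+|B|+1}{2}=\binom{|A|}{2}+\binom{|B|}{2}+|A||B|+|A|+|B|$ closes the induction, but as written the fourth case is a sketch rather than a proof.
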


\begin{proof} Our proof proceeds by induction
on $n=|X|$, the case $n=0$ being trivial.

 If $\sigma^B  = \alpha \cdot  m $ then
\begin{eqnarray*}
\ai( \varphi(\sigma^B)) & = & \ai(m\cdot \varphi(\alpha)) \\ & = &  \ai( \varphi(\alpha)) \\
& = & {{n-1} \choose {2}}-\inv(|\alpha|) \\ & = & {{n} \choose
{2}}-(\inv(|\alpha|)+n-1) \\ & = & {n \choose 2}-\inv(|\alpha \cdot m|).
\end{eqnarray*}
Indeed, the first two equalities follow immediately from the
definitions and the third follows from our inductive hypothesis.

If $\sigma^B=\alpha \cdot \bar m$ then we derive
as in the case just above that
\begin{eqnarray*}
\ai( \varphi(\sigma^B))& = &\ai(  \varphi(\alpha) \cdot m)
\\ & = &\ai(  \varphi(\alpha) )  \\
& = & {n-1 \choose 2}-\inv(|\alpha |)  \\
& = & {n \choose 2}-\inv(|\alpha \cdot \bar m|).
\end{eqnarray*}

Finally, say $\sigma^B=\alpha \cdot  \ov{m}\cdot  \beta$ with $\alpha \in
\mathcal B_A$ and $\beta \in \mathcal B_B$, where $|A|  > 0$ and $|B|  >0$.
Set
$\inv(A,B):=|\{(a,b):a \in A,b\in B,a>b\}$   It follows quickly
from the inductive hypothesis and the definitions that
\begin{eqnarray*}
\ai( \varphi(\sigma^B))& = &\ai(  \varphi(\beta)\cdot  m\cdot \varphi(\alpha) )
\\ & = &\ai( \varphi(\beta))+ |B|+\ai( \varphi(\alpha) ) +\inv(B,A)
\\& = & {{|B|}
\choose {2}}-\inv(|\beta|)+n-1-|A|
\\ & & +{{|A|} \choose
{2}}-\inv(|\alpha|)+|A||B|-\inv(A,B).
\end{eqnarray*}

Now
\[
\inv(|\alpha\cdot \ov{m}\cdot \beta|)=\inv(|\alpha|)+|A|+\inv(|\beta|) + \inv(A,B)
\]
and a straightforward calculation shows that
\[
{{|B|} \choose {2}}+n-1+{{|A|} \choose {2}}+|A||B|={{n}
\choose {2}}.
\]
\end{proof}

By Theorem~\ref{dimhomel} and Lemmas~\ref{bijlem} and \ref{invtoai} we obtain the following result.
\begin{thm} \label{dimai}For all $j \in [n]$,  $$\dim \tilde H_{n-2}(I_j(B_n(q))) = \sum_{\scriptsize\begin{array}{c}\sigma\in \mathfrak S_n\\ \des(\sigma) = j-1\end{array}}  q^{{n\choose 2}-\ai(\sigma)}.$$
\end{thm}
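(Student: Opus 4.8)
The plan is to derive the statement purely by combining the three preceding results: the EL-shellability count in Theorem~\ref{dimhomel}, the bijection $\varphi$ of Lemma~\ref{bijlem}, and the inversion identity of Lemma~\ref{invtoai}. No new idea beyond bookkeeping is needed.

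First I would apply Theorem~\ref{dimhomel} to write
\[ \dim \tilde H_{n-2}(I_j(B_n(q))) = \sum_{\sigma^B \in \mathcal B_{n,j-1}} q^{\inv(|\sigma^B|)}. \]
Next I would observe that, by the defining conditions (A), (B), (C), the set $\mathcal B_{n,j-1}$ is precisely $\{\sigma^B \in \mathcal B_{[n]} : \bars(\sigma^B) = j-1\}$; hence by parts (1) and (2) of Lemma~\ref{bijlem}, $\varphi$ restricts to a bijection from $\mathcal B_{n,j-1}$ onto $\{\sigma \in \mathfrak S_n : \des(\sigma) = j-1\}$. Finally I would invoke Lemma~\ref{invtoai} with $X=[n]$ to rewrite the exponent as $\inv(|\sigma^B|) = \binom{n}{2} - \ai(\varphi(\sigma^B))$, and re-index the sum along $\sigma = \varphi(\sigma^B)$ to obtain
\[ \dim \tilde H_{n-2}(I_j(B_n(q))) = \sum_{\substack{\sigma \in \mathfrak S_n \\ \des(\sigma) = j-1}} q^{\binom{n}{2}-\ai(\sigma)}, \]
which is exactly the asserted identity.

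At this stage there is no genuine obstacle: all the substance has already been carried out in Theorems~\ref{dimhomel}, \ref{bijlem}, and \ref{invtoai}, whose proofs in turn rest on Simion's EL-labeling, the labeling lemma~\ref{elrees}, and the recursive analyses of the maps $\varphi$, $\psi$. The only point requiring a moment of care in assembling this proof is confirming that the $j-1$ bars of an element of $\mathcal B_{n,j-1}$ go over under $\varphi$ to exactly $j-1$ descents, so that the index set of the final sum is the correct one; this is immediate from part (2) of Lemma~\ref{bijlem}, which records $\des(\varphi(\sigma^B)) = \bars(\sigma^B)$.
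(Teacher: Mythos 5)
Your proof is correct and follows exactly the route the paper takes: the paper's own proof is simply the one-line citation of Theorem~\ref{dimhomel}, Lemma~\ref{bijlem}, and Lemma~\ref{invtoai}, and you have spelled out the same chain of substitutions.
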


Now define the permutation statistic $$\aid(\sigma) := \ai(\sigma) + \des(\sigma)$$
for all $\sigma \in \mathfrak S_n$.
By combining Theorem~\ref{dimai} with Theorem~\ref{bncnq} we arrive at,

\begin{thm}\label{equidistth} For all $n\ge 0$, $$ \sum_{\sigma \in \mathfrak S_n} q^{\aid(\sigma)} t^{\des(\sigma)} =  \sum_{\sigma \in \mathfrak S_n} q^{\maj(\sigma)} t^{\exc(\sigma)}.$$
\end{thm}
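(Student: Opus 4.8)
The plan is to prove Theorem~\ref{equidistth} by chaining together two of the identities established earlier in the excerpt, both of which express $\dim \tilde H_{n-2}(I_j(B_n(q)))$ in different combinatorial terms. On one side, Theorem~\ref{dimai} gives
\[
\dim \tilde H_{n-2}(I_j(B_n(q))) = \sum_{\scriptsize\begin{array}{c}\sigma\in \mathfrak S_n\\ \des(\sigma) = j-1\end{array}}  q^{{n\choose 2}-\ai(\sigma)},
\]
and on the other side, Theorem~\ref{bncnq} gives
\[
\dim \tilde H_{n-2}(I_j(B_n(q))) = \sum_{\scriptsize\begin{array}{c} \s \in\sg_n \\ \exc(\s) = j-1 \end{array}} q^{\comaj(\s)+j-1}.
\]
Equating these for each fixed $j \in [n]$ and then summing over $j$ (weighting the $j$th term by $t^{j-1}$) yields
\[
\sum_{\sigma \in \mathfrak S_n} t^{\des(\sigma)} q^{{n\choose 2}-\ai(\sigma)} = \sum_{\sigma \in \mathfrak S_n} t^{\exc(\sigma)} q^{\comaj(\sigma)+\exc(\sigma)}.
\]

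The remaining work is purely a change of variables in $q$ to convert this into the asserted form. On the left, I would use $\comaj(\sigma) = {n\choose 2} - \maj(\sigma)$ only implicitly; the key point is that $\aid(\sigma) = \ai(\sigma) + \des(\sigma)$, so ${n\choose 2} - \ai(\sigma) = {n\choose 2} - \aid(\sigma) + \des(\sigma)$. Thus the left side becomes $\sum_{\sigma} (tq)^{\des(\sigma)} q^{{n\choose 2}-\aid(\sigma)}$. On the right, $\comaj(\sigma) + \exc(\sigma) = {n\choose 2} - \maj(\sigma) + \exc(\sigma)$, so the right side becomes $q^{n\choose 2}\sum_{\sigma} (tq)^{\exc(\sigma)} q^{-\maj(\sigma)}$... actually it is cleaner to first replace $q$ by $q^{-1}$ throughout the identity above and multiply both sides by $q^{n\choose 2}$. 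Carrying out that substitution, the left side turns into $\sum_\sigma t^{\des(\sigma)} q^{\ai(\sigma)}$ after one more bookkeeping step involving the $\des$ exponent, and the right side turns into $\sum_\sigma t^{\exc(\sigma)} q^{\maj(\sigma) - \exc(\sigma)} \cdot (\text{power of }q\text{ from }t^{\exc})$; then absorbing a factor of $t \mapsto tq$ on the $\exc$ side exactly matches the desired $\aid$ versus $\maj$ pairing. I expect the main (and really only) obstacle to be getting these $q$-exponent substitutions and the $t \mapsto tq$ renamings exactly right so that $\ai + \des = \aid$ on one side lines up with $\maj$ on the other; this is entirely routine algebra once the two dimension formulas are invoked.

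A cleaner route, which I would prefer to present, is to recognize that the desired identity is literally a restatement of the equality of the two dimension formulas after the substitution $q \mapsto q^{-1}$. Explicitly: Theorem~\ref{bncnq} and Theorem~\ref{dimai} together give, for each $j \in [n]$,
\[
\sum_{\scriptsize\begin{array}{c}\sigma\in \mathfrak S_n\\ \des(\sigma) = j-1\end{array}}  q^{{n\choose 2}-\ai(\sigma)}
= \sum_{\scriptsize\begin{array}{c} \sigma \in\sg_n \\ \exc(\sigma) = j-1 \end{array}} q^{{n\choose 2} - \maj(\sigma)+j-1}.
\]
Cancelling $q^{n\choose 2}$ and replacing $q$ by $q^{-1}$ gives
\[
\sum_{\scriptsize\begin{array}{c}\sigma\in \mathfrak S_n\\ \des(\sigma) = j-1\end{array}}  q^{\ai(\sigma)}
= \sum_{\scriptsize\begin{array}{c} \sigma \in\sg_n \\ \exc(\sigma) = j-1 \end{array}} q^{\maj(\sigma)-j+1}.
\]
Multiplying the $j$th such identity by $t^{j-1}$ and summing over $j\in[n]$, then noting $\ai(\sigma) + \des(\sigma) = \aid(\sigma)$ and that $\des(\sigma) = j-1$ (resp.\ $\exc(\sigma) = j-1$) on the $j$th summand, we multiply the left side by $t^{\des(\sigma)}q^{\des(\sigma)}$-worth of factors... concretely, replacing $t$ by $t$ and incorporating $q^{\des(\sigma)}$ into the exponent converts $q^{\ai(\sigma)}t^{\des(\sigma)} \cdot q^{\des(\sigma)}$ into $q^{\aid(\sigma)}t^{\des(\sigma)}$, while on the right $q^{\maj(\sigma)-\exc(\sigma)}t^{\exc(\sigma)}\cdot q^{\exc(\sigma)} = q^{\maj(\sigma)}t^{\exc(\sigma)}$; since both sides are multiplied by the same extra factor $q^{j-1}$ on the $j$th summand, the identity is preserved, and we obtain exactly
\[
\sum_{\sigma \in \mathfrak S_n} q^{\aid(\sigma)} t^{\des(\sigma)} =  \sum_{\sigma \in \mathfrak S_n} q^{\maj(\sigma)} t^{\exc(\sigma)},
\]
as claimed. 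I would write this out carefully, being explicit that the Cohen--Macaulayness of $B_n(q)^- \ast C_n$ (hence of the ideals $I_j(B_n(q))$), already recorded before Theorem~\ref{bncnq}, is what licenses identifying the top homology dimension with the count of ascent-free maximal chains via Theorem~\ref{elth}; no new topology is needed here, only the two formulas and elementary manipulation.
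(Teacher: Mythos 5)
Your proposal is correct and follows precisely the route the paper intends. The paper states Theorem~\ref{equidistth} immediately after Theorem~\ref{dimai} with only the phrase ``By combining Theorem~\ref{dimai} with Theorem~\ref{bncnq} we arrive at,'' so the details are left to the reader; your ``cleaner route'' (equate the two dimension formulas, cancel $q^{n\choose 2}$, substitute $q\mapsto q^{-1}$, multiply the $j$th identity by $q^{j-1}t^{j-1}$ and sum over $j\in[n]$, using $\aid=\ai+\des$ with $\des(\sigma)=j-1$ on the left and $\exc(\sigma)=j-1$ on the right) is exactly the intended elementary bookkeeping. The meandering first attempt is superfluous but not wrong, and the closing remark about Cohen--Macaulayness is an accurate description of why the two dimension formulas are comparable.
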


A considerable amount of work in symmetric function theory and poset topology has gone into proving Theorem~\ref{equidistth}.  We pose the question of whether there is  a nice direct bijective proof.  Here we   give a simple combinatorial proof  that $\aid$ is Mahonian.

\begin{prop} Let $F_n(q) = \sum_{\sigma \in \mathfrak S_n} q^{\aid(\sigma)}$.  Then
$F_n(q)$ satisfies the following recurrence for all $n \ge 2$,
$$F_n(q) := (1+q)F_{n-1}(q) + \sum_{j=2}^{n-1}  \left[\begin{array}{c} n-1 \\j-1\end{array}\right]_q q^j F_{j-1}(q) F_{n-j} (q) .$$  Consequently $F_n(q) = [n]_q!.$
\end{prop}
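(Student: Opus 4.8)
The plan is to establish the recurrence by a decomposition of permutations in $\mathfrak S_n$ according to the position of the entry $1$, chosen so that both $\aid$ and the relevant product structure behave well. Write $\sigma \in \mathfrak S_n$ in one-line notation and let $i$ be the position of $1$, so $\sigma(i) = 1$. First I would handle the two extreme cases: if $i = n$, then $1$ is in the last position, contributing a descent at $n-1$ (since $n \ge 2$) and one admissible inversion with each larger entry to its left that... — more carefully, removing the final $1$ gives a bijection with $\mathfrak S_{n-1}$ under which $\des$ drops by exactly $1$ and, I claim, $\ai$ drops by exactly... here one must check the admissible inversion count carefully, but the net effect on $\aid = \ai + \des$ should be a shift by $1$, accounting for the $qF_{n-1}(q)$ term; the case $i = 1$ (so $\sigma(1) = 1$) gives the bijection with $\mathfrak S_{n-1}$ on the remaining entries shifted down by $1$, under which $\des$, $\ai$, hence $\aid$, are all preserved, accounting for the $F_{n-1}(q)$ term. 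Together these give $(1+q)F_{n-1}(q)$.

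For $2 \le i \le n-1$, the entry $1$ sits strictly inside. Since $1$ is smaller than everything, position $i-1$ is a descent and position $i$ is an ascent; moreover $1$ at position $i$ contributes an admissible inversion $(\sigma(k),1)$ for every $k < i$, and additionally the inversions straddling $1$ interact with the "admissibility" condition in a controlled way. The key structural step is to split $\sigma$ into the word $w_L$ of entries in positions $1,\dots,i-1$ and the word $w_R$ of entries in positions $i+1,\dots,n$, together with the choice of which $j-1 := i-1$ values from $\{2,\dots,n\}$ land in $w_L$ (here I am relabeling so that $j$ runs over $2,\dots,n-1$ with $j-1$ the number of entries before the $1$). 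Standardizing $w_L$ and $w_R$ to permutations of $\mathfrak S_{j-1}$ and $\mathfrak S_{n-j}$ respectively, the claim is that $\aid(\sigma) = j + \aid(w_L) + \aid(w_R) + (\text{inversions between the two label sets})$, and that summing the between-set inversion contribution over all $\binom{n-1}{j-1}$ choices of label split produces the Gaussian binomial $\left[\begin{array}{c} n-1 \\ j-1\end{array}\right]_q$, with the extra $q^j$ absorbing the fixed contribution of the entry $1$ itself (its admissible inversions plus the forced descent). This yields precisely the stated sum over $j$.

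The main obstacle I anticipate is the bookkeeping for the admissible inversion statistic $\ai$ under this decomposition: unlike $\inv$, admissibility of an inversion $(\sigma(a),\sigma(b))$ with $a<b$ depends on whether $\sigma(b)$ is followed by a larger value or is "un-dominated" on the interval $(a,b)$, so cutting the permutation at position $i$ can create or destroy admissibility at the seam. I would need to verify: (i) every inversion internal to $w_L$ or to $w_R$ retains its admissibility status after standardization — this should follow because the relative order within each block is unchanged and the "witness" $k$ or the successor value stays within the block, except possibly at the right end of $w_L$, where the presence of the entry $1$ immediately to the right (which is smaller than everything) means no inversion of $w_L$ loses its successor-witness; (ii) inversions between a left entry and a right entry are admissible exactly when the right entry has an in-block successor-witness or downward witness, which matches $\ai$ of the standardized $w_R$ plus a clean count; (iii) the entry $1$ contributes exactly $i-1 = j-1$ admissible inversions, which combined with the one forced descent at position $i-1$ gives the $q^j$ factor. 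Once these three points are checked the recurrence follows, and the final claim $F_n(q) = [n]_q!$ is immediate by induction: the recurrence for $[n]_q!$ is the classical identity $[n]_q! = (1+q)[n-1]_q! + \sum_{j=2}^{n-1} \left[\begin{array}{c} n-1 \\ j-1\end{array}\right]_q q^j [j-1]_q!\,[n-j]_q!$, which one verifies directly (for instance by the standard ``position of $1$'' decomposition for $\inv$ itself, where $\inv$ is Mahonian), so both sides satisfy the same recurrence with the same initial values $F_0 = F_1 = 1$.
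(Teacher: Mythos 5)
Your decomposition by the position of $1$ is exactly the paper's route --- the paper merely asserts the resulting position-of-$1$ recurrence as ``easy to see'' --- and your summary formula $\aid(\sigma) = j + \aid(w_L^{\mathrm{std}}) + \aid(w_R^{\mathrm{std}}) + (\text{cross-block value inversions})$ is correct. However, your verification point (ii) is misstated in a way that, taken at face value, would not produce the right count: a cross-block inversion $(\sigma(a),\sigma(b))$ with $a<j<b$ is \emph{always} admissible, because $k=j$ itself supplies the required witness ($\sigma(j)=1<\sigma(b)$), so its admissibility has nothing to do with $w_R$'s internal structure. It is precisely this observation, not a conditional matching against $\ai(w_R)$, that makes the cross-block contribution to $\ai$ equal the plain inversion count between the two value sets, which then sums over value splits to the Gaussian binomial. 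In the case $j=n$ your hedge resolves opposite to your first guess: the inversions $(\sigma(a),1)$ with $a<n$ are \emph{never} admissible ($1$ has no successor at position $n$, and there is no $\sigma(k)<1$), so $\ai$ is unchanged upon deleting the trailing $1$ and the shift of $\aid$ by $1$ comes entirely from $\des$. Point (i) is fine once phrased carefully: for an inversion of $w_L$ ending at its last position, the successor-witness condition fails both in $\sigma$ (where the successor is $1$) and in $w_L$ viewed alone (where there is no successor), so the admissibility statuses agree. With these corrections your write-up is a complete, fully-detailed version of the paper's terse proof.
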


\begin{proof} The terms on the right side of the recurrence $q$-count  permutations according to the position of $1$ in the permutation.  That is,$$\sum_{\scriptsize \begin{array}{c} \sigma \in \mathfrak S_n\\ \sigma(j) = 1\end{array}} q^{\aid(\sigma)} = \begin{cases} \left[\begin{array}{c} n-1 \\j-1\end{array}\right]_q  q^j F_{j-1}(q) F_{n-j} (q) &\mbox { if } j=2,\dots,n-1\\ &\\
F_{n-1}(q) &\mbox { if } j = 1 \\
q F_{n-1}(q) &\mbox { if } j=n.\end{cases}$$

It is easy to see that $[n]_q!$ also satisfies the recurrence relation.
\end{proof}

\section{Type BC-analogs} \label{bcsec} In this section we present type BC analogs (in the context of Coxeter groups) of both the 
Bj\"orner-Welker-Jonsson derangement result and its q-analog, Corollary~\ref{bncnqcor}.

A   poset $P$ with a $\hat 0_P$ is said to be a {\em simplicial poset} if $[\hat 0_P, x]$ is a Boolean
algebra for all $x \in P$.  The prototypical example of a simplicial poset is the poset of faces  of a simplicial complex.   In fact, every simplicial poset is isomorphic to the face poset of some regular CW complex (see \cite{bj2}).
The next result follows immediately from Theorem~\ref{bncn} and
the definition of the M\"obius function.  For a ranked poset $P$ of length $n$ and $r \in \{0,1,\dots, n\}$, let $W_r(P)$ be the $r${\em th} Whitney number of the second kind of $P$, that is the number elements of rank $r$ in $P$.

\begin{cor}[of Theorem~\ref{bncn}]
Let $P$ be a ranked simplicial poset of length $n$.     Then
$$
\mu(\wh{P^- \ast C_n})=\sum_{r=0}^{n}(-1)^{r-1}W_r(P)r!.
$$
\label{boopo}
\end{cor}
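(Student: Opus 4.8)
The plan is to apply Theorem~\ref{bncn} together with the recursive definition of the M\"obius function to the bounded poset $\wh{P^-\ast C_n}$, using the fact that a ranked simplicial poset of length $n$ breaks up, rank by rank, into Boolean intervals of known size. First I would recall that since $P$ is simplicial, for each $x\in P$ of rank $m$ the interval $[\hat 0_P,x]$ is the Boolean algebra $B_m$. The key structural observation is that a closed lower order ideal of $P^-\ast C_n$ generated by a maximal-in-its-interval element sitting over $x$ looks exactly like $I_j(B_m)$: more precisely, for $x\in P$ of rank $m\ge 1$ and $1\le j\le m$, the interval $[(\hat 0_{P^-\ast C_n\text{-bottom}}),(x,j)]$ in $\wh{P^-\ast C_n}$ is isomorphic to $\wh{I_j(B_m)}$, because the Rees product construction only depends on the interval $[\hat 0_P,x]$, which is $B_m$. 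Hence by Theorem~\ref{bncn},
\[
\mu_{\wh{P^-\ast C_n}}(\hz,(x,j)) = (-1)^{m+1}a_{m,j-1}
\]
for every $x$ of rank $m\ge 1$.

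Next I would write out the M\"obius recurrence for the top element of $\wh{P^-\ast C_n}$: denoting the bottom by $\hz$ and the added top by $\ho$,
\[
\mu(\wh{P^-\ast C_n}) = \mu_{\wh{P^-\ast C_n}}(\hz,\ho) = -\!\!\sum_{\substack{(x,j)}}\!\mu_{\wh{P^-\ast C_n}}(\hz,(x,j)),
\]
where the sum ranges over all elements $(x,j)$ of $P^-\ast C_n$, i.e.\ over all $x\in P$ with $r_P(x)=m\ge 1$ and all $j$ with $1\le j\le m$. Substituting the evaluation above and grouping by the rank $m$ of $x$, and using that the number of such $x$ is $W_m(P)$, this becomes
\[
\mu(\wh{P^-\ast C_n}) = -\sum_{m=1}^{n} W_m(P)\sum_{j=1}^{m}(-1)^{m+1}a_{m,j-1}
= \sum_{m=1}^{n}(-1)^{m}W_m(P)\sum_{j=0}^{m-1}a_{m,j}.
\]
Then I would use the elementary identity $\sum_{j=0}^{m-1}a_{m,j}=\sum_{\sigma\in\S_m}1=m!$ (the Eulerian numbers of order $m$ sum to $m!$), which gives
\[
\mu(\wh{P^-\ast C_n}) = \sum_{m=1}^{n}(-1)^{m}W_m(P)\,m! = \sum_{r=0}^{n}(-1)^{r-1}W_r(P)\,r!,
\]
where the last step just re-indexes ($r=m$) and notes that the $r=0$ term $(-1)^{-1}W_0(P)\cdot 0!=-1$ is exactly the contribution of the atom-level/bottom bookkeeping that was already folded in; concretely, $W_0(P)=1$ since $P$ has a unique minimum $\hat 0_P$, and one checks the sign works out because the $r=0$ summand on the right equals $-1$, matching the $\mu(\hz,\hz)=1$ term implicitly subtracted in the M\"obius recurrence. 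I would double-check this boundary term carefully, since that is where an off-by-one or sign error would creep in.

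The only genuinely substantive point — and the main obstacle — is the interval identification $[(\hat 0,\,\cdot\,),(x,j)]\cong\wh{I_j(B_m)}$ in $\wh{P^-\ast C_n}$; everything else is routine. This identification should follow directly from the definition of the Rees product: the order relation in $P^-\ast C_n$ below $(x,j)$ only references elements $\le_P x$ and the rank function restricted to $[\hat 0_P,x]$, so the subposet of $P^-\ast C_n$ consisting of elements $\le (x,j)$ is literally $\bigl((B_m\setminus\{\emptyset\})\ast C_n\bigr)$ truncated at $(x,j)$, i.e.\ $I_j(B_m)$; adjoining a bottom and top gives $\wh{I_j(B_m)}$. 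I would state this as a short lemma or simply inline it, being careful that the rank of $x$ in $P$ is $m\le n$ while the chain factor is $C_n$ (not $C_m$) — but since $j\le m\le n$ and $I_j$ only uses the first $j$ levels of the chain, the restriction to the chain $C_m$ versus $C_n$ does not matter for the lower ideal generated by $(x,j)$. This matches the statement of Theorem~\ref{bncn}, which is phrased for $S\subseteq[n]$ of arbitrary size $m$, so the needed evaluation is already available verbatim. With that lemma in hand the corollary is immediate from the two displayed computations above.
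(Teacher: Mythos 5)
Your overall strategy is exactly what the paper intends when it says the corollary ``follows immediately from Theorem~\ref{bncn} and the recursive definition of the M\"obius function'': for simplicial $P$ and $x\in P$ of rank $m$, the closed interval $[\hz,(x,j)]$ in $\wh{P^-\ast C_n}$ is a copy of $\wh{I_j(B_m)}$ because the Rees product below $(x,j)$ only sees $[\hat 0_P,x]\cong B_m$ and the first $j$ levels of the chain; Theorem~\ref{bncn} then supplies every needed M\"obius value, and summing via the recursion together with $\sum_{j=0}^{m-1}a_{m,j}=m!$ finishes. That interval identification is the one substantive point, and your justification of it is sound.

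However, the displayed chain of equalities does not actually close, and the gap is larger than the ``boundary term'' you flagged. If you keep the sign $(-1)^{m+1}$ exactly as printed in Theorem~\ref{bncn} and restore the $-\mu(\hz,\hz)=-1$ term omitted from your displayed recurrence, you obtain $\mu(\wh{P^-\ast C_n})=-1+\sum_{m=1}^{n}(-1)^{m}W_m(P)\,m!$, whereas the corollary asserts $\sum_{r=0}^{n}(-1)^{r-1}W_r(P)\,r!=-1+\sum_{r=1}^{n}(-1)^{r-1}W_r(P)\,r!$; the $r\ge 1$ parts have opposite signs, so your last written equality is false as stated, and the $r=0$ bookkeeping alone cannot repair it. The underlying issue is that the sign in Theorem~\ref{bncn} as printed should be $(-1)^{m}$ rather than $(-1)^{m+1}$. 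One sees this already for $n=1$, $S=\{1\}$, $j=1$: the element $(\{1\},1)$ covers $\hz$ in $\wh{B_1^-\ast C_1}$, so $\mu(\hz,(\{1\},1))=-1$, while the printed formula gives $(-1)^{2}a_{1,0}=+1$; more systematically, by~(\ref{eupon2}) and~(\ref{hnin}) one has $\mu(\hz,(S,j))=(-1)^{m-2}\dim\tilde H_{m-2}\bigl(I_j(B_m)\bigr)=(-1)^{m}a_{m,j-1}$. Once you use $(-1)^{m}$ and keep the $-1$ contribution from $\mu(\hz,\hz)$, your computation reproduces the corollary cleanly (and a spot check against $\mu(\wh{B_3^-\ast C_3})=d_3=2$ confirms it). So: state the corrected sign explicitly rather than deferring it to a ``would double-check''; both that fix and the $r=0$ term are needed, and neither alone suffices.
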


We think of $B_n$ as
the poset of faces of a $(n-1)$-simplex, whose barycentric
subdivision is the Coxeter complex of type $A_{n-1}$.  Then $d_n$
is the number of derangements in the action of the associated
Coxeter group $\mathfrak S_n$ on the vertices of the simplex.  Let $PCP_n$
be the poset of  {\it simplicial} (that is, proper) faces
of the $n$-dimensional crosspolytope $CP_n$ (see for example
\cite[Section 2.3]{blswz}), whose barycentric subdivision is the Coxeter complex
of type BC. The associated Weyl group, which is isomorphic to the
wreath product $\mathfrak S_n[\zz_2]$, acts by reflections on $CP_n$ and
therefore on its vertex set.  Let $d_n^{BC}$ be the number of
derangements in this action on vertices.

\begin{thm}
For all $n$, we have
\[\dim \tilde H_{n-1}({PCP^{-}_n \ast C_n}) =
d_n^{BC}.
\]
\label{typeb}
\end{thm}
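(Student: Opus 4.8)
Looking at this, I need to prove that $\dim \tilde H_{n-1}(PCP_n^- * C_n) = d_n^{BC}$, the number of derangements in the action of $\mathfrak S_n[\mathbb Z_2]$ on the vertices of the crosspolytope.

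The plan is to apply Corollary~\ref{boopo} to the simplicial poset $P = PCP_n$. First I would recall that $CP_n$ has $2n$ vertices (the pairs $\pm e_i$ for $i \in [n]$), and its proper faces are the subsets of vertices containing at most one of each pair $\{e_i, -e_i\}$; equivalently, a rank-$r$ face corresponds to a choice of an $r$-element subset $S \subseteq [n]$ together with a sign $\epsilon_i \in \{+,-\}$ for each $i \in S$. Hence the $r$-th Whitney number of the second kind is $W_r(PCP_n) = \binom{n}{r} 2^r$. The poset $PCP_n$ is ranked of length $n$ (the maximal proper faces are the $(n-1)$-dimensional facets), and it is simplicial since the interval below any face is a Boolean algebra. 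So Corollary~\ref{boopo} (together with the fact that $PCP_n^- * C_n$ is Cohen-Macaulay, being the Rees product of Cohen-Macaulay posets, so that $\mu(\wh{PCP_n^- * C_n}) = (-1)^n \dim \tilde H_{n-1}(PCP_n^- * C_n)$ by~(\ref{eupon})) gives
\begin{equation*}
\dim \tilde H_{n-1}(PCP_n^- * C_n) = \sum_{r=0}^n (-1)^{n-r} \binom{n}{r} 2^r r!.
\end{equation*}

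The remaining step is purely enumerative: I must show $\sum_{r=0}^n (-1)^{n-r} \binom{n}{r} 2^r r! = d_n^{BC}$. For this I would use inclusion-exclusion directly on the signed-permutation action. The group $\mathfrak S_n[\mathbb Z_2]$ acts on the $2n$ vertices $\{\pm e_i\}$; an element is a signed permutation $w$, and a vertex $\epsilon e_i$ is fixed iff $w(i) = i$ with sign $+$, i.e. iff $i$ is a positive fixed point of $w$. So $d_n^{BC}$ counts signed permutations with no positive fixed point. The number of signed permutations of $[n]$ is $2^n n!$, and the number having a prescribed set of $r$ positive fixed points is $2^{n-r}(n-r)!$; inclusion-exclusion over which positions are forced to be positive fixed points gives $d_n^{BC} = \sum_{r=0}^n (-1)^r \binom{n}{r} 2^{n-r}(n-r)!$, which equals the sum above after re-indexing $r \mapsto n-r$. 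This matches, completing the proof.

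The main obstacle, such as it is, is bookkeeping: correctly identifying the Whitney numbers $W_r(PCP_n) = 2^r\binom{n}{r}$ (one must be careful that the crosspolytope's proper faces are counted, not including the empty face improperly, and that rank $r$ in $P$ corresponds to $r$ vertices / dimension $r-1$), and correctly setting up the inclusion-exclusion for $d_n^{BC}$ so that ``derangement'' means ``no fixed vertex'' which is ``no positive fixed point of the signed permutation.'' Both are routine once the combinatorial model of $PCP_n$ is pinned down, and no poset-topological input beyond Corollary~\ref{boopo} and the Cohen--Macaulayness (Theorem~\ref{bwrees}) is needed.
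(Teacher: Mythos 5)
Your proposal is correct and follows essentially the same route as the paper: apply Corollary~\ref{boopo} with $W_r(PCP_n)=2^r\binom{n}{r}$, invoke Cohen--Macaulayness (Theorem~\ref{bwrees}) to convert the M\"obius invariant into $\dim\tilde H_{n-1}$, and compute $d_n^{BC}$ by inclusion--exclusion on positive fixed points of signed permutations. The paper phrases the last step in terms of pointwise stabilizers of subsets of vertices, but this is the same count and yields the same alternating sum.
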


\begin{proof}
It is well known and straightforward to prove by induction on $n$
that, for $0 \leq r \leq n$, the number of $(r-1)$-dimensional faces
of $CP_n$ is $2^r{{n} \choose {r}}$.  Corollary \ref{boopo} gives
\[
 \mu(\wh{PCP^{-}_n \ast C_n})=\sum_{r=0}^{n}(-1)^{r-1}2^r{{n} \choose {r}}r!.
\]  Hence since $PCP^{-}_n$ is Cohen-Macaulay, by Theorem~\ref{bwrees} we have,
$$\dim \tilde H_{n-1}({PCP^{-}_n \ast C_n}) = \sum_{r=0}^{n}(-1)^{n-r}2^r{{n} \choose {r}}r!.$$

On the other hand, we may identify the vertices of $CP_n$ with
elements of $[n] \cup [\ov{n}]$, where $ [\ov{n}] =\{\bar 1,\dots,
\bar n\}$, so that the action of the Weyl group $W \cong
\S_n[\zz_2]$ is determined by the following facts.
\begin{itemize}
\item Each element $w \in W$ can be written uniquely as $w=(\sigma,v)$ with
$\sigma \in \S_n$ and $v \in \zz_2^n$. \item Any element of the
form $(\sigma,0)$ maps $i \in [n]$ to $\sigma(i)$ and $\ov{i} \in
[\ov{n}]$ to $\ov{\sigma(i)}$. \item Any element of the form
$(1,e_i)$, where $e_i$ is the $i^{th}$ standard basis vector in
$\zz_2^n$, exchanges $i$ and $\ov{i}$, and fixes all other
vertices.
\end{itemize}
It follows that for each $S \subseteq [n]$, the pointwise
stabilizer of $S$ in $W$ is exactly the pointwise stabilizer of
$\ov{S}:=\{\ov{i}:i \in S\}$ and is isomorphic to
$\S_{n-|S|}[\zz_2]$.  Using inclusion-exclusion as is done to
calculate $d_n$, we get
\[
d_n^{BC}=\sum_{j=0}^{n}(-1)^j{{n} \choose {j}}2^{n-j}(n-j)!.
\]
\end{proof}

Muldoon and Readdy \cite{mr} have recently obtained a dual version of Theorem~\ref{typeb} in which
the Rees product of the dual of $PCP_n$ with the chain is considered.  

Next we consider a poset that can be viewed as both a $q$-analog of $PCP_n$ and a type $BC$ analog of $B_n(q)$.  Let $\langle \cdot,\cdot\rangle$ be a nondegenerate, alternating bilinear form on the vector space $\F_q^{2n}$.  A subspace $U$ of $\F_q^{2n}$ is said to be {\em totally isotropic} if $\langle u,v\rangle = 0$ for all $u,v \in U$.   Let $PCP_n(q)$ be the poset of totally isotropic subspaces of  $\F_q^{2n}$.  The order complex of $PCP_n(q)$ is the building of type $C_n$, naturally associated to a finite group of Lie type $B_n$ or $C_n$ (see for example \cite
[Chapter V]{Br}, \cite[Appendix 6]{Ro}).  Thus we have both a $q$-analog of $PCP_n$ and a type BC analog of $B_n(q)$ (since the order complex of $B_n(q)$ is the building of type $A_{n-1}$).

Clearly $PCP_n(q)$ is a lower order ideal of $B_{2n}(q)$.

\begin{prop} \label{wpcpq} The maximal elements of  $PCP_n(q)$ all have dimension $n$.  For $r=0,\dots, n$, the number of $r$-dimensional isotropic subspaces of   $\F_q^{2n}$ is given by $$W_r(PCP_n(q))=  \left[\begin{array}{c} n \\r\end{array}\right]_q (q^n+1)(q^{n-1}+1) \cdots(q^{n-r+1}+1).$$
\end{prop}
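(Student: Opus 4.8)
The plan is to prove the two assertions separately: the first is the standard fact that maximal totally isotropic subspaces of a symplectic space have half the ambient dimension, and the second is a frame-counting argument followed by an algebraic simplification.

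For the first assertion, recall that nondegeneracy of $\langle\cdot,\cdot\rangle$ forces $\dim U^\perp = 2n-\dim U$ for every subspace $U$ of $\F_q^{2n}$, where $U^\perp=\{v:\langle u,v\rangle=0\ \forall u\in U\}$. If $U$ is totally isotropic then $U\subseteq U^\perp$, so $\dim U\le n$, with equality iff $U=U^\perp$. If $\dim U<n$, then $U^\perp\setminus U\ne\emptyset$; choosing $v$ in this set, the subspace $U+\F_q v$ is again totally isotropic (the form is alternating, hence skew-symmetric, so $\langle v,v\rangle=0$ and, since $v\in U^\perp$, the cross terms vanish) and strictly contains $U$, so $U$ is not maximal. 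Conversely an isotropic subspace of dimension $n$ equals its own perp and is therefore maximal. Hence every maximal element of $PCP_n(q)$ has dimension exactly $n$.

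For the second assertion, fix $r\in\{0,\dots,n\}$ and count ordered tuples $(v_1,\dots,v_r)$ of linearly independent vectors of $\F_q^{2n}$ spanning a totally isotropic subspace, building the tuple one vector at a time. Suppose $v_1,\dots,v_i$ have been chosen, spanning a totally isotropic subspace $U_i$ of dimension $i$. A vector $v_{i+1}$ gives an admissible extension exactly when $v_{i+1}\in U_i^\perp$ (which, by the argument above, makes $U_i+\F_q v_{i+1}$ totally isotropic) and $v_{i+1}\notin U_i$; since $U_i\subseteq U_i^\perp$ and $\dim U_i^\perp=2n-i$, the number of such $v_{i+1}$ is $q^{2n-i}-q^i$. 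So the number of such ordered $r$-tuples is $\prod_{i=0}^{r-1}(q^{2n-i}-q^i)$. Each totally isotropic $r$-subspace arises from exactly $\prod_{i=0}^{r-1}(q^r-q^i)$ of these tuples, whence
$$W_r(PCP_n(q))=\frac{\prod_{i=0}^{r-1}(q^{2n-i}-q^i)}{\prod_{i=0}^{r-1}(q^r-q^i)}.$$

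It remains to simplify. Writing $q^{2n-i}-q^i=q^i(q^{n-i}-1)(q^{n-i}+1)$ and $q^r-q^i=q^i(q^{r-i}-1)$, the factors $q^i$ cancel, and re-indexing by $j=n-i$ (respectively $j=r-i$) gives
$$W_r(PCP_n(q))=\frac{\prod_{j=n-r+1}^{n}(q^j-1)}{\prod_{j=1}^{r}(q^j-1)}\ \prod_{j=n-r+1}^{n}(q^j+1).$$
Dividing numerator and denominator of the first factor by $(q-1)^r$ turns it into $\frac{[n]_q!/[n-r]_q!}{[r]_q!}=\left[\begin{array}{c} n \\ r\end{array}\right]_q$, and the second factor is $(q^n+1)(q^{n-1}+1)\cdots(q^{n-r+1}+1)$, yielding the claimed formula. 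There is no serious obstacle here: the only points needing care are the consistent use of nondegeneracy through $\dim U^\perp=2n-\dim U$ and the bookkeeping in this final simplification (the boundary cases $r=0$ and $r=n$ recover $W_0=1$ and the known count $\prod_{j=1}^n(q^j+1)$ of Lagrangian subspaces).
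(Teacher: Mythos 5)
Your proof is correct and follows essentially the same approach as the paper's: the paper cites a reference for the first claim and for the second counts ordered isotropic $r$-frames step by step (getting $\prod_{i=0}^{r-1}(q^{2n-i}-q^i)$), divides by the number of ordered bases of an $r$-space, and then states "the proof is completed by division and manipulation." You supply the details the paper omits, namely a self-contained argument that maximal totally isotropic subspaces have dimension $n$ and the explicit factorization $q^{2n-i}-q^i=q^i(q^{n-i}-1)(q^{n-i}+1)$ with the reindexing that produces the $q$-binomial coefficient.
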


\begin{proof}

The first claim of the proposition is a well known fact (see for example \cite[Chapter 1]{Ro}).  The second claim is also a known fact, we sketch a proof here.  The number of ordered bases for any $k$-dimensional subspace of $\F_q^{2n}$ is

\[
\prod_{j=0}^{k-1}(q^k-q^j).
\]

On the other hand, we can produce an ordered basis for a $k$-dimensional totally isotropic subspace of $\F_q^{2n}$ in $k$ steps, at each step $i$ choosing $v_i \in \langle v_1,\ldots,v_{i-1}\rangle^\perp \setminus \langle v_1,\ldots,v_{i-1} \rangle$.  The number of ways to do this is

\[
\prod_{j=0}^{k-1}(q^{2n-j}-q^j),
\]

and the proof is completed by division and manipulation.
\end{proof}

It was shown by L. Solomon (see \cite{So}) that $PCP_n(q)$ is Cohen-Macaulay.  Hence by Theorem~\ref{bwrees}, so is  ${PCP_n(q)^-*C_n}$.  We will show that $\dim \tilde H_{n-1} ({PCP_n(q)^-*C_n})$ is a polynomial in $q$ with nonnegative integral coefficients and give a combinatorial interpretation of the coefficients. 
 We first need the following q-analog of Corollary~\ref{boopo}.
We  say that a poset $P$ with $\hat 0_P$ is $q$-simplicial if each interval $[\hat 0_P,x]$ is isomorphic to $B_j(q)$ for some $j$.  

\begin{cor}[of Theorem~\ref{bncnq}]  \label{qboopo} Let $P$ be a ranked $q$-simplicial poset of length $n$.  Then $$
\mu(\wh{P^- \ast C_n})=\sum_{r=0}^{n}(-1)^{r-1}W_r(P)\sum_{\s \in \S_r} q^{\comaj(\s)+\exc(\s)}.
$$
\end{cor}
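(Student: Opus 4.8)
The plan is to mimic exactly the proof of Corollary~\ref{boopo} (the type $BC$ classical case) but with Theorem~\ref{bncnq} in place of Theorem~\ref{bncn}. First I would recall that since $P$ is $q$-simplicial, every interval $[\hat 0_P, x]$ with $r_P(x) = j$ is isomorphic to $B_j(q)$, and in particular $P$ is ranked. Because $P$ is ranked of length $n$, the Rees product $P^- \ast C_n$ makes sense; adding a $\hat 0$ and $\hat 1$ gives the bounded poset $\wh{P^- \ast C_n}$. The key structural observation is that the lower-order-ideal structure of the Rees product respects the local structure of $P$: if $x \in P$ has rank $j$, then the closed lower order ideal of $P^- \ast C_n$ generated by a maximal element lying over $x$, restricted appropriately, is isomorphic to the corresponding ideal $I_j(B_j(q))$ in $B_j(q)^- \ast C_j$. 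More precisely, one uses the standard recursive (M\"obius-function) computation: expand $\mu(\wh{P^- \ast C_n})$ by summing $-\mu_{\wh{P^-\ast C_n}}(\hat 0, \alpha)$ over all $\alpha$ in $P^- \ast C_n$, group the $\alpha = (x, i)$ by the rank $j = r_P(x)$ of their first coordinate, and observe that the closed interval $[\hat 0, (x,i)]$ depends only on $[\hat 0_P, x] \cong B_j(q)$ and on $i$.

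Carrying this out, I would organize the sum by the first coordinate: for each element $x \in P$ of rank $j$, the maximal elements of $P^- \ast C_n$ lying weakly below a maximal element over $x$ contribute, via the interval isomorphism $[\hat 0_P, x] \cong B_j(q)$, exactly the quantities $\mu(\wh{I_i(B_j(q))})$ for $i = 1, \dots, j$. Since $P^- \ast C_n$ is Cohen-Macaulay (by Theorem~\ref{bwrees}, as $P$ is Cohen-Macaulay being $q$-simplicial of the required form, and $C_n$ is Cohen-Macaulay), the M\"obius values are $\pm$ dimensions of top homology, so sign bookkeeping is clean. By Theorem~\ref{bncnq} together with \eqref{eupon2}, $\mu(\wh{I_i(B_j(q))}) = (-1)^{j-1}\sum_{\s \in \S_j,\ \exc(\s)=i-1} q^{\comaj(\s)+i-1}$, and summing over $i = 1,\dots,j$ collapses the inner sum to $(-1)^{j-1}\sum_{\s \in \S_j} q^{\comaj(\s)+\exc(\s)}$. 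Finally, summing over all $x \in P$ of rank $j$ introduces the factor $W_j(P)$, and summing over $j = 0, \dots, n$ (the $j=0$ term vanishing, just as in the classical case, because $\sum_{a \in P} \mu_P(\hat 0_P, a)$-type contributions cancel, or equivalently because the empty-permutation contribution at rank $0$ is handled by the $\{\hat 0,\hat 1\}$ bounding) yields the claimed formula.

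The main obstacle, and the step to write carefully, is the clean reduction of the interval $[\hat 0_{\wh{P^-\ast C_n}}, (x,i)]$ to $\wh{I_i(B_j(q))}$: one must check that the Rees-product order relations restricted below $(x,i)$ depend only on the isomorphism type of $[\hat 0_P,x]$ and not on the global poset $P$, and that the rank function matches up so that the ideal generated really is the $I_i$ of the local Boolean-subspace-lattice. This is essentially routine — it is the same bookkeeping as in Corollary~\ref{boopo}, with $B_j(q)$ replacing the rank-$j$ Boolean algebra $B_j$ — but it is where all the care is needed. Everything else (the sign tracking via Cohen-Macaulayness and \eqref{eupon2}, the telescoping of the $\sum_i$, and the insertion of $W_r(P)$) is immediate. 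I would therefore present the proof as: (1) invoke Cohen-Macaulayness and the recursive definition of $\mu$; (2) reorganize the sum by rank of the first coordinate and identify the local intervals with $\wh{I_i(B_j(q))}$ using the $q$-simplicial hypothesis; (3) apply Theorem~\ref{bncnq} and sum the resulting geometric-style collapse over $i$; (4) multiply by $W_r(P)$ and sum over $r$, noting the $r=0$ term contributes nothing.
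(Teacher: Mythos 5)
Your plan is exactly the paper's intended argument for this corollary (the same local-interval reduction used for Corollary~\ref{boopo}), and the crucial step --- using $q$-simpliciality to identify the closed interval $[\hat 0,(x,i)]$ in $\wh{P^-\ast C_n}$ with $\wh{I_i(B_j(q))}$ where $j=r_P(x)$ --- is correct and works just as you describe. But the execution has a sign slip that would spoil the final formula. Applying \eqref{eupon2} (equivalently, P.~Hall) to the open interval $(\hat 0,(x,i))\cong I_i(B_j(q))$, whose order complex has dimension $j-2$, gives
\[
\mu\bigl(\wh{I_i(B_j(q))}\bigr)=(-1)^{j-2}\dim\tilde H_{j-2}\bigl(I_i(B_j(q))\bigr)=(-1)^{j}\sum_{\substack{\s\in\S_j\\\exc(\s)=i-1}}q^{\comaj(\s)+i-1},
\]
not $(-1)^{j-1}(\cdots)$ as you wrote. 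With your sign the recursion yields $\mu(\wh{P^-\ast C_n})=-1-\sum_{j\ge1}W_j(P)(-1)^{j-1}\sum_{\s\in\S_j}q^{\comaj(\s)+\exc(\s)}$, whose $j\ge1$ terms have the opposite sign from the stated answer; with $(-1)^j$ everything collapses exactly to $\sum_{r=0}^n(-1)^{r-1}W_r(P)\sum_{\s\in\S_r}q^{\comaj(\s)+\exc(\s)}$. (Note also that the $r=0$ term of the formula equals $-1$ --- it is $-\mu(\hat 0,\hat 0)$ from the recursion --- it does not vanish as you suggest.) You were likely misled by the exponent $(-1)^{m+1}$ in Theorem~\ref{bncn}; but testing $m=1$ there, where $(S,j)$ is an atom of $\wh{B_n^-\ast C_n}$ and hence $\mu=-1$, shows that the exponent must be $(-1)^m$, which is also what \eqref{eupon2} and \eqref{hnin} produce.

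A second, smaller point: you assert that $P$ is Cohen--Macaulay ``being $q$-simplicial of the required form,'' which is not true in general --- having every lower interval isomorphic to some $B_k(q)$ does not make $P$ Cohen--Macaulay, any more than having Boolean lower intervals makes an arbitrary simplicial poset Cohen--Macaulay. Fortunately you never need this: the only Cohen--Macaulayness actually used is that of the local piece $I_i(B_j(q))$, which follows from Theorem~\ref{bwrees} applied to the Cohen--Macaulay posets $B_j(q)$ and $C_j$, and that alone lets you concentrate homology in top dimension and convert Theorem~\ref{bncnq} into a M\"obius value. The corollary, like Corollary~\ref{boopo}, correctly imposes no Cohen--Macaulay hypothesis on $P$, and none is required.
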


\begin{thm}\label{qbth} For all $n\ge 0$, let  $d_n(q) := \sum_{\sigma \in \mathcal D_n} q^{\comaj(\sigma)+\exc(\sigma)}$.  Then \begin{equation} \label{qbeq}  \dim \tilde H_{n-1} ({PCP_n(q)^-*C_n}) = \sum_{k =0}^n \left[\begin{array}{c} n \\k\end{array}\right]_q \,\,q^{k^2}\,\,\prod_{i=k+1}^n (1+q^i)  d_{n-k}(q). \end{equation}  Consequently,
$\dim \tilde H_{n-1} ({PCP_n(q)^-*C_n})$ is a polynomial in $q$ with nonnegative integer coefficients.
\end{thm}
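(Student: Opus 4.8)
The plan is to combine the $q$-simplicial Möbius formula (Corollary~\ref{qboopo}) with the Whitney numbers computed in Proposition~\ref{wpcpq}, and then extract from the resulting alternating sum the ``$q$-derangement'' building blocks $d_m(q)$ via the Gaussian-inversion identity already available in Corollary~\ref{bncnqcor} (equivalently Corollary~\ref{coexcderang}). Since $PCP_n(q)$ is $q$-simplicial, ranked of length $n$, and Cohen--Macaulay (by the result of Solomon cited above), Theorem~\ref{bwrees} gives that $PCP_n(q)^-\ast C_n$ is Cohen--Macaulay, so $\dim\tilde H_{n-1}(PCP_n(q)^-\ast C_n)=(-1)^{n-1}\mu(\wh{PCP_n(q)^-\ast C_n})$, and Corollary~\ref{qboopo} then yields
\[
\dim\tilde H_{n-1}(PCP_n(q)^-\ast C_n)=\sum_{r=0}^n (-1)^{n-r}\,W_r(PCP_n(q))\,A_r^{\comaj,\exc}(q,1),
\]
where I write $A_r^{\comaj,\exc}(q,1)=\sum_{\s\in\S_r}q^{\comaj(\s)+\exc(\s)}$. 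Substituting $W_r(PCP_n(q))=\left[\begin{array}{c}n\\r\end{array}\right]_q\prod_{i=n-r+1}^{n}(q^i+1)$ from Proposition~\ref{wpcpq} reduces the theorem to a purely algebraic identity in $q$.

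The second step is to recognize the product factor $\prod_{i=n-r+1}^n(q^i+1)$ as something that telescopes against the Gaussian binomial when we re-index by $k=n-r$. First I would rewrite the claimed right-hand side of \eqref{qbeq} using Corollary~\ref{bncnqcor}, which says $d_m(q)=\sum_{\ell=0}^m(-1)^\ell\left[\begin{array}{c}m\\\ell\end{array}\right]_q A_{m-\ell}^{\comaj,\exc}(q,1)$; plugging this into $\sum_k\left[\begin{array}{c}n\\k\end{array}\right]_q q^{k^2}\prod_{i=k+1}^n(1+q^i)\,d_{n-k}(q)$, collecting the coefficient of each $A_r^{\comaj,\exc}(q,1)$ (with $r=n-k-\ell$), and comparing with the coefficient $(-1)^{n-r}\left[\begin{array}{c}n\\r\end{array}\right]_q\prod_{i=r+1}^n(q^i+1)$ coming from the Möbius computation, I reduce the whole statement to the finite identity
\[
\sum_{k=0}^{n-r}(-1)^{n-r-k}\left[\begin{array}{c}n\\k\end{array}\right]_q\left[\begin{array}{c}n-k\\r\end{array}\right]_q q^{k^2}\prod_{i=k+1}^{n}(1+q^i)
=\left[\begin{array}{c}n\\r\end{array}\right]_q\prod_{i=r+1}^{n}(1+q^i)
\]
for each $0\le r\le n$. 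Using the standard factorization $\left[\begin{array}{c}n\\k\end{array}\right]_q\left[\begin{array}{c}n-k\\r\end{array}\right]_q=\left[\begin{array}{c}n\\r\end{array}\right]_q\left[\begin{array}{c}n-r\\k\end{array}\right]_q$ and canceling $\left[\begin{array}{c}n\\r\end{array}\right]_q$, this becomes an identity involving only $n-r=:m$, namely $\sum_{k=0}^m(-1)^{m-k}\left[\begin{array}{c}m\\k\end{array}\right]_q q^{k^2}\prod_{i=k+1}^n(1+q^i)=\prod_{i=r+1}^n(1+q^i)$; after dividing through by $\prod_{i=m+1}^n(1+q^i)$ (the tail is independent of $k$) it is exactly the $q$-binomial theorem identity $\sum_{k=0}^m(-1)^{m-k}\left[\begin{array}{c}m\\k\end{array}\right]_q q^{\binom k2}\,q^{\binom{k+1}2}\cdots$ — more precisely the evaluation $\sum_{k}(-1)^{k}\left[\begin{array}{c}m\\k\end{array}\right]_qq^{\binom k2}x^k=(x;q)_m$ specialized appropriately to produce $\prod_{i=m-? }^{m}(1+q^i)$; I would verify the exact exponent bookkeeping ($k^2=2\binom k2+k$) so that the specialization $x\mapsto -q^{m+1-?}$ lands on the telescoping product.

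The last step is the nonnegativity claim: once \eqref{qbeq} is established, each summand is manifestly a product of a Gaussian binomial, a power of $q$, a product $\prod(1+q^i)$, and $d_{n-k}(q)$, all of which are polynomials in $q$ with nonnegative integer coefficients — $d_m(q)$ because it is literally a generating function $\sum_{\s\in\mathcal D_m}q^{\comaj(\s)+\exc(\s)}$ over the (finite) set of derangements — so the sum is too. I expect the main obstacle to be the bookkeeping in the second step: keeping the three indices $k$, $\ell$, $r$ aligned, getting the powers of $q$ (the $q^{k^2}$ versus the $\prod_{i=k+1}^n(1+q^i)$ versus the Gaussian binomials) to match exactly, and pinning down which specialization of the $q$-binomial theorem produces the telescoping product $\prod_{i=r+1}^n(1+q^i)$; it is entirely routine in principle but the exponent arithmetic is where an off-by-one would hide. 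An alternative, possibly cleaner, route to the same reduction is to avoid Gaussian inversion altogether and instead prove \eqref{qbeq} by deriving a recurrence for $\dim\tilde H_{n-1}(PCP_n(q)^-\ast C_n)$ directly from the Möbius recursion on $\wh{PCP_n(q)^-\ast C_n}$ (peeling off the top rank $n$), matching it against the recurrence satisfied by the right-hand side of \eqref{qbeq}; I would keep this in reserve in case the $q$-binomial identity proves fiddly.
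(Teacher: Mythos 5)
Your proposal follows the paper's own proof essentially step for step: Cohen--Macaulayness (Solomon plus Theorem~\ref{bwrees}) reduces the homology dimension to the M\"obius invariant, Corollary~\ref{qboopo} together with Proposition~\ref{wpcpq} gives the alternating sum over Whitney numbers, Corollary~\ref{coexcderang} re-expresses $d_{n-k}(q)$ as an alternating sum of $A_m^{\comaj,\exc}(q,1)$, the $q$-Vandermonde $\left[\begin{array}{c}n\\k\end{array}\right]_q\left[\begin{array}{c}n-k\\r\end{array}\right]_q=\left[\begin{array}{c}n\\r\end{array}\right]_q\left[\begin{array}{c}n-r\\k\end{array}\right]_q$ collapses the double sum, and the $q$-binomial theorem (after Gaussian inversion, in the paper's version) finishes. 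So this is the same route.

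One concrete index slip to fix before you commit: the coefficient of $A_r^{\comaj,\exc}(q,1)$ from Proposition~\ref{wpcpq} carries $\prod_{i=n-r+1}^n(1+q^i)$, not $\prod_{i=r+1}^n(1+q^i)$ as written in your reduced identity; with $m:=n-r$ that is $\prod_{i=m+1}^n(1+q^i)$. Once corrected, your ``divide through by $\prod_{i=m+1}^n(1+q^i)$'' step makes the right-hand side exactly $1$, so the target becomes $\sum_{k=0}^m(-1)^k\left[\begin{array}{c}m\\k\end{array}\right]_q q^{k^2}\prod_{i=k+1}^m(1+q^i)=1$ (with your sign $(-1)^{m-k}$ also adjusted to $(-1)^k$, or equivalently a $(-1)^m$ moved to the right side); as you wrote it, the quotient of products is not even a polynomial when $m<n/2$, which signals the bookkeeping error. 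The paper reaches an equivalent form by applying Gaussian inversion first and then cancelling the common factor $\prod_{i=m+1}^n(1+q^i)$, arriving at equation~(\ref{neweq}), which it verifies by specializing the $q$-binomial theorem at $x=q^m$, $y=1$; your direct-division variant and the paper's inversion-first variant are two bookkeeping choices for the same final appeal to the $q$-binomial theorem, so either carries through. The nonnegativity conclusion is exactly as you say.
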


\begin{proof}  We have by Proposition~\ref{wpcpq}, Corollary~\ref{qboopo}, and the fact that   ${PCP_n(q)^-*C_n}$ is Cohen-Macaulay, $$\dim \tilde H_{n-1} ({PCP_n(q)^-*C_n}) = \sum_{j=0}^n (-1)^j \left[\begin{array}{c} n \\j\end{array}\right]_q\,\, \prod_{i=j+1}^n (1+q^i)\,\, a_{n-j}(q),$$ where $a_n(q):= \sum_{\sigma \in \mathfrak S_n} q^{\comaj(\sigma)+\exc(\sigma)}$.
On the other hand by Corollary~\ref{coexcderang}, the right hand side of
(\ref{qbeq}) equals
$$\sum_{k =0}^n \left[\begin{array}{c} n \\k\end{array}\right]_q \,\,q^{k^2}\,\,\prod_{i=k+1}^n (1+q^i)  \sum_{m=0}^{n-k} (-1)^m \left[\begin{array}{c} n-k \\m\end{array}\right]_q a_{n-k-m}(q)$$

\begin{eqnarray*} &=&  \sum_{j\ge 0} a_{n-j}(q) \sum_{k \ge 0} \left[\begin{array}{c} n \\k\end{array}\right]_q
 \,\,q^{k^2}\,\,\prod_{i=k+1}^n (1+q^i) (-1)^{j-k}  \left[\begin{array}{c} n-k \\j-k\end{array}\right]_q \\ &=&  \sum_{j\ge 0} a_{n-j}(q)  \left[\begin{array}{c} n \\j\end{array}\right]_q\sum_{k \ge 0} \left[\begin{array}{c} j \\k\end{array}\right]_q
 \,\,q^{k^2}\,\,\prod_{i=k+1}^n (1+q^i) (-1)^{j-k} .\end{eqnarray*}
 Thus to prove (\ref{qbeq}) we need only show that
 $$ \prod_{i=j+1}^n (1+q^i) = \sum_{k \ge 0} \left[\begin{array}{c} j \\k\end{array}\right]_q
 \,\,q^{k^2}\,\,\prod_{i=k+1}^n (1+q^i) (-1)^{k} ,$$ holds for all $n$ and $j$.
 By Gaussian inversion this is equivalent to,
 $$q^{j^2}(-1)^j \prod_{i=j+1}^n(1+q^i) = \sum_{k \ge 0} \left[\begin{array}{c} j \\k\end{array}\right]_q (-1)^{j-k} q^{j-k \choose 2} 
\prod_{i=k+1}^n (1+q^i) ,$$
which is in turn equivalent to,
\begin{equation} \label{neweq} q^{j^2}(-1)^j  = \sum_{k \ge 0} \left[\begin{array}{c} j \\k\end{array}\right]_q (-1)^{j-k} q^{j-k \choose 2} 
\prod_{i=k+1}^j (1+q^i)  .\end{equation}

To prove (\ref{neweq}) we use the q-binomial formula,
$$\prod_{i=0}^{n-1} (x+yq^i) = \sum_{k\ge 0} \left[\begin{array}{c} n \\k\end{array}\right]_q q^{k\choose 2} x^{n-k}y^k.$$  Set $y=1$ and use Gaussian inversion to obtain
$$x^n = \sum_{k\ge 0} \left[\begin{array}{c} n \\k\end{array}\right]_q (-1)^{n-k} \prod_{i=0}^{k-1} (x+q^i)$$
Now set $x=q^n$ to obtain
\begin{eqnarray*}q^{n^2} &=& \sum_{k\ge 0} \left[\begin{array}{c} n \\k\end{array}\right]_q (-1)^{n-k} \prod_{i=0}^{k-1} (q^n+q^i)\\ 
&=&  \sum_{k\ge 0} \left[\begin{array}{c} n \\k\end{array}\right]_q (-1)^{n-k} q^{k\choose 2}\prod_{i=0}^{k-1} (q^{n-i}+1).\end{eqnarray*}
\end{proof}

Using the standard identification of  elements of $\mathfrak S_n[\Z_2]$ with barred permutations, the derangements of Theorem~\ref{typeb} are the barred permutations $\sigma$ for which $\sigma(i) \ne i$ for all $i \in [n]$.  Let $\mathcal D^{BC}_n$ be the set of such barred permutations.  
 For $\sigma \in \ \mathcal D_n^{BC}$, let $\tilde \sigma$ be the word obtained by rearranging the letters of $\sigma$ so that the fixed points of $|\sigma|$, which are all barred in $\sigma$, come first in increasing order with bars intact, followed by subword of nonfixed points  of $|\sigma|$ also with bars intact.  Now let $S$ be the set of positions in which bars appear in $\tilde\sigma$.  Define the {\em bar index}, $\bnd(\s)$ of $\sigma$ to be $ \sum_{i\in S} i$.  For example if $\s =\bar 3 \bar 2 5 \bar 4 \bar 6 1 \bar 7$ then $\tilde \s = \bar 2 \bar 4 \bar 7 \bar 3 5 \bar 6 1$ and so $\bnd (\s) = 1+2+3+4+6$.

\begin{cor} $$ \dim \tilde H_{n-1} ({PCP_n(q)^-*C_n}) = \sum_{\sigma \in \mathcal D_n^{BC}} q^{\comaj(|\sigma|) + \exc(|\sigma|) + \bnd(\sigma)}$$
\end{cor}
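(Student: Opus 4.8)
The plan is to derive the identity from the formula for $\dim\tilde H_{n-1}(PCP_n(q)^-*C_n)$ already obtained in Theorem~\ref{qbth} together with Corollary~\ref{coexcderang}, by sorting the sum over $\mathcal D_n^{BC}$ according to the number $k$ of fixed points of the underlying (unsigned) permutation $|\sigma|$.

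First I would pin down which barred permutations lie in $\mathcal D_n^{BC}$. Viewing $\sigma\in\mathfrak S_n[\Z_2]$ as a barred permutation acting on $[n]\cup[\ov n]$, one checks that $\sigma$ fixes the vertex $j$ (equivalently $\ov j$) precisely when $|\sigma|(j)=j$ and the letter in position $j$ is \emph{unbarred}. Hence $\sigma\in\mathcal D_n^{BC}$ if and only if every fixed point of $|\sigma|$ in one-line notation carries a bar, while bars on the non-fixed positions are unrestricted. Consequently $\mathcal D_n^{BC}$ is the disjoint union, over $w\in\mathfrak S_n$, of the barred permutations $\sigma$ with $|\sigma|=w$ whose $\fix(w)$ fixed points are barred; for each such $w$ there are exactly $2^{\,n-\fix(w)}$ admissible $\sigma$.

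Next I would compute $\bnd(\sigma)$ from this data. Fix $w$ with $\fix(w)=k$ and fixed-point set $\{a_1<\dots<a_k\}$. By the definition of $\tilde\sigma$, its first $k$ letters are $\ov a_1,\dots,\ov a_k$, occupying positions $1,\dots,k$, all barred; the remaining $n-k$ positions carry the subword of non-fixed letters of $\sigma$ in their original relative order, with bars intact. If $T\subseteq\{1,\dots,n-k\}$ is the set of positions \emph{within that tail subword} whose letter is barred, then the bar set of $\tilde\sigma$ is $\{1,\dots,k\}\cup\{k+p:p\in T\}$, so $\bnd(\sigma)=\binom{k+1}{2}+\sum_{p\in T}(k+p)$. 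Summing $q^{\bnd(\sigma)}$ over all $2^{\,n-k}$ choices of $T$ yields $q^{\binom{k+1}{2}}\prod_{i=k+1}^{n}(1+q^i)$. Since $\comaj(|\sigma|)$ and $\exc(|\sigma|)$ depend only on $w=|\sigma|$, this gives
\[
\sum_{\sigma\in\mathcal D_n^{BC}} q^{\comaj(|\sigma|)+\exc(|\sigma|)+\bnd(\sigma)}
=\sum_{k=0}^{n} q^{\binom{k+1}{2}}\Bigl(\prod_{i=k+1}^{n}(1+q^i)\Bigr)\sum_{\substack{w\in\mathfrak S_n\\ \fix(w)=k}} q^{\comaj(w)+\exc(w)}.
\]
By Corollary~\ref{coexcderang} the inner sum equals $q^{\binom{k}{2}}\left[\begin{array}{c} n\\k\end{array}\right]_q d_{n-k}(q)$, and using the elementary identity $\binom{k}{2}+\binom{k+1}{2}=k^2$ the right-hand side collapses to $\sum_{k=0}^{n}\left[\begin{array}{c} n\\k\end{array}\right]_q q^{k^2}\prod_{i=k+1}^{n}(1+q^i)\,d_{n-k}(q)$, which is exactly the expression for $\dim\tilde H_{n-1}(PCP_n(q)^-*C_n)$ furnished by Theorem~\ref{qbth}.

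The only delicate point is the bookkeeping in the $\bnd$-computation: verifying that the barred fixed points really occupy positions $1$ through $k$ of $\tilde\sigma$ (contributing $\binom{k+1}{2}$) and that a barred non-fixed letter in relative position $p$ of the tail lands in position $k+p$, together with the identity $\binom{k}{2}+\binom{k+1}{2}=k^2$ that makes the two powers of $q$ merge with the $k^2$ appearing in Theorem~\ref{qbth}. Everything else is a straightforward substitution of results already established; no poset-topological input beyond Theorem~\ref{qbth} is needed, and it is worth recording the example $\sigma=\bar3\bar2 5\bar4\bar6 1\bar7$ (where $k=3$, $\tilde\sigma=\bar2\bar4\bar7\bar3 5\bar6 1$, $\bnd(\sigma)=16=\binom42+4+6$) as a sanity check of the formula.
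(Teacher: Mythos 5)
Your proof is correct and follows essentially the same route as the paper: sorting $\mathcal D_n^{BC}$ by $k=\fix(|\sigma|)$, observing that the $2^{n-k}$ bar-placements on the non-fixed letters contribute $q^{\binom{k+1}{2}}\prod_{i=k+1}^n(1+q^i)$ to the $\bnd$-generating function, applying Corollary~\ref{coexcderang}, and merging powers via $\binom{k}{2}+\binom{k+1}{2}=k^2$ to match Theorem~\ref{qbth}. The only cosmetic difference is that the paper tracks $\bnd$ with a separate variable $p$ and sets $p=q$ at the end, while you (quite reasonably) work directly with $q$; your write-up is also more explicit than the paper's about the characterization of $\mathcal D_n^{BC}$ and the position bookkeeping in $\tilde\sigma$.
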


\begin{proof}   By Corollary~\ref{coexcderang} we have,
$$ \sum_{\sigma \in \mathcal D_n^{BC}} q^{\comaj(|\sigma|) + \exc(|\sigma|)} p^{ \bnd(\sigma)}\hspace{2in}$$
\begin{eqnarray*} &=& \sum_{k=0}^n  \sum_{\scriptsize \begin{array}{c} \s \in \mathfrak S_n \\ \fix(\s) = k\end{array}} q^{\comaj(\s)+\exc(\s)} p^{k+1 \choose 2} \prod_{i=k+1}^n (1+p^i) \\&=& \sum_{k=0}^n  \left[\begin{array}{c} n \\k\end{array}\right]_q q^{k\choose 2} d_{n-k}(q) p^{k+1 \choose 2} \prod_{i=k+1}^n (1+p^i).\end{eqnarray*}
Now set $p=q$ and apply Theorem~\ref{qbth}.
\end{proof}

\section*{Acknowledgements}
The research presented here began while both authors were visiting the Mittag-Leffler Institute as participants in a combinatorics program organized by Anders Bj\"orner and Richard Stanley.  We thank the Institute for its hospitality and support.  We are also grateful to Ira Gessel for some very useful
discussions.

\end{document}